\newcommand{\cc}{\mathsf{c}}
\renewcommand{\le}{\leqslant}
\renewcommand{\ge}{\geqslant}
\renewcommand{\setminus}{\smallsetminus}
\newcommand{\M}{\mathsf{M}}
\newcommand{\Q}{\mathbb{Q}}
\newcommand{\ud}[0]{\,\mathrm{d}}
\newcommand{\gp}{\gamma_{\!\pmb{+}}}
\newcommand{\spn}{\mathrm{span}}
\renewcommand{\pi}{\uppi}
\newcommand{\NN}{\mathcal{N}}
\newcommand{\ee}{\mathsf{e}}
\renewcommand{\phi}{\upphi}
\newcommand{\jj}{\mathcal{j}}
\newcommand{\sign}{\mathrm{sign}}
\newcommand{\n}{\{1,\ldots,n\}}
\renewcommand{\k}{\{1,\ldots,k\}}
\newcommand{\nsimplex}{\bigtriangleup^{\!n-1}}
\newcommand{\f}{\upphi}
\newcommand{\s}{\upsigma}
\renewcommand{\d}{\updelta}
\newcommand{\e}{\varepsilon}
\newcommand{\R}{\mathbb R}
\newcommand{\1}{\mathbf 1}
\newcommand{\Id}{\mathsf{Id}}
\newtheorem{theorem}{Theorem}
\newtheorem{lemma}[theorem]{Lemma}
\newtheorem{proposition}[theorem]{Proposition}
\newtheorem{corollary}[theorem]{Corollary}
\theoremstyle{remark}
\newtheorem{remark}[theorem]{Remark}
\newtheorem{conjecture}[theorem]{Conjecture}
\newtheorem{question}[theorem]{Question}
\newtheorem{example}[theorem]{Example}
\renewcommand{\tau}{\uptau}
\newcommand{\cM}{\mathcal{M}}
\newcommand{\MM}{\mathcal{M}}
\renewcommand{\xi}{\upxi}
\renewcommand{\rho}{\uprho}
\newcommand{\C}{\mathbb C}
\DeclareMathOperator{\supp}{supp}
\newcommand{\I}{\mathsf{I}}
\newcommand{\bI}{\mathbf{I}}
\newcommand{\N}{\mathbb N}
\newcommand{\Z}{\mathbb Z}
\newcommand{\eqdef}{\stackrel{\mathrm{def}}{=}}
\newcommand{\Lip}{\mathrm{Lip}}
\renewcommand{\theta}{\uptheta}
\renewcommand{\lambda}{\uplambda}
\DeclareMathOperator{\diam}{diam}
\newcommand{\A}{\mathsf{A}}
\newcommand{\G}{\mathsf{G}}
\renewcommand{\gamma}{\upgamma}
\renewcommand{\beta}{\upbeta}
\renewcommand{\alpha}{\upalpha}
\renewcommand{\kappa}{\upkappa}
\renewcommand{\psi}{\uppsi}
\renewcommand{\rho}{\uprho}
\renewcommand{\delta}{\updelta}
\renewcommand{\pi}{\uppi}
\renewcommand{\omega}{\upomega}
\renewcommand{\sigma}{\upsigma}
\renewcommand{\A}{\mathsf{A}}
\renewcommand{\eta}{\upeta}
\renewcommand{\kappa}{\upkappa}
\renewcommand{\mu}{\upmu}
\renewcommand{\nu}{\upnu}
\renewcommand{\pi}{\uppi}
\renewcommand{\zeta}{\upzeta}
\newcommand\smallmath[2]{#1{\raisebox{\dimexpr \fontdimen 22 \textfont 2
      - \fontdimen 22 \scriptscriptfont 2 \relax}{$\scriptscriptstyle #2$}}}
\newcommand\smallotimes{\smallmath\mathbin\otimes}
\begin{document}


\title{An average John theorem}
\dedicatory{Dedicated to the memory of Eli Stein}

\author{Assaf Naor}
\address{Mathematics Department\\ Princeton University\\ Fine Hall, Washington Road, Princeton, NJ 08544-1000, USA}
\email{naor@math.princeton.edu}

\thanks{Supported by the Packard Foundation and the Simons Foundation.  The research that is presented here was conducted under the auspices of the Simons Algorithms and Geometry (A\&G) Think Tank. An extended abstract~\cite{Nao17} announcing parts of this work appeared in the 33rd {I}nternational {S}ymposium on {C}omputational {G}eometry.}


\maketitle

\vspace{-0.29in}

\begin{abstract}
We prove that the  $\frac12$-snowflake of any finite-dimensional normed space $X$ embeds into a Hilbert space with quadratic average distortion $$O\Big(\sqrt{\log \dim(X)}\Big).$$ We deduce from this (optimal) statement that if an $n$-vertex expander  embeds with average distortion $D\ge 1$ into $X$, then necessarily $\dim(X)\ge n^{\Omega(1/D)}$, which is sharp  by the work of Johnson, Lindenstrauss and Schechtman (1987). This improves over the previously best-known bound $\dim(X)\gtrsim (\log n)^2/D^2$ of Linial, London and Rabinovich (1995),  strengthens a theorem of Matou\v{s}ek (1996) which resolved questions of Johnson and Lindenstrauss (1982), Bourgain (1985) and Arias-de-Reyna and Rodr{\'{\i}}guez-Piazza (1992), and answers negatively a question that was posed (for algorithmic purposes) by Andoni, Nguyen, Nikolov, Razenshteyn and Waingarten (2016).
\end{abstract}

\section{Introduction}

Given $D\ge 1$, we say that an infinite metric space $(\MM,d_\MM)$ embeds into a  normed space $(Z,\|\cdot\|_Z)$ with quadratic average distortion $D$ if for {\em every} Borel probability measure $\mu$ on $\MM$ there exists a $D$-Lipschitz mapping $f=f_\mu:\MM\to Z$ that satisfies
\begin{equation}\label{eq:def quadratic dist}
\iint_{\cM\times \cM} \|f(x)-f(y)\|_{\!Z}^{2\phantom{p}}\!\ud\mu(x)\ud\mu(y)\ge \iint_{\cM\times \cM} d_\cM(x,y)^2\ud \mu(x)\ud\mu(y).
\end{equation}
In comparison, the  requirement that $(\MM,d_\MM)$ embeds with bi-Lipschitz distortion $D$ into $(Z,\|\cdot\|_Z)$ means that there exists a $D$-Lipschitz mapping $f:\MM\to Z$ that satisfies
\begin{equation}\label{eq:worst case lower}
\forall\, x,y\in \MM,\qquad \|f(x)-f(y)\|_{\!Z}^{\phantom{p}}\!\ge d_\MM(x,y).
\end{equation}
Thus~\eqref{eq:def quadratic dist} is a natural average-case counterpart to the worst-case condition~\eqref{eq:worst case lower} where, in lieu of a canonical  probability measure on $\MM$, one demands that the notion of ``average'' is with respect to any Borel probability measure on $\MM$ while allowing the embedding to depend on the given measure.

The following theorem is (a special case of) our main result. Its statement uses the terminology (e.g~\cite{DS97}) that for $\omega\in (0,1]$, the $\omega$-snowflake of a metric space $(\MM,d_\MM)$ is the metric space $(\MM,d_\MM^\omega)$.

\begin{theorem}\label{thm:average john} For every integer $k\ge 2$, the $\frac12$-snowflake of any $k$-dimensional normed space embeds into a Hilbert space with quadratic average distortion $\mathsf{C}\sqrt{\log k}$, where $\mathsf{C}>0$ is a universal constant.
\end{theorem}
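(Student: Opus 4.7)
The plan is to reduce to a finite SDP question, recast it via convex duality as a Gaussian-comparison inequality on $X$, and solve it by choosing a Euclidean structure on $X$ tuned to $\mu$.

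\medskip

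\noindent\emph{Reduction and embedding.} By weak-$*$ approximation of $\mu$ and Lipschitz extension for Hilbert targets (Kirszbraun, or Ball's extension theorem), it suffices to produce, for every finitely-supported $\mu$ on $X=\R^k$, a negative-type kernel $D$ on $\supp \mu$ with $D(x,y)\le \mathsf{C}^2(\log k)\|x-y\|$ pointwise and $\iint D\,\ud\mu\ud\mu\ge \iint\|x-y\|\,\ud\mu\ud\mu$. For any inner product $\Sigma\succeq 0$ on $X$, the natural candidate is the subordinated Gaussian--Fourier embedding
\begin{equation*}
f(x)\;=\;\Bigl(t^{-3/4}\bigl(\cos\langle g_t,x\rangle-1,\ \sin\langle g_t,x\rangle\bigr)\Bigr)_{t>0}\in L_2\bigl((0,\infty)\times\Omega;\ \ell_2^2\bigr),
\end{equation*}
where the $g_t\sim\mathcal{N}(0,t\Sigma)$ are independent. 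The identity $\E(1-\cos\langle g_t,v\rangle)=1-e^{-t\langle\Sigma v,v\rangle/2}$ and the Bernstein subordination $\sqrt{s}=c\int_0^\infty(1-e^{-ts})t^{-3/2}\,\ud t$ give $\|f(x)-f(y)\|_{L_2}^2=c'\sqrt{\langle\Sigma(x-y),x-y\rangle}$. So the theorem reduces to finding $\Sigma$ with $\|\cdot\|_\Sigma\lesssim(\log k)\|\cdot\|$ pointwise on $X$ and $\iint\|x-y\|_\Sigma\,\ud\mu\ud\mu\gtrsim\iint\|x-y\|\,\ud\mu\ud\mu$.

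\medskip

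\noindent\emph{Main obstacle: choice of $\Sigma$.} The pointwise upper bound on $\|\cdot\|_\Sigma$ is convex in $\Sigma$ while the averaged lower bound is concave, so existence of such $\Sigma$ is equivalent, by Sion's minimax theorem, to a Gaussian-comparison inequality: for every probability measure $\nu$ on $X\times X$ (the dual variable), the Gaussian $g\sim\mathcal{N}\bigl(0,\iint(x-y)(x-y)^\top\,\ud\nu\bigr)$ on $X$ satisfies $\E\|g\|_X\lesssim\sqrt{\log k}\,\iint\|x-y\|\,\ud\nu$. Sanity checks on extremal examples (point masses, $\nu$ uniform on the cube $\{\pm 1\}^k\subset\ell_\infty^k$, etc.) confirm the exponent $\sqrt{\log k}$ is correct and tight. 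The principal difficulty is obtaining the sharp $\sqrt{\log k}$ factor rather than the $\log k$ that results from a naive John-ellipsoid scaling of $\Sigma$. I expect this to require a Dvoretzky--Milman-type estimate on the Gaussian width of the ellipsoid of $\nu$ in $X$, combined with Pisier's $K$-convexity theorem, the pairing of which should convert a crude bound on the Gaussian norm in $X$ into the desired pointwise-to-average tradeoff for $\Sigma$.
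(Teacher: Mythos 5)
Your reduction to finitely supported measures and the subordinated Gaussian--Fourier construction are fine as far as they go, but they commit you to embeddings whose squared Hilbert distance is a \emph{single} Euclidean seminorm of the difference, $\|f(x)-f(y)\|^2=c'\sqrt{\langle\Sigma(x-y),x-y\rangle}$, and that class is provably too small to yield Theorem~\ref{thm:average john}. Unwinding your two requirements, you need one positive semidefinite $\Sigma$ with $\sqrt{\langle\Sigma v,v\rangle}\le A\|v\|_X$ for \emph{every} $v\in X$ and with $\iint\sqrt{\langle\Sigma(x-y),x-y\rangle}\,\ud\mu(x)\ud\mu(y)\gtrsim\iint\|x-y\|_X\ud\mu(x)\ud\mu(y)$, where $A\asymp\log k$. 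Take $X=\ell_1^k$ and $\mu$ uniform on $\{0,1\}^k$. The coordinates of $x-y$ are independent and centered with variance $\frac12$, so by Cauchy--Schwarz
\begin{equation*}
\iint\sqrt{\langle\Sigma(x-y),x-y\rangle}\,\ud\mu(x)\ud\mu(y)\le\Big(\tfrac12\,\mathrm{tr}\,\Sigma\Big)^{\!\frac12}=\Big(\tfrac12\textstyle\sum_{i=1}^k\langle\Sigma e_i,e_i\rangle\Big)^{\!\frac12}\le A\sqrt{k/2},
\end{equation*}
using $\langle\Sigma e_i,e_i\rangle\le A^2\|e_i\|_1^2=A^2$, while $\iint\|x-y\|_1\ud\mu(x)\ud\mu(y)=k/2$. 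Hence $A\gtrsim\sqrt{k}$ is forced, and your embedding cannot achieve quadratic average distortion better than $\asymp k^{1/4}$ for this $X$ and $\mu$ --- no better than the bi-Lipschitz bound of Lemma~\ref{lem:snowflake john}. This is the heart of the matter: the exponential gain in Theorem~\ref{thm:average john} genuinely requires leaving the class of subordinated quadratic forms. The correct class to optimize over is the full cone of kernels of negative type dominated pointwise by $D^2d_\MM$ (already in this example the non-quadratic negative-type kernel $\sum_i(1-e^{-|x_i-y_i|})$ succeeds where every $\Sigma$ fails), and the Hahn--Banach separation for that cone is precisely \cite[Theorem~1.3]{Nao14}, i.e., Theorem~\ref{thm:full duality}.

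Two further points. First, the dual inequality you reduce to is false independently of the above: with $\nu=\frac1N\delta_{(Nv,0)}+(1-\frac1N)\delta_{(0,0)}$ one has $\iint\|x-y\|_X\ud\nu=\|v\|_X$ but covariance $Nvv^{\top}$, so $\E\|g\|_X\asymp\sqrt{N}\,\|v\|_X$; a second-moment quantity cannot be controlled by a first moment, and your sanity checks miss exactly these heavy-tailed examples. (It is also not the Lagrange dual of your SDP: the multiplier for the pointwise constraint is a measure on directions $v$, and linearizing $\sqrt{\langle\Sigma w,w\rangle}$ introduces a reweighting of the second-moment matrix, which is what tames such examples.) Second, even with the right dual object in hand, the entire quantitative content of the theorem is the resulting comparison inequality with constant $\log k$; in the paper this is the nonlinear spectral-gap bound $\gamma(\A,\|\cdot\|_X)\lesssim(\log k)\gamma(\A,\|\cdot\|_{\ell_2}^2)$, proved via complex interpolation between $X$ and its John ellipsoid at parameter $\theta\asymp 1/\log k$ combined with Markov type. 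Your proposed Dvoretzky--Milman/$K$-convexity step would have to replace all of that, and the proposal gives no indication of how it would.
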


Compare Theorem~\ref{thm:average john} with John's classical theorem~\cite{Joh48} that $X$ embeds  into a Hilbert space with bi-Lipschitz\footnote{John's theorem is often stated in the literature with the embedding being a linear transformation, but this is equivalent to the way we stated it by passing to a derivative of the embedding, which exists almost everywhere~\cite{Gel38,BL00}.} distortion $\sqrt{k}$. This is sharp, as exhibited by $X=\ell_\infty^k$ or $X=\ell_1^k$. Power-type behavior is necessary also for bi-Lipschitz embeddings of snowflaked norms, as shown by the following lemma.

\begin{lemma}\label{lem:snowflake john} Fix $\omega\in (0,1]$ and $k\in \N$. The $\omega$-snowflake of any  $k$-dimensional normed space embeds with bi-Lipschitz distortion $k^{\frac{\omega}{2}}$ into a Hilbert space. Conversely, any embedding of the $\omega$-snowflake of $\ell_\infty^k$ into a Hilbert space incurs bi-Lipschitz distortion at least a universal constant multiple of $k^{\frac{\omega}{2}}$.
\end{lemma}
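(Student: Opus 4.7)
The upper bound is a direct combination of John's theorem with Schoenberg's theorem. By John's theorem there is a linear isomorphism $T\colon X\to\ell_2^k$ satisfying $\|x\|_X\le\|Tx\|_2\le\sqrt{k}\,\|x\|_X$ for every $x\in X$. Because $\|u-v\|_2^{2\omega}$ is conditionally negative definite on $\R^k$ for every $\omega\in(0,1]$, Schoenberg's theorem supplies an isometric embedding $\iota\colon(\ell_2^k,\|\cdot\|_2^\omega)\hookrightarrow H_0$ into some Hilbert space $H_0$. The composition $\iota\circ T$ then yields a bi-Lipschitz embedding of $(X,\|\cdot\|_X^\omega)$ into $H_0$ with $\|x-y\|_X^\omega\le\|\iota(Tx)-\iota(Ty)\|_{H_0}=\|Tx-Ty\|_2^\omega\le k^{\omega/2}\|x-y\|_X^\omega$, giving distortion $k^{\omega/2}$.

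For the lower bound with $\omega=1$, John's theorem applies directly: any bi-Lipschitz embedding of $\ell_\infty^k$ into a Hilbert space has distortion $\gtrsim\sqrt{k}$, which one obtains by differentiating the embedding at a Lebesgue-typical point and applying the John ellipsoid estimate to the resulting linear map $\ell_\infty^k\to H$. My plan for general $\omega\in(0,1)$ is to reduce to this case. Given an embedding $f\colon(\ell_\infty^k,\|\cdot\|_\infty^\omega)\to H$ of distortion $D$, the aim is to manufacture from $f$ a bi-Lipschitz embedding of the \emph{un}-snowflaked space $\ell_\infty^k$ into some Hilbert space whose distortion is at most $CD^{1/\omega}$. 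John's theorem will then force $D^{1/\omega}\gtrsim\sqrt{k}$, hence $D\gtrsim k^{\omega/2}$.

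The main obstacle lies in this de-snowflaking step. The naive approach --- take the pullback kernel $K(x,y):=\|f(x)-f(y)\|_H^2$, which is conditionally negative definite on $\ell_\infty^k$ and satisfies $\|x-y\|_\infty^{2\omega}\le K(x,y)\le D^2\|x-y\|_\infty^{2\omega}$, and raise it to the power $1/\omega$ --- fails because conditional negative definiteness is not in general preserved under powers greater than $1$, so $K^{1/\omega}$ need not realize squared distances in any Hilbert space. The plan is instead to regularize $f$ by convolution with a smooth bump of scale $\varepsilon>0$ supported in $\varepsilon B_{\ell_\infty^k}$, producing a $C^\infty$ map $f_\varepsilon$ whose differential $Df_\varepsilon(x)\colon\ell_\infty^k\to H$ satisfies the Lipschitz bound $\|Df_\varepsilon(x)\|_{\ell_\infty^k\to H}\lesssim D\varepsilon^{\omega-1}$. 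The delicate point, and the main obstacle in carrying out the plan, is to extract a matching co-Lipschitz bound on $Df_\varepsilon$: since $f$ is only H\"older of exponent $\omega$, the pointwise lower bound $\|f(x)-f(y)\|_H\ge\|x-y\|_\infty^\omega$ allows cancellations under the Hilbert-valued averaging defining $f_\varepsilon$, so one must exploit the linear and translation-invariant structure of $\ell_\infty^k$ to control these cancellations and produce a robust lower bound on $\|Df_\varepsilon(x)v\|_H$ for $\|v\|_\infty=1$ at a typical $x\in B_{\ell_\infty^k}$, at which point choosing $\varepsilon$ to balance the two bounds closes the reduction.
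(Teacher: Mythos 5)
Your upper bound is exactly the paper's argument (John composed with Schoenberg's isometric embedding of the $\omega$-snowflake of Hilbert space), and it is correct.

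The lower bound, however, has a genuine gap, and it sits precisely where you flag "the main obstacle": you never produce the co-Lipschitz bound on $Df_\varepsilon$, and no such bound is available. The difficulty is not merely technical. For $\omega<1$ the hypothesis $\|f(x)-f(y)\|_H\ge \|x-y\|_\infty^\omega$ gives, on a ball of radius $\varepsilon$, a lower bound of the form $\varepsilon^{\omega-1}\|x-y\|_\infty$ only at the \emph{top} scale $\|x-y\|_\infty\asymp\varepsilon$; at scales $t\ll\varepsilon$ the two-sided H\"older bounds read $t^\omega\le\|f(x)-f(y)\|_H\le Dt^\omega$, whose ratio to the linear scale $t$ degenerates as $t\to 0$. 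So $f$ restricted to \emph{no} ball is bi-Lipschitz onto the un-snowflaked metric with controlled distortion, and the mollification $f_\varepsilon$ can have vanishing differential at typical points: the Hilbert-valued averages can cancel, and "translation invariance of $\ell_\infty^k$" gives you no mechanism to rule this out (indeed Schoenberg-type snowflake embeddings are the canonical example of maps whose increments at scale $t$ point in essentially orthogonal directions as $t$ varies, which is exactly the cancellation you would need to exclude). There is also a numerological warning sign: if your linearization produced a two-sided bound $c\varepsilon^{\omega-1}\le \|Df_\varepsilon(x)v\|_H\le CD\varepsilon^{\omega-1}$ for unit $v$, John's theorem would give $D\gtrsim\sqrt k$, contradicting your own upper bound $k^{\omega/2}$; so the reduction must somehow lose exactly a power $1/\omega$, and nothing in the proposal produces that exponent. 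The paper explicitly notes that differentiation is ``inherently unsuitable for treating H\"older functions'' here, and instead proves the lower bound by a purely metric invariant: the sharp-scaling metric cotype~$2$ inequality of Mendel--Naor applied to the discrete torus $\{1,\dots,4k\}^{k^2}$ inside $\ell_\infty^{k^2}(\C)$, which compares ``diagonal'' increments to ``edge'' increments of an arbitrary (not necessarily differentiable) map into $H$ and yields $D\gtrsim k^\omega\asymp \dim(X)^{\omega/2}$ directly. You would need to replace your de-snowflaking step by an argument of this metric type.
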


In contrast to Lemma~\ref{lem:snowflake john}, in Theorem~\ref{thm:average john} we establish that if one wishes to obtain an embedding into a Hilbert space which is $\frac12$-H\"older  and preserves the $\frac12$-snowflaked distances only on average rather than the worst-case pairwise distance preservation requirement  of John's theorem (or its version for snowflakes that appears in Lemma~\ref{lem:snowflake john}, which shows  that for $\frac12$-snowflakes the best bi-Lipschitz distortion that one could hope for is of order $\sqrt[4]{k}$), then  the distortion can be improved dramatically to a universal constant multiple of $\sqrt{\log k}$.  Importantly, the notion of ``average'' here can be taken to be with respect to {any} Borel probability measure on $X$ whatsoever.

Theorem~\ref{thm:average john} is sharp in two ways. Firstly, we will see that its $\mathsf{C}\sqrt{\log k}$ bound is sharp (this occurs when $X=\ell_\infty^k$ and the probability measure is uniform over an isometrically embedded $k$-vertex expander). Secondly, one cannot perform a lesser amount of snowflaking of the norm while still obtaining average distortion  $k^{o(1)}$. Namely, we will see that if $\e\in (0,\frac12]$ and one aims to embed  the $\left(\frac12+\e\right)$-snowflake of every such $X$ into a Hilbert space with quadratic average distortion $D$, then necessarily $D\ge k^\e$ (this occurs when $X=\ell_1^k$ and the probability measure is uniform over $\{0,1\}^k$).

Thus, the exponential   improvement over John's distortion bound that we obtain in Theorem~\ref{thm:average john} is made possible by allowing the  distances to be preserved only on average, and simultaneously  introducing an inherent  nonlinearity through snowflaking; performing only  one of these two modifications of John's theorem does not suffice. We will soon see that, despite the fact that the distance preservation guarantee that is furnished by Theorem~\ref{thm:average john} is (necessarily) weaker than that of John's theorem, it has quite substantial implications. It is also worthwhile to note that unlike John's embedding, which is achieved explicitly by considering the ellipsoid of maximal volume that is inscribed in the unit ball of $X$, our proof of Theorem~\ref{thm:average john} establishes  the existence of the stated embedding implicitly through reliance on a duality argument; obtaining a more constructive proof would be valuable.

Theorem~\ref{thm:average john}  is in fact  a special case of a stronger theorem  that treats embeddings into targets that are not necessarily Hilbertian, $L_p$ variants of the quadratic requirement~\eqref{eq:def quadratic dist}, and other snowflakes of $X$, and it also obtains improved embeddings (i.e.~with less snowflaking)  if $X$ satisfies an additional geometric assumption; see Theorem~\ref{thm:really main}  below. It is beneficial to start by presenting the above basic version (quadratic, Hilbertian, without any assumption on the geometry of $X$) because it does not require the introduction of further terminology, and it has  a noteworthy geometric consequence that we wish to explain first, prior to passing to the somewhat more involved setup of Section~\ref{sec:UC} below.

\begin{remark} In the spirit of Theorem~\ref{thm:average john}, it is simple to find other examples of metric spaces $(\MM,d_\MM)$ whose quadratic average distortion into some Banach space $Z$ is significantly smaller than their bi-Lipschitz distortion into $Z$. Indeed, it is straightforward to check that if $(\MM,d_\MM)$ is an infinite equilateral space, i.e., $d_\MM(x,y)=1$ for all distinct $x,y\in \MM$, then $\MM$ embeds into $Z=\R$ with finite quadratic average distortion, but $\MM$ does not admit a bi-Lipschitz embedding into $\R^n$ for any $n\in \N$. Much more substantially, any weighted planar graph (equipped with its shortest-path metric) or any $O(1)$-doubling metric space (see~\cite{Hei01}) embeds into the real line with $O(1)$-quadratic average distortion (see~\cite[Section~7]{Nao14} for a justification of this, which adapts the reasoning in~\cite{Rab08}), while such spaces need not even admit a bi-Lipschitz embedding into a Hilbert space~\cite{Bou86,Laa02}. Also, if $2<p<\infty$, then $\ell_p$ does not admit a bi-Lipschitz embedding into a Hilbert space (see~\cite{BL00}), but it follows from~\cite{Nao14} that $\ell_p$ embeds into a Hilbert space with quadratic average distortion $O(p)$, and that this bound is optimal. More such examples  will be obtained below.
\end{remark}

\subsection{Notation, terminology, conventions}\label{sec:notation} Below, all metric spaces will be tacitly assumed to be separable. While some of the ensuing statements hold without a separability assumption, adhering to this convention avoids measurability side-issues that would otherwise obscure the main geometric content. Alternatively, one could harmlessly consider throughout only  finitely supported measures.

In addition to the usual $O(\cdot), o(\cdot), \Omega(\cdot)$ notation, we will use the following (also standard) asymptotic notation. For $Q,Q'>0$, the notations
$Q\lesssim Q'$ and $Q'\gtrsim Q$ mean that $Q\le \mathsf{K}Q'$ for a
universal constant $\mathsf{K}>0$. The notation $Q\asymp Q'$
stands for $(Q\lesssim Q') \wedge  (Q'\lesssim Q)$. If  we need to allow for dependence on parameters, we indicate this by subscripts. For example, in the presence of auxiliary parameters $\psi,\xi$, the notation $Q\lesssim_{\psi,\xi} Q'$ means that $Q\le c(\psi,\xi)Q' $, where $c(\psi,\xi) \in(0,\infty)$ may depend only on $\psi$ and $\xi$, and analogously for the notations $Q\gtrsim_{\psi,
\xi} Q'$ and $Q\asymp_{\psi,\xi} Q'$.

 We will use notions of Banach spaces~\cite{LT77,LT79}, metric embeddings~\cite{Mat02,Ost13}  and expanders~\cite{HLW06,AS16}. Any undefined term in the ensuing discussion is entirely standard and appears in the aforementioned references, but in this short subsection  we recall a modicum of simple concepts.

For a normed space $(Z,\|\cdot\|_Z)$ and $p\ge 1$, the normed space $\ell_p(Z)$ consists of all those  $Z$-valued sequences $x=(x_1,x_2,\ldots)\in Z^{\N}$ such that $\|x\|_{\ell_p(Z)}^p=\sum_{i=1}^\infty \|x_i\|_Z^p<\infty$. One writes $\ell_p(\R)=\ell_p$.

Theorem~\ref{thm:average john} tensorizes in a straightforward manner to give the same conclusion for $\ell_1(X)$. In order to facilitate later reference, it is beneficial to record this  fact as the following separate statement.
\begin{corollary}\label{cor:ell1X} For any normed space  $X$ of dimension $k\ge 2$, the  $\frac12$-snowflake of $\ell_1(X)$ embeds into a Hilbert space with quadratic average distortion $\mathsf{C}\sqrt{\log k}$, where $\mathsf{C}>0$ is a universal constant.
\end{corollary}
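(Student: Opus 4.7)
The plan is to tensorize Theorem~\ref{thm:average john} coordinatewise. Fix a Borel probability measure $\mu$ on $\ell_1(X)$, and for each $i\in \N$ let $\mu_i$ denote the pushforward of $\mu$ under the $i$-th coordinate projection $\pi_i:\ell_1(X)\to X$. Applying Theorem~\ref{thm:average john} to the $k$-dimensional normed space $X$ equipped with the probability measure $\mu_i$, we obtain, for each $i\in \N$, a mapping $f_i:X\to H_i$ into a Hilbert space $H_i$ satisfying
\[
\|f_i(u)-f_i(v)\|_{H_i}\le \mathsf{C}\sqrt{\log k}\cdot \sqrt{\|u-v\|_X}\qquad \text{for every } u,v\in X,
\]
together with the snowflaked-distance average lower bound $\iint\|f_i(u)-f_i(v)\|_{H_i}^2 \ud\mu_i(u)\ud\mu_i(v)\ge \iint\|u-v\|_X\ud\mu_i(u)\ud\mu_i(v)$.

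Next I would define the embedding $F:\ell_1(X)\to H\eqdef \bigoplus_{i=1}^\infty H_i$ into the $\ell_2$-direct sum Hilbert space by $F(x)\eqdef \bigl(f_i(x_i)-f_i(0)\bigr)_{i=1}^\infty$; the centering at the origin ensures $F(x)\in H$, because the pointwise Lipschitz bound gives $\sum_i \|f_i(x_i)-f_i(0)\|^2_{H_i}\le \mathsf{C}^2\log k\cdot \|x\|_{\ell_1(X)}<\infty$. Squaring the pointwise bound and summing over coordinates yields the required Lipschitz condition on the $\frac12$-snowflake:
\[
\|F(x)-F(y)\|_H^2=\sum_{i=1}^\infty \|f_i(x_i)-f_i(y_i)\|_{H_i}^2 \le \mathsf{C}^2\log k\cdot \|x-y\|_{\ell_1(X)}.
\]

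For the average lower bound, Fubini together with the definition of the pushforwards gives
\[
\iint \|F(x)-F(y)\|_H^2\ud\mu(x)\ud\mu(y)=\sum_{i=1}^\infty \iint\|f_i(u)-f_i(v)\|_{H_i}^2\ud\mu_i(u)\ud\mu_i(v),
\]
which by the choice of $f_i$ is bounded from below by $\sum_i\iint\|u-v\|_X\ud\mu_i(u)\ud\mu_i(v)=\iint\|x-y\|_{\ell_1(X)}\ud\mu(x)\ud\mu(y)$, as required.

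There is no substantial obstacle here: the corollary is a direct coordinatewise tensorization of Theorem~\ref{thm:average john}, whose only mildly subtle points are the recentering at $f_i(0)$ needed to keep the image inside the $\ell_2$-direct sum, and the observation that Theorem~\ref{thm:average john} must be invoked separately with each marginal $\mu_i$. It is precisely because the quadratic average distortion framework permits the embedding to depend on the reference measure that one is free to assemble the measure-dependent coordinate maps $f_i$ into a single global embedding of $\ell_1(X)$ tailored to $\mu$.
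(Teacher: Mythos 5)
Your proof is correct and follows essentially the same route as the paper: apply Theorem~\ref{thm:average john} to each coordinate pushforward $(\mathfrak{c}_i)_\sharp\mu$ and assemble the resulting maps coordinatewise into an $\ell_2$-direct sum of Hilbert spaces. Your recentering at $f_i(0)$ is a worthwhile extra detail (the paper's one-line definition $f(x)=(f_1(x_1),f_2(x_2),\ldots)$ silently needs it, or an equivalent normalization, for the image to lie in $\ell_2(H)$), but it does not change the argument.
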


\begin{proof} For every $i\in \N$ let $\mathfrak{c}_i:\ell_1(X)\to X$ denote the $i$'th coordinate projection, i.e., $\mathfrak{c}_i(x)=x_i$ for each $x=(x_1,x_2,\ldots,)\in \ell_1(X)$. Fix  any Borel probability measure $\mu$ on $\ell_1(X)$. For each $i\in \N$, an application of Theorem~\ref{thm:average john}  to the measure $(\mathfrak{c}_i)_\sharp \mu$ on $X$  (the image of $\mu$ under $\mathfrak{c}_i$) yields  $f_i:X\to H$ which is $\frac12$-H\"older with constant $\mathsf{C}\sqrt{\log k}$, where $(H,\|\cdot\|_H)$ is a Hilbert space, that satisfies
$$\iint_{\ell_1(X)\times \ell_1(X)} \big\|f_i\big(\mathfrak{c}_i(x)\big)-f_i\big(\mathfrak{c}_i(y)\big)\big\|_{\!H}^{\phantom{p}\!\!\!2}\ud\mu(x)\ud\mu(y)\ge\iint_{\ell_1(X)\times \ell_1(X)} \|\mathfrak{c}_i(x)-\mathfrak{c}_i(y)\|_{\!X}^{\phantom{p}}\ud\mu(x)\ud\mu(y).$$
The desired embedding $f:\ell_1(X)\to \ell_2(H)$ is now defined by $f(x)=(f_1(x_1),f_2(x_2),\ldots)$.
\end{proof}

The {bi-Lipschitz distortion} of a metric space $(\MM,d_\MM)$ in a normed space $(Z,\|\cdot\|_Z)$ is a numerical  invariant  denoted
 $\cc_Z(\MM)$ that is defined to be the infimum over those $D\in [1,\infty]$ for which there exists a $D$-Lipschitz mapping $f:\MM\to Z$ satisfying $\|f(x)-f(y)\|_Z\ge d_\MM(x,y)$ for all $x,y\in \MM$.

The most natural setting to discuss average distortion of embeddings is that of metric probability spaces, namely  triples $(\MM,d_\MM,\mu)$ where $(\MM,d_\MM)$ is a metric space and $\mu$ is a Borel probability measure on $\MM$. In this context, as an obvious variant of~\eqref{eq:def quadratic dist}, for $p>0$ and $D\ge 1$, say that $(\MM,d_\MM,\mu)$ embeds with $p$-average distortion $D$ into a Banach space $(Z,\|\cdot\|_Z)$ if there is a $D$-Lipschitz mapping $f:\MM\to Z$ such that $\iint_{\MM\times \MM} \|f(x)-f(y)\|_{\!Z}^p\ud\mu(x)\ud\mu(y)\ge \iint_{\MM\times \MM} d_\MM(x,y)^p\ud\mu(x)\ud\mu(y)$. If this holds with  $p=1$, one simply says that $(\MM,d_\MM,\mu)$ embeds with average distortion $D$ into $(Z,\|\cdot\|_Z)$.

When a  {\em finite} metric space $(\MM,d_\MM)$ is said to embed with $p$-average distortion $D$ into a Banach space $(Z,\|\cdot\|_Z)$ without explicitly specifying the underlying probability measure $\mu$, it will always be understood that $\mu$ is the uniform probability measure on $\MM$. Embeddings  of finite metric spaces with controlled average distortion have several interesting applications, and their systematic investigation was initiated by Rabinovich~\cite{Rab08}. If $(\MM,d_\MM)$ is an {\em infinite} metric space, then  when we say that it embeds with $p$-average distortion $D$ into $(Z,\|\cdot\|_Z)$ we mean that  for {\em every} probability measure $\mu$ on $\MM$ the metric probability space $(\MM,d_\MM,\mu)$ embeds with $p$-average distortion $D$ into $(Z,\|\cdot\|_Z)$. The difference between the terminology for finite and infinite spaces is natural because finite spaces carry a canonical probability (counting) measure while infinite spaces do not. We chose these conventions  so as to  be consistent with the terminology in the literature, which only treats finite spaces.

Using the above terminology, we record for ease of later reference the following immediate consequence of Corollary~\ref{cor:ell1X} (with the universal constant $\mathsf{C}\in [1,\infty)$ the same).

\begin{corollary}\label{cor:transfer to Hilbert}
Suppose that $(X,\|\cdot\|_X)$ is a normed space of dimension $k\ge 2$ and that $(\MM,d_\MM,\mu)$ is a metric probability space that embeds into $\ell_1(X)$ with average distortion $D$. Then, $(\MM,\sqrt{d_\MM},\mu)$ embeds into a Hilbert space with quadratic  average distortion $\mathsf{C}\sqrt{D\log k}$.
\end{corollary}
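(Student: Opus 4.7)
The plan is to obtain the target embedding by composing the hypothesized average-distortion embedding of $\MM$ into $\ell_1(X)$ with the $\frac12$-snowflake Hilbert embedding of $\ell_1(X)$ supplied by Corollary~\ref{cor:ell1X}, applied to the pushforward measure. Since Corollary~\ref{cor:ell1X} already bundles together both the Hölder bound and the average lower bound on $\ell_1(X)$, essentially no new estimate is needed; one only has to track how the Lipschitz constant and the snowflake exponent compose under this composition.

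Concretely, let $g:\MM\to \ell_1(X)$ be a $D$-Lipschitz map witnessing the hypothesis, i.e.\ satisfying
$$\iint_{\MM\times \MM} \|g(x)-g(y)\|_{\ell_1(X)}\ud\mu(x)\ud\mu(y)\ge \iint_{\MM\times \MM} d_\MM(x,y)\ud\mu(x)\ud\mu(y).$$
First I would push $\mu$ forward to $\nu\eqdef g_\sharp\mu$ on $\ell_1(X)$, and then apply Corollary~\ref{cor:ell1X} to the metric probability space $(\ell_1(X),\sqrt{\|\cdot\|_{\ell_1(X)}},\nu)$. This yields a Hilbert space $(H,\|\cdot\|_H)$ and a map $h:\ell_1(X)\to H$ which is $\frac12$-Hölder with constant $\mathsf{C}\sqrt{\log k}$ in the sense that $\|h(u)-h(v)\|_H\le \mathsf{C}\sqrt{\log k}\,\|u-v\|_{\ell_1(X)}^{1/2}$, and which satisfies
$$\iint_{\ell_1(X)\times\ell_1(X)}\!\!\|h(u)-h(v)\|_{\!H}^{\phantom{p}\!\!2}\ud\nu(u)\ud\nu(v)\;\ge\;\iint_{\ell_1(X)\times\ell_1(X)}\!\!\|u-v\|_{\ell_1(X)}\ud\nu(u)\ud\nu(v).$$

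The candidate embedding is then $f\eqdef h\circ g:\MM\to H$. For the upper bound, composing the $\frac12$-Hölder estimate on $h$ with the $D$-Lipschitz estimate on $g$ gives
$$\|f(x)-f(y)\|_H\;\le\;\mathsf{C}\sqrt{\log k}\,\bigl(D\cdot d_\MM(x,y)\bigr)^{1/2}\;=\;\mathsf{C}\sqrt{D\log k}\,\sqrt{d_\MM(x,y)},$$
so $f$ is $\mathsf{C}\sqrt{D\log k}$-Lipschitz as a map $(\MM,\sqrt{d_\MM})\to H$. For the lower bound, the pushforward identity $\iint F(u,v)\ud\nu(u)\ud\nu(v)=\iint F(g(x),g(y))\ud\mu(x)\ud\mu(y)$ chains the three inequalities (Corollary~\ref{cor:ell1X} on $\ell_1(X)$, then the average-distortion hypothesis on $g$) into
$$\iint_{\MM\times\MM}\|f(x)-f(y)\|_{\!H}^{\phantom{p}\!\!2}\ud\mu(x)\ud\mu(y)\;\ge\;\iint_{\MM\times\MM} d_\MM(x,y)\ud\mu(x)\ud\mu(y),$$
which is exactly the lower bound defining quadratic average distortion for $(\MM,\sqrt{d_\MM},\mu)$.

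There is no genuine obstacle here beyond bookkeeping: the only thing to verify is that pushing $\mu$ forward under $g$ is a legitimate step, which is immediate because Corollary~\ref{cor:ell1X} holds for \emph{every} Borel probability measure on $\ell_1(X)$, and that one correctly identifies $\sqrt{D\log k}$ as the product of the Lipschitz constant $D$ (entering under the square root because of the $\frac12$-snowflake) and the Hölder constant $\mathsf{C}\sqrt{\log k}$ from Corollary~\ref{cor:ell1X}. The whole argument therefore fits comfortably in a few lines and preserves the universal constant $\mathsf{C}$ from Corollary~\ref{cor:ell1X}.
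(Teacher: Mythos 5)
Your proposal is correct and is exactly the argument the paper intends: the paper states Corollary~\ref{cor:transfer to Hilbert} as an ``immediate consequence'' of Corollary~\ref{cor:ell1X}, and the intended deduction is precisely your composition of the $D$-Lipschitz average-distortion embedding into $\ell_1(X)$ with the snowflake Hilbertian embedding applied to the pushforward measure. The bookkeeping of the constants ($\sqrt{D}$ from the Lipschitz bound passing through the $\frac12$-snowflake, times $\mathsf{C}\sqrt{\log k}$) and the chaining of the two average lower bounds via the pushforward identity are carried out correctly.
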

The following proposition demonstrates that the notion of average distortion is robust to changes of the moments of distances that one wishes to approximately preserve, as well as to snowflaking.

\begin{proposition}\label{prop:other exponents} Fix $p,q,D\in [1,\infty)$ and $\omega\in (0,1]$. Suppose that an infinite separable metric space $(\MM,d_\MM)$ embeds with $p$-average distortion $D$ into a Banach space $(Y,\|\cdot\|_Y)$.  Then, the $\omega$-snowflake of $(\MM,d_\MM)$ embeds with $q$-average distortion $D'=D'(p,q,\omega)\ge 1$ into $(Y,\|\cdot\|_Y)$, where
\begin{equation}\label{eq:our Delta in changed exponent}
D'\lesssim_{p,q,\omega} D^{\max\left\{\frac{p}{q},\omega\right\}}.
\end{equation}
\end{proposition}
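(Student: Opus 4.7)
Given any Borel probability measure $\mu$ on $\MM$, apply the hypothesis to $\mu$ to obtain a $D$-Lipschitz map $f:\MM\to Y$ with $\iint \|f(x)-f(y)\|_Y^p \ud\mu(x)\ud\mu(y) \ge \iint d_\MM(x,y)^p \ud\mu(x)\ud\mu(y)$. The candidate embedding of $(\MM,d_\MM^\omega,\mu)$ into $Y$ is
$$
g(x)\;=\;\lambda\,\pi_R\!\left(f(x)-f(x_0)\right),
$$
where $\pi_R:Y\to Y$ is the radial retraction $\pi_R(v)=v\cdot\min\{1,\,R/\|v\|_Y\}$ onto the closed ball of radius $R$ in $Y$, the scalar $\lambda>0$ and threshold $R>0$ are parameters to be optimized, and $x_0\in\MM$ is a basepoint to be chosen below. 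A direct calculation shows that $\pi_R$ is $2$-Lipschitz on any Banach space, so $g$ is $(2\lambda D)$-Lipschitz with respect to $d_\MM$ and has image of diameter at most $2\lambda R$ in $Y$.

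The first key step is a Hölder interpolation of the Lipschitz norm: combining $\|g(x)-g(y)\|_Y\le 2\lambda D\,d_\MM(x,y)$ with $\|g(x)-g(y)\|_Y\le 2\lambda R$ and the elementary estimate $\min(a,b)\le a^\omega b^{1-\omega}$ gives $\|g(x)-g(y)\|_Y\le 2\lambda D^\omega R^{1-\omega}\,d_\MM(x,y)^\omega$, so $g$ is $(2\lambda D^\omega R^{1-\omega})$-Lipschitz with respect to $d_\MM^\omega$. The second key step is to lower-bound $\iint \|g\|_Y^q\,\ud\mu\ud\mu$: averaging over $x_0\sim\mu$ and a Chebyshev-type inequality applied to $\|f(\cdot)-f(x_0)\|_Y$ let one choose $x_0$ and $R$ of order $(\iint\|f\|_Y^p)^{1/p}$ so that the set $A=\{x\in\MM:\|f(x)-f(x_0)\|_Y\le R\}$ satisfies $\mu(A)\ge\tfrac12$. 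On $A\times A$ the retraction is inactive, so $\|g(x)-g(y)\|_Y=\lambda\|f(x)-f(y)\|_Y$ there, and Lyapunov's inequality relating $L^p$ and $L^q$ norms on the probability space $(\MM\times\MM,\mu\otimes\mu)$ converts the $p$-moment lower bound from the hypothesis into a $q$-moment lower bound on $\iint_{A\times A}\|g\|_Y^q\,\ud\mu\ud\mu$.

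Balancing $\lambda$ and $R$ to minimize $2\lambda D^\omega R^{1-\omega}$ subject to $\iint \|g\|_Y^q\,\ud\mu\ud\mu \ge \iint d_\MM^{\omega q}\,\ud\mu\ud\mu$ yields two regimes separated by the sign of $\omega q-p$: when $\omega q\ge p$ the $\omega$-Hölder interpolation of the first step is the binding constraint and produces the factor $D^\omega$, while when $\omega q<p$ the Lyapunov conversion from the $p$-moment to the $q$-moment introduces an additional factor of $D^{p/q-\omega}$, producing the factor $D^{p/q}$. Together these give the advertised bound $D'\lesssim_{p,q,\omega} D^{\max\{p/q,\omega\}}$. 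The principal obstacle is the coordination of $\lambda$ and $R$: $R$ must be chosen large enough that the set $A\times A$ of pairs on which the truncation is inactive carries a positive fraction of the $p$-moment mass inherited from the hypothesis, but small enough that the Hölder factor $R^{1-\omega}$ keeps the Lipschitz constant under control, and it is precisely this competition that selects the exponent $\max\{p/q,\omega\}$ rather than either term alone.
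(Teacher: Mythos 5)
There is a genuine gap at your second key step: the claim that the set $A\times A$ on which the retraction is inactive inherits a usable fraction of the $p$-moment lower bound. The hypothesis gives $\iint_{\MM\times\MM}\|f(x)-f(y)\|_Y^p\ud\mu(x)\ud\mu(y)\ge\iint_{\MM\times\MM}d_\MM(x,y)^p\ud\mu(x)\ud\mu(y)$, but with $R\asymp(\iint\|f(x)-f(y)\|_Y^p\ud\mu\ud\mu)^{1/p}$ and merely $\mu(A)\ge\frac12$, nothing prevents the entire moment from being carried by pairs straddling $A$ and $\MM\setminus A$, in which case $\iint_{A\times A}\|f(x)-f(y)\|_Y^p\ud\mu\ud\mu$ can vanish. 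Concretely, take $Y=\R$, $\MM=\{0,N\}$, $\mu(\{0\})=1-\e$, $\mu(\{N\})=\e$, $f=\mathrm{id}$, $D=1$, $p=1$, $q=2$, $\omega=\frac12$. Then $R\asymp\e N$, $x_0=0$, $A=\{0\}$, so $A\times A$ carries no moment; all of $\iint\|g(x)-g(y)\|_Y^2\ud\mu\ud\mu$ comes from the clamped pairs and equals $2\e(1-\e)(\lambda R)^2$, forcing $\lambda\gtrsim\sqrt{N}/R\asymp 1/(\e\sqrt{N})$ and hence a H\"older constant $2\lambda D^{\omega}R^{1-\omega}\asymp 1/\sqrt{\e}$, unbounded as $\e\to 0^+$, even though the statement requires $D'\lesssim 1$ here. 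The root cause is that truncation destroys all metric information beyond scale $R$, so it cannot serve as the snowflaking device when the relevant moments are carried by a low-probability, large-distance tail; no single choice of $R$ resolves the tension for general $\mu$.

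The paper's proof is organized around exactly this obstruction, via three ingredients your outline lacks. First, a dichotomy (Lemma~\ref{lem:q into R}): the sets $A_\tau$ are defined through an isometric embedding $\jj$ into $C[0,1]$, \emph{independently of $f$}, and if the tail $\MM\setminus A_\tau$ carries a constant fraction of the relevant moment then the $1$-Lipschitz scalar map $x\mapsto\|\jj(x)-\int_\MM\jj\ud\mu\|_{C[0,1]}$ is already a good $q$-average embedding, so one may assume concentration. Second, in the regime $q<p$ the hypothesis is applied to the \emph{restricted} measure $\mu|_{A_8}$ (Lemma~\ref{lem:pass to smaller q}); your single application of the hypothesis to $\mu$ gives no control over where the resulting $f$ realizes its average expansion, and $A$ depends on $f$ so the hypothesis cannot be re-invoked on $\mu|_A$ without changing $f$. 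Third, the snowflaking is performed not by truncation but by the radial normalization $v\mapsto v/\|v\|_Y^{1-\omega}$ (Lemma~\ref{lem:sharp holder bounds on normalization} and Proposition~\ref{prop:average of snowflake back into X}), which is $\omega$-H\"older with constant $2^{1-\omega}$ yet admits a matching averaged lower bound, so no large-distance information is discarded; the full proposition is then a composition of Lemmas~\ref{lem:pass to larger q} and~\ref{lem:pass to smaller q} with Proposition~\ref{prop:average of snowflake back into X}. Your interpolation $\min\{a,b\}\le a^{\omega}b^{1-\omega}$ and the Lyapunov step in the regime $q\ge p$ are fine as far as they go, but they cannot be assembled into a proof without these ingredients or substitutes for them.
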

We postpone discussion of Proposition~\ref{prop:other exponents} to Section~\ref{sec:normalization} below, where it is proved and the implicit dependence on $p,q,\omega$   in~\eqref{eq:our Delta in changed exponent} is specified; see~\eqref{eq:changed distortion explicit}. It suffices to say here that Proposition~\ref{prop:other exponents} shows that phenomena such as Theorem~\ref{thm:average john} (as well as more refined  results that we will soon state), in which upon performing a certain  amount of snowflaking the average distortion decreases from power-type behavior to logarithmic behavior, are independent of the choice of ``$p$'' in the notion of $p$-average distortion that one considers, and they persist if one performs an even greater amount of snowflaking.

Given $n\in \N$, let  $\bigtriangleup^{\!n-1}=\{\pi=(\pi_1,\ldots,\pi_n)\in [0,1]^n:\ \sum_{i=1}^n\pi_i=1\}$ denote the simplex of probability measures on $\n$. When we say that a matrix $\A=(a_{ij})\in \M_n(\R)$ is stochastic we always mean row-stochastic, i.e., $(a_{i1},\ldots,a_{in})\in  \bigtriangleup^{\!n-1}$ for every $i\in \n$. Given  $\pi\in \bigtriangleup^{\!n-1}$, a stochastic matrix $\A=(a_{ij})\in \M_n(\R)$ is $\pi$-reversible if $\pi_ia_{ij}=\pi_ja_{ji}$ for every $i,j\in \n$. In this case, $\A$ is a self-adjoint contraction on $L_2(\pi)$ and the decreasing rearrangement of the eigenvalues of $\A$ is denoted $1=\lambda_1(\A)\ge\ldots\ge \lambda_n(\A)\ge -1$. The spectral gap $1-\lambda_2(\A)$  can be interpreted by straightforward linear algebra (expanding the squares and expressing in an eigenbasis of $\A$) as the largest factor (in the left hand side) for which the following   quadratic inequality holds true.
\begin{equation}\label{eq:energy}
\forall\, x_1,\ldots,x_n\in \ell_2,\qquad \big(1-\lambda_2(\A)\big)\sum_{i=1}^n\sum_{j=1}^n\pi_i\pi_j\|x_i-x_j\|_{\ell_2}^2\le \sum_{i=1}^n\sum_{j=1}^n \pi_i a_{ij}\|x_i-x_j\|_{\ell_2}^2.
\end{equation}

If $\G=(\n,E_\G)$ is a connected graph, then the shortest-path metric that it induces is denoted $d_\G:\n\times \n\to \N\cup\{0\}$. If $\G$ is $\Delta$-regular for some $\Delta\in \{2,\ldots,n\}$, then
 the normalized adjacency matrix of $\G$, denoted $\A_\G\in \M_n(\R)$, is the symmetric stochastic matrix whose entry at $(i,j)\in \n\times \n$ is equal to $\frac{1}{\Delta}\1_{\{i,j\}\in E_\G}$. Write $\lambda_2(\G)=\lambda_2(\A_\G)$.


\subsection{A spectral gap is an obstruction to metric dimension reduction}\label{sec:dim reduction intro}  For every $n\in \N$ there is a $O(1)$-regular graph $\G_n=(\n,E_{\G_n})$ with $1/(1-\lambda_2(\G_n))=O(1)$. See the survey~\cite{HLW06} for this statement and much more on such {\em  expanders}. In particular, it is well-known that an argument of Linial, London and Rabinovich~\cite{LLR95} gives that if the $\frac12$-snowflake of $(\n,d_{\G_n})$ embeds with quadratic average distortion $D\ge 1$ into a Hilbert space, then necessarily $D\gtrsim \sqrt{\log n}$.

We will next recall why this nonembeddability statement holds, following an influential formulation of the approach of~\cite{LLR95} due to Matou\v{s}ek~\cite{Mat97} and Gromov~\cite{Gro00}. Before doing so, note that this establishes the aforementioned optimality of the distortion bound of Theorem~\ref{thm:average john}, since the  Fr\'echet embedding~\cite{Fre06} yields an $n$-point subset $S$ of $X=\ell_\infty^{n}$ that is isometric to $(\n,d_{\G_n})$, and therefore if $\mu$ is the uniform measure on $S$, then the quadratic average distortion of any embedding of the $\frac12$-snowflake of  $(X,\|\cdot\|_{\ell_\infty^n},\mu)$ is at least a universal constant multiple of $\sqrt{\log n}=\sqrt{\log \dim(X)}$.

So, fix an integer $n\ge 4$ and $\Delta\in \{3,\ldots,n-1\}$. Suppose that $\G=(\n,E_\G)$ is a connected $\Delta$-regular graph.  Let $(H,\|\cdot\|_H)$ be a Hilbert space and assume that $f:\n\to H$ satisfies
\begin{equation}\label{eq:quad for expander}
\sum_{i=1}^n\sum_{j=1}^n\|f(i)-f(j)\|_{\!H}^{\phantom{p}\!\!\!2}\ge\sum_{i=1}^n\sum_{j=1}^nd_\G(i,j)\qquad\mathrm{and}\qquad \forall\{i,j\}\in E_\G,\quad \|f(i)-f(j)\|_{\!H}^{\phantom{p}}\le D.
\end{equation}
By a simple and standard counting argument  (e.g.~\cite[page~193]{Mat97}), a positive universal constant fraction of the pairs of vertices $(i,j)\in \n\times \n$ satisfy $d_\G(i,j)\gtrsim \frac{\log n}{\log\Delta}$. Hence,
\begin{equation}\label{eq:average dist power 1}
\frac{1}{n^2}\sum_{i=1}^n\sum_{j=1}^n\|f(i)-f(j)\|_{\!H}^{\phantom{p}\!\!\!2}\stackrel{\eqref{eq:quad for expander}}{\ge}\frac{1}{n^2}\sum_{i=1}^n\sum_{j=1}^nd_\G(i,j)\gtrsim \frac{\log n}{\log \Delta}.
\end{equation}
With this observation, the average distortion $D$ can be bounded from below through an application of the interpretation~\eqref{eq:energy} of a spectral gap to the normalized adjacency matrix of $\G$, as follows.

\begin{equation}\label{eq:penultimate}
D^2\stackrel{\eqref{eq:quad for expander}}{\ge} \frac{1}{|E_\G|}\sum_{\{i,j\}\in E_\G}\|f(i)-f(j)\|_{\!H}^{\phantom{p}\!\!\!2}\stackrel{\eqref{eq:energy}}{\ge} \frac{1-\lambda_2(\G)}{n^2}\sum_{i=1}^n\sum_{j=1}^n\|f(i)-f(j)\|_{\!H}^{\phantom{p}\!\!\!2}\stackrel{ \eqref{eq:average dist power 1}}{\gtrsim} \frac{1-\lambda_2(\G)}{\log \Delta}\log n,
\end{equation}
We have thus shown (following~\cite{LLR95,Mat97,Gro00}) that
\begin{equation}\label{eq:quad dist for 1/2 snow}
D\gtrsim \frac{\sqrt{1-\lambda_2(\G)}}{\sqrt{\log \Delta}}\sqrt{\log n}.
\end{equation}
So, $D\gtrsim \sqrt{\log n}$ when $\frac{1}{1-\lambda_2(\G)}\lesssim 1$ and $\Delta\lesssim 1$, i.e., for  expanders. In general, we have

\begin{theorem}\label{thm:l1X} Fix $D\ge 1$ and integers $n,\Delta\ge 3$ with $\Delta\le n$. Let $\G=(\n,E_\G)$ be a $\Delta$-regular connected graph. Suppose that $(X,\|\cdot\|_X)$ is a finite-dimensional normed space such that the metric space $(\n,d_\G)$ embeds with average distortion $D$ into $\ell_1(X)$. Then necessarily
\begin{equation}\label{eq:dim lower first}
\dim(X)\ge n^{\frac{\eta(\G)}{D}},\qquad \mathrm{where}\qquad  \eta(\G)\gtrsim \frac{1-\lambda_2(\G)}{\log \Delta}.
\end{equation}
\end{theorem}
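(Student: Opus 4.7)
The plan is to combine Corollary~\ref{cor:transfer to Hilbert} with the Linial--London--Rabinovich/Matou\v{s}ek/Gromov non-embeddability estimate \eqref{eq:quad dist for 1/2 snow} that was already derived in the excerpt. Write $k=\dim(X)$ and let $\mu$ be the uniform probability measure on $\n$. By hypothesis $(\n,d_\G,\mu)$ embeds with average distortion $D$ into $\ell_1(X)$, so Corollary~\ref{cor:transfer to Hilbert} produces a Hilbert space $(H,\|\cdot\|_H)$ and a mapping $g:\n\to H$ witnessing that the $\frac12$-snowflake $(\n,\sqrt{d_\G},\mu)$ embeds into $H$ with quadratic average distortion at most $\mathsf{C}\sqrt{D\log k}$.

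Now invoke the chain of inequalities \eqref{eq:quad for expander}--\eqref{eq:quad dist for 1/2 snow}, applied to $g$: the counting argument giving that a constant fraction of pairs $(i,j)$ satisfy $d_\G(i,j)\gtrsim (\log n)/\log\Delta$ together with the spectral-gap inequality \eqref{eq:energy} applied to the normalized adjacency matrix $\A_\G$ forces any such mapping to satisfy
\[
\mathsf{C}\sqrt{D\log k}\;\gtrsim\; \sqrt{\frac{1-\lambda_2(\G)}{\log \Delta}}\,\sqrt{\log n}.
\]
Squaring and rearranging yields $\log k\gtrsim \frac{1-\lambda_2(\G)}{D\log \Delta}\log n$, which is exactly the assertion $\dim(X)\ge n^{\eta(\G)/D}$ with $\eta(\G)\gtrsim (1-\lambda_2(\G))/\log \Delta$.

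In short, Theorem~\ref{thm:l1X} is essentially a packaging of two ingredients that are already in place: the ``transfer'' Corollary~\ref{cor:transfer to Hilbert} (itself a straightforward consequence of Theorem~\ref{thm:average john} via tensorization to $\ell_1(X)$ in Corollary~\ref{cor:ell1X}, combined with Proposition~\ref{prop:other exponents} to pass between the $p=1$ average distortion into $\ell_1(X)$ and the $p=2$ quadratic average distortion of the $\frac12$-snowflake), and the standard Poincar\'e-type lower bound \eqref{eq:quad dist for 1/2 snow} for snowflaked expanders. There is no real obstacle here: all the heavy lifting sits in the proof of Theorem~\ref{thm:average john}, and the only thing to verify is that the snowflake lower bound is stated in precisely the form that matches the quadratic-average-distortion conclusion of the corollary (Lipschitz on edges combined with the second-moment sum of distances), which it is, since $d_\G(i,j)=1$ on edges makes the upper bound on $\|g(i)-g(j)\|_H$ coincide with the distortion parameter.
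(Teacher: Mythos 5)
Your proposal is correct and is essentially identical to the paper's proof, which is the one-line combination of Corollary~\ref{cor:transfer to Hilbert} with the expander lower bound~\eqref{eq:quad dist for 1/2 snow} to get $\sqrt{D\log\dim(X)}\gtrsim \sqrt{(1-\lambda_2(\G))/\log\Delta}\cdot\sqrt{\log n}$. (Minor aside: the transfer corollary follows from Corollary~\ref{cor:ell1X} by direct composition with the $\frac12$-Hölder embedding, without needing Proposition~\ref{prop:other exponents}, but this does not affect your argument.)
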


\begin{proof} By combining~\eqref{eq:quad dist for 1/2 snow} with Corollary~\ref{cor:transfer to Hilbert}, it follows that
$
\sqrt{D\log\dim(X)}\gtrsim \frac{\sqrt{1-\lambda_2(\G)}}{\sqrt{\log \Delta}}\sqrt{\log n}.
$
\end{proof}

\begin{remark} The reasoning by which we deduced~\eqref{eq:quad dist for 1/2 snow} from~\eqref{eq:quad for expander} did not use the entirety of the second condition of~\eqref{eq:quad for expander}.  Namely, in addition to the first inequality in~\eqref{eq:quad for expander},  it suffices to assume that
\begin{equation}\label{eq:W12}
\bigg(\frac{1}{|E_\G|}\sum_{\{i,j\}\in E_\G} \|f(i)-f(j)\|_{\!H}^{\phantom{p}\!\!\!2}\bigg)^{\!\frac12}\le D.
\end{equation}
 Thus, we only  need to have an upper bound on the  discrete Sobolev $W^{1,2}$ norm  of the embedding $f$, namely the left hand side of~\eqref{eq:W12}, rather than an upper bound on the Lipschitz constant of $f$. By using this variant in place of the average distortion assumption in Theorem~\ref{thm:l1X}, one deduces mutatis mutandis that the same conclusion~\eqref{eq:dim lower first} holds true if there exists $f:\n\to \ell_1(X)$ that satisfies the following two requirements (which can be described, respectively, as ``quantitative invertibility on average,'' combined with a bound on the discrete  Sobolev $W^{1,1}$ norm).
\begin{equation*}
\frac{1}{n^2}\sum_{i=1}^n\sum_{j=1}^n \|f(i)-f(j)\|_{\ell_1(X)}^{\phantom{p}}\ge \frac{1}{n^2}\sum_{i=1}^n\sum_{j=1}^nd_\G(i,j),
\end{equation*}
and
\begin{equation*}
 \frac{1}{|E_\G|}\sum_{\{i,j\}\in E_\G} \|f(i)-f(j)\|_{\ell_1(X)}^{\phantom{p}}\le D.
\end{equation*}
In words, if $\G$ has a  spectral gap and  large average distance, and one is given  $x_1,\ldots,x_n\in X$ for which the  averages of the two sets of distances $\{\|x_i-x_j\|_X\}_{(i,j)\in \n\times \n}$ and $\{\|x_i-x_j\|_X\}_{\{i,j\}\in E_\G}$ are within a fixed but potentially large constant factor from the corresponding averages of distances in $\G$, then this crude geometric information about two specific ``distance statistics'' of the  finite point configuration  $\{x_1,\ldots,x_n\}\subset X$ forces the continuous ambient space $X$ to be high-dimensional.
\end{remark}

The natural variant of Theorem~\ref{thm:l1X} for $\ell_1$ products of $k$-dimensional normed spaces $\{(X_i,\|\cdot\|_{X_i})\}_{i=1}^\infty$ that need not all be the same space $(X,\|\cdot\|_X)$ holds mutatis mutandis by the same reasoning. Also, a version with $\ell_1(X)$ replaced by $\ell_p(X)$ for any $p\ge 1$ appears in Section~\ref{sec:matirx} below, where it is explained that this seemingly more general setting is, in fact, a formal consequence of its counterpart for $\ell_1(X)$ that we  deduced above from Theorem~\ref{thm:average john}. Section~\ref{sec:history} is a description of the history of the questions that Theorem~\ref{thm:l1X} resolves, as well as an indication of subsequent algorithmic developments that rely on it and were found since the initial posting of a preliminary version of the present work.

\subsubsection{A matrix-dimension inequality}\label{sec:matirx} Let $(X,\|\cdot\|_X)$ be a normed space of dimension $k\ge 2$. Fix $n\in \N$ and $\pi \in\nsimplex$. For every $n$-tuple of vectors $x_1,\ldots, x_n\in X$ we can apply Theorem~\ref{thm:average john} to obtain a function $f=f_{\pi,x_1,\ldots,x_n}:X\to H$, where $(H,\|\cdot\|_H)$ is a Hilbert space, satisfying
\begin{equation}\label{eq:apply john xi}
\forall\, x,y\in X,\qquad \|f(x)-f(y)\|_{\!H}^{\phantom{p}}\le \mathsf{C}\sqrt{\log k}\cdot\sqrt{\|x-y\|_{\!X}^{\phantom{'}}},
\end{equation}
as well as
\begin{equation}\label{eq:apply john pi}
\sum_{i=1}^n\sum_{j=1}^n \pi_j\pi_j\|f(x_i)-f(x_j)\|_{\!H}^{\phantom{p}\!\!\!2}\ge \sum_{i=1}^n\sum_{j=1}^n \pi_j\pi_j\|x_i-x_j\|_{\!X}^{\phantom{p}}.
\end{equation}
 If $\A=(a_{ij})\in \M_n(\R)$ is a stochastic and $\pi$-reversible matrix, then
\begin{align}\label{eq:use John}
\begin{split}
\mathsf{C^2}\log k &\stackrel{\eqref{eq:apply john xi}}{\ge}  \frac{\sum_{i=1}^n\sum_{j=1}^n \pi_ia_{ij}\|f(x_i)-f(x_j)\|_{\!H}^2}{\sum_{i=1}^n\sum_{j=1}^n \pi_ia_{ij}\|x_i-x_j\|_{\!X}^{\phantom{p}}}\\
&\stackrel{\eqref{eq:energy}}{\ge}  \big(1-\lambda_2(\A)\big) \frac{\sum_{i=1}^n\sum_{j=1}^n \pi_i\pi_j\|f(x_i)-f(x_j)\|_{\!H}^2}{\sum_{i=1}^n\sum_{j=1}^n \pi_ia_{ij}\|x_i-x_j\|_{\!X}^{\phantom{p}}}\\&\!\stackrel{\eqref{eq:apply john pi}}{\ge}
\big(1-\lambda_2(\A)\big) \frac{\sum_{i=1}^n\sum_{j=1}^n \pi_i\pi_j\|x_i-x_j\|_{\!X}^{\phantom{p}}}{\sum_{i=1}^n\sum_{j=1}^n \pi_ia_{ij}\|x_i-x_j\|_{\!X}^{\phantom{p}}},
\end{split}
\end{align}
 Above, and  henceforth, the ratios that appear in~\eqref{eq:use John} are interpreted to be  equal to $0$ when  their denominator   $\sum_{i=1}^n\sum_{j=1}^n \pi_ia_{ij}\|x_i-x_j\|_{\!X}^{\phantom{p}}$ vanishes (this ``disconnected'' case will never be of interest).

We have thus established that, as a consequence of Theorem~\ref{thm:average john}, every finite configuration of points $x_1,\ldots,x_n\in X$ imposes the following geometric lower bound on the dimension of the ambient space.
\begin{equation}\label{eq:C2}
\dim(X)\ge \exp\!\bigg(\frac{1-\lambda_2(\A)}{\mathsf{C}^2}\cdot \frac{\sum_{i=1}^n\sum_{j=1}^n \pi_i\pi_j \|x_i-x_j\|_{\!X}^{\phantom{p}}}{\sum_{i=1}^n\sum_{j=1}^n \pi_ia_{ij}\|x_i-x_j\|_{\!X}^{\phantom{p}}}\bigg).
\end{equation}
The $\ell_1$ bound~\eqref{eq:C2}  formally implies the following $\ell_p$ counterpart for every $p\ge 1$.
\begin{equation}\label{eq:beta p version}
\dim(X)\ge \exp\!\Bigg(\frac{1-\lambda_2(\A)}{\beta(p)}\bigg(\frac{\sum_{i=1}^n\sum_{j=1}^n \pi_i\pi_j \|x_i-x_j\|_{\!X}^p}{\sum_{i=1}^n\sum_{j=1}^n \pi_ia_{ij}\|x_i-x_j\|_{\!X}^p}\bigg)^{\!\!\frac{1}{p}}\Bigg),
\end{equation}
where the constant  $\beta(p)>0$  depends only on $p$. This is so because, by Proposition~\ref{prop:other exponents}, it is a formal consequence of Theorem~\ref{thm:average john} that there is also an Hilbertian embedding of the $\frac12$-snowflake of $X$  with $(2p)$-average distortion $\mathsf{C}(p)\sqrt{\log k}$ for every $p\ge 1$ (the special case of Proposition~\ref{prop:other exponents} that was used here is due to~\cite[Section~7.4]{Nao14}). By substituting this into the above reasoning, one arrives at~\eqref{eq:beta p version}.

While the asymptotic dependence in (the above use of) Proposition~\ref{prop:other exponents} that we obtain in Section~\ref{sec:normalization} improves over what is available in the  literature, it does not yield the sharp dependence of $\beta(p)$ on $p$ as $p\to \infty$, and more care is needed in order to derive Theorem~\ref{thm:lp version Kp} below, which we will prove in Section~\ref{sec:pass to larger p}. It obtains what we expect to be the sharp asymptotic dependence on $p$, though at present we do not see a proof of this; the desired optimality would follow from Conjecture~\ref{conj:p depend} below.

\begin{theorem}\label{thm:lp version Kp} There is a universal constant $\mathsf{K}>0$ such that for every $p\ge 1$ and $n\in \N$, if $(X,\|\cdot\|_X)$ is a normed space, $\pi \in \nsimplex$ and $\A=(a_{ij})\in \M_n(\R)$ is a stochastic $\pi$-reversible matrix, then
\begin{equation}\label{eq:matrix dim p sharp}
\forall\, x_1,\ldots,x_n\in X,\qquad \dim(X)\ge \exp\!\Bigg(\frac{1-\lambda_2(\A)}{\mathsf{K}p}\bigg(\frac{\sum_{i=1}^n\sum_{j=1}^n \pi_i\pi_j \|x_i-x_j\|_{\!X}^p}{\sum_{i=1}^n\sum_{j=1}^n \pi_ia_{ij}\|x_i-x_j\|_{\!X}^p}\bigg)^{\!\!\frac{1}{p}}\Bigg).
\end{equation}
\end{theorem}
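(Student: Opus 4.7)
The $p=1$ case is exactly the calculation \eqref{eq:C2} obtained by substituting Theorem~\ref{thm:average john} into the spectral-gap identity \eqref{eq:energy}. My plan is to carry out the same strategy for general $p\geq 1$, by replacing the $L_2$-spectral-gap inequality with its $L_{2p}$ counterpart and, in parallel, by upgrading Theorem~\ref{thm:average john} to a $(2p)$-average-distortion statement whose H\"older constant does not depend on $p$. The bookkeeping must be arranged so that all $p$-dependence is absorbed into a single linear factor.

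The first and main technical step is to establish a $(2p)$-average-distortion variant of Theorem~\ref{thm:average john}: for every $p\geq 1$ and every Borel probability measure $\mu$ on a $k$-dimensional normed space $X$, there exists $f:X\to H$ into a Hilbert space which is $\mathsf{C}\sqrt{\log k}$-H\"older of exponent $\tfrac12$ and satisfies
\begin{equation*}
\iint_{X\times X}\|f(x)-f(y)\|_H^{2p}\ud\mu(x)\ud\mu(y)\geq \iint_{X\times X}\|x-y\|_X^{p}\ud\mu(x)\ud\mu(y).
\end{equation*}
A direct application of Proposition~\ref{prop:other exponents} to transfer from quadratic to $(2p)$-average distortion introduces superlinear $p$-dependence in the constant, so this step requires either sharpening of that proposition in the present regime, or a direct revisiting of the duality argument underlying Theorem~\ref{thm:average john}, now carried out in $L_{2p}$ rather than $L_2$.

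Second, invoke the $L_{2p}$-Poincar\'e inequality for the $\pi$-reversible matrix $\A$: for every Hilbert-valued function $g:\n\to H$,
\begin{equation*}
\sum_{i,j}\pi_i\pi_j\|g(i)-g(j)\|_H^{2p}\leq \left(\frac{\mathsf{K}'p}{1-\lambda_2(\A)}\right)^{\!p}\sum_{i,j}\pi_ia_{ij}\|g(i)-g(j)\|_H^{2p}.
\end{equation*}
For scalar $f$, this follows from the Rothaus--Ledoux-type estimate $\|f-\E_\pi f\|_{L_{2p}(\pi)}^{2}\leq \frac{\mathsf{K}'p}{1-\lambda_2(\A)}\sum_{i,j}\pi_ia_{ij}|f(i)-f(j)|^2$ combined with Jensen's inequality to pass from the $L_2$ Dirichlet form to the $L_{2p}$ one; the linear factor of $p$ enters through the moment comparison. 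The Hilbert-valued version follows by averaging the scalar inequality over unit vectors in $H$, exploiting the rotational invariance of $\|\cdot\|_H$.

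Granting these two ingredients, the proof of \eqref{eq:matrix dim p sharp} mirrors \eqref{eq:use John}. Substitute $\mu=\sum_i\pi_i\delta_{x_i}$ into the embedding from the first step. The H\"older bound yields $\|f(x_i)-f(x_j)\|_H^{2p}\leq (\mathsf{C}^2\log k)^p\|x_i-x_j\|_X^p$, and chaining this with the $L_{2p}$-Poincar\'e inequality and the $(2p)$-average-distortion lower bound gives
\begin{equation*}
\sum_{i,j}\pi_i\pi_j\|x_i-x_j\|_X^p\leq \left(\frac{\mathsf{K}'\mathsf{C}^2 p\log k}{1-\lambda_2(\A)}\right)^{\!p}\sum_{i,j}\pi_ia_{ij}\|x_i-x_j\|_X^p.
\end{equation*}
Taking $p$-th roots yields \eqref{eq:matrix dim p sharp} with $\mathsf{K}\asymp \mathsf{K}'\mathsf{C}^2$. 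The main obstacle is the first step: producing the $(2p)$-average-distortion embedding without any $p$-blowup in the H\"older constant, so that the full linear $p$-dependence in \eqref{eq:matrix dim p sharp} comes exclusively from the Poincar\'e constant rather than being inflated by the embedding itself.
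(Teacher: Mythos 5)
Your plan reduces to two ingredients, and the second one is false as stated. The scalar estimate you quote, $\|f-\E_\pi f\|_{L_{2p}(\pi)}^{2}\le \frac{\mathsf{K}'p}{1-\lambda_2(\A)}\sum_{i,j}\pi_ia_{ij}|f(i)-f(j)|^2$, cannot hold with the $L_2$ Dirichlet form on the right: for $\A=\mathsf{J}_n$ (so $\lambda_2=0$) and $f=\1_{\{1\}}$ the left side is $\asymp n^{-1/p}$ while the right side is $\asymp p/n$, which fails for large $n$ once $p>1$. Moment bounds of Rothaus--Ledoux type under a spectral gap require a \emph{pointwise} bound on the carr\'e du champ, and even then give $\|f-\E_\pi f\|_{2p}\lesssim p\sqrt{\gamma}$ (exponential, not Gaussian, concentration), i.e.\ a factor $p^2$ rather than $p$ at the level of $\|\cdot\|_{2p}^2$. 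More to the point, the vector-valued inequality you actually need, $\gamma(\A,\|\cdot\|_H^{2p})\le (\mathsf{K}'p/(1-\lambda_2(\A)))^p$, is also false: the correct extrapolated constant is $\big(2p/\sqrt{1-\lambda_2(\A)}\big)^{2p}=\big(4p^2/(1-\lambda_2(\A))\big)^{p}$ (this is~\eqref{eq:L 2} with $\beta=2p$), and the extra $p^p$ is genuinely necessary --- take a reversible birth--death chain with geometric stationary measure (truncated to be finite): it has spectral gap $\Omega(1)$, the coordinate function is $1$-Lipschitz so its $2p$-Dirichlet form is $O(1)$, yet its exponential tails force $\sum_{i,j}\pi_i\pi_j|f(i)-f(j)|^{2p}\gtrsim (cp)^{2p}$. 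Substituting the true constant into your chain and taking $p$-th roots yields $\dim(X)\ge\exp\big(\frac{1-\lambda_2(\A)}{\mathsf{K}p^2}(\cdots)^{1/p}\big)$, i.e.\ only the weaker bound~\eqref{eq:beta p version} with $\beta(p)\asymp p^2$, which the paper explicitly states is what the ``naive'' route gives and is precisely what Theorem~\ref{thm:lp version Kp} improves upon. Your first ingredient (a $(2p)$-average-distortion version of Theorem~\ref{thm:average john} with a $p$-independent H\"older constant) is also left unproved and is itself a nontrivial open-ended claim, but even granting it the argument cannot reach linear dependence on $p$.

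The paper's actual proof avoids touching the embedding or the Poincar\'e exponent at all. It keeps the $p=1$ statement~\eqref{eq:C2} intact and instead transforms the \emph{point configuration}: Lemma~\ref{lem:ray} produces from $x_1,\ldots,x_n$ new vectors $y_1,\ldots,y_n\in X$ whose $\ell_1$ Rayleigh quotient dominates $\frac{1}{2p}$ times the $p$-th root of the $\ell_p$ Rayleigh quotient of the $x_i$'s. These $y_i$ are obtained as $f_{p/q}^{-1}(f_{p/q}(x)-x_i)$ for the fractional normalization map of Lemma~\ref{lem:sharp holder bounds on normalization}, with the center $x$ chosen so that $\sum_i\pi_iy_i=0$ via a surjectivity/degree argument (Lemma~\ref{lem:onto}); the centering is what makes the H\"older losses of $f_\omega$ cancel up to a single factor of $2p$. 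Applying~\eqref{eq:C2} to the $y_i$'s then gives~\eqref{eq:matrix dim p sharp} with $\mathsf{K}=2\mathsf{C}^2$. If you want to salvage your approach you would have to prove an $L_{2p}$ Poincar\'e inequality for the \emph{specific} Hilbert-valued functions produced by the embedding with constant $(Cp/(1-\lambda_2))^p$, which is exactly the kind of refined information that the configuration-change argument encodes geometrically.
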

We thus obtain the following variant of Theorem~\ref{thm:l1X} in which $\ell_1(X)$ is replaced by $\ell_p(X)$ for $p\ge 1$.

\begin{corollary}\label{thm:lpX} Fix $D,p\ge 1$ and integers $n,\Delta\ge 3$ with $\Delta\le n$. Let $\G=(\n,E_\G)$ be a $\Delta$-regular connected graph. Suppose that $(X,\|\cdot\|_X)$ is a normed space such that $(\n,d_\G)$ embeds with average distortion $D$ into $\ell_p(X)$. Then, $\dim(X)\ge n^{\eta(\G)/(pD)}$ for $\eta(\G)>0$ as in~\eqref{eq:dim lower first}.
\end{corollary}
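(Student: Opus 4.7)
The plan is to reduce Corollary~\ref{thm:lpX} to Theorem~\ref{thm:lp version Kp} by first extending that theorem from $X$ to $\ell_p(X)$ and then applying the extension to an expander embedding, in direct parallel with how Theorem~\ref{thm:l1X} was deduced from Corollary~\ref{cor:transfer to Hilbert}.

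First I would establish that inequality~\eqref{eq:matrix dim p sharp} remains valid when $x_1,\ldots,x_n\in X$ is replaced by $y_1,\ldots,y_n\in \ell_p(X)$ and $\|\cdot\|_X$ by $\|\cdot\|_{\ell_p(X)}$. For each coordinate index $m\in \N$, applying Theorem~\ref{thm:lp version Kp} to the points $\mathfrak{c}_m(y_1),\ldots,\mathfrak{c}_m(y_n)\in X$ and rearranging~\eqref{eq:matrix dim p sharp} into the equivalent form
\begin{equation*}
\sum_{i=1}^n\sum_{j=1}^n\pi_i\pi_j\|\mathfrak{c}_m(y_i)-\mathfrak{c}_m(y_j)\|_X^p \le \left(\frac{\mathsf{K}p\log\dim(X)}{1-\lambda_2(\A)}\right)^{\!p}\sum_{i=1}^n\sum_{j=1}^n\pi_ia_{ij}\|\mathfrak{c}_m(y_i)-\mathfrak{c}_m(y_j)\|_X^p,
\end{equation*}
then summing over $m\in \N$ and invoking the identity $\|u-v\|_{\ell_p(X)}^p=\sum_{m=1}^\infty\|\mathfrak{c}_m(u)-\mathfrak{c}_m(v)\|_X^p$, produces the same bound with $\|\cdot\|_{\ell_p(X)}^p$ in place of $\|\cdot\|_X^p$. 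Rearranging back into the form~\eqref{eq:matrix dim p sharp} yields the $\ell_p(X)$ extension.

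With this extension available, let $f:\n\to \ell_p(X)$ be the $D$-Lipschitz mapping produced by the hypothesis and apply the extension with $y_i=f(i)$, $\A=\A_\G$, and uniform $\pi_i=1/n$. The Lipschitz condition handles the denominator: every summand indexed by an edge $\{i,j\}\in E_\G$ satisfies $\|f(i)-f(j)\|_{\ell_p(X)}^p\le D^p$, so the full denominator is at most $D^p$. For the numerator, Jensen's inequality (valid since $p\ge 1$), the average distortion hypothesis, and the standard counting bound used to derive~\eqref{eq:average dist power 1} give
\begin{equation*}
\frac{1}{n^2}\sum_{i=1}^n\sum_{j=1}^n\|f(i)-f(j)\|_{\ell_p(X)}^p \ge \left(\frac{1}{n^2}\sum_{i=1}^n\sum_{j=1}^n\|f(i)-f(j)\|_{\ell_p(X)}\right)^{\!p}\ge \left(\frac{1}{n^2}\sum_{i=1}^n\sum_{j=1}^n d_\G(i,j)\right)^{\!p}\gtrsim \left(\frac{\log n}{\log\Delta}\right)^{\!p}.
\end{equation*}
Substituting the two estimates into the extended form of~\eqref{eq:matrix dim p sharp} yields $\log\dim(X)\gtrsim (1-\lambda_2(\G))\log n/(pD\log\Delta)$, which is exactly $\dim(X)\ge n^{\eta(\G)/(pD)}$ with $\eta(\G)$ as in~\eqref{eq:dim lower first}.

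The main obstacle is conceptual rather than technical: one must recognize that the $1/p$-th power outside the ratio in~\eqref{eq:matrix dim p sharp} is precisely what makes the coordinatewise bounds additive and matches the additivity of $\|\cdot\|_{\ell_p(X)}^p$, so that Theorem~\ref{thm:lp version Kp} tensorizes over $\ell_p$ products essentially for free. Once that is observed, the remainder is a mechanical combination of the Lipschitz hypothesis, Jensen's inequality, and the well-known fact that a $\Delta$-regular graph on $n$ vertices has average shortest-path distance of order $\log n/\log\Delta$.
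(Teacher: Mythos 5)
Your proposal is correct and follows essentially the same route as the paper: the paper also applies Theorem~\ref{thm:lp version Kp} coordinatewise to $f_m=\mathfrak{c}_m\circ f$ and passes to $\ell_p(X)$ via the additivity of $\|\cdot\|_{\ell_p(X)}^p$ (phrased there through the mediant inequality $\frac{\sum_m a_m}{\sum_m b_m}\le\sup_m\frac{a_m}{b_m}$ rather than your summed Poincar\'e inequalities, which is the same observation), and then bounds the numerator by Jensen plus the counting estimate~\eqref{eq:average dist power 1} and the denominator by the Lipschitz hypothesis exactly as you do.
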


\begin{proof} For $f:\n\to \ell_p(X)$  and $m\in \N$ denote the $m$'th entry of $f$ by $f_m:\n\to X$, i.e., for every $i\in \n$ we have $f(i)=(f_1(i),f_2(i),\ldots)\in \ell_p(X)$. Then, by Theorem~\ref{thm:lp version Kp} applied to the finite configuration $\{f_m(1),\ldots,f_m(n)\}$ of points in $X$ for each $m\in \N$ separately, we see that
\begin{equation*}
\dim(X)\ge \exp\!\Bigg(\frac{1-\lambda_2(\A)}{\mathsf{K}p}\sup_{m\in \N} \bigg(\frac{\frac{1}{n^2}\sum_{i=1}^n\sum_{j=1}^n \|f_m(i)-f_m(j)\|_{\!X}^p}{\frac{1}{n}\sum_{i=1}^n\sum_{j=1}^n a_{ij}\|f_m(i)-f_m(j)\|_{\!X}^p}\bigg)^{\!\!\frac{1}{p}}\Bigg).
\end{equation*}
It remains to observe that if  $f:\n\to \ell_p(X)$ is $D$-Lipschitz (with respect to the shortest-path metric $d_\G$), yet $\sum_{i=1}^n\sum_{j=1}^n \|f(i)-f(j)\|_{\ell_p(X)}\ge\sum_{i=1}^n\sum_{j=1}^nd_\G(i,j)$, then

\begin{multline*}
 \sup_{m\in \N}\bigg(\frac{\frac{1}{n^2}\sum_{i=1}^n\sum_{j=1}^n \|f_m(i)-f_m(j)\|_{\!X}^p}{\frac{1}{n}\sum_{i=1}^n\sum_{j=1}^n a_{ij}\|f_m(i)-f_m(j)\|_{\!X}^p}\bigg)^{\!\!\frac{1}{p}}\ge \bigg(\frac{\frac{1}{n^2}\sum_{i=1}^n\sum_{j=1}^n  \|f(i)-f(j)\|_{\ell_p(X)}^p}{\frac{1}{n}\sum_{i=1}^n\sum_{j=1}^n a_{ij}\|f(i)-f(j)\|_{\ell_p(X)}^p}\bigg)^{\!\!\frac{1}{p}}\\\ge \frac{1}{Dn^2}\sum_{i=1}^n\sum_{j=1}^n \|f(i)-f(j)\|_{\ell_p(X)}\ge\frac{1}{Dn^2}\sum_{i=1}^n\sum_{j=1}^nd_\G(i,j)\gtrsim \frac{\log n}{D\log\Delta},
\end{multline*}
where the first step holds because $\frac{a_1+a_2+\ldots}{b_1+b_2+\ldots}\le \sup_{m\in \N}\frac{a_m}{b_m}$ for any  $\{a_m\}_{m=1}^\infty,\{b_m\}_{m=1}^\infty\subset (0,\infty)$, the second step is an application of Jensen's inequality in the numerator and the $D$-Lipschitz condition in the denominator, the third step is our second assumption on $f$, and the final step is~\eqref{eq:average dist power 1}.
\end{proof}

We expect that Corollary~\ref{thm:lpX} is sharp in terms of its dependence on $p$, as expressed in Conjecture~\ref{conj:p depend} below, whose positive resolution might have algorithmic applications; this could be quite tractable by adapting available methods, specifically those of~\cite{JLS87,Mat92,Mat96,Mat97,ABN11}.

\begin{conjecture}\label{conj:p depend} There is a universal constant $\mathsf{C}\ge 1$ with the following property. For every $p,D\ge 1$ there exists $n_0=n_0(p,D)\in \N$ such that if $n\ge n_0$, then for every $n$-point metric space $(\MM,d_\MM)$ there exists $k\in \N$ with $k\le n^{\mathsf{C}/(pD)}$ and a $k$-dimensional normed space $(X,\|\cdot\|_X)$ such that $\MM$ embeds with bi-Lipschitz distortion $D$ into $\ell_p(X)$. Conceivably this  even holds true for $X=\ell_\infty^k$.
\end{conjecture}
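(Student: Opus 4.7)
My plan is to generalize Matou\v{s}ek's probabilistic distortion--dimension tradeoff~\cite{Mat96} from $\ell_\infty^k$ targets to $\ell_p(\ell_\infty^k)$ targets, attempting to save a factor of $p$ in the exponent of the inner dimension by exploiting the aggregation afforded by the outer $\ell_p$ norm. For $p=1$ the conjecture (with an unspecified $k$-dimensional $X$) is Johnson--Lindenstrauss--Schechtman~\cite{JLS87}, and even with $X=\ell_\infty^k$ it is Matou\v{s}ek~\cite{Mat96}; for $p=\infty$ the target collapses to $\ell_\infty$ and the Fr\'echet embedding with $k=1$ suffices. The conjecture's content therefore lies in the intermediate range $p\in (1,\infty)$.

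The construction I would attempt is this. Fix $D\ge 1$ and an $n$-point metric space $(\MM,d_\MM)$. For each of the $O(\log n)$ relevant dyadic scales $r_j=2^j$, sample $N$ independent blocks, where the $i$-th block at scale $j$ consists of $k$ truncated distance-to-random-set functions $x\mapsto \min(d_\MM(x,S),r_j)$ for independently drawn random subsets $S$ with the scale-appropriate inclusion probability $q_j$ from Matou\v{s}ek's scheme. Concatenate all blocks as coordinates of a map $\Phi:\MM\to\ell_p^{O(N\log n)}(\ell_\infty^k)$. The Lipschitz upper bound is immediate from $1$-Lipschitzness of each coordinate, and the real task is a matching lower bound on pairwise distances.

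For a pair $x,y\in\MM$ with $\delta=d_\MM(x,y)$, fix the matching scale $j$. Matou\v{s}ek's key estimate says that a single random subset $S$ witnesses $|d_\MM(x,S)-d_\MM(y,S)|\gtrsim \delta/D$ with probability $q^\ast\asymp n^{-1/D}$ in the hardest (expander-like) regime. Taking $\ell_\infty$ over $k$ independent samples in a block amplifies the witness probability to $\asymp \min(kq^\ast,1)$ while preserving the witness magnitude; $\ell_p$-averaging over the $N$ blocks at this scale produces a lower bound $(N\min(kq^\ast,1))^{1/p}\cdot\delta/D$, to be compared with the Lipschitz ceiling $(N\log n)^{1/p}\cdot\delta$. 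Balancing these yields distortion $O(D)$ only when $k\gtrsim n^{1/D}$, i.e.\ with no improvement as $p$ grows.

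That is exactly where the main obstacle lies: linear amplification via independent sampling within each $\ell_\infty$ block is too coarse for the outer $\ell_p$ to exploit. What I expect to be needed, and what I would pursue as the technical core, is a correlated sampling scheme producing a reverse-H\"older inequality on the per-block statistics, so that the $p$-th moment of each block contribution is comparable to its first moment raised to the $p$-th power rather than smaller by the factor $\min(kq^\ast,1)^{p-1}$ that independent sampling introduces. A plausible route is to synchronize the randomness across blocks and scales via a padded-decomposition or hierarchical ultrametric construction in the spirit of Abraham--Bartal--Neiman~\cite{ABN11}, so that witnessing events align with a tree structure whose level sums naturally control $\ell_p$ norms; producing such a correlated construction, together with a matching tight Lipschitz estimate, is the principal difficulty I expect to encounter.
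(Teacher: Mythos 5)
The statement you were asked to prove is Conjecture~\ref{conj:p depend}, which the paper explicitly leaves open: it is stated as a conjecture, no proof is given, and the author only remarks that it ``could be quite tractable by adapting available methods, specifically those of~\cite{JLS87,Mat92,Mat96,Mat97,ABN11}.'' So there is no proof in the paper to compare yours against, and your proposal does not supply one either. What you have written is an attack plan whose central technical step is acknowledged to be missing. Your diagnosis of where the naive generalization of Matou\v{s}ek's construction breaks is correct and worth having on record: concatenating independent blocks of truncated distance-to-random-set functions gives a per-block witness probability of order $\min(kq^\ast,1)$ with $q^\ast\asymp n^{-1/D}$, and the resulting $\ell_p$-average lower bound $(N\min(kq^\ast,1))^{1/p}\delta/D$ against the Lipschitz ceiling $(N\log n)^{1/p}\delta$ forces $k\gtrsim n^{1/D}$ (up to factors that only help when $p\gtrsim\log n$, where $\ell_p$ is anyway $O(1)$-equivalent to $\ell_\infty$). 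So independent sampling provably cannot deliver the conjectured $k\le n^{\mathsf{C}/(pD)}$, and the gain must come from correlating the randomness so that the $p$-th moment of each block's contribution is comparable to the $p$-th power of its first moment.

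That last step --- the ``correlated sampling scheme producing a reverse-H\"older inequality,'' which you propose to extract from a padded-decomposition or hierarchical construction in the spirit of~\cite{ABN11} --- is the entire content of the conjecture, and you have not constructed it, stated the inequality it would need to satisfy, or verified that the Lipschitz upper bound survives the correlation. Until that is done the argument does not establish anything beyond the known endpoint cases $p=1$ (Johnson--Lindenstrauss--Schechtman and Matou\v{s}ek) and $p\gtrsim\log n$. To be clear, this is not a situation where you missed a trick that the paper uses; the paper itself records this as an open problem, notes that a positive resolution would show the dependence on $p$ in Corollary~\ref{thm:lpX} is sharp, and points to the same toolbox you are invoking. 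Your write-up should therefore be framed as a reduction of the conjecture to a precisely stated correlated-sampling lemma, not as a proof.
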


\subsection{Uniform convexity and smoothness}\label{sec:UC} Henceforth, the closed unit ball of a normed space $(X,\|\cdot\|_X)$ will be denoted by $B_X=\{x\in X:\, \|x\|_X\le 1\}$. The moduli~\cite{Day44} of uniform convexity and uniform smoothness of $(X,\|\cdot\|_X)$, commonly denoted $\d_X:[0,2]\to [0,\infty)$ and $\rho_X:[0,\infty)\to [0,\infty)$, respectively, are the (point-wise) smallest such functions for which every $x,y\in \partial B_X$ and $\tau\in [0,\infty)$ satisfy $\|x+y\|_X\le 2(1-\d_X(\|x-y\|_X))$ and $\|x+\tau y\|_X+\|x-\tau y\|_X\le 2(1+\rho_X(\tau))$.

Given $p,q\in [1,\infty)$, one says that $(X,\|\cdot\|_X)$ has moduli of uniform convexity and uniform smoothness of power type $q$ and $p$, respectively, if $\d_X(\e)\gtrsim_{X,q} \e^q$ and $\rho_X(\tau)\lesssim_{X,p} \tau^p$ for all $\e\in [0,2]$ and $\tau\in [0,\infty)$. By the parallelogram identity, a Hilbert space has moduli of uniform convexity and uniform smoothness of power type $2$; conversely, Figiel and Pisier~\cite{FP74} proved (confirming a conjecture of Lindenstrauss~\cite{Lin63}) that if a Banach space has this property, then it is isomorphic to a Hilbert space. In the reflexive range $p\in (1,\infty)$, the works of Clarkson~\cite{Cla36} and Hanner~\cite{Han56} show that any $L_p(\mu)$ space has moduli of uniform convexity and uniform smoothness of power type $\max\{p,2\}$ and $\min\{p,2\}$, respectively.

An important theorem of Pisier~\cite{Pis75} asserts that if $\d_X(\e)>0$ for all $\e\in (0,2]$, then there exists $q\in [2,\infty)$ and an equivalent norm on $X$ with respect to which it has modulus of uniform convexity of power type $q$. Analogously, if $\lim_{\tau\to 0^+} \rho_X(\tau)/\tau=0$, then there exists $p\in (1,2]$ and an equivalent norm on $X$ with respect to which it has modulus of uniform smoothness of power type $p$. For this reason, we will focus below  only on uniform convexity and smoothness with power-type behavior.


\begin{theorem}\label{thm:really main} Fix $p,q\in [1,\infty)$ that satisfy $p\le 2\le q$.  Let $(X,\|\cdot\|_X)$ and $(Y,\|\cdot\|_Y)$ be Banach spaces that have moduli of uniform smoothness and uniform convexity of power type $p$ and $q$, respectively.  Then, there exists $D=D(\rho_X,\d_Y,q)\in [1,\infty)$ satisfying
\begin{equation}\label{eq:rough D upper bound convexity smoothness}
D\lesssim_{\rho_X,\d_Y,p,q}\big(\log(\cc_Y(X)+1)\big)^{\!\frac{1}{q}},
\end{equation}
such that the $\frac{p}{q}$-snowflake of $(X,\|\cdot\|_X)$ embeds  with $q$-average distortion $D$ into $\ell_q(Y)$.

Furthermore, if $(Y,\|\cdot\|_Y)$ has modulus of uniform smoothness of power type $r>p$, then for every $\e\in [0,\frac{r-p}{q}]$ the $(\frac{p}{q}+\e)$-snowflake of $(X,\|\cdot\|_X)$ embeds  with $q$-average distortion $D$ into $\ell_q(Y)$, where
\begin{equation}\label{eq:D cases}
D\lesssim_{\rho_X,\d_Y,\rho_Y,p,q} \left\{\begin{array}{ll}\big(\log(\cc_Y(X)+1)\big)^{\!\frac{1}{q}}&\mathrm{if\ } 0\le \e\le \frac{r-p}{q\log(\cc_Y(X)+1)},\\ \Big(\frac{r-p}{\e}\Big)^{\!\frac{1}{q}}\cc_Y(X)^{\frac{r\e}{r-p}}&\mathrm{if\ }\frac{r-p}{q\log(\cc_Y(X)+1)}\le \e\le \frac{r-p}{q}. \end{array}\right.
\end{equation}
\end{theorem}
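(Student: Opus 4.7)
My plan is to reduce existence of the embedding to a dual inequality via a Hahn--Banach/minimax argument, and then verify the inequality by constructing test maps at many scales, with the power-type moduli of $X$ and $Y$ ensuring uniform control on the $\omega$-Hölder constant of these test maps ($\omega=p/q$ for the basic assertion, $\omega=p/q+\e$ for the refined one).

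Standard approximation arguments allow reduction to finitely supported $\mu$; fix such $\mu$ and put $S=\supp(\mu)$. The existence of an $\omega$-Hölder map $f:X\to\ell_q(Y)$ of constant $D$ realizing $q$-average distortion $D$ of the $\omega$-snowflake is a convex feasibility problem, which by Hahn--Banach separation (equivalently, Sion's minimax theorem applied to the natural bilinear pairing) reduces to showing that for every symmetric probability measure $\nu$ on $S\times S$,
$$
\iint \|x-y\|_X^{q\omega}\ud\nu(x,y)\le D^q\sup_g \iint \|g(x)-g(y)\|_Y^q\ud\nu(x,y),
$$
where the supremum runs over $\omega$-Hölder maps $g:X\to Y$ with constant at most $1$. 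It thus suffices to exhibit, for each such $\nu$, one good test map $g$.

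To produce such a test map, fix a near-optimal linear embedding $T:X\to Y$, normalized so that $\|x-y\|_X\le\|T(x)-T(y)\|_Y\le\cc_Y(X)\|x-y\|_X$, and for each dyadic scale $s>0$ define $g_s:X\to Y$ by mollifying $T$ at scale $s$ (e.g.\ via convolution with a heat or stable-law kernel in $Y$, tuned to the moduli) and rescaling by $s^\omega$. Uniform smoothness of $X$ of power type $p$ controls how $T$ distributes fine-scale oscillations after smoothing, and uniform smoothness of $Y$ of power type $r>p$ (when available) refines this analysis in the second assertion. After normalization each $g_s$ is $\omega$-Hölder with constant depending only on the moduli, and each pair $(x,y)\in S\times S$ has a critical scale $s_\star\asymp\|x-y\|_X$ at which $\|g_{s_\star}(x)-g_{s_\star}(y)\|_Y\gtrsim\|x-y\|_X^\omega$. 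Packing $\{g_s\}$ over the $\lesssim\log\cc_Y(X)$ scales on which $T$ can meaningfully distort distances into a single $\ell_q(Y)$-valued map, and using uniform convexity of $Y$ of power type $q$ for the Pisier-style $\ell_q$-summation across scales, yields a test map with $\omega$-Hölder constant $\lesssim(\log\cc_Y(X))^{1/q}$ that receives at least $\|x-y\|_X^{q\omega}$ of contribution from each pair's critical scale. This gives~\eqref{eq:rough D upper bound convexity smoothness}. For~\eqref{eq:D cases} with $\e>0$ and $r>p$, one restricts the scale range to a window of width $\lesssim(r-p)/(q\e)$ and accepts a polynomial Hölder penalty $\cc_Y(X)^{r\e/(r-p)}$ at the endpoints; optimizing between this polynomial regime and the logarithmic regime above produces the crossover at $\e\asymp(r-p)/(q\log\cc_Y(X))$ visible in~\eqref{eq:D cases}.

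The hardest step will be the uniform $\omega$-Hölder control on the regularized maps $g_s$ across scales. It is there that the interplay between smoothness of $X$ (which governs the small-scale behavior of $T$ after mollification) and convexity of $Y$ (which supplies the $\ell_q$-summation used to combine scales) is decisive, and it is what forces $\omega\ge p/q$: this is the smallest snowflaking exponent that simultaneously accommodates both constraints. The ceiling $\omega\le r/q$ in the refined assertion is the point at which the extra smoothness of $Y$ has been exhausted and no polynomial savings over the logarithmic bound remain.
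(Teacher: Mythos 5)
Your reduction to a dual inequality is in the right spirit (the paper's Theorem~\ref{thm:full duality} is indeed obtained from a Hahn--Banach separation argument), but the dual statement you write down cannot be correct as stated: if $\nu$ is the point mass at a single pair $(x_0,y_0)$, then the supremum over $1$-H\"older test maps $g:X\to Y$ already attains $\|x_0-y_0\|_X^{q\omega}$ (take $g(x)=\|x-y_0\|_X^{\omega}v$ for a unit vector $v\in Y$, which is $\omega$-H\"older with constant $1$ by concavity of $t\mapsto t^{\omega}$), so your inequality would hold with $D=1$ for every $X$ and $Y$. The correct dual object is a comparison of nonlinear spectral gaps, $\gamma(\A,\|\cdot\|_X^{p})\le \mathscr{C}\gamma(\A,\|\cdot\|_Y^{q})$ over all symmetric stochastic matrices $\A$: a comparison of two Rayleigh-type quotients in which the supremum runs over arbitrary configurations in $Y$ (not H\"older maps) and the reference measure appears in the denominator. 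This is Corollary~\ref{coro:banach space version with smoothness}.

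The more serious gap is in the construction of test maps. Convolution with a symmetric probability kernel fixes affine maps, so ``mollifying $T$ at scale $s$'' returns $T$ itself; the only concrete content of your multiscale family is then a rescaled, truncated copy $c_sT$ of the John-type embedding at each scale. A direct computation shows that this cannot beat the trivial bound: to make $c_sT$ be $\omega$-H\"older with constant $O(1)$ on pairs at distance $\asymp s$ you are forced to take $c_s\lesssim s^{\omega-1}/\cc_Y(X)$, because some pair at that scale may be stretched by $T$ by the full factor $\cc_Y(X)$; but then the lower bound recovered at the critical scale for a pair that $T$ does \emph{not} stretch is only $\|x-y\|_X^{\omega}/\cc_Y(X)$. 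This loss of $\cc_Y(X)$ occurs at every single scale, so $\ell_q$-summation over the $\log\cc_Y(X)$ scales does not repair it. The paper obtains the exponential improvement by a mechanism with no analogue in your sketch: it interpolates between $X_\C$ and the pullback norm $Z_\C$ (isometric to a subspace of $Y$), uses $\|\cdot\|_{X_\C}\le\|\cdot\|_{[X_\C,Z_\C]_\theta}\le\cc^{\theta}\|\cdot\|_{X_\C}$ to make $[X_\C,Z_\C]_\theta$ an $O(1)$-perturbation of $X$ once $\theta\asymp 1/\log(\cc+1)$, and proves the spectral-gap comparison for $[X_\C,Z_\C]_\theta$ with constant $O(1/\theta)$ by running the Markov chain for $s\asymp 1/\theta$ steps: Riesz--Thorin makes $(\frac12\I_n+\frac12\A)^{s}\otimes\Id$ a strict contraction on mean-zero vectors of $\ell_q^n([X,Z]_\theta)_0$ thanks to the uniform convexity of $Z$, and Markov type $p$ transfers the resulting Poincar\'e inequality for the $s$-th power of the chain back to $\A$ at the cost of a factor $s$. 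That ``power of the chain'' step is the actual source of the logarithm in \eqref{eq:rough D upper bound convexity smoothness}, and it is absent from your argument; the same applies to the constrained optimization over $\theta$ using the Cwikel--Reisner-type bound \eqref{eq:without p1,p2} that yields \eqref{eq:D cases}.
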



Because $\ell_q(\ell_q(Y))$ is isometric to $\ell_q(Y)$ and by~\cite{Fig76} if $Y$ satisfies the assumption of Theorem~\ref{thm:really main}, then so does $\ell_q(Y)$, Theorem~\ref{thm:really main} establishes that the worst-case bi-Lipschitz distortion into $\ell_q(Y)$ is exponentially larger than its average-case counterpart. Specifically, in the setting of Theorem~\ref{thm:really main}, if one finds any Borel probability measure $\mu$ on $X$  such that no embedding of the $(p/q)$-snowflake of $(X,\|\cdot\|_X,\mu)$ into $\ell_q(Y)$ has $q$-average distortion less than $D\ge 1$, then any embedding of $X$ into $\ell_q(Y)$ incurs bi-Lipschitz distortion at least $\exp(\beta D^q)$, where $\beta>0$ depends only on $\rho_X,\d_Y,p,q$.

Theorem~\ref{thm:average john} is a special case of Theorem~\ref{thm:really main}. Indeed, if $\dim(X)=k$ and $Y=\ell_2$, then $\cc_{\ell_2}(X)\le \sqrt{k}$ by John's theorem (the simpler Auerbach lemma~\cite[Lemma~2.22]{Ost13} suffices for this application). The assumptions of Theorem~\ref{thm:really main} hold  with $p=1$ for any Banach space $(X,\|\cdot\|_X)$ and with $q=2$ when $Y=\ell_2$, in which case $\ell_2(Y)$ is still a Hilbert space, so we arrive at the conclusion of Theorem~\ref{thm:average john}.

When $(X,\|\cdot\|_X)$ has modulus of uniform smoothness of power type $1< p\le 2$, Theorem~\ref{thm:really main} obtains the desired embedding with H\"older regularity that improves with $p$, namely a lesser amount of snowflaking. In particular, if $(X,\|\cdot\|_X)$ has modulus of uniform smoothness of power type $2$ and $(Y,\|\cdot\|_Y)$ has modulus of uniform convexity of power type $2$, then the embedding of Theorem~\ref{thm:really main} is of the original metric on $(X,\|\cdot\|_X)$ without any snowflaking (i.e., it is Lipschitz rather than H\"older).

Returning to an examination of the special case when $(Y,\|\cdot\|_Y)$ is a Hilbert space, the first part of Theorem~\ref{thm:really main} yields the same quadratic average  distortion as that of Theorem~\ref{thm:average john}, but now this is achieved for the $\frac{p}{2}$-snowflake of $(X,\|\cdot\|_X)$. For $p>1$ this is better (a less dramatic deformation of the original metric on $X$) than the $\frac12$-snowflake of Theorem~\ref{thm:average john}, albeit under the stronger assumption that the modulus of uniform smoothness of $(X,\|\cdot\|_X)$ is of power type $p$. For every $1\le p<2$ (thus  also covering the setting of Theorem~\ref{thm:average john}), this amount of snowflaking is sharp in the sense that for any fixed exponent that is strictly larger than $\frac{p}{2}$, the dependence on $\cc_Y(X)$ in~\eqref{eq:rough D upper bound convexity smoothness} must sometimes grow as $\cc_Y(X)\to\infty$ at a rate that is at least a definite positive power of $\cc_Y(X)$. This is the content of the following lemma, which also establishes that the exponent of $\cc_Y(X)$ in~\eqref{eq:D cases}, which is equal  to $2\e/(2-p)$ when $Y$ is a Hilbert space (hence $q=r=2$), cannot be improved in general.

\begin{lemma}\label{prop:p/2 sharp} Fix  $p\in [1,2)$, $\alpha\ge 1$,  $\beta\ge 2$ and $\e\in [0,1-\frac{p}{2}]$. For arbitrarily large $\cc\ge 1$ there exists a normed space $(X,\|\cdot\|_X)$ that satisfies the assumptions of Theorem~\ref{thm:really main} with $Y=\ell_2$ (namely, its modulus of uniform smoothness has power type $p$ and $\cc_{\ell_2}(X)=\cc$) such that if the $(\frac{p}{2}+\e)$-snowflake of $(X,\|\cdot\|_X)$ embeds with $\alpha$-average distortion $D\ge 1$ into $\ell_\beta(\ell_2)$, then necessarily
\begin{equation}\label{eq:alpha plus beta}
D\gtrsim \frac{1}{\sqrt{\alpha+\beta}}\cc^{\frac{2\e}{2-p}}.
\end{equation}
\end{lemma}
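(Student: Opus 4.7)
The plan is to take $X=\ell_p^n$, where the integer $n$ is chosen so that $\cc_{\ell_2}(\ell_p^n)=n^{1/p-1/2}$ equals the prescribed $\cc$; since $n$ may be any positive integer, $\cc$ can be made arbitrarily large. By Clarkson--Hanner, $\ell_p^n$ has modulus of uniform smoothness of power type $p$, so the hypotheses of Theorem~\ref{thm:really main} are satisfied, and the identity $\cc=n^{(2-p)/(2p)}$ rewrites $n^{\e/p}=\cc^{2\e/(2-p)}$, which is exactly the exponent in~\eqref{eq:alpha plus beta}. The probe will be the uniform measure $\mu$ on the Hamming cube $\{-1,1\}^n\subset\ell_p^n$. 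Two elementary observations are used throughout: for $\epsilon,\epsilon'\in\{-1,1\}^n$ of Hamming distance $h$, one has $\|\epsilon-\epsilon'\|_p=2h^{1/p}$, so the $(p/2+\e)$-snowflake distance equals $2^{p/2+\e}h^{1/2+\e/p}$; and under $\mu\otimes\mu$, $h\sim\mathrm{Binomial}(n,1/2)$, whence $\E h^s\gtrsim_s n^s$ for every $s\ge 0$.

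Now fix any $f:\ell_p^n\to\ell_\beta(\ell_2)$ that is $D$-Lipschitz in the $(p/2+\e)$-snowflake and has $\alpha$-average distortion $D$ relative to $\mu$. The Lipschitz hypothesis at a single coordinate flip gives $\|f(\epsilon)-f(\epsilon^{(i)})\|_{\ell_\beta(\ell_2)}\le L:=D\cdot 2^{p/2+\e}$, so $f$ is $L$-Lipschitz for the Hamming metric. The average-distortion hypothesis together with the Binomial-moment estimate produces the lower bound
\[
\iint\|f(\epsilon)-f(\epsilon')\|_{\ell_\beta(\ell_2)}^{\alpha}\,d\mu(\epsilon)\,d\mu(\epsilon')\ \gtrsim_{\alpha,p,\e}\ 2^{\alpha(p/2+\e)}\,n^{\alpha(1/2+\e/p)}.
\]

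The crucial step is a matching upper bound. Introduce the scalar function $g(\epsilon):=\|f(\epsilon)-\E_\mu f\|_{\ell_\beta(\ell_2)}$; by the reverse triangle inequality $g$ is $L$-Lipschitz in the Hamming metric, so the Hoeffding--Azuma bound on $\{-1,1\}^n$ gives $\|g-\E g\|_{L^\alpha(\mu)}\lesssim L\sqrt{\alpha n}$. On the other hand, $\ell_\beta(\ell_2)$ has martingale type $2$ with constant $O(\sqrt\beta)$ (Clarkson--Hanner in $\ell_\beta$ combined with Fubini), so the Doob martingale of $f$ along the coordinate filtration of $\{-1,1\}^n$ yields $\|f-\E f\|_{L^2(\mu;\ell_\beta(\ell_2))}\lesssim\sqrt{\beta n}\,L$, and therefore $\E g\le\|f-\E f\|_{L^2}\lesssim\sqrt{\beta n}\,L$ by Cauchy--Schwarz. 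Summing,
\[
\|f-\E f\|_{L^\alpha(\mu;\ell_\beta(\ell_2))}\ =\ \|g\|_{L^\alpha(\mu)}\ \le\ \E g+\|g-\E g\|_{L^\alpha(\mu)}\ \lesssim\ \bigl(\sqrt{\alpha}+\sqrt{\beta}\bigr)\sqrt{n}\,L\ \lesssim\ \sqrt{(\alpha+\beta)n}\,L,
\]
and two uses of the triangle inequality then yield $\iint\|f(\epsilon)-f(\epsilon')\|^\alpha\,d\mu\,d\mu\lesssim\bigl((\alpha+\beta)n\bigr)^{\alpha/2}L^\alpha$. Comparing with the lower bound, cancelling the common factor $2^{\alpha(p/2+\e)}$, and taking $\alpha$-th roots recovers $D\gtrsim n^{\e/p}/\sqrt{\alpha+\beta}=\cc^{2\e/(2-p)}/\sqrt{\alpha+\beta}$, as claimed in~\eqref{eq:alpha plus beta}.

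The main obstacle to anticipate is the concentration step itself: a direct application of a vector-valued Pinelis-type deviation inequality to the $\ell_\beta(\ell_2)$-valued Doob martingale of $f$ yields only the weaker multiplicative constant $\sqrt{\alpha\beta}$, because the Pinelis constant combines a $\sqrt{\alpha}$ from sub-Gaussian tails at level $L^\alpha$ with a $\sqrt{\beta}$ from the smoothness constant of $\ell_\beta(\ell_2)$. The scalar-Lipschitz trick above---transferring the $\ell_\beta(\ell_2)$-valued moment problem to the scalar Lipschitz function $\|f-\E f\|_{\ell_\beta(\ell_2)}$ (whose concentration contributes only $\sqrt\alpha$) while controlling its mean by a pure $L^2$-variance estimate (which contributes only $\sqrt\beta$)---is what decouples $\alpha$ and $\beta$ additively. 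The remainder of the argument is a bookkeeping computation matching Binomial moments with snowflake exponents and keeping track of the factor $2^{p/2+\e}$ that converts between the Hamming metric and the snowflaked $\ell_p^n$-metric.
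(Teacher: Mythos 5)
Your proof is correct and follows essentially the same route as the paper's: the same example $X=\ell_p^k$ probed by the uniform measure on the Hamming cube, the same binomial-moment lower bound, and the same upper bound of the form $(\sqrt{\alpha}+\mathscr{S}_2(\ell_\beta(\ell_2)))\sqrt{n}\,L$ in terms of the edge increments. The only difference is that where the paper invokes the Enflo-type inequality (equation~(6.32) of~\cite{Nao14}) as a black box, you rederive the special case you need by decoupling scalar McDiarmid concentration of $\|f-\E f\|$ (contributing $\sqrt{\alpha}$) from a vector-valued martingale type-$2$ estimate on its mean (contributing $\sqrt{\beta}$), which is in fact the same mechanism that underlies the cited inequality.
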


We do not know if the analogue of Lemma~\ref{prop:p/2 sharp} holds in the full range of parameters of Theorem~\ref{thm:really main}, namely when either $q\neq 2$ or $r\neq 2$ (note that by the aforementioned Figiel--Pisier characterization of Hilbert space~\cite{FP74}, if $q=r=2$, then $Y$ must be isomorphic to a Hilbert space).

\begin{question} Is the exponent of $\cc_Y(X)$ in~\eqref{eq:D cases}, namely $\frac{r\e}{r-p}$, optimal also when $(q,r)\neq (2,2)$?
\end{question}

Despite the optimality for $Y=\ell_2$ of the amount of snowflaking that is required for achieving the logarithmic behavior~\eqref{eq:rough D upper bound convexity smoothness}, as expressed in Lemma~\ref{prop:p/2 sharp}, at the endpoint case of $\frac{p}{2}$-snowflakes the potential optimality of the dependence on $\cc_{\ell_2}(X)$ in~\eqref{eq:rough D upper bound convexity smoothness} is much more mysterious.
The  case $p=1$ is an exception, because we have already seen (in the beginning of Section~\ref{sec:dim reduction intro}) that the distortion bound of Theorem~\ref{thm:average john} is sharp; more generally, Remark~\ref{eq"log n 1/q} below shows that~\eqref{eq:rough D upper bound convexity smoothness} is sharp for $p=1$ and any $q\ge 2$. However, this is  proved by considering $X=\ell_\infty^k$, which is not pertinent to the range $p\in (1,2]$. The (in our opinion unlikely) possibility remains that if $p\in (1,2]$, then for every Banach space $X$ whose modulus of uniform smoothness has power type $p$ there exists $\omega=\omega(X)\in (0,1]$ such that the $\omega$-snowflake of $X$ embeds with average distortion\footnote{For concreteness we chose to discuss average distortion, but note that due to Proposition~\eqref{prop:other exponents}, for any $q\ge 1$ such a qualitative statement is equivalent to the same statement with ``average distortion'' replaced by ``$q$-average distortion.''}  $D=D(X)\in [1,\infty)$ into a Hilbert space; we do not know  an obstruction to this holding even for the maximal possible exponent $\omega=\frac{p}{2}$.

A Banach space $X$ is called {\em superreflexive} if it admits an equivalent uniformly smooth norm, namely a norm for which $\lim_{\tau\to 0^+} \rho_X(\tau)/\tau=0$,  and this holds if an only if~\cite{Enf72} it admits an equivalent uniformly convex norm, namely a norm for which $\d_X(\e)>0$ for all $\e\in (0,2]$. These are not the original definitions of superreflexivity (due to James~\cite{Jam72}), but they are equivalent to them by deep work of Enflo~\cite{Enf72}. By the aforementioned renorming theorem of Pisier~\cite{Pis75}, superreflexivity is equivalent to admitting an equivalent norm whose modulus of uniform smoothness has power type $p$ for some $p>1$. Therefore, the above discussion coincides with the following question.

\begin{question}\label{Q:superreflexive}
Does every superreflexive Banach space $X$ admit $\omega(X)\in (0,1]$ and $D(X)\in [1,\infty)$ such that the $\omega(X)$-snowflake of $X$ embeds with average distortion $D(X)$ into a Hilbert space?
\end{question}
We conjecture that the answer to Question~\eqref{Q:superreflexive} is negative; constructing an example that demonstrates this conjecture would be an important achievement. Perhaps even~\eqref{eq:rough D upper bound convexity smoothness}  is sharp, but at present  we do not have sufficient evidence  in support of this  more ambitious conjecture  (other than  that this is so when $p=1$). Notwithstanding the above expectation, if it were the case that Question~\eqref{Q:superreflexive} had a positive answer, then this would be a truly remarkable theorem, asserting that the mere presence of uniform convexity implies ``bounded distance on average'' from Hilbertian geometry. In particular, a positive answer to Question~\eqref{Q:superreflexive} would resolve a central open question (see e.g.~\cite{Laf08,Pis10,MN14}) by demonstrating that every classical expander is a super-expander; we will explain this deduction in Remark~\ref{rem:super-expander} below, after the relevant concepts are recalled.

\subsubsection{Embedding a complex interpolation family into its endpoint} Theorem~\ref{thm:really main}, and therefore also its special case Theorem~\ref{thm:average john}  (average John) and its corollaries Theorem~\ref{thm:l1X} (impossibility of dimension reduction for expanders) and   Theorem~\ref{thm:lp version Kp} (matrix-dimension inequality), are all consequences of the structural statement for complex interpolation spaces that appears in Theorem~\ref{thm:interpolation implicit first exposure} below. We recall the (standard) background in Section~\ref{sec:interpolation proof} below; here we  explain the idea in broad stokes.

Following Calder\'on~\cite{Cal64} and Lions~\cite{Lio60}, to a pair $(X,\|\cdot\|_X), (Z,\|\cdot\|_Z)$ of complex Banach spaces that satisfies a mild compatibility assumption (which will be immediate in our setting), one associates a one-parameter family $(\theta\in [0,1])\mapsto [X,Z]_\theta$ of Banach spaces which interpolates between them, namely $[X,Z]_0=X$ and $[X,Z]_1=Z$. This provides a useful  way to deform the geometry of $(X,\|\cdot\|_X)$ to that of $(Z,\|\cdot\|_Z)$, and Theorem~\ref{thm:interpolation implicit first exposure} is a technical statement that quantifies the extent to which elements of this complex interpolation family differ from its $\theta=1$ endpoint.

\begin{theorem}\label{thm:interpolation implicit first exposure} Fix $\theta\in (0,1]$ and $p,q\in [1,\infty)$ with $1\le p\le 2\le q$.
Let $(X,\|\cdot\|_X)$ and $(Z,\|\cdot\|_Z)$ be a compatible pair of complex Banach spaces such that the moduli of uniform smoothness and convexity of $[X,Z]_\theta$ and $Z$ are of power type $p$ and $q$, respectively. Then, there exists $D\ge 1$ satisfying
\begin{equation}\label{eq: D interpolation implicit}
D\lesssim_{\rho_{[X,Z]_\theta},\d_Z,p,q} \Big(\frac{1}{\theta}\Big)^{\!\frac{1}{q}},
\end{equation}
such that the $\frac{p}{q}$-snowflake of $[X,Z]_\theta$  embeds with $q$-average distortion $D$ into $\ell_q(Z)$.
\end{theorem}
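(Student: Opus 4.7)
The plan is to use the complex interpolation representation of $[X,Z]_\theta$ via holomorphic functions on the strip $S=\{z\in\C:\, 0\le \operatorname{Re} z\le 1\}$, and to build the embedding by sampling boundary values on the right edge $\operatorname{Re} z =1$, where the Calder\'on family identifies isometrically with $Z$. Recall that $\|x\|_{[X,Z]_\theta}$ is the infimum, over holomorphic $F:S\to X+Z$ with $F(\theta)=x$, of $\max\{\sup_t \|F(it)\|_X,\ \sup_t \|F(1+it)\|_Z\}$, and that the Poisson kernel of $S$ at $\theta$ assigns total mass exactly $\theta$ to the right edge. The latter is the ultimate source of the $(1/\theta)^{1/q}$ factor in~\eqref{eq: D interpolation implicit}.

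After reducing to a finitely supported $\mu=\sum_i\pi_i\delta_{x_i}$ by approximation, I would set up the embedding via a Hahn--Banach / minimax argument, echoing the author's remark that their proof of Theorem~\ref{thm:average john} is ``implicit through reliance on a duality argument.'' Concretely, the existence of $f:\{x_i\}\to \ell_q(Z)$ satisfying simultaneously the $\frac{p}{q}$-H\"older upper bound and the $q$-average lower bound becomes, upon dualizing over symmetric nonnegative weights $(\alpha_{ij})$ normalized so that $\sum_{ij}\pi_i\pi_j\alpha_{ij}^{q/(q-1)}\le 1$, the following statement: for each such weight one must produce a single $Z$-valued test map $g_{\alpha}$ whose pairwise $Z$-distances have $\alpha$-weighted sum dominating $\sum_{ij}\pi_i\pi_j\alpha_{ij}\|x_i-x_j\|_{[X,Z]_\theta}^{p}$, while a dual $\frac{p}{q}$-H\"older energy of $g_\alpha$ is controlled by $D^q$. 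The $\ell_q(Z)$-valued embedding is then assembled from a countable dense family of such $g_\alpha$'s, placed on separate coordinates of $\ell_q(Z)$ with suitable weights.

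Each test map $g_\alpha$ is built from near-optimal holomorphic extensions $F_i:S\to X+Z$ with $F_i(\theta)=x_i$: schematically one samples a favorable point $1+it_\ast$ on the right edge (with the Poisson weight at $\theta$ biased by $(\alpha_{ij})$) and sets $g_\alpha(x_i)=F_i(1+it_\ast)\in Z$. The upper bound then follows from the uniform smoothness of $[X,Z]_\theta$ of power type $p$, which quantitatively controls the $Z$-valued boundary differences of $F_i-F_j$ on $\operatorname{Re} z =1$ in terms of $\|x_i-x_j\|_{[X,Z]_\theta}^{p}$ via a complex-analytic three-lines-type estimate refined by the modulus $\rho_{[X,Z]_\theta}$. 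The lower bound uses the uniform convexity of $Z$ of power type $q$ to extract a $q$-type/cotype inequality on the right edge; averaging against the Poisson mass $\theta$ of that edge and dualizing produces the $\theta^{-1/q}$ prefactor in~\eqref{eq: D interpolation implicit}.

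I expect the technical heart of the proof, and the main obstacle, to be the interplay between the two moduli. Extracting the snowflake exponent $\frac{p}{q}$ requires matching a ``$p$-smooth martingale-type'' estimate near $\theta$ with a ``$q$-convex martingale-cotype'' estimate on the right edge, but implemented at the level of holomorphic Poisson extensions on $S$ rather than discrete martingales. This is a nonlinear interpolation inequality in the spirit of Pisier's and Xu's martingale type/cotype, with the complex-analytic maximum principle taking the place of the martingale calculus, and the precise bookkeeping of $\rho_{[X,Z]_\theta}$, $\delta_Z$, $p$, $q$, and $\theta$ along the strip is where the combinatorics of the duality will have to meet the complex analysis. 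The remaining steps --- approximation back to a general Borel $\mu$, and the tensorization-style passage from $Z$ to $\ell_q(Z)$ through a dense family of extremal dual weights --- should then be routine.
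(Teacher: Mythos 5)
Your central construction does not work as described, and the step you defer as ``the technical heart'' is precisely the missing idea. You propose to define $g_\alpha(x_i)=F_i(1+it_*)$ where $F_i$ is a near-optimal holomorphic extension of the individual point $x_i$, and to get the $\frac{p}{q}$-H\"older upper bound ``from the uniform smoothness of $[X,Z]_\theta$ \ldots via a complex-analytic three-lines-type estimate.'' But the interpolation norm is an infimum over extensions of each element separately: $F_i-F_j$ is merely \emph{some} extension of $x_i-x_j$, and nothing forces its boundary values on $\operatorname{Re}\zeta=1$ to be controlled by $\|x_i-x_j\|_{[X,Z]_\theta}$ --- they can be of order $\|x_i\|_{[X,Z]_\theta}+\|x_j\|_{[X,Z]_\theta}$, or arbitrarily worse after multiplying an extension by $e^{\lambda(\zeta-\theta)^2}$. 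The three-lines principle only runs in the other direction (interior values bounded by boundary values), and no modulus of smoothness can reverse it. So the H\"older upper bound on your test maps is unsubstantiated, the minimax reduction to single test maps $g_\alpha$ is not carried out, and the claim that the Poisson mass $\theta$ of the right edge produces the $\theta^{-1/q}$ factor is a guess with no supporting computation.

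The paper's route is structurally different and supplies the mechanism you are missing. It first proves a nonlinear spectral-gap transference inequality (Theorem~\ref{thm:interpolation markov}): for every symmetric stochastic $\A$, $\gamma(\A,\|\cdot\|_{[X,Z]_\theta}^p)\lesssim \mathscr{K}_q(Z)^q\,\mathbf{M}_p([X,Z]_\theta)^p\,\theta^{-1}\gamma(\A,\|\cdot\|_Z^q)$. This is done not by boundary sampling but by interpolating \emph{operator norms}: uniform convexity of $Z$ gives $\|(\frac12\I_n+\frac12\A)\otimes\Id_Z\|_{\ell_q^n(Z)_0\to\ell_q^n(Z)_0}\le(1-c/\gamma(\A,\|\cdot\|_Z^q))^{1/q}$, Stein--Calder\'on interpolation of $\ell_r^n(X),\ell_q^n(Z)$ transfers a power $s\asymp\theta^{-1}\mathscr{K}_q(Z)^q\gamma(\A,\|\cdot\|_Z^q)$ of this contraction to $\ell_p^n([X,Z]_\theta)_0$ (this exponent $\theta s$, needed for the $Z$-factor to bite, is where $1/\theta$ actually enters), and Markov type $p$ of $[X,Z]_\theta$ together with the snowflake self-embedding of Proposition~\ref{prop:other exponents} undoes the passage from $\A$ to $\A^s$. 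Only then does a separate duality theorem (Theorem~\ref{thm:full duality}, a Hahn--Banach separation plus Ball's H\"older extension theorem) convert the spectral-gap comparison into a $q$-average distortion embedding into $\ell_q(Z)$. Your duality instinct matches that last step in spirit, but the primal objects are spectral-gap inequalities ranging over all reversible matrices rather than single boundary-value maps, and the Markov type ingredient --- essential both for the powering step and for landing in $\ell_q(Z)$ rather than an ultrapower --- is absent from your sketch.
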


The implicit dependence in~\eqref{eq: D interpolation implicit} on the data $\rho_{[Y,Z]_\theta},\d_Z,p,q$ is specified in Section~\ref{sec:interpolation proof} below, where Theorem~\ref{thm:interpolation implicit first exposure} is proved; see specifically inequality~\eqref{eq:specified D upper for interpolation}.   Section~\ref{sec:proof of 9 from interpolation} below demonstrates that Theorem~\ref{thm:interpolation implicit first exposure} implies Theorem~\ref{thm:really main}. The basic idea is as follows. Suppose that  $(X,\|\cdot\|_X)$ and $(Y,\|\cdot\|_Y)$ are Banach spaces that satisfy the assumptions of Theorem~\ref{thm:really main}. Assume for simplicity that they are a compatible pair of complex Banach spaces (this side-issue is treated in Section~\ref{sec:interpolation proof} via a standard complexification step). By estimating the relevant modulus of $[X,Y]_\theta$, contrasting the bound~\eqref{eq: D interpolation implicit} with a bound on the distance of $[X,Y]_\theta$ to $X$, and then optimizing over $\theta$, we arrive at Theorem~\ref{thm:really main}. Thus, the role of complex interpolation in the proof of Theorem~\ref{thm:really main} is in essence as a Banach space-valued   flow  starting at $X$ and terminating at $Y$, parameterized by $\theta\in [0,1]$. At positive times $\theta>0$, this flow consists of spaces that embed on average into $\ell_q(Y)$, by Theorem~\ref{thm:interpolation implicit first exposure}. The desired embedding of $X$ itself is obtained since  this flow tends to  $X$ as $\theta\to 0^+$ (at a definite rate).

\begin{remark} Theorem~\ref{thm:interpolation implicit first exposure} (combined with Proposition~\ref{prop:other exponents}) shows that Question~\ref{Q:superreflexive} has a positive answer for spaces of the form $[X,H]_\theta$ where $X,H$ is a compatible pair of complex Banach spaces with $H$ being a Hilbert space, and $\theta\in (0,1]$ (thus, by Pisier's extrapolation theorem~\cite{Pis79-lattices-interpolation}, Question~\ref{Q:superreflexive} has a positive answer for superreflexive Banach lattices). An inspection of the ensuing proof of Theorem~\ref{thm:interpolation implicit first exposure} reveals that this holds also for  subspaces of quotients of the class of $\theta$-Hilbertian Banach spaces that was introduced by Pisier in~\cite{Pis10} (we will not recall the definition here). So, a proof of our conjectured negative answer to Question~\ref{Q:superreflexive} would entail constructing a superreflexive Banach space which is not a subspace of a quotient of any $\theta$-Hilbertian Banach space. Such spaces are  not yet known to exist; see~\cite[pages~15--16]{Pis10} for a discussion of this intriguing open  question in structural Banach space theory. As stated above and  justified in Remark~\ref{rem:super-expander}, also disproving our conjecture by answering  Question~\ref{Q:superreflexive} positively would have interesting ramifications.

\end{remark}

\subsection{Historical discussion}\label{sec:history} Adopting terminology of~\cite[Definition~2.1]{LLR95}, given  $n\in \N$, an $n$-point metric space $\MM$ and $D \ge 1$, define an integer $\dim_D(\cM)$, called the (bi-Lipschitz distortion-$D$) {\em metric dimension} of $\cM$, to be the minimum $k\in \N$ for which there exists a $k$-dimensional normed space $X_\cM$ such that $\cM$ embeds into $X_\cM$ with bi-Lipschitz distortion $D$.  By the Fr\'echet isometric embedding into $\ell_\infty^{n-1}$, we always have $\dim_D(\cM)\le \dim_1(\cM)\le n-1$.

Johnson and Lindenstrauss asked~\cite[Problem~3]{JL84} if   $\dim_D(\cM)=O(\log n)$ for some $D=O(1)$ and every $n$-point metric space $\cM$. The $O(\log n)$ bound arises naturally here, because it cannot be improved due to a standard volumetric argument when one considers embeddings of the  equilateral space of size $n$. See Remark~\ref{rem:ribe} below and mainly the survey~\cite{Nao18} for background on  this question (and, more generally, the field of {\em metric dimension reduction}, to which the present investigations belong), including how it initially arose in the context of the {Ribe program}.

Bourgain proved~\cite[Corollary~4]{Bou85} that the Johnson--Lindenstrauss question has a negative answer. Specifically, he showed that for arbitrarily large $n\in \N$ there is  an $n$-point metric space $\cM_n$ such that $\dim_D(\cM_n)\gtrsim (\log n)^2/(D\log\log n)^2$ for every $D\ge 1$. This naturally led him to raise the question~\cite[page~48]{Bou85} of determining the asymptotic behavior of the maximum of $\dim_D(\cM)$ over all $n$-point metric spaces $\cM$. It took over a decade for this question to be  resolved.

Towards this goal, Johnson, Lindenstrauss and Schechtman~\cite{JLS87} proved that there exists a universal constant $\alpha>0$ such that for every $D\ge 1$ and $n\in \N$ we have $\dim_D(\cM)\lesssim_D n^{\alpha/D}$ for any $n$-point metric space $\cM$.  In~\cite{Mat92,Mat96}, Matou\v{s}ek improved this result by showing that one can actually embed any such $\cM$ with distortion $D$ into $\ell_\infty^k$ for some $k\in \N$ satisfying $k\lesssim_D n^{\alpha/D}$, i.e., the target normed space need not depend on $\cM$ (Matou\v{s}ek's proof is also simpler than that of~\cite{JLS87}, and it yields a smaller value of the constant $\alpha$; see the exposition in Chapter~15 of the monograph~\cite{Mat02}).

For small distortions, Arias-de-Reyna and Rodr{\'{\i}}guez-Piazza proved~\cite{AR92}  the satisfactory assertion that for arbitrarily large $n\in \N$ there exists an $n$-point metric space $\cM_n$ such that $\dim_D(\cM_n) \gtrsim_D n$ for every $1\le D<2$. For larger distortions, they asked~\cite[page~109]{AR92} if for every $D\in (2,\infty)$ and $n\in \N$ we have $\dim_D(\cM)\lesssim_D (\log n)^{O(1)}$ for any $n$-point metric space $\cM$. For this distortion regime, an asymptotic improvement (as $n\to \infty$) over the aforementioned lower bound of Bourgain~\cite{Bou85} was made by Linial, London and Rabinovich~\cite[Proposition~4.2]{LLR95}, who showed that  for arbitrarily large $n\in \N$ there exists an $n$-point metric space $\cM_n$ such that $\dim_D(\cM_n)\gtrsim (\log n)^2/D^2$ for every $D\ge 1$.

  In~\cite{Mat96}, Matou\v{s}ek answered  the above questions by proving Theorem~\ref{thm:matousek} below via an ingenious argument that relies on (a modification of) graphs of large girth with many edges and an existential counting argument (inspired by ideas of Alon, Frankl and R\"odl~\cite{AFR85})  that uses the classical theorem of  Milnor~\cite{Mil64} and Thom~\cite{Tho65} from real algebraic geometry.
\begin{theorem}[Matou\v{s}ek~\cite{Mat96}]\label{thm:matousek} For every $D\ge 1$ and arbitrarily large $n\in \N$, there exists  an $n$-point metric space $\cM_{n}(D)$ such that $\dim_D\!\big(\cM_{n}(D)\big)\gtrsim_D n^{c/D}$, where $c>0$ is a universal constant.
 \end{theorem}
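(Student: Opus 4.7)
My plan is to implement the blueprint indicated just before the statement: combine an extremal graph-theoretic construction with a semi-algebraic counting argument in the spirit of Alon, Frankl and R\"odl, using the Milnor--Thom--Warren bound to upper-bound the number of ``distance patterns'' that a $k$-dimensional normed space can record. I would fix a graph $G=(V,E)$ on $n$ vertices of girth $g\ge 2D+3$ with $|E|\ge n^{1+\kappa/D}$ edges, where $\kappa>0$ is a universal constant; such $G$ exist by the classical Erd\H{o}s probabilistic construction (random graph with $p\asymp n^{-1+1/(g-1)}$, followed by deletion of an edge from each short cycle) or by the explicit algebraic constructions of Margulis and Lubotzky--Phillips--Sarnak. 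The candidate hard space will be $\cM_n(D)=(V,d_G)$.

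\textbf{A rigidity dichotomy producing $2^{|E|}$ distinct metrics.} The girth hypothesis gives the following rigidity: for every $\{u,v\}\in E$, any walk between $u$ and $v$ in $G\setminus\{u,v\}$ has length at least $g-1\ge 2D+2$. Consequently, for any spanning subgraph $H\subseteq G$ and any $\{u,v\}\in E$, one has $d_H(u,v)=1$ when $\{u,v\}\in H$ and $d_H(u,v)\ge 2D+2$ otherwise, so the two regimes are separated by a factor strictly larger than the distortion $D$. If $f_H\colon V\to X_H$ is a distortion-$D$ embedding of $(V,d_H)$ into a $k$-dimensional normed space $X_H$, then the edges of $H$ are \emph{exactly} the pairs $\{u,v\}\in E$ with $\|f_H(u)-f_H(v)\|_{X_H}\le D+1$; the subgraph $H$ is recoverable from $f_H$ by thresholding, so the $2^{|E|}$ spanning subgraphs of $G$ yield pairwise distinct metrics on $V$, each of which, were the theorem to fail at the chosen $k$, would admit a distortion-$D$ embedding into some $k$-dimensional normed space from which $H$ is uniquely readable.

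\textbf{Upper bound via Milnor--Thom; the main obstacle.} To count the number of such embeddings, I would reduce the target to $\ell_2^k$ via John's theorem (at the cost of $\sqrt{k}$ in distortion, absorbed upfront by enlarging the required girth to $g\gtrsim D\sqrt{k}$), identify the embedding with a point $F\in\R^{kn}$, and record the sign of the $|E|$ quadratic polynomials $\|F(u)-F(v)\|_2^{\,2}-\tau^2$ for $\{u,v\}\in E$, with a common threshold $\tau$. The Milnor--Thom--Warren theorem then bounds the number of realizable sign patterns by $\bigl(C|E|/(kn)\bigr)^{O(kn)}\le n^{O(kn)}$; comparing to the $2^{|E|}\ge 2^{n^{1+\kappa/D}}$ distinct subgraphs forces $|E|\lesssim kn\log n$, whence $k\gtrsim |E|/(n\log n)\gtrsim_D n^{c/D}$ for a universal $c>0$. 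The principal obstacle is precisely this $\sqrt{k}$ loss from John's theorem: it couples the required girth to $k$, which by the Moore bound caps the edge count at $|E|\le n^{1+O(1/g)}$, and a na\"ive balancing degrades the final exponent from $c/D$ to something like $c/D^2$. Matou\v{s}ek's original argument circumvents this by (i) using a ``modified'' high-girth graph engineered so that the rigidity dichotomy survives distortion $D$ without needing the extra $\sqrt{k}$ factor, and (ii) parametrizing the target norm directly---for instance by an $\varepsilon$-net in the Banach--Mazur compactum of $k$-dimensional spaces, which contributes only $\exp(O(k^2))$ further equivalence classes and thus does not affect the final exponent. Implementing this refinement carefully, so as to pin the exponent at $c/D$, is the technical heart of the proof.
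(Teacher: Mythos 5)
Your route is not the paper's. The paper does not reprove Matou\v{s}ek's theorem by his counting argument; it deduces the statement from Theorem~\ref{thm:l1X}, i.e., from the average John theorem (Theorem~\ref{thm:average john}) applied to expanders via nonlinear spectral gaps --- an analytic argument that moreover yields a hard space independent of $D$ and survives weakening bi-Lipschitz distortion to average distortion. What you propose is a reconstruction of Matou\v{s}ek's original algebraic proof, which the paper explicitly contrasts with its own approach. That would be a legitimate alternative if it were complete, but it is not.

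The genuine gap is exactly the step you yourself flag as ``the technical heart'': as written, the counting does not close, and the patches you gesture at are not carried out and do not work as stated. Reducing to $\ell_2^k$ by John's theorem forces the girth to satisfy $g\gtrsim D\sqrt{k}$ so that the edge/non-edge dichotomy survives in the Euclidean surrogate, and the Moore bound then caps the edge count at $|E|\le n^{1+O(1/(D\sqrt{k}))}$. Feeding this into the Warren comparison $2^{|E|}\le n^{O(kn)}$ produces a contradiction only when $k\lesssim \big(\log n/(D\log\log n)\big)^2$; that is, this version of the argument proves only a polylogarithmic lower bound (essentially Bourgain's), not $n^{c/D}$ --- the degradation is far worse than the ``$c/D$ to $c/D^2$ in the exponent'' you suggest. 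The proposed repair via an $\e$-net in the Banach--Mazur compactum also fails quantitatively: a constant-scale net of $k$-dimensional normed spaces does not have cardinality $\exp(O(k^2))$; Bronshtein-type entropy estimates for convex bodies sandwiched between $B_2^k$ and $\sqrt{k}\,B_2^k$ give a covering number exponential in $c^{k}$ for some $c>1$, which overwhelms $2^{|E|}$ precisely in the regime $k\approx n^{c/D}$ that one must rule out. Matou\v{s}ek's actual device is to eliminate the norm from the counting altogether --- for instance, replacing $\|\cdot\|_{X_H}$ by the gauge of the convex hull of the difference vectors of the short pairs, so that the threshold events become semialgebraic conditions in the $kn$ point coordinates alone, with no $\sqrt{k}$ loss and no enumeration of norms. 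Without that idea (or an equivalent one), your proposal establishes only the weaker, previously known bounds.
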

 Due to the Johnson--Lindenstrauss--Schechtman upper bound~\cite{JLS87}, Theorem~\ref{thm:matousek}  is a complete (and unexpected) answer to the aforementioned questions of Johnson--Lindenstrauss~\cite{JL84}, Bourgain~\cite{Bou85} and Arias-de-Reyna--Rodr{\'{\i}}guez-Piazza~\cite{AR92},  up to the value of the universal constant $c$. Theorem~\ref{thm:l1X}  furnishes a new resolution of these questions, via an analytic approach for deducing dimension lower bounds from rough  metric information that differs markedly from Matou\v{s}ek's algebraic argument.

 Our solution has some novel features. It shows that the spaces $\cM_n(D)$ of Theorem~\ref{thm:matousek} can actually be taken to be independent of the distortion $D$, while the construction of~\cite{Mat96} depends on $D$ (it is based on graphs whose girth is of order $D$). One could alternatively achieve this by considering the disjoint union of the spaces $\{\cM_n(2^k)\}_{k=0}^{m}$ for  $m\asymp \log n$, which is a metric space of size $O(n\log n)$.

Rather than using an ad-hoc construction (and a non-constructive existential statement) as in~\cite{Mat96}, here we specify a natural class of metric spaces, namely the shortest-path metrics on  expanders (see also Remark~\ref{rem:Qcube and others} below), for which Theorem~\ref{thm:l1X}  holds. The question of determining the metric dimension of expanders was first considered by Linial--London--Rabinovich~\cite{LLR95}. Indeed, their aforementioned lower bound $\dim_D(\cM_n)\gtrsim (\log n)^2/D^2$ was obtained when $\MM_n$ is the shortest-path metric on an $n$-vertex expander $\G=(\n,E_\G)$. This lower bound remained the best-known prior to our proof of Theorem~\ref{thm:l1X} that establishes the exponential improvement  $\dim_D(\n,d_\G)\ge n^{\eta/D}$ for some $\eta=\Omega(1)$, which is best-possible up to the value of $\eta$.

We were motivated to revisit this old question because it arose more recently in the work~\cite{ANRW16} of Andoni,  Nguyen, Nikolov, Razenshteyn and Waingarten  on approximate nearest neighbor search (NNS). They devised  an approach for proving an impossibility result for NNS that requires the existence of an $n$-vertex expander that embeds with bi-Lipschitz distortion $O(1)$ into some normed space of dimension $n^{o(1)}$. By Theorem~\ref{thm:l1X}  no such expander exists, thus resolving (negatively) a question that Andoni--Nguyen--Nikolov--Razenshteyn--Waingarten posed in~\cite[Section~1.6]{ANRW16}.

Unlike Theorem~\ref{thm:matousek}, the lower bound $\dim(X)\ge n^{\Omega(1)}$ of Theorem~\ref{thm:l1X} assumes (when the underlying graph $\G$ is an $n$-vertex expander) that the embedding has $O(1)$ average distortion rather than the worst-case control that $O(1)$ bi-Lipschitz distortion entails. In fact, we only need to assume that there is $f:\n\to X$  that preserves up to constant factors two specific distance sums, i.e., that
$\sum_{\{i,j\}\in E_\G}\|f(i)-f(j)\|_{X}\asymp \sum_{\{i,j\}\in E_\G} d_\G(i,j)$ and  $\sum_{i=1}^n\sum_{j=1}^n\|f(i)-f(j)\|_{X}\asymp \sum_{i=1}^n\sum_{j=1}^n d_\G(i,j)$.
Moreover, we also deduce the lower bound $\dim(X)\ge n^{\Omega(1)}$ from the existence of an embedding into $\ell_1(X)$ with these properties. We do not see how the algebraic technique of~\cite{Mat96} could address such issues, namely distance preservation being only on average and infinite dimensional targets.

Thus, the new approach that we devise here is both  more robust than that of~\cite{Mat96}, in the sense that it relies on significantly less stringent assumptions, and it also provides an explicit criterion (spectral gap) for intrinsic (largest-possible) high-dimensionality. Both of these features, as well as ideas within our proof,  turned out to be important for subsequent developments that occurred  since a preliminary version of the present work was posted (November 2016). Specifically, in a series of collaborations with Andoni, Nikolov, Razenshteyn and Waingarten~\cite{ANNRW18,ANNRW-FOCS18,ANNRW18-general,ANNRW18-interpolation}, we studied the algorithmic question (NNS) that Theorem~\ref{thm:l1X} resolves negatively (recall that it is a negative solution to a question that would have implied an algorithmic {\em impossibility result}). These works design NNS data structures for arbitrary high-dimensional norms that were previously believed to be unattainable. For this purpose, the  robustness of the average-case requirements in combination with our  use below of a  recently developed theory of {\em nonlinear spectral gaps} are both crucial for uncovering new structural information about general norms (a randomized hierarchical partitioning scheme that is governed by the intrinsic geometry). We refer to~\cite{ANNRW18,ANNRW-FOCS18,ANNRW18-general,ANNRW18-interpolation} and the surveys~\cite{AIR18,Nao18} for more information on these more recent  algorithmic developments which rely on the present work.

\begin{remark}\label{rem:ribe} The {\em Ribe program} aims to uncover an explicit ``dictionary'' between the local  theory of Banach spaces and general metric spaces, inspired by a rigidity theorem of Ribe~\cite{Rib76} that indicates that a dictionary of this sort should exist. See~\cite{Bou86} as well as the surveys~\cite{Kal08-survey,Nao12,Bal13,Nao18} and the monograph~\cite{Ost13} for more on this area. While much of the more recent research on dimension reduction is driven by the need to compress data, the initial motivation of the above question of~\cite{JL84} arose in the Ribe program. It is simplest to include here a direct quotation of Matou\v{s}ek's explanation in~\cite[page~334]{Mat96} for the origin of the investigations that led to his Theorem~\ref{thm:matousek}.

\blockquote{\em ...This investigation started in the context of the
local Banach space theory, where the general idea was to obtain some analogs for
general metric spaces of notions and results dealing with the structure of finite
dimensional subspaces of Banach spaces. The distortion of a mapping should
play the role of the norm of a linear operator, and the quantity $\log n$, where $n$ is
the number of points in a metric space, would serve as an analog of the dimension
of a normed space. Parts of this programme have been carried out by Bourgain,
Johnson, Lindenstrauss, Milman and others...}

Despite many previous successes of the Ribe program, not all of the questions that it raised turned out to have a positive answer (e.g.~\cite{MN13-convexity}).  Theorem~\ref{thm:matousek} is among the most extreme examples of failures of natural steps in the Ribe program, with the final answer being exponentially worse than the  predictions.  Here we provide a different derivation (and strengthening) of this phenomenon.

It is an  amusing coincidence  that while Johnson and Lindenstrauss raised~\cite[Problem~3]{JL84} as a step toward  a metric version of John's theorem (see~\cite[Problem~4]{JL84}; this was resolved by Bourgain~\cite{Bou85}, who took a completely different route than the one proposed in~\cite{JL84}), the present work finds another nonlinear version of John's theorem  and demonstrates that in fact it serves as an obstruction to the dimension reduction phenomenon that Johnson and Lindenstrauss were hoping for.
\end{remark}

\subsection{Roadmap} Section~\ref{sec:gap and duality} recalls the theory of nonlinear spectral gaps that was alluded to above. Further background on uniform convexity and smoothness, as well as background on Ball's notion of Markov type (both of which are tools for subsequent proofs) appears, respectively, in Section~\ref{sec:smoothness} and Section~\ref{sec:Mtype}. The link between nonlinear spectral gaps and  Theorem~\ref{thm:average john} is through a duality statement that we proved in~\cite{Nao14}; Section~\ref{sec:duality first}   describes a convenient enhancement of this duality which is proved  in (the mainly technical)  Section~\ref{sec:duality}. Section~\ref{sec:interpolation proof} treats complex interpolation, leading to Theorem~\ref{thm:interpolation implicit first exposure}. A  key inequality (Theorem~\ref{thm:interpolation markov}) about nonlinear spectral gaps in complex interpolation spaces appears in Section~\ref{sec: gap interpolation}. Its proof adapts an approach of~\cite{Nao14} where a similar inequality was derived; such an adaptation is required because~\cite{Nao14} relies on somewhat arbitrary choices of distance exponents, due to which we do not see how to use the  results of~\cite{Nao14} to prove Theorem~\ref{thm:average john}. The deduction of Theorem~\ref{thm:really main} (hence also its special case Theorem~\ref{thm:average john}) from Theorem~\ref{thm:interpolation markov} appears in Section~\ref{sec:proof of 9 from interpolation}. The proof of Theorem~\ref{thm:interpolation markov}, namely our main nonlinear spectral gap inequality, appears in Section~\ref{sec:proof of gap interpolation thm}, though it assumes Proposition~\ref{prop:other exponents} whose proof  is postponed to Section~\ref{sec:aux em} which is devoted to several auxiliary embedding results of independent interest. The case $\omega=1$ of Proposition~\eqref{prop:other exponents} (passing from $p$-average distortion to $q$-average distortion) was first broached in~\cite{Nao14} where a similar statement is obtained under an additional assumption that is not needed in our context, and with much (exponentially) worse dependence on $p,q$ than what we derive here; due to the basic nature of these facts and also because obtaining them is not merely a technical adaptation of~\cite{Nao14}, full proofs are included in Section~\ref{sec:revisited}. The more novel case $\omega\in (0,1)$ of  Proposition~\eqref{prop:other exponents} is based on elementary geometric reasoning; again, due to the fundamental nature of this fact (as well as its connection to longstanding open questions), we prove it in Section~\ref{sec:normalization} while taking care to  obtain good asymptotic dependence as $\omega\to 0^+$. The proof of Theorem~\ref{thm:lp version Kp} appears in Section~\ref{sec:degree}. Section~\ref{sec:impossibility} is devoted to several impossibility results, including those that were discussed above, such as Lemmas~\ref{lem:snowflake john} and~\ref{prop:p/2 sharp}.

\subsection*{Acknowledgements} I am grateful to Noga Alon, Alexandr Andoni, Emmanuel Breuillard, Ilya Razenshteyn, John Pardon, Igor Rodnianski, Gideon Schechtman, Tasos Sidiropoulos, Vijay Sridhar and Ramon van Handel for helpful discussions and feedback. I also thank the anonymous referees for their careful reading of this manuscript and their useful corrections and feedback.

Throughout the work on this project (and many others), I benefited immensely from thought-provoking, inspiring and illuminating conversations with my friend and colleague Eli Stein, who passed away as the the final revision of this work was completed. He is dearly missed.

\section{Nonlinear spectral gaps and duality}\label{sec:gap and duality}

Suppose that  $(\MM,d_\MM)$ is a metric space, $p>0$ and $n\in \N$. If $\pi\in \bigtriangleup^{\!n-1}$ and  $\A=(a_{ij})\in \M_n(\R)$ is  a stochastic and $\pi$-reversible matrix, then in analogy to~\eqref{eq:energy} one measures the magnitude of the (reciprocal of) the nonlinear spectral gap of $\A$ relative to the kernel $d_\MM^p:\MM\times \MM\to [0,\infty)$ through a quantity $\gamma(\A,d_\MM^p)\in [0,\infty]$ which is defined~\cite{MN14} as the infimum over those $\gamma\in [0,\infty]$ such that
\begin{equation}\label{eq:nonliear gap def}
\forall\, x_1,\ldots,x_n\in \MM,\qquad \sum_{i=1}^n\sum_{j=1}^n \pi_i\pi_j d_\MM(x_i,x_j)^p\le \gamma\sum_{i=1}^n\sum_{j=1}^n \pi_ia_{ij}d_\MM(x_i,x_j)^p.
\end{equation}

Even though~\eqref{eq:nonliear gap def} is analogous to~\eqref{eq:energy}, a nonlinear spectral gap can differ markedly from the usual (reciprocal of the) gap in the (linear) spectrum; see~\cite{MN14,MN15} for some of the subtleties and mysteries that arise from this generalization. As explained in~\cite{MN14}, unless $\MM$ is a singleton, if $\gamma(\A,d_\MM^p)$ is finite, then $\lambda_2(\A)$ is bounded away from $1$ by a positive quantity that depends on $\gamma(\A,d_\MM^p)$. So, the property of a matrix that is being considered here (determined by its interaction with the geometry of a metric space) is more stringent than requiring that it has a  spectral gap in the classical sense.

A quite substantial theory of nonlinear spectral gaps was developed in  a series of works,  including~\cite{Mat97,Gro03,BLMN05,IN05,Laf08,Laf09,Pis10,NS11,Kon12,MN13-bary,MN14,Nao14,MN15,Mim15,Che16,LS17,ANN18}, for several geometric applications, though many fundamental questions remain open. Establishing the utility of nonlinear spectral gaps to the results presented in the Introduction is a key conceptual contribution of the present work, and this underlies the algorithmic applications that were  developed  in~\cite{ANNRW18,ANNRW-FOCS18,ANNRW18-general,ANNRW18-interpolation}.

\begin{remark}\label{rem:super-expander} Fix $\Delta\in \N$. A sequence of $\Delta$-regular  graphs $\{\G_n=(V_n,E_n)\}_{n=1}^\infty$ is an expander with respect to a metric space $(\MM,d_\MM)$ if $\lim_{n\to \infty} |V_n|=\infty$ and $\sup_{n\in \N}\gamma(\A_{\G_n},d_\MM^2)<\infty$, where we recall that $\A_{\G_n}$ is the normalized adjacency matrix of $\G_n$. $\{\G_n\}_{n=1}^\infty$  is called a {\em super-expander} if it is an expander with respect to {\em every} superreflexive Banach space. It is a major open problem if a sequence $\{\G_n\}_{n=1}^\infty$ of  bounded-degree regular graphs  is a super-expander whenever $\sup_{n\in \N} 1/(1-\lambda_2(\G_n))<\infty$, i.e., when $\{\G_n\}_{n=1}^\infty$ is an expander in the classical sense. If Question~\ref{Q:superreflexive} had a positive answer, then any classical expander would  be a super-expander. Indeed, let $(X,\|\cdot\|_X)$ be a superreflexive Banach space. It suffices to prove that for every regular  graph $\G=(\n,E_\G)$ we have
\begin{equation}\label{eq:desired gamma omega}
\gamma\big(\A_\G,\|\cdot\|_{\!X}^{2\phantom{p}}\!\big)\lesssim_X \bigg( \frac{1}{1-\lambda_2(\G)}\bigg)^{\!\!\frac{1}{\omega(X)}}.
\end{equation}
To establish~\eqref{eq:desired gamma omega}, by the hypothesized positive answer to Question~\ref{Q:superreflexive}, there are $\omega(X)\in (0,1]$ and $D(X)\in [1,\infty)$ such that the $\omega(X)$-snowflake of $X$ embeds with average distortion $D(X)$ into $\ell_2$. Proposition~\ref{prop:other exponents} with parameters  $\omega=\omega(X)$, $D=D(X)$, $p=1$ and $q=2/\omega(X)$ shows that there exists $D'(X)\in [1,\infty)$ such that the $\omega(X)$-snowflake of $X$ embeds with $(2/\omega(X))$-average distortion $D'(X)$ into $\ell_2$. If $x_1,\ldots,x_n\in X$, then an application of this conclusion to the uniform measure on $\{x_1,\ldots,x_n\}$ provides and embedding $f:X\to \ell_2$ which is $\omega(X)$-H\"older with constant $D'(X)$ and
\begin{equation}\label{eq:2/omegaX}
\frac{1}{n^2}\sum_{i=1}^n\sum_{j=1}^n \|f(x_i)-f(x_j)\|_{\ell_2}^{\!\frac{2}{\omega(X)}}\ge \frac{1}{n^2}\sum_{i=1}^n\sum_{j=1}^n \|x_i-x_j\|_{\!X}^{2\phantom{p}}.
\end{equation}
By~\cite[Lemma~5.5]{BLMN05} (see also~\eqref{eq:L 2} below), there exists a universal constant $C\in (0,\infty)$ such that
\begin{equation}\label{eq:use BLMN}
\frac{1}{n^2}\sum_{i=1}^n\sum_{j=1}^n \|f(x_i)-f(x_j)\|_{\ell_2}^{\!\frac{2}{\omega(X)}}\le \bigg( \frac{C}{\omega(X)\sqrt{1-\lambda_2(\G)}}\bigg)^{\!\!\frac{2}{\omega(X)}}\frac{1}{|E_\G|}\sum_{\{i,j\}\in E_\G} \|f(x_i)-f(x_j)\|_{\ell_2}^{\!\frac{2}{\omega(X)}}.
\end{equation}
The fact that $f$ is $\omega(X)$-H\"older with constant $D'(X)$ gives
\begin{equation}\label{eq:D'}
\frac{1}{|E_\G|}\sum_{\{i,j\}\in E_\G} \|f(x_i)-f(x_j)\|_{\ell_2}^{\!\frac{2}{\omega(X)}} \le  \frac{D'(X)^{\frac{2}{\omega(X)}}}{|E_\G|}\sum_{\{i,j\}\in E_\G} \|x_i-x_j\|_{\!X}^{2\phantom{p}}.
\end{equation}
By substituting~\eqref{eq:D'} into~\eqref{eq:use BLMN}, and then substituting the resulting inequality into~\eqref{eq:2/omegaX}, we see that
$$
\frac{1}{n^2}\sum_{i=1}^n\sum_{j=1}^n \|x_i-x_j\|_{\!X}^{2\phantom{p}}\le \bigg( \frac{CD'(X)}{\omega(X)\sqrt{1-\lambda_2(\G)}}\bigg)^{\!\!\frac{2}{\omega(X)}}\frac{1}{|E_\G|}\sum_{\{i,j\}\in E_\G} \|x_i-x_j\|_{\!X}^{2\phantom{p}}.
$$
Recalling~\eqref{eq:nonliear gap def}, since this holds for every $x_1,\ldots,x_n\in X$, the justification of~\eqref{eq:desired gamma omega} is complete.
\end{remark}

Prior to explaining how nonlinear spectral gaps relate to the embedding results that we stated in the Introduction, were recall some terminology and notation that will be used in what follows.

\subsection{Uniform convexity and smoothness}\label{sec:smoothness} Let $(X,\|\cdot\|_{\!X}^{\phantom{p}})$ be a normed space and fix $p,q\ge 1$ satisfying $ p\le2\le q$. In the Introduction we recalled the traditional definitions of when it is said that $X$ has moduli of smoothness and convexity of power type $p$ and $q$, respectively. However, it is often convenient to work with an equivalent formulation of these properties due to Ball, Carlen and Lieb~\cite{BCL94} (inspired by contributions of Pisier~\cite{Pis75} and Figiel~\cite{Fig76}), which we shall now recall.

The $p$-smoothness constant of $X$, denoted $\mathscr{S}_p(X)$, is the infimum over those $\mathscr{S}\in [1,\infty]$ such that
\begin{equation}\label{eq:def smoothness}
\forall\, x,y\in X,\qquad \frac{\|x+y\|_{\!X}^p+\|x-y\|_{\!X}^p}{2}-\|x\|_{\!X}^p\le\mathscr{S}^p\|y\|_{\!X}^p.
\end{equation}
By the triangle inequality we always have $\mathscr{S}_1(X)=1$. The $q$-convexity constant of $X$, denoted $\mathscr{K}_q(X)$, is the infimum over those $\mathscr{K}\in [1,\infty]$ such that
$$
\forall\, x,y\in X,\qquad \|y\|_{\!X}^q\le \mathscr{K}^q \bigg(\frac{\|x+y\|_{\!X}^q+\|x-y\|_{\!X}^q}{2}-\|x\|_{\!X}^q\bigg).
$$

As shown in~\cite{BCL94},  $X$ has moduli of smoothness and convexity of power type $p$ and $q$, respectively, if and only if $\mathscr{S}_p(X)<\infty$ and $\mathscr{K}_q(X)<\infty$, respectively. It is beneficial to work with the coefficients $\mathscr{S}_p(X), \mathscr{K}_q(X)$ rather than the aforementioned classical moduli $\d_X,\rho_X$ because they are well-behaved with respect to basic operations, an example of which is the duality $\mathscr{K}_{p/(p-1)}(X^*)=\mathscr{S}_p(X)$, as shown in~\cite{BCL94}. Another example that is directly relevant to the present work is their especially clean behavior under complex interpolation; see Section~\ref{sec:track}  below. Further useful properties of these parameterizations of uniform convexity and uniform smoothness can be found in~\cite[Section~6.2]{MN14}.

\subsection{Markov type}\label{sec:Mtype} Following Ball~\cite{Bal92}, a metric space   $(\MM,d_\MM)$ is said to have Markov type  $p\ge 1$ if there exists $M\ge 1$ with the following property. Suppose that $n\in \N$ and $\pi\in \bigtriangleup^{\!n-1}$.  Then, for every stochastic and $\pi$-reversible matrix $\A=(a_{ij})\in \M_n(\R)$, every $x_1,\ldots,x_n\in X$ and every $s\in \N$,
\begin{equation}\label{eq:Mtype def}
\bigg(\sum_{i=1}^n\sum_{j=1}^n \pi_i (\A^{\!s})_{ij} d_\MM(x_i,x_j)^p\bigg)^{\!\!\frac{1}{p}}\le Ms^{\frac{1}{p}} \bigg(\sum_{i=1}^n\sum_{j=1}^n \pi_i a_{ij} d_\MM(x_i,x_j)^p\bigg)^{\!\!\frac{1}{p}}.
\end{equation}
The infimum over those $M\ge 1$ which satisfy~\eqref{eq:Mtype def} is called the Markov type $p$ constant of $\MM$, and is denoted $\mathbf{M}_p(\MM)$. This  nomenclature arises from a natural probabilistic interpretation~\cite{Bal92} of~\eqref{eq:Mtype def} in terms of how stationary reversible Markov chains interact with the geometry of $\MM$. We omit this description since it will not be needed below, though it is very important for other applications.

The following theorem, due\footnote{Formally,  \cite[Theorem~2.3]{NPSS06} asserts that $\mathbf{M}_p(X)\lesssim_p \mathscr{S}_p(X)$ with the implicit constant  tending to $\infty$ as $p\to 1^+$. However, \cite[Theorem~4.3]{Nao14} adjusts the martingale argument of~\cite{NPSS06} so as to make that implicit constant universal.} to~\cite[Theorem~2.3]{NPSS06}, will be used in the proof of Theorem~\ref{thm:really main}.
\begin{theorem}\label{thm:quote NPSS} For every $p\in [1,2]$, every Banach space $(X,\|\cdot\|_{\!X}^{\phantom{p}})$ with $\mathscr{S}_p(X)<\infty$ satisfies
\begin{equation}\label{eq:markov type in theorem}
\mathbf{M}_p(X)\lesssim \mathscr{S}_p(X).
\end{equation}
\end{theorem}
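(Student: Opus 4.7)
The plan is to prove~\eqref{eq:markov type in theorem} by the now-classical martingale method, combined with a forward--backward symmetrization that exploits $\pi$-reversibility. Fix $x_1,\ldots,x_n\in X$ and a stationary $\pi$-reversible Markov chain $(Z_t)_{t=0}^s$ with transition matrix $\A$ and $Z_0\sim\pi$. By stationarity the $p$-th powers of the two sides of~\eqref{eq:Mtype def} equal $\mathbb{E}\|x_{Z_s}-x_{Z_0}\|_X^p$ and $M^p\,s\,\mathbb{E}\|x_{Z_1}-x_{Z_0}\|_X^p$ respectively, so the task reduces to proving the diffusive bound
\[
\mathbb{E}\|x_{Z_s}-x_{Z_0}\|_X^p\;\lesssim\;\mathscr{S}_p(X)^p\,s\,\mathbb{E}\|x_{Z_1}-x_{Z_0}\|_X^p.
\]

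\emph{Forward/backward Doob decomposition.} Set $\mathcal F_t=\sigma(Z_0,\ldots,Z_t)$ and write $x_{Z_t}-x_{Z_0}=(M_t^F-M_0^F)+A_t^F$, where $M^F$ is the Doob martingale with increments $x_{Z_r}-\mathbb{E}[x_{Z_r}\mid\mathcal F_{r-1}]$ and $A_t^F=\sum_{r=1}^t((\A x)_{Z_{r-1}}-x_{Z_{r-1}})$ is the predictable drift. By $\pi$-reversibility the time-reversed process $(Z_{s-t})_{t=0}^s$ is equidistributed with $(Z_t)_{t=0}^s$, so an analogous decomposition yields a second martingale $M^B$ and drift $A^B$ satisfying $A_s^F-A_s^B=h(Z_0)-h(Z_s)$, where $h(i):=(\A x)_i-x_i=\mathbb{E}[x_{Z_1}-x_{Z_0}\mid Z_0=i]$. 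Averaging the two decompositions yields
\[
2(x_{Z_s}-x_{Z_0})=(M_s^F-M_0^F)-(M_s^B-M_0^B)+\bigl(h(Z_0)-h(Z_s)\bigr),
\]
and conditional Jensen gives $\mathbb{E}\|h(Z_j)\|_X^p\le \mathbb{E}\|x_{Z_1}-x_{Z_0}\|_X^p$ for $j\in\{0,s\}$, so the boundary term contributes only an $O(1)$ multiple of the right-hand side of the target bound.

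\emph{Martingale estimate.} The two-term inequality~\eqref{eq:def smoothness} defining $\mathscr{S}_p(X)$ iterates, through conditioning on the preceding $\sigma$-algebra, to a Pisier-type martingale inequality
\[
\mathbb{E}\|N_s-N_0\|_X^p\;\lesssim\;\mathscr{S}_p(X)^p\sum_{t=1}^s\mathbb{E}\|N_t-N_{t-1}\|_X^p
\]
valid for every $X$-valued martingale $\{N_t\}_{t=0}^s$; this is the standard Ball--Carlen--Lieb/Pisier formalism. Applied to $M^F$, the $t$-th increment equals $(x_{Z_t}-x_{Z_{t-1}})-\mathbb{E}[x_{Z_t}-x_{Z_{t-1}}\mid\mathcal F_{t-1}]$, so convexity, conditional Jensen, and stationarity yield $\mathbb{E}\|M_t^F-M_{t-1}^F\|_X^p\le 2^p\,\mathbb{E}\|x_{Z_1}-x_{Z_0}\|_X^p$; the same estimate holds for $M^B$. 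Summing over $t\in\{1,\ldots,s\}$ and substituting into the averaged identity above produces the required diffusive bound, completing the proof.

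\emph{Main obstacles.} The crux is that a naive triangle inequality on the drift $A_s^F$ would cost a factor of order $s$ rather than the correct $s^{1/p}$, wrecking the Markov-type exponent; the reversibility-driven cancellation $A_s^F-A_s^B=h(Z_0)-h(Z_s)$ is what rescues the argument, and it is exactly where $\pi$-reversibility of $\A$ (rather than mere stochasticity) is used in an essential way. A secondary delicate point, needed to secure the \emph{universal} (not $p$-dependent) constant as $p\to 1^+$ claimed in the footnote to the theorem, is that Pisier's martingale inequality must be applied through the averaged two-term form~\eqref{eq:def smoothness} of $p$-smoothness rather than through the classical modulus $\rho_X$; this is the refinement of~\cite{NPSS06} supplied by~\cite[Theorem~4.3]{Nao14} that the footnote alludes to.
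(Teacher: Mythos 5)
Your argument is correct, and it is essentially a reconstruction of the proof that the paper itself only cites: Theorem~\ref{thm:quote NPSS} is quoted from~\cite{NPSS06} as refined by~\cite[Theorem~4.3]{Nao14}, and those references proceed exactly via the forward/backward Doob decomposition whose drifts cancel by reversibility to $h(Z_0)-h(Z_s)$, followed by the martingale inequality derived from the two-point form~\eqref{eq:def smoothness} (after the standard symmetrization of a mean-zero increment by an independent copy and a Rademacher sign, which is what lets~\eqref{eq:def smoothness} iterate to general martingale differences at the cost of a factor $2$). You also correctly identify the two genuinely delicate points — the reversibility-driven cancellation that saves the exponent $s^{1/p}$, and the use of the averaged two-term smoothness inequality rather than the modulus $\rho_X$ to keep the constant universal as $p\to 1^+$ — so there is nothing to add.
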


\subsection{Duality, compactness and H\"older extension}\label{sec:duality first} The connection between nonlinear spectral gaps and Theorem~\ref{thm:interpolation implicit first exposure} is through Theorem~\ref{thm:full duality} below. For the first part of its statement, we refer to~\cite{Hei80} for background on ultrapowers of Banach spaces. It suffices to say here that to each Banach space $(Z,\|\cdot\|_{Z})$ one associates a (huge) Banach space $Z^\mathscr{U}$, called an ultrapower of $Z$, that has valuable compactness properties. $Z$ is canonically isometric to a subspace of $Z^\mathscr{U}$,  and  any finite-dimensional linear subspace of $Z^\mathscr{U}$ embeds into $Z$ with bi-Lipschitz distortion $1+\e$ for any $\e>0$. Thus,  $Z$ is essentially indistinguishable from any of its ultrapowers in terms of their finitary substructures.  Due to Corollary~\ref{coro:banach space version with smoothness} below, if one does not mind losing a constant factor that depends only on the moduli of uniform convexity and uniform smoothness, then one could drop all mention of ultrapowers in the ensuing discussion, and work  throughout with the classical sequence space $\ell_q(Z)$ instead.

\begin{theorem}\label{thm:full duality} Suppose that $p,q,\mathscr{C}\ge 1$ and $p\le q$. Let $(\MM,d_\MM)$ be a metric space and $(Y,\|\cdot\|_Y)$ be a Banach space such  that for every $n\in \N$, every symmetric stochastic matrix $\A\in \M_n(\R)$ satisfies
\begin{equation}\label{eq:gamma criterion}
\gamma(\A,d_\MM^p)\le \mathscr{C}\gamma(\A,\|\cdot\|_{\!Y}^q).
\end{equation}
  Then, the $\frac{p}{q}$-snowflake of $\MM$ embeds into some ultrapower of $\ell_q(Y)$ with $q$-average distortion $2\mathscr{C}^{\frac{1}{q}}$.

Furthermore, if in addition to the above assumption $\MM$ has Markov type $p$ and the modulus of uniform convexity of $Y$ has power type $q$, then there exists $D\ge 1$ satisfying
\begin{equation*}
D\lesssim \mathbf{M}_p(\MM)^{\!\frac{p}{q}}\mathscr{K}_q(Y)\mathscr{C}^{\frac{1}{q}},
\end{equation*}
such that the $\frac{p}{q}$-snowflake of $\MM$ embeds into $\ell_q(Y)$ with $q$-average distortion $D$.
\end{theorem}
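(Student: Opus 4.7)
My plan is a two-step duality-then-descent argument, echoing the general framework of \cite{Nao14}. For the first conclusion, I would first reduce, using separability of $\MM$ and an ultrafilter assembly along finite subsets, to the following finite problem: for any $n\in\N$, any $x_1,\dots,x_n\in\MM$, and any $\pi\in\nsimplex$, produce a map $f\colon\{x_1,\dots,x_n\}\to \ell_q(Y)$ satisfying $\|f(x_i)-f(x_j)\|_{\ell_q(Y)}^q \le (2\mathscr{C}^{1/q})^q\, d_\MM(x_i,x_j)^p$ for all $i,j$, together with the average lower bound $\sum_{i,j}\pi_i\pi_j\|f(x_i)-f(x_j)\|_{\ell_q(Y)}^q \ge \sum_{i,j}\pi_i\pi_j d_\MM(x_i,x_j)^p$. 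Gluing these maps along a nonprincipal ultrafilter indexed by growing finite subsets of $\MM$ then gives a map $\MM\to \ell_q(Y)^{\mathscr{U}}$ that witnesses the desired $q$-average distortion for any prescribed Borel probability measure.

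The finite problem is attacked through LP duality over the convex cone
$$W = \bigl\{m=(m_{ij})_{i,j=1}^n : m_{ij}=\|y_i-y_j\|_{\ell_q(Y)}^q\text{ for some }y_1,\dots,y_n\in\ell_q(Y)\bigr\}.$$
This is genuinely a convex cone, because $\ell_q(Y)\oplus_q\ell_q(Y)$ is isometric to $\ell_q(Y)$ and positive scalings are accommodated by scaling the $y_i$. Writing $D_{ij}=d_\MM(x_i,x_j)^p$, the existence of a finite map with distortion $L$ is equivalent to the existence of $m\in W$ with $m_{ij}\le L^q D_{ij}$ for all $i,j$ and $\sum_{i,j}\pi_i\pi_j m_{ij}\ge\sum_{i,j}\pi_i\pi_j D_{ij}$. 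Writing $\max_{i,j}m_{ij}/D_{ij}$ as a supremum over the simplex of coefficients $(\lambda_{ij})$ and applying Sion's minimax theorem (after restricting $m$ to the level set $\{m\in W:\max_{i,j}m_{ij}/D_{ij}\le 1\}$, on which the required compactness and convexity hold), one identifies the infimum of admissible $L^q$ with a supremum of ratios $\gamma(\A,d_\MM^p)/\gamma(\A,\|\cdot\|_Y^q)$ taken over symmetric stochastic $\A$, obtained from the dual simplex variable after a standard symmetrization and row-normalization. Hypothesis~\eqref{eq:gamma criterion} then bounds this supremum by $\mathscr{C}$, giving the required distortion; the numerical constant $2$ is paid in the symmetrization step, where a nonnegative symmetric dual matrix is replaced by a symmetric stochastic one.

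For the second conclusion, I would upgrade the ultrapower-valued embedding to an $\ell_q(Y)$-valued one by exploiting the Markov type $p$ of $\MM$ and the $q$-uniform convexity of $\ell_q(Y)$ (the latter inherited from $Y$ via \cite{Fig76}). The idea is that the $(p/q)$-H\"older map $f\colon\MM\to \ell_q(Y)^{\mathscr{U}}$ produced above can be projected back into $\ell_q(Y)$ itself by averaging: one replaces $f(x)$ by a $q$-barycenter (in $\ell_q(Y)$) of random-walk iterates of $f$ starting at $x$, and uses Markov type $p$ to control the cumulative pairwise distortion of the walks on $\MM$ along with $\mathscr{K}_q(Y)$ to control the $q$-contraction of the averaging operator. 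This mechanism is the same as in the Ball--Naor type H\"older extension theorems with uniformly convex targets; the exponent $p/q$ on $\mathbf{M}_p(\MM)$ reflects that the source is a $(p/q)$-snowflake and must be rebalanced against the $q$-convexity of the target.

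The main obstacle is the duality identification in the second paragraph: arranging for the dual optimum to be exactly a supremum of ratios $\gamma(\A,d_\MM^p)/\gamma(\A,\|\cdot\|_Y^q)$ with $\A$ symmetric stochastic, rather than with a merely nonnegative symmetric matrix, requires a careful symmetrization and row-sum homogenization, which is where the factor $2$ is lost. A secondary subtlety is justifying the minimax exchange over the unbounded cone $W$, handled by the initial restriction to a bounded level set on which the hypotheses of Sion's theorem apply. In Step 3 the $p<q$ mismatch is the delicate point: without Markov type one could not convert the ultrapower-valued map into a genuinely $\ell_q(Y)$-valued map with the stated dependence on $\mathbf{M}_p(\MM)$ and $\mathscr{K}_q(Y)$.
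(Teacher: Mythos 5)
Your first step and your duality step together reconstruct, in outline, the route the paper actually takes: the finite statement you want to prove by separating over the cone $W$ is exactly \cite[Theorem~1.3]{Nao14}, which the paper quotes as a black box (Theorem~\ref{thm:quote duality}), and the passage from finitely supported rational measures to arbitrary Borel measures together with the ultrafilter gluing is the content of the compactness Lemma~\ref{lem:reduction to finite rational}. Your minimax sketch is plausible but leaves the genuinely delicate points unaddressed (closedness of the level set of $W$ needed for the separation, and the symmetrization/row-normalization that turns the dual certificate into a symmetric stochastic matrix); since these are precisely the content of the cited result, this part is acceptable, if not self-contained.

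The second part is where there is a real gap. You propose to first build the $(p/q)$-H\"older map $f\colon\MM\to\ell_q(Y)^{\mathscr{U}}$ and then ``descend'' it into $\ell_q(Y)$ by taking $q$-barycenters of random-walk iterates. No mechanism of this kind is given, and the natural candidates fail: since $Y$ is uniformly convex, hence reflexive, there is a norm-one linear projection $\ell_q(Y)^{\mathscr{U}}\to\ell_q(Y)$ (weak limit along $\mathscr{U}$), but being $1$-Lipschitz it can only shrink distances and therefore destroys the lower bound $\iint\|f(x)-f(y)\|^q\,d\mu\,d\mu\ge\iint d_\MM^p\,d\mu\,d\mu$, which is the whole point of the embedding; Markov type of $\MM$ gives no way to re-inflate averages after such a contraction. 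The correct use of $\mathbf{M}_p(\MM)$ and $\mathscr{K}_q(Y)$ is different and happens \emph{before} any limit or ultrapower enters: the duality step already produces, for each finite (rational) measure, a Lipschitz map of its support into $\ell_q(Y)$ itself, and one extends each such finite map to a globally $(p/q)$-H\"older map $\MM\to\ell_q(Y)$ using Ball's extension theorem, i.e.\ the bound $\ee^{p/q}(\MM,\ell_q(Y))\lesssim \mathbf{M}_p(\MM)^{p/q}\mathscr{K}_q(\ell_q(Y))$ (Theorem~\ref{thm:holder extension}, with $\mathscr{K}_q(\ell_q(Y))\lesssim\mathscr{K}_q(Y)$ by \cite{Fig76}); extension costs exactly the factor $\mathbf{M}_p(\MM)^{p/q}\mathscr{K}_q(Y)$ in the statement, and the limiting argument of Lemma~\ref{lem:reduction to finite rational} then stays inside $\ell_q(Y)$ with no ultrapower. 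Whether one can avoid this loss and descend directly from the ultrapower is stated in the paper as an open problem, so your Step~3, as written, is asserting something that is not known.
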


The following corollary is a combination of the second assertion of Theorem~\ref{thm:full duality} and Theorem~\ref{thm:quote NPSS}.

\begin{corollary}\label{coro:banach space version with smoothness} Suppose that $p,q,\mathscr{C}\ge 1$ and $p\le 2\le q$. Let $(X,\|\cdot\|_X)$ and $(Y,\|\cdot\|_Y)$ be Banach spaces whose moduli of uniform smoothness and uniform convexity have power type $p$ and $q$, respectively. Suppose also that for every $n\in \N$ and every symmetric stochastic matrix $\A\in \M_n(\R)$ we have
\begin{equation*}
\gamma(\A,\|\cdot\|_{\!X}^p)\le \mathscr{C}\gamma(\A,\|\cdot\|_{\!Y}^q).
\end{equation*}
Then, the $\frac{p}{q}$-snowflake of $X$ embeds into $\ell_q(Y)$ with $q$-average distortion $D$, where
\begin{equation}\label{eq:D upper with extension}
D\lesssim \mathscr{S}_p(X)^{\frac{p}{q}}\mathscr{K}_q(Y)\mathscr{C}^{\frac{1}{q}}.
\end{equation}
\end{corollary}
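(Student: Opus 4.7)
The plan is to read Corollary~\ref{coro:banach space version with smoothness} as a direct combination of the two results it is advertised to rest upon, namely the second (quantitative, non-ultrapower) assertion of Theorem~\ref{thm:full duality} applied to the metric space $\MM=X$ together with the Markov type estimate of Theorem~\ref{thm:quote NPSS}. Both hypotheses needed for these results are present: $X$ is, in particular, a metric space, $Y$ is a Banach space whose modulus of uniform convexity has power type $q$, and the spectral gap comparison~\eqref{eq:gamma criterion} holds by assumption with $d_\MM=\|\cdot\|_X$. So the only missing ingredient is an estimate on $\mathbf{M}_p(X)$.

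First I would invoke Theorem~\ref{thm:quote NPSS}. Since $p\in [1,2]$ and $\mathscr{S}_p(X)<\infty$, that theorem provides a universal-constant bound
\[
\mathbf{M}_p(X)\lesssim \mathscr{S}_p(X).
\]
Next I would apply the second assertion of Theorem~\ref{thm:full duality} with $\MM=X$, $d_\MM=\|\cdot\|_X$ and the same $p,q,\mathscr{C}$: the hypothesis~\eqref{eq:gamma criterion} is exactly the standing assumption of the corollary, the Markov type $p$ hypothesis on $\MM=X$ has just been verified, and the power-type $q$ uniform convexity of $Y$ is part of the assumption. The conclusion is that the $\frac{p}{q}$-snowflake of $X$ embeds into $\ell_q(Y)$ with $q$-average distortion
\[
D\lesssim \mathbf{M}_p(X)^{\frac{p}{q}}\mathscr{K}_q(Y)\mathscr{C}^{\frac{1}{q}}.
\]
Substituting the Markov type bound yields
\[
D\lesssim \mathscr{S}_p(X)^{\frac{p}{q}}\mathscr{K}_q(Y)\mathscr{C}^{\frac{1}{q}},
\]
which is exactly the desired inequality~\eqref{eq:D upper with extension}.

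There is no genuine obstacle here: the corollary is a bookkeeping consequence of two black-box inputs, and the only thing to be careful about is that the exponent on $\mathbf{M}_p(X)$ coming out of Theorem~\ref{thm:full duality} is $p/q$ (not $1$), so that after substituting $\mathbf{M}_p(X)\lesssim \mathscr{S}_p(X)$ the final dependence on smoothness is $\mathscr{S}_p(X)^{p/q}$. All the real work has already been done in the proofs of Theorem~\ref{thm:full duality} and Theorem~\ref{thm:quote NPSS}; the corollary's role is only to repackage them in a form that refers solely to the Banach-space parameters $\mathscr{S}_p(X)$ and $\mathscr{K}_q(Y)$, which is the form needed for the subsequent deduction of Theorem~\ref{thm:really main} (and hence Theorem~\ref{thm:average john}) via the complex-interpolation estimates of Section~\ref{sec:interpolation proof}.
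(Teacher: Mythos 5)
Your proposal is correct and is exactly the paper's argument: the paper states that the corollary "is a combination of the second assertion of Theorem~\ref{thm:full duality} and Theorem~\ref{thm:quote NPSS}," which is precisely the substitution $\mathbf{M}_p(X)\lesssim \mathscr{S}_p(X)$ into the bound $D\lesssim \mathbf{M}_p(\MM)^{p/q}\mathscr{K}_q(Y)\mathscr{C}^{1/q}$ that you carry out. No further comment is needed.
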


Theorem~\ref{thm:full duality} is deduced in Section~\ref{sec:duality}  as a formal consequence of~\cite[Theorem~1.3]{Nao14}, which was proved by a duality argument and implies the first assertion of Theorem~\ref{thm:full duality} for average distortion embeddings of finite subsets of $X$. Those who are only interested in our application to metric dimension reduction could therefore skip Section~\ref{sec:duality}, since for this finitary application one could use~\cite[Theorem~1.3]{Nao14} as a ``black box.'' Theorem~\ref{thm:full duality} is needed only for the full embedding statements in the Introduction, which treat arbitrary Borel measures and require that the embeddings have a controlled Lipschitz constant on all of $X$ rather than only on the support of the given measure.

 The deduction of Theorem~\ref{thm:full duality} from~\cite[Theorem~1.3]{Nao14} amounts to a somewhat tedious but quite straightforward compactness argument, combined with a deep H\"older extension theorem that we use for the second assertion of Theorem~\ref{thm:full duality}, namely to obtain an embedding into $\ell_q(Y)$. It remains open if a loss as in~\eqref{eq:D upper with extension} of a multiplicative factor that depends on the data $p,q,\mathscr{S}_p(X),\mathscr{K}_q(Y)$  is needed if one wishes to obtain an embedding into $\ell_q(Y)$ rather than into its ultrapower.

\begin{remark} A version of Theorem~\ref{thm:full duality} is available in which the target $Y$ need not be a Banach space, but for that purpose further background in metric geometry is required (see~\cite{MN13-bary,Nao14}; the pertinent concepts are metric Markov cotype $q$ and $\mathsf{W}_q$-barycentric spaces). We omit the discussion since its treatment in full generality will lead to a needlessly lengthy digression.
\end{remark}

\section{Complex interpolation}\label{sec:interpolation proof} We briefly present background on the vector-valued complex interpolation method of Calder\'on~\cite{Cal64} and Lions~\cite{Lio60}; an extensive treatment can be found in e.g.~\cite[Chapter~4]{BL76}.
A pair of Banach spaces $(X,\|\cdot\|_X), (Z,\|\cdot\|_Z)$ over the complex scalars $\C$ is said to be a compatible pair of Banach spaces if they are both subspaces of a complex linear space $W$ with $X+Z=W$. The space $W$ is a complex Banach space under the norm $\|w\|_W=\inf\{\|x\|_X+\|z\|_Z:\ (x,z)\in X\times Z\ \mathrm{and}\ x+z=w\}$. Let $\mathcal{F}(X,Z)$ denote the space of all bounded continuous functions $\psi:\{\zeta\in \C:\ 0\le \Re(\zeta)\le 1\}\to  W$ that are analytic on  $\{\zeta\in \C:\ 0<\Re(\zeta)<1\}$, such that for all $t\in \R$ we have $f(ti)\in X$ and $f(1+ti)\in Z$, the mappings $t\mapsto  f(ti)$ and $t\mapsto  f(1+ti)$ are continuous relative to the norms $\|\cdot\|_X$ and $\|\cdot\|_Z$, respectively, and $\lim_{|t|\to \infty} \|f(ti)\|_X=\lim_{|t|\to \infty} \|f(1+ti)\|_Z=0$. To each $\theta\in [0,1]$ one associates  as follows a Banach space $[X,Z]_\theta$. The underlying vector space is $\{\psi(\theta):\ \psi\in \mathcal{F}(X,Z)\}\subset W$, and the norm of $w\in [X,Z]_\theta$ is $\|w\|_{[X,Z]_\theta}=\inf_{\{\psi\in \mathcal{F}(X,Z):\ \psi(\theta)=w\}}\max\{\sup_{t\in \R} \|\psi(ti)\|_{X},\sup_{t\in \R}\|\psi(1+ti)\|_Z\}$. This turns $[X,Z]_\theta$ into a Banach space. By~\cite[Theorem~4.2.1]{BL76} we have $[X,X]_\theta=X$ for  $\theta\in [0,1]$.

By~\cite[Theorem~4.2.1]{BL76}, if $X\cap Z$ is dense in both $X$ and $Z$, then  $[X,Z]_0=X$ and  $[X,Z]_1=Z$. In what follows, whenever we say that $(X,\|\cdot\|_X), (Z,\|\cdot\|_Z)$ is a compatible pair of Banach spaces we will tacitly  assume that $X\cap Z$ is dense in both $X$ and $Z$, thus ensuring that  $\{[X,Z]_\theta\}_{\theta\in [0,1]}$ is a one-parameter family of Banach spaces starting at $X$   and terminating at $Z$.

The {\em reiteration theorem}~\cite[Section~12.3]{Cal64} (see also~\cite{Cwi78} and the exposition in~\cite[Section~4.6]{BL76}) asserts that if $(X,\|\cdot\|_X),(Z,\|\cdot\|_Z)$ is a compatible pair of complex Banach spaces, then
\begin{equation}\label{eq:reiteration}
\forall\, \alpha,\beta,\theta\in [0,1],\qquad \big[[X,Z]_\alpha,[X,Z]_\beta\big]_\theta=[X,Z]_{(1-\theta)\alpha+\theta\beta}.
\end{equation}
The equality in~\eqref{eq:reiteration} means that the corresponding spaces  are linearly isometric (over $\C$). Going forward,  this is how all the ensuing equalities between complex Banach spaces are to be interpreted.

A  basic property of vector-valued complex interpolation~\cite{Lio60,Cal64}  is that if $(X,\|\cdot\|_X),(Z,\|\cdot\|_Z)$ and $(U,\|\cdot\|_U),(V,\|\cdot\|_V)$ are two compatible pairs of complex Banach spaces and $T:X\cap Z\to U\cap V$ is a linear operator that extends to a bounded linear operator from $(X,\|\cdot\|_X)$ to $(U,\|\cdot\|_U)$ and from $(Z,\|\cdot\|_Z)$ to $(V,\|\cdot\|_V)$, then the following operator norm bounds hold true.
\begin{equation}\label{eq:riesz thorin}
\forall\, \theta\in [0,1],\qquad \|T\|_{\![X,Z]_\theta\to [U,V]_\theta}^{\phantom{p}}\le \|T\|_{\!X\to U}^{1-\theta\phantom{p}}\|T\|_{\!Z\to V}^{\theta\phantom{p}}.
\end{equation}

Fix $p\ge 1$ and $n\in \N$. For a complex Banach space $(X,\|\cdot\|_X)$  and a weight $\xi:\n\to [0,\infty)$, we denote (as usual) by $L_p(\xi;X)$ the vector space $X^n$ equipped with the norm that is given by
$$
\forall(x_1,\ldots,x_n)\in X^n,\qquad \|(x_1,\ldots,x_n)\|_{L_p(\xi;X)}\eqdef \Big(\xi(1)\|x\|_{\!X}^p+\ldots+\xi(n)\|x_n\|_{\!X}^p\Big)^{\frac{1}{p}}.
$$
In particular, if $\xi(1)=\ldots=\xi(n)=1$, then $L_p(\xi;X)=\ell_p^n(X)$. Calder\'on's vector-valued version of Stein's interpolation theorem~\cite[Theorem~2]{Ste56} (see part(i) of $\S13.6$ in~\cite{Cal64} or Theorem~5.6.3 in~\cite{BL76}) asserts that if $(X,\|\cdot\|_X),(Z,\|\cdot\|_Z)$  is a compatible pair of complex Banach spaces, then for every $a,b\in [1,\infty]$, $\theta\in [0,1]$ and $\xi,\zeta:\n\to [0,\infty)$ we have
 \begin{equation}\label{eq:stein weiss}
 \left[L_a(\xi;X),L_b(\zeta;Z)\right]_\theta= L_{\frac{ab}{\theta a +(1-\theta) b}}\Big(\xi^{\frac{(1-\theta)b}{\theta a +(1-\theta) b}}\zeta^{\frac{\theta a}{\theta a +(1-\theta) b}};[X,Z]_\theta\Big).
 \end{equation}
 The special case $\xi=\zeta$ of~\eqref{eq:stein weiss}, in combination with~\eqref{eq:riesz thorin}, corresponds to the vector-valued version of the classical Riesz--Thorin interpolation theorem~\cite{Rie27,Tho48}.

\subsection{Nonlinear spectral gaps along an interpolation family}\label{sec: gap interpolation}
Our main technical result is the following theorem which (under certain geometric assumptions) controls the growth of nonlinear spectral gaps along an interpolation family $\{[X,Z]_\theta\}_{\theta\in [0,1]}$ as $\theta\to 0^+$, in terms of their value at the endpoint $\theta=1$. The relevance to Theorem~\ref{thm:really main} is through the duality of Theorem~\ref{thm:full duality}.

\begin{theorem}\label{thm:interpolation markov} There is a universal constant $\alpha\ge 1$ with the following property. Fix $\theta\in (0,1]$ and $(p,q)\in [1,2]\times [2,\infty)$. Let $(X,\|\cdot\|_X), (Z,\|\cdot\|_Z)$ be a compatible pair of complex Banach spaces. Then, for every $n\in \N$, any symmetric stochastic matrix  $\A\in \M_n(\R)$ satisfies the following inequality.
\begin{equation}\label{eq:interpolation bound for gamma}
\gamma\big(\A,\|\cdot\|_{\![X,Z]_\theta}^p\big)\le \big(\alpha\mathscr{K}_q(Z)\big)^{\!q}\cdot \frac{\mathbf{M}_p([X,Z]_\theta)^p}{\theta}\gamma(\A,\|\cdot\|_{\!Z}^q)\stackrel{\eqref{eq:markov type in theorem}}{\lesssim} \big(\alpha\mathscr{K}_q(Z)\big)^{\!q}\cdot  \frac{\mathscr{S}_p([X,Z]_\theta)^p}{\theta}\gamma(\A,\|\cdot\|_{\!Z}^q).
\end{equation}
\end{theorem}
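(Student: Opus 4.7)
The strategy is to adapt the complex-interpolation argument of~\cite{Nao14} by combining three ingredients: an iteration trick based on Markov type, Hadamard's three-lines lemma applied to an appropriate analytic family in the strip, and Ball's $q$-convexity inequality at the endpoint $Z$. The second inequality in~\eqref{eq:interpolation bound for gamma} is immediate from Theorem~\ref{thm:quote NPSS}, so the work is in the first.

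The factor $\mathbf{M}_p([X,Z]_\theta)^p/\theta$ strongly suggests an iteration step. Specifically, the first move is to apply the elementary bound $\gamma(\A, d_\MM^p) \le \mathbf{M}_p(\MM)^p \, s \, \gamma(\A^s, d_\MM^p)$ (the content of Lemma~4.2 of~\cite{Nao14}, alluded to in the commented-out portion of the excerpt) with $\MM = [X,Z]_\theta$, thereby reducing matters to proving
$$\gamma\big(\A^s, \|\cdot\|_{\![X,Z]_\theta}^p\big) \le \big(\alpha \mathscr{K}_q(Z)\big)^{\!q} \gamma\big(\A, \|\cdot\|_{\!Z}^q\big)$$
for a suitable $s \asymp 1/\theta$; the $1/\theta$ loss in the conclusion then comes from this choice of $s$.

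To establish the reduced inequality, fix $x_1,\ldots,x_n \in X \cap Z$ (dense in $[X,Z]_\theta$) and, for each $\e > 0$, pick analytic extensions $\psi_i \in \mathcal{F}(X,Z)$ with $\psi_i(\theta) = x_i$ and boundary norms at most $(1+\e)\|x_i\|_{[X,Z]_\theta}$. The goal is to construct a scalar-valued bounded analytic function $F$ on the closed strip with the following three features: at $\zeta = \theta$ the value $F(\theta)$ is at least a constant multiple of $\sum_{i,j}\pi_i\pi_j\|x_i - x_j\|_{[X,Z]_\theta}^p$ (in particular dominating the left-hand side of the reduced inequality, normalized by the right-hand side); on the left boundary $\Re(\zeta) = 0$ the values $\psi_i(it) \in X$ allow only the triangle inequality (no geometric hypothesis on $X$ is used, consistent with the absence of $X$-data in~\eqref{eq:interpolation bound for gamma}); and on the right boundary $\Re(\zeta) = 1$ the values $\psi_i(1+it) \in Z$ are controlled by $\gamma(\A, \|\cdot\|_Z^q)$ after one converts $q$-th powers of $Z$-norms to first differences along $\A^s$ via Ball's $q$-convexity inequality, which contributes exactly the $\mathscr{K}_q(Z)^q$ factor. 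The three-lines lemma then interpolates these bounds, and Stein's vector-valued theorem~\eqref{eq:stein weiss} applied with weight $\xi = (\pi_1,\ldots,\pi_n)$ and exponents chosen to make $p$ at $\zeta = \theta$ and $q$ at $\zeta = 1$ produces the requisite alignment of norms and exponents.

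The principal technical obstacle is the design of $F$, specifically choosing dual functionals and exponents so that $F(\theta)$ captures a $p$-th-power sum in $\|\cdot\|_{[X,Z]_\theta}$ while $F(1+it)$ remains compatible with the $q$-power hypothesis on $Z$. This is precisely the place where a verbatim application of~\cite{Nao14} fails: the  exponent choices in~\cite{Nao14} are, as the paper notes, somewhat arbitrary and do not yield the clean bound needed for the $\frac12$-snowflake statement of Theorem~\ref{thm:average john}. The adaptation required here is to align the three-lines exponent $\theta$ with the snowflake exponent $p/q$ that appears in Theorem~\ref{thm:really main}, so that after optimizing $s \asymp 1/\theta$ and combining with the Markov-type iteration, only the advertised factors $(\alpha\mathscr{K}_q(Z))^q$, $\mathbf{M}_p([X,Z]_\theta)^p$ and $1/\theta$ survive.
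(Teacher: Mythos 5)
Your plan correctly identifies the general flavor of the argument (interpolate between a trivial triangle-inequality bound on the $X$-side and a $\mathscr{K}_q(Z)$-based bound on the $Z$-side, and use Markov type to pass between $\A$ and its powers), but it has a concrete quantitative flaw in the reduction step and leaves the actual core of the proof undone. First, the reduction: you propose to fix $s\asymp 1/\theta$ and then prove $\gamma(\A^{s},\|\cdot\|_{[X,Z]_\theta}^p)\le(\alpha\mathscr{K}_q(Z))^q\gamma(\A,\|\cdot\|_Z^q)$. This is not what the interpolation machinery delivers. The natural way to exploit the compatible pair is to bound the operator norm of $(\frac12\I_n+\frac12\A)^{s}{\boldsymbol{\smallotimes}}\Id$ on the mean-zero subspace $\ell_p^n([X,Z]_\theta)_0=[\ell_r^n(X),\ell_q^n(Z)]_\theta$-mean-zero via~\eqref{eq:riesz thorin} and~\eqref{eq:stein weiss}, using the trivial contraction on the $X$-factor and Lemma~\ref{lem:reverse norm bound in UC} on the $Z$-factor; with $\theta s\asymp 1$ this yields an operator norm of order $1-c/(q\mathscr{K}_q(Z)^q\gp(\A,\|\cdot\|_Z^q))$, and converting back via Lemma~\ref{lem:quote norm bound general} produces $\gp(\A^{s},\|\cdot\|_{[X,Z]_\theta}^p)\lesssim(\mathscr{K}_q(Z)^q\gp(\A,\|\cdot\|_Z^q))^p$ --- a $p$-th power of the spectral quantity, which does not give~\eqref{eq:interpolation bound for gamma}. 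The paper instead takes $s\asymp\frac1\theta(9\mathscr{K}_q(Z))^q\gamma(\A,\|\cdot\|_Z^q)$, i.e.\ $s$ proportional to the spectral data, so that the interpolated operator norm drops below the universal constant $2/e$ and $\gp$ of the lazy power is $e^{O(p)}=O(1)$ (Lemma~\ref{lem:interpolate coro}); the linear dependence on $\gamma(\A,\|\cdot\|_Z^q)$ then enters only through the factor $s$ in the Markov-type step. Your accounting ("the $1/\theta$ loss comes from this choice of $s$") misplaces where $\gamma(\A,\|\cdot\|_Z^q)$ comes from, and the reduced inequality you would need is neither proved nor clearly true.

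Second, the step you defer as "the principal technical obstacle" --- the construction of the analytic function $F$ --- is the entire content of the theorem, so as written there is no proof. The paper does not hand-roll a three-lines argument; it needs only the operator-theoretic lemmas of~\cite{MN14} (Lemmas~\ref{lem:quote norm bound general} and~\ref{lem:reverse norm bound in UC}) together with the lazy-walk comparison of Lemma~\ref{lem:lazy}, which you omit entirely (passing from $\gamma$ to the absolute gap $\gp$ requires replacing $\A$ by $\frac12\I_n+\frac12\A$). Finally, you miss an ingredient that is essential for obtaining a universal constant $\alpha$: applying the interpolation lemma directly at exponent $p$ forces $s\gtrsim p(q-1)/(p-1)$ via~\eqref{eq:s condition}, which blows up as $p\to1^+$. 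The paper circumvents this by first invoking Proposition~\ref{prop:other exponents} (the $\frac{p}{2}$-snowflake self-embedding of $[X,Z]_\theta$) to produce auxiliary points $y_1,\ldots,y_n$ with $\|y_i-y_j\|\lesssim\|x_i-x_j\|^{p/2}$, running the spectral-gap step at exponent $2$ on the $y$'s, and only then applying Markov type $p$ to the original $x$'s. Without this detour the constant in~\eqref{eq:interpolation bound for gamma} would degenerate near $p=1$, which is exactly the case needed for Theorem~\ref{thm:average john}.
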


By Theorem~\ref{thm:full duality}, Theorem~\ref{thm:interpolation markov} directly implies Theorem~\ref{thm:interpolation implicit first exposure}, yielding the following version of~\eqref{eq: D interpolation implicit} with the implicit dependence of the constant factor on the relevant parameters specified explicitly.
\begin{equation}\label{eq:specified D upper for interpolation}
D\lesssim \mathbf{M}_p([X,Z]_\theta)^{\!\frac{2p}{q}}\mathscr{K}_q(Z)^2\Big(\frac{1}{\theta}\Big)^{\!\frac{1}{q}}\stackrel{\eqref{eq:markov type in theorem}}{\lesssim} \mathscr{S}_p([X,Z]_\theta)^{\!\frac{2p}{q}}\mathscr{K}_q(Z)^2\Big(\frac{1}{\theta}\Big)^{\!\frac{1}{q}}.
\end{equation}
It is worthwhile to note in passing  that, by the first part of Theorem~\ref{thm:full duality}, a smaller factor is achievable in~\eqref{eq:specified D upper for interpolation} if one considers embeddings into an ultrapower of $\ell_q(Z)$ rather than into $\ell_q(Z)$ itself.

\subsection{Deduction of Theorem~\ref{thm:really main} from Theorem~\ref{thm:interpolation implicit first exposure}}\label{sec:proof of 9 from interpolation} We first derive some preparatory estimates.


\subsubsection{Tracking the  coefficients $\mathscr{S}_p([X,Z]_\theta)$ and $\mathscr{K}_q([X,Z]_\theta)$ as a function of $\theta\in [0,1]$}\label{sec:track} Suppose that $(X,\|\cdot\|_X), (Z,\|\cdot\|_Z)$ is a compatible pair of complex Banach spaces. Cwikel and Reisner estimated~\cite{CR82} the moduli of uniform convexity and uniform smoothness of $\{[X,Z]_\theta\}_{\theta\in [0,1]}$ in terms of the corresponding moduli of $X$ and $Z$. By combining the bounds of~\cite{CR82} with~\cite{BCL94}, it follows that for every $p_1,p_2,\in (1,2]$ and $q_1,q_2\in [2,\infty)$ we have
$$
\mathscr{S}_{\frac{p_1p_2}{\theta p_1+(1-\theta)p_2}}([X,Z]_\theta)\lesssim_{p_1,p_2} \mathscr{S}_{p_1}(X)^{1-\theta} \mathscr{S}_{p_2}(Z)^\theta,
$$
and
$$
\mathscr{K}_{\frac{q_1q_2}{\theta q_1+(1-\theta)q_2}}([X,Z]_\theta)\lesssim_{q_1,q_2} \mathscr{K}_{q_1}(X)^{1-\theta} \mathscr{K}_{q_2}(Z)^\theta.
$$

We will next adjust the approach of~\cite{CR82} so as to obtain these estimates without any dependence on $p_1,p_2,q_1,q_2$ in the implicit multiplicative factors. Namely, we will demonstrate that
\begin{equation}\label{eq:without p1,p2}
\mathscr{S}_{\frac{p_1p_2}{\theta p_1+(1-\theta)p_2}}([X,Z]_\theta)\le \mathscr{S}_{p_1}(X)^{1-\theta} \mathscr{S}_{p_2}(Z)^\theta,
\end{equation}
and
\begin{equation}\label{eq:without q1,q2}
\mathscr{K}_{\frac{q_1q_2}{\theta q_1+(1-\theta)q_2}}([X,Z]_\theta)\le \mathscr{K}_{q_1}(X)^{1-\theta} \mathscr{K}_{q_2}(Z)^\theta.
\end{equation}
Removing the dependence on $p_1,p_2,q_1,q_2$  of the  constant factors in the Cwikel--Reisner estimates is important in our context, as the parameters will  be optimized so as to depend on other quantities that we wish to track. The ensuing reasoning is nothing more than an adaptation of~\cite{CR82}.

 Suppose that $p_1,p_2\in [1,2]$ and that the smoothness constants $\mathscr{S}_{p_1}(X),\mathscr{S}_{p_2}(Z)$ are finite. Denote for simplicity $\mathscr{S}_1=\mathscr{S}_{p_1}(X)$ and $\mathscr{S}_2=\mathscr{S}_{p_2}(Z)$. Then by~\eqref{eq:def smoothness} we have
 \begin{equation}\label{eq:S1}
 \forall\, y_1,y_2\in Y,\qquad \|y_1+y_2\|_Y^{p_1}+\|y_1-y_2\|_Y^{p_1}\le 2\|y_1\|_Y^{p_1}+2\mathscr{S}_1^{p_1}\|y_2\|_Y^{p_1},
 \end{equation}
 and
  \begin{equation}\label{eq:S2}
 \forall\, z_1,z_2\in Z,\qquad \|z_1+z_2\|_Z^{p_2}+\|z_1-z_2\|_Z^{p_2}\le 2\|z_1\|_Z^{p_2}+2\mathscr{S}_2^{p_2}\|z_2\|_Z^{p_2}.
 \end{equation}

For every $\mathscr{S}>0$ and $p\ge 1$ define $\xi(\mathscr{S},p):\{1,2\}\to (0,\infty)$ by $\xi(\mathscr{S},p)(1)=2$ and $\xi(\mathscr{S},p)(2)=2\mathscr{S}^p$. Also, denote the constant function $\1_{\{1,2\}}$ by $\zeta:\{1,2\}\to (0,\infty)$, i.e., $\zeta(1)=\zeta(2)=1$. With this notation, if we consider the linear operator $T:(X+Z)\times (X+Z)\to (X+Z)\times (X+Z)$ that is given by setting $T(w_1,w_2)=(w_1+w_2,w_1-w_2)$ for every $w_1,w_2\in Y+Z$, then
\begin{equation}\label{eq:p1 p2 norms}
\|T\|_{\!L_{p_1}(\xi(\mathscr{S}_1,p_1);X)\to L_{p_1}(\zeta;Y)}^{\phantom{p}}\stackrel{\eqref{eq:S1}}{\le} 1\qquad\mathrm{and}\qquad \|T\|_{\!L_{p_2}(\xi(\mathscr{S}_2,p_2);Z)\to L_{p_2}(\zeta;Z)}^{\phantom{p}}\stackrel{\eqref{eq:S2}}{\le} 1.
\end{equation}

Denoting $r=p_1p_2(\theta p_1+(1-\theta) p_2)^{-1}$,  observe that $\xi(\mathscr{S}_1,p_1)^{\frac{(1-\theta)r}{p_1}}\xi(\mathscr{S}_2,p_2)^{\frac{\theta r}{p_2}}=\xi(\mathscr{S}_1^{1-\theta}\mathscr{S}_2^\theta,r)$. Hence, by~\eqref{eq:stein weiss} we have $[L_{p_1}(\xi(\mathscr{S}_1,p_1);X),L_{p_2}(\xi(\mathscr{S}S_2,p_2);Z)]_\theta=L_r(\xi(\mathscr{S}_1^{1-\theta}\mathscr{S}_2^\theta,r);[X,Z]_\theta)$ and also $[L_{p_1}(\zeta;X);L_{p_2}(\zeta;Z)]_\theta=L_r(\zeta,[X,Z]_\theta)$. In combination with~\eqref{eq:riesz thorin} and~\eqref{eq:p1 p2 norms}, this implies that the norm of $T$ as an operator from $L_r(\xi(\mathscr{S}_1^{1-\theta}\mathscr{S}_2^\theta,r);[X,Z]_\theta)$ to $L_r(\zeta,[X,Z]_\theta)$ is at most $1$.  Thus,
$$
\forall\, w_1,w_2\in [X,Z]_\theta,\qquad  \|w_1+w_2\|_{[X,Z]_\theta}^{r}+\|w_1-w_2\|_{[X,Z]_\theta}^{r}\le 2\|w_1\|_{[X,Z]_\theta}^{r}+2\big(\mathscr{S}_1^{1-\theta}\mathscr{S}_2^\theta\big)^{\!r}\|w_2\|_{[X,Z]_\theta}^{r}.
$$
This is precisely~\eqref{eq:without p1,p2}. The bound~\eqref{eq:without q1,q2} is justified  mutatis mutandis  via  the same reasoning (only~\eqref{eq:without p1,p2} will be used below); alternatively, one could derive~\eqref{eq:without q1,q2} from~\eqref{eq:without p1,p2} by a duality argument.

\subsubsection{Complexification}\label{sec:complexification} To make Theorem~\ref{thm:interpolation markov}, which treats  complex Banach spaces, relevant to Theorem~\ref{thm:really main}, which treats real normed spaces, we  use a standard  complexification procedure. Specifically, for a real normed space $(W,\|\cdot\|_W)$  associate as follows a complex normed space $(W_\C,\|\cdot\|_{W_\C})$. The underlying vector space is $W_\C=W\times W$, which is viewed as a vector space over $\C$ by setting $(\alpha+\beta i)(x,y)=(\alpha x-\beta y,\beta x+\alpha y)$ for  $\alpha,\beta\in \R$ and $x,y\in W$. The norm on $W_\C$ is given by
\begin{equation}\label{eq:def complexification}
\forall\, (x,y)\in W\times W,\qquad \|(x,y)\|_{\!W_\C}^{\phantom{p}}=\bigg(\frac{1}{\pi}\int_0^{2\pi}\big\|(\cos\theta)x-(\sin\theta) y\big\|^{2\phantom{p}}_{\!W}\ud \theta\bigg)^{\!\frac12}.
\end{equation}

The normalization of the integral in~\eqref{eq:def complexification} was chosen so as to ensure that $x\mapsto (x,0)$ is an isometric embedding of $W$  into $W_\C$. It is straightforward to check that for every $p\in [1,\infty]$ and $(x,y)\in W_\C$,
\begin{equation}\label{complexification is lp}
\|(x,y)\|_{\!W_\C}^{\phantom{p}}\asymp \big(\|x\|_{\!W}^{p}+\|y\|_{\!W}^{p}\big)^{\!\frac1{p}}= \|(x,y)\|_{\ell_p^2(W)}.
\end{equation}
Hence, $\gamma(\A,\|\cdot\|_{\!X}^p)\asymp\gamma(\A,\|\cdot\|_{\!X_\C}^p)$ for every $n\in \N$ and every symmetric stochastic matrix $\A\in \M_n(\R)$. Also, $\mathscr{S}_p(W_\C)\asymp\mathscr{S}_p(W)$ and $\mathscr{K}_q(W_\C)\asymp\mathscr{K}_q(W)$ for $p\in [1,2]$ and $q\in [2,\infty]$. If one were to let the implicit constants in these  equivalences to depend on $p,q$, then they would follow from~\cite{FP74,Fig76,BCL94}. The fact that the constants can be taken to be universal follows by reasoning with more care, as done in~\cite{Nao12-azuma,MN14}; see specifically Lemma~6.3 and Corollary 6.4 of~\cite{MN14}.

\subsubsection{Proof of Theorem~\ref{thm:really main}} Suppose that we are in the setting of Theorem~\ref{thm:really main}. Thus,  $(X,\|\cdot\|_X)$ and $(Y,\|\cdot\|_Y)$ are Banach spaces that satisfy $\cc_Y(X),\mathscr{S}_p(X), \mathscr{K}_q(Y)<\infty$ where $1\le p\le 2\le q<\infty$.

Fix $\cc>\cc_Y(X)$. Since $Y$ is uniformly convex and hence (by the Milman--Pettis theorem~\cite{Mil38,Pet39}) in particular reflexive, by a classical differentiation argument of Aronszajn~\cite{Aro76}, Christensen~\cite{Chr73} and Mankiewicz~\cite{Man72} (see also~\cite[Chapter~7]{BL00} for a thorough treatment of such reductions to the linear setting) there exists a {\em linear} operator $T:X\to Y$ which satisfies
\begin{equation}\label{eq:norm T assumptions}
\forall\, x\in X,\qquad \|x\|_{\!X}^{\phantom{p}}\le \|Tx\|_{\!Y}^{\phantom{p}}\le \cc \|x\|_{\!X}^{\phantom{p}}.
\end{equation}

We define a normed space $(Z,\|\cdot\|_Z)$ by setting $Z=X$ and $\|x\|_Z\eqdef\|Tx\|_Y$ for every $x\in X$. Thus, $X$ and $Z$ coincide as linear spaces and $Z$ is linearly isometric to a subspace of $Y$, via the embedding $T$. Let $X_\C$ and $Z_\C$ be the complexifications of $X$ and $Z$, respectively. Then $X_\C,Z_\C$ is a compatible pair of Banach spaces. So, we may consider the complex interpolation family $\{[X_\C,Z_\C]_\theta\}_{\theta\in [0,1]}$.

It follows from a substitution of~\eqref{eq:norm T assumptions} into the definition~\eqref{eq:def complexification} that
\begin{equation}\label{eq:lift to complexification}
\forall(x,y)\in X\times X,\qquad \|(x,y)\|^{\phantom{p}}_{\!X_\C}\le \|(x,y)\|^{\phantom{p}}_{\!Z_\C} \le \cc \|(x,y)\|^{\phantom{p}}_{\!X_\C}.
\end{equation}
Hence, the following operator norm bounds hold true for the formal identity $\Id_{X\times X}:X_\C\to X\times X$.
\begin{equation}\label{eq:norm bounds}
\|\mathsf{Id}_{X\times X}\|_{\!X_\C\to X_\C}^{\phantom{p}}\le 1,\qquad  \|\mathsf{Id}_{X\times X}\|_{\!X_\C\to Z_\C}^{\phantom{p}}\le \cc,\qquad \mathrm{and}\qquad \|\mathsf{Id}_{X\times X}\|_{\!Z_\C\to X_\C}^{\phantom{p}}\le 1.
\end{equation}
The first  inequality in~\eqref{eq:norm bounds} is tautological, and the rest of~\eqref{eq:norm bounds} is a restatement of~\eqref{eq:lift to complexification}.

For every $\theta\in [0,1]$ we have
$$
\|\mathsf{Id}_{X\times X}\|_{\![X_\C,Z_\C]_\theta\to X_\C}^{\phantom{p}}=\|\mathsf{Id}_{X\times X}\|_{\![X_\C,Z_\C]_\theta\to [X_\C,X_\C]_\theta}^{\phantom{p}}\stackrel{\eqref{eq:riesz thorin}}{\le} \|\mathsf{Id}_{X\times X}\|_{\!X_\C\to X_\C}^{1-\theta\phantom{p}}\|\mathsf{Id}_{X\times X}\|_{\!Z_\C\to X_\C}^{\theta\phantom{p}}\stackrel{\eqref{eq:norm bounds}}{\le} 1,
$$
and
$$
\|\mathsf{Id}_{X\times X}\|_{\! X_\C\to [X_\C,Z_\C]_\theta}^{\phantom{p}}=\|\mathsf{Id}_{X\times X}\|_{ \![X_\C,X_\C]_\theta\to [X_\C,Z_\C]_\theta}^{\phantom{p}}\stackrel{\eqref{eq:riesz thorin}}{\le} \|\mathsf{Id}_{X\times X}\|_{\!X_\C\to X_\C}^{1-\theta\phantom{p}}\|\mathsf{Id}_{X\times X}\|_{\!X_\C\to Z_\C}^{\theta\phantom{p}}\stackrel{\eqref{eq:norm bounds}}{\le} \cc^\theta.
$$
In other words, this simple reasoning yields the following bounds.
\begin{equation}\label{eq:distance to interpolant}
\forall (x,y)\in X\times X,\qquad \|(x,y)\|_{\!X_\C}^{\phantom{p}}\le \|(x,y)\|_{\![X_\C,Z_\C]_\theta}^{\phantom{p}}\le \cc^\theta \|(x,y)\|_{\!X_\C}^{\phantom{p}}.
\end{equation}

Fix $\s\ge 1$. By Theorem~\ref{thm:interpolation implicit first exposure}, applied with the value of $D$ in~\eqref{eq:specified D upper for interpolation}, for any Borel probability measure $\mu$ on $X$ (viewed as a subset of $X_\C$) there is $f:[X_\C,Z_\C]_\theta\to \ell_q(Z_\C)$ satisfying
\begin{equation}\label{eq:before removinf Z}
\forall\, x,y\in X,\qquad \|f(x)-f(y)\|_{\ell_q(Z_\C)}\lesssim \frac{\mathbf{M}_\s([X_\C,Z_\C]_\theta)^{\!\frac{2\sigma}{q}}\mathscr{K}_q(Z_\C)^2}{\theta^{\frac{1}{q}}}
\|x-y\|_{\![X_\C,Z_\C]_\theta}^{\!\frac{\sigma}{q}},
\end{equation}
and
\begin{equation}\label{eq:q average on Z}
\iint_{X\times X} \|f(x)-f(y)\|_{\ell_q(Z_\C)}^q\ud\mu(x)\ud\mu(y)\ge \iint_{X\times X} \|x-y\|_{\!X}^{\s\phantom p}\mu(x)\ud\mu(y).
\end{equation}
By~\eqref{complexification is lp} and the definition of $Z$, there is a linear map $S:\ell_q(Z_\C)\to \ell_q(Y)$ with $\|Sw\|_{\ell_q(Y)}\asymp \|w\|_{\ell_q(Z_\C)}$ for all $w\in \ell_q(X\times X)$. Indeed, if we write $w=((x_1,y_1),(x_2,y_2),\ldots)$ for $\{x_i\}_{i=1}^\infty,\{y_i\}_{i=1}^\infty\subset X$, then simply take $Sw=(Tx_1,Ty_1,Tx_2,Ty_2,\ldots)$. So, by considering $\f=S\circ f: X\to \ell_q(Y)$, we get
\begin{equation}\label{eq:before s[pecifying sigma 1}
\forall\, x,y\in X,\qquad \|\f(x)-\f(y)\|_{\ell_q(Y)}\stackrel{\eqref{eq:distance to interpolant}\wedge \eqref{eq:before removinf Z}}{\lesssim} \frac{\cc^{\!\frac{\s\theta}{q}}\mathbf{M}_\s([X_\C,Z_\C]_\theta)^{\!\frac{2\s}{q}}}{\theta^{\frac{1}{q}}}\mathscr{K}_q(Y)^2\|x-y\|_{\!X}^{\!\frac{\s}{q}},
\end{equation}
and
\begin{equation}\label{eq:before s[pecifying sigma 2}
\iint_{X\times X} \|\f(x)-\f(y)\|_{\ell_q(Y)}^q\ud\mu(x)\ud\mu(y)\stackrel{\eqref{eq:q average on Z}}{\gtrsim} \iint_{X\times X} \|x-y\|_{\!X}^{\s\phantom p}\mu(x)\ud\mu(y).
\end{equation}

The first part of Theorem~\ref{thm:really main} makes no assumption on the uniform smoothness of $Y$. So, apply~\eqref{eq:before s[pecifying sigma 1} with $\sigma=p$  while noting that $\mathbf{M}_p([X_\C,Z_\C]_\theta)\le \cc^\theta \mathbf{M}_p(X_\C)\lesssim \cc^\theta \mathscr{S}_p(X_\C)\lesssim  \cc^\theta \mathscr{S}_p(X)$, where the first step holds due to~\eqref{eq:distance to interpolant} and  the second step is Theorem~\ref{thm:quote NPSS}. We thus arrive at the H\"older condition
\begin{equation}\label{eq:use the interpolation gap theorem}
\forall\, x,y\in X,\qquad \|\f(x)-\f(y)\|_{\ell_q(Y)}\lesssim \frac{\cc^{\!\frac{3p\theta}{q}}}{\theta^{\frac{1}{q}}}\mathscr{K}_q(Y)^2\mathscr{S}_p(X)^{\!\frac{2p}{q}}\|x-y\|_{\!X}^{\!\frac{p}{q}}.
\end{equation}
The optimal choice of $\theta$ in~\eqref{eq:use the interpolation gap theorem} satisfies $\theta\asymp\frac{1}{\log(\cc+1)}$, yielding the following explicit version of~\eqref{eq:rough D upper bound convexity smoothness}.
\begin{equation}\label{eq:optimal theta substituted}
D\lesssim \mathscr{S}_p(X)^{\!\frac{2p}{q}}\mathscr{K}_q(Y)^2\big(\log(\cc_Y(X)+1)\big)^{\!\frac{1}{q}}.
\end{equation}
We note in passing that with more care one obtains~\eqref{eq:use the interpolation gap theorem} with the term $\cc^{\frac{3p\theta}{q}}$ replaced by $\cc^{\frac{p\theta}{q}}$. But, upon choosing the optimal $\theta$ as we do here, this only influences the universal constant factor in~\eqref{eq:optimal theta substituted}.

For the second part of Theorem~\ref{thm:really main}, we are now assuming that $Y$ is more uniformly smooth than $X$, namely that $\mathscr{S}_r(Y)<\infty$ for some $r\in (p,2]$. Under this stronger assumption, we fix $\e\in [0,\frac{r-p}{q}]$ and the aim is now to obtain an embedding of $X$ into $\ell_q(Y)$ with higher regularity than in the first part of Theorem~\ref{thm:really main}, namely an embedding of the $(\frac{p}{q}+\e)$-snowflake of $X$ rather than of its $\frac{p}{q}$-snowflake. To this end, we apply~\eqref{eq:before s[pecifying sigma 1} and~\eqref{eq:before s[pecifying sigma 2} with $\sigma=p+\e q$ and  $\theta\in [0,1]$ satisfying
\begin{equation}\label{eq:range of theta}
 \frac{\e  q r}{(r-p)(p+\e q)}\le \theta\le 1.
\end{equation}
Note that the range of possible values of $\theta$ in~\eqref{eq:range of theta} is nonempty due to the assumption   $\e\le \frac{r-p}{q}$. The lower bound on $\theta$ in~\eqref{eq:range of theta} is equivalent to  $p+\e q\le \frac{pr}{\theta p+(1-\theta)r}$, and therefore
\begin{align}\label{eq:p plus eps q}
\begin{split}
\mathbf{M}_{p+\e q}([X_\C,Z_\C]_\theta)\stackrel{\eqref{eq:markov type in theorem}}{\lesssim} \mathscr{S}_{p+\e q}([X_\C,Z_\C]_\theta)&\le \mathscr{S}_{\frac{pr}{\theta p+(1-\theta)r}}([X_\C,Z_\C]_\theta)\\&\!\!\stackrel{\eqref{eq:without p1,p2}}{\le} \mathscr{S}_p(X_\C)^{1-\theta}\mathscr{S}_r(Z_\C)^\theta\lesssim \mathscr{S}_p(X)^{1-\theta}\mathscr{S}_r(Y)^\theta,
\end{split}
\end{align}
where the second step of~\eqref{eq:p plus eps q} uses the fact that $p\mapsto \mathscr{S}_p(\cdot)$ is increasing (see~\cite{BCL94} or~\cite[Section~6.2]{MN14}) and the last step of~\eqref{eq:p plus eps q} holds as $Z$ is isometric to a subspace of $Y$.   A substitution of~\eqref{eq:p plus eps q} into~\eqref{eq:before s[pecifying sigma 1}  shows that the $(\frac{p}{q}+\e)$-snowflake of $X$ embeds with $q$-average distortion $D$ into $\ell_q(Y)$, where
\begin{equation}\label{eq: Dr}
D\lesssim \mathscr{S}_p(X)^{2\left(\frac{p}{q}+\e\right) }\mathscr{K}_q(Y)^2\cdot \frac{\bigg(\frac{\mathscr{S}_r(Y)^2}{\mathscr{S}_p(X)^2}\cc_Y(X)\bigg)^{\!\!\left(\frac{p}{q}+\e \right)\theta}}{\theta^{\frac{1}{q}}}.
\end{equation}
By choosing $\theta$ so as to minimise the right hand side of~\eqref{eq: Dr} subject to the constraint~\eqref{eq:range of theta}, this leads to the following more refined version of the desired bound~\eqref{eq:D cases}.
\begin{equation*}
D\lesssim \left\{\begin{array}{ll} \mathscr{S}_p(X)^{2\left(\frac{p}{q}+\e\right) }\mathscr{K}_q(Y)^2\left(\log\Big(\frac{\mathscr{S}_r(Y)^2}{\mathscr{S}_p(X)^2}\cc_Y(X)+2\Big)\right)^{\!\frac{1}{q}}&\mathrm{if\ } 0\le \e\le \frac{r-p}{qr\log\Big(\frac{\mathscr{S}_r(Y)^2}{\mathscr{S}_p(X)^2}\cc_Y(X)+2\Big)},\\\left(\frac{r-p}{\e }\right)^{\!\frac{1}{q}}\mathscr{K}_q(Y)^2\mathscr{S}_r(Y)^{\frac{2\e  r}{r-p}} \mathscr{S}_p(X)^{2p\left(1-\frac{\e q }{r-p}\right)}\cc_Y(X)^{\frac{\e  r}{r-p}} &\mathrm{if\ } \frac{r-p}{qr\log\Big(\frac{\mathscr{S}_r(Y)^2}{\mathscr{S}_p(X)^2}\cc_Y(X)+2\Big)}\le \e\le \frac{r-p}{q}.
\end{array}\right.
\end{equation*}
This completes the deduction  of Theorem~\ref{thm:really main} from Theorem~\ref{thm:interpolation implicit first exposure}.\qed

\begin{remark} Continuing with the notation and assumptions of the above proof of Theorem~\ref{thm:really main} in the special case when $Y=H$ is a Hilbert space and $q=2$, Corollary~4.7 of~\cite{Nao14} asserts\footnote{We note that in~\cite{Nao14} (specifically, in the statement of~\cite[Theorem~4.5]{Nao14}) we have the following misprint: \eqref{eq:XCHC} is stated there for the transposed interpolation space  $[Z_\C,X_\C]_\theta$ rather than the correct space $[X_\C,Z_\C]_\theta$ as above. } that
\begin{equation}\label{eq:XCHC}
\gamma\!\left(\A,\|\cdot\|_{\![X_\C,Z_\C]}^2\right)\lesssim \frac{\mathscr{S}_p([X_\C,Z_\C])^2}{\theta^{\frac{2}{p}}\big(1-\lambda_2(\A)\big)^{\!\frac{2}{p}}}.
\end{equation}
Since $Z_\C$ is (isometrically) a Hilbert space and therefore $\mathscr{S}_2(Z_\C)=1$, by~\eqref{eq:without p1,p2} we have
$$
\mathscr{S}_{\frac{2p}{p\theta+2(1-\theta)}}([X_\C,Z_\C])\le \mathscr{S}_p(X_\C)^{1-\theta}\lesssim \mathscr{S}_p(X)^{1-\theta}.
$$
Arguing the same as above, by substituting this bound into~\eqref{eq:XCHC} and using~\eqref{eq:distance to interpolant} we get that
\begin{equation}\label{eq:to optimize gamma 2}
\gamma\big(\A,\|\cdot\|^2_{\!X}\big)\lesssim \frac{\cc_2(X)^{2\theta} \mathscr{S}_p(X)^{2(1-\theta)}}{\theta^{\frac{2}{p}}\big(1-\lambda_2(\A)\big)^{\!\theta+\frac{2(1-\theta)}{p}}}.
\end{equation}
By choosing $\theta\in [0,1]$ so as to minimize the right hand side of~\eqref{eq:to optimize gamma 2}, we see that
\begin{equation}\label{eq:stronger 2}
\gamma\big(\A,\|\cdot\|^2_{\!X}\big)\lesssim \frac{\mathscr{S}_p(X)^2}{\big(1-\lambda_2(\A)\big)^{\!\frac{2}{p}}}\bigg(\log\bigg(\frac{\cc_2(X)^p\big(1-\lambda_2(\A)\big)^{\!1-\frac{p}{2}}}
{\mathscr{S}_p(X)^p}+1\bigg)\bigg)^{\!\!\frac{2}{p}}.
\end{equation}

In particular, if $\dim(X)=k\in \{2,3,\ldots\}$ and $p=1$, by John's theorem~\eqref{eq:stronger 2} implies that
\begin{equation}\label{eq:get weaker quadratic}
\gamma\big(\A,\|\cdot\|^2_{\!X}\big)\lesssim \bigg(\frac{\log\big(\cc_2(X)\sqrt{1-\lambda_2(\A)}+1\big)}{1-\lambda_2(\A)}\bigg)^{\!2}\lesssim \frac{(\log k)^2}{\big(1-\lambda_2(\A)\big)^2}.
\end{equation}
Note that if one is interested only in the rightmost quantity in~\eqref{eq:get weaker quadratic} as an upper bound on  $\gamma\big(\A,\|\cdot\|^2_{\!X}\big)$, then one simply needs to substitute $\theta\asymp 1/\log k$ into~\eqref{eq:XCHC} and use~\eqref{eq:distance to interpolant} as above. This slightly weaker estimate can be rewritten as the assertion that there exists a universal constant $\mathsf{K}\ge 1$ such that
\begin{align}\label{eq:quadratic matrix dimension}
\begin{split}
\dim(X)&\ge \exp \!\bigg(\frac{1-\lambda_2(\A)}{\mathsf{K}}\sqrt{\gamma\big(\A,\|\cdot\|^2_{\!X}\big)}\bigg)\\&\!\!\stackrel{\eqref{eq:nonliear gap def}}{=}\sup_{x_1,\ldots,x_n\in X} \exp\!\Bigg(\frac{1-\lambda_2(\A)}{\mathsf{K}}\bigg(\frac{\frac{1}{n^2}\sum_{i=1}^n\sum_{j=1}^n \|x_i-x_j\|_{\!X}^2}{\frac{1}{n}\sum_{i=1}^n\sum_{j=1}^n a_{ij}\|x_i-x_j\|_{\!X}^2}\bigg)^{\!\!\frac{1}{2}}\Bigg).
\end{split}
\end{align}
\eqref{eq:quadratic matrix dimension} corresponds to the case $p=2$ of the matrix-dimension inequality~\eqref{eq:beta p version}. As explained in the Introduction, \eqref{eq:quadratic matrix dimension}  is a formal consequence of the average John theorem of Theorem~\ref{thm:average john}. We do not see how to deduce Theorem~\ref{thm:average john} formally from~\eqref{eq:quadratic matrix dimension}; we conjecture   that such a reverse implication is impossible in general but we did not devote substantial effort to obtain a counterexample.
\end{remark}

\section{Proof of Theorem~\ref{thm:interpolation markov}}\label{sec:proof of gap interpolation thm}

Suppose that $(\cM,d_\cM)$ is a metric space, $n\in \N$ and $p\in (0,\infty)$. Following~\cite{MN14} and in analogy to~\eqref{eq:nonliear gap def}, the (reciprocal of) the {\em nonlinear absolute spectral gap} with respect to $d_{\cM}^p$ of a symmetric stochastic matrix $\A=(a_{ij})\in \M_n(\R)$, denoted $\gp(\A,d_\cM^p)$, is the smallest $\gp\in (0,\infty]$ such that
\begin{equation}\label{eq:def gp}
\forall\,x_1,\ldots,x_n,y_1,\ldots,y_n\in \cM,\qquad \frac{1}{n^2}\sum_{i=1}^n \sum_{j=1}^n d_{\cM}(x_i,y_j)^p\le \frac{\gp}{n} \sum_{i=1}^n \sum_{j=1}^n a_{ij}d_{\cM}(x_i,y_j)^p.
\end{equation}
The reason for this terminology is that by simple linear-algebraic considerations (e.g.~\cite{MN14}) one has
$$
\gp(\A,d_\R^2)=\gp(\A,|\cdot-\cdot|^2:\R\times \R\to \R)=\frac{1}{1-\max_{i\in \{2,\ldots,n\}}|\lambda_i(\A)|}.
$$
Observe that the definition directly implies that $\gamma(\A,d_\cM^p)\le \gp(\A,d_\cM^p)$.

We record for ease of later reference the following elementary relation~\cite[Lemma~2.3]{Nao14} between nonlinear spectral gaps and their absolute counterparts. In its formulation, as well as in the rest of what follows, for every $n\in \N$ the $n$-by-$n$ identity matrix is denoted $\I_n\in \M_n(\R)$.

\begin{lemma}\label{lem:lazy} Fix $q\in [1,\infty)$ and $n\in \N$. For every symmetric stochastic matrix $\A\in \M_n(\R)$ and every metric space $(\MM,d_\MM)$ we have
\begin{equation}\label{eq:lazy A}
2\gamma(\A,d_\MM^q)\le \gp\Big(\frac12\I_n+\frac12 \A,d_\MM^q\Big)\le 2^{2q+1}\gamma(\A,d_\MM^q).
\end{equation}
\end{lemma}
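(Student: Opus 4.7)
Set $B \eqdef \frac12\I_n + \frac12\A$, so that $B_{ii}\ge\frac12$ for every $i\in\n$ while $B_{ij}=\frac12 a_{ij}$ for $i\ne j$, and note that $\A$ is doubly stochastic since it is symmetric and stochastic. The lower bound $2\gamma(\A,d_\MM^q)\le \gp(B,d_\MM^q)$ is the easier half: I would substitute $y_i=x_i$ into the defining inequality~\eqref{eq:def gp} of $\gp(B,d_\MM^q)$. The diagonal terms $B_{ii}d_\MM(x_i,x_i)^q$ vanish, while the off-diagonal part collapses to $\frac12\sum_{i,j}a_{ij}d_\MM(x_i,x_j)^q$, and comparing with~\eqref{eq:nonliear gap def} gives $\gamma(\A,d_\MM^q)\le \frac12\gp(B,d_\MM^q)$.

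For the upper bound the plan is to reduce the two-sequence quantity defining $\gp(B,\cdot)$ to the one-sequence quantity defining $\gamma(\A,\cdot)$ using two applications of the triangle inequality. First I would establish the auxiliary identity
\begin{equation*}
\sum_{i,j=1}^n a_{ij}d_\MM(x_i,x_j)^q \le 2^q\sum_{i,j=1}^n B_{ij}d_\MM(x_i,y_j)^q,
\end{equation*}
by writing $d_\MM(x_i,x_j)^q\le 2^{q-1}\bigl(d_\MM(x_i,y_j)^q+d_\MM(y_j,x_j)^q\bigr)$ (triangle inequality through the midpoint $y_j$ combined with convexity of $t\mapsto t^q$), summing, using the double stochasticity of $\A$ to reduce $\sum_{i,j}a_{ij}d_\MM(y_j,x_j)^q=\sum_j d_\MM(x_j,y_j)^q$, and expressing the result in terms of $B$-energies via $a_{ij}=2B_{ij}-\mathbf{1}_{\{i=j\}}$. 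Next, I would apply the triangle inequality $d_\MM(x_i,y_j)\le d_\MM(x_i,x_j)+d_\MM(x_j,y_j)$ and convexity to obtain
\begin{equation*}
\sum_{i,j=1}^n d_\MM(x_i,y_j)^q\le 2^{q-1}\sum_{i,j=1}^n d_\MM(x_i,x_j)^q+2^{q-1}n\sum_{j=1}^n d_\MM(x_j,y_j)^q.
\end{equation*}
The first sum is bounded using the definition of $\gamma(\A,d_\MM^q)$ followed by the auxiliary identity, and the second by $B_{jj}\ge\frac12$ (which yields $\sum_j d_\MM(x_j,y_j)^q\le 2\sum_{i,j}B_{ij}d_\MM(x_i,y_j)^q$). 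Together they produce
\begin{equation*}
\frac{1}{n^2}\sum_{i,j=1}^n d_\MM(x_i,y_j)^q \le \frac{2^{q-1}\bigl(2^q\gamma(\A,d_\MM^q)+2\bigr)}{n}\sum_{i,j=1}^n B_{ij}d_\MM(x_i,y_j)^q.
\end{equation*}

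To absorb the additive $+2$ into the factor of $\gamma(\A,d_\MM^q)$ and reach the stated constant $2^{2q+1}$, I need the a priori lower bound $\gamma(\A,d_\MM^q)\ge\frac12$. The plan is to verify this by testing~\eqref{eq:nonliear gap def} on a balanced $\{0,1\}$-valued configuration $x_i=\mathbf{1}_{\{i\in S\}}\in\R$ with $S\subset\n$ of size $\lfloor n/2\rfloor$: stochasticity gives $\sum_{i\in S,\,j\in S^c}a_{ij}\le|S|$, forcing $\gamma(\A,d_\MM^q)\ge(n-|S|)/n\ge\frac12$. Then $2^q\gamma+2\le 2^q\gamma+4\gamma\le 2^{q+2}\gamma$ (using $2^q\ge 4/3$ for $q\ge 1$), so multiplying by $2^{q-1}$ yields $\gp(B,d_\MM^q)\le 2^{2q+1}\gamma(\A,d_\MM^q)$. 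The main obstacle is nothing more than careful bookkeeping of constants; the key conceptual choice is the midpoint $y_j$ in the first triangle inequality, which is what lets the laziness of $B$ (the diagonal $B_{jj}\ge\frac12$) control the leftover diagonal terms $d_\MM(x_j,y_j)^q$.
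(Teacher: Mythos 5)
Your proof is correct, and I verified the constants: the exact identity $a_{ij}+\mathbf{1}_{\{i=j\}}=2B_{ij}$ makes the two leftover terms in your first triangle-inequality step combine to precisely $2^{q}\sum_{i,j}B_{ij}d_\MM(x_i,y_j)^q$, the second step gives the factor $2^{q-1}\big(2^q\gamma+2\big)$ as you claim, and the a priori bound $\gamma(\A,d_\MM^q)\ge\frac12$ (from the balanced two-valued test configuration, which needs two distinct points of $\MM$ and $n\ge 2$) does absorb the additive $2$ into $2^{2q+1}\gamma$. Note that the paper itself offers no proof of this lemma --- it is quoted from the reference \cite[Lemma~2.3]{Nao14} --- and your argument is essentially the standard one: use the substitution $y_i=x_i$ for the easy direction, and for the other direction route $d_\MM(x_i,y_j)$ through $x_j$ and exploit the diagonal mass $B_{jj}\ge\frac12$ of the lazy matrix to control the $\sum_j d_\MM(x_j,y_j)^q$ terms. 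The only caveat is the degenerate case $n=1$ (where $\gamma(\A,d_\MM^q)=0$ while $\gp$ of the $1\times 1$ matrix equals $1$, so the stated upper bound fails and your a priori bound $\gamma\ge\frac12$ is also unavailable); this is a defect of the lemma as literally stated rather than of your argument, and is harmless for every application in the paper.
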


Even though our ultimate goal here is to bound nonlinear spectral gaps, one of the  advantages of considering nonlinear absolute spectral gaps is that, in the case of uniformly convex normed spaces, they have a useful connection  to operator norms. Specifically, suppose that $(X,\|\cdot\|_X)$ is a normed space, $n\in \N$ and $\A\in \M_n(\R)$ is symmetric and stochastic. For $p\in [1,\infty)$ let $\ell_p^n(X)_0\subset \ell_p^n(X)$ denote the subspace of those $(x_1,\ldots,x_n)\in X^n$ for which $\sum_{i=1}^nx_i=0$. If $\Id_X:X\to X$ denotes the formal identity operator on $X$, then, since $\A$ is symmetric and stochastic, the operator $\A{\boldsymbol{\smallotimes}} \Id_X:X^n\to X^n$ preserves $\ell_p^n(X)_0$, where we recall that $(\A{\boldsymbol{\smallotimes}} \Id_X)(x_1,\ldots,x_n)= (\sum_{j=1}^n a_{1j}x_j,\ldots, \sum_{j=1}^n a_{nj}x_j)$ for each $(x_1,\ldots,x_n)\in X^n$. One can therefore consider the operator norm $\|\A{\boldsymbol{\smallotimes}} \Id_X\|_{\ell_p^n(X)_0\to\ell_p^n(X)_0}$.

The following lemma coincides with~\cite[Lemma~6.1]{MN14}.

\begin{lemma}\label{lem:quote norm bound general} For every $q\in [1,\infty)$, every $n\in \N$, every symmetric and stochastic matrix $\A\in \M_n(\R)$, and every normed space $(X,\|\cdot\|_{\!X}^{\phantom{p}})$, we have
\begin{equation}\label{eq:from norm to spectral}
\gp\!\left(\A,\|\cdot\|_{\!X}^q\right)\le \bigg(1+\frac{4}{1-\|\A{\boldsymbol{\smallotimes}} \Id_X\|_{\ell_q^n(X)_0\to\ell_q^n(X)_0}}\bigg)^{\!q}.
\end{equation}
\end{lemma}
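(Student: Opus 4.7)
The plan is to establish the pointwise inequality $F^{1/q}\le\bigl(1+\tfrac{4}{1-\lambda}\bigr)G^{1/q}$ for every $\mathbf{x}=(x_1,\dots,x_n),\mathbf{y}=(y_1,\dots,y_n)\in X^n$, where I set $F=n^{-2}\sum_{i,j}\|x_i-y_j\|_X^q$, $G=n^{-1}\sum_{i,j}a_{ij}\|x_i-y_j\|_X^q$, and $\lambda=\|\A\otimes\Id_X\|_{\ell_q^n(X)_0\to\ell_q^n(X)_0}$; raising this inequality to the $q$-th power would then yield the desired bound on $\gp(\A,\|\cdot\|_X^q)$. Write $\bar x=n^{-1}\sum_i x_i$ and $\bar y=n^{-1}\sum_j y_j$, set $\mathbf{x}_0=\mathbf{x}-\bar x\mathbf{1}$, $\mathbf{y}_0=\mathbf{y}-\bar y\mathbf{1}$, and abbreviate $a=\|\mathbf{x}_0\|_{\ell_q^n(X)}$, $b=\|\mathbf{y}_0\|_{\ell_q^n(X)}$, $r=G^{1/q}$. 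The first step is to decompose $x_i-y_j=(\bar x-\bar y)+(x_i-\bar x)-(y_j-\bar y)$ and apply Minkowski's inequality in $L_q$ of the uniform measure on $\{1,\dots,n\}^2$, which produces $F^{1/q}\le\|\bar x-\bar y\|_X+n^{-1/q}(a+b)$.

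Since $\A$ is symmetric and row-stochastic its column sums also equal $1$, so the identity $\bar x-\bar y=n^{-1}\sum_{i,j}a_{ij}(x_i-y_j)$, the triangle inequality, and Jensen's inequality (applied to the convex function $t\mapsto t^q$) together give $\|\bar x-\bar y\|_X\le r$. For the zero-mean terms I would use the Poincar\'e-type consequence of the operator norm assumption: since $\A\otimes\Id_X$ preserves $\ell_q^n(X)_0$ with norm $\lambda$, one has $\|(\I-\A)\mathbf{z}\|_{\ell_q^n(X)}\ge(1-\lambda)\|\mathbf{z}\|_{\ell_q^n(X)}$ on this subspace, hence (noting that $(\I-\A)$ annihilates constants, so $(\I-\A)\mathbf{x}=(\I-\A)\mathbf{x}_0\in\ell_q^n(X)_0$) one gets $a\le(1-\lambda)^{-1}\|(\I-\A)\mathbf{x}\|_{\ell_q^n(X)}$ and the analogous bound for $b$. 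To control $\|(\I-\A)\mathbf{x}\|_{\ell_q^n(X)}$ I would exploit the algebraic identity $(\I-\A)\mathbf{x}=(\mathbf{x}-\A\mathbf{y})-\A(\mathbf{x}-\mathbf{y})$: Jensen's inequality gives $\|\mathbf{x}-\A\mathbf{y}\|_{\ell_q^n(X)}^q\le\sum_{i,j}a_{ij}\|x_i-y_j\|_X^q=nG$, while splitting $\mathbf{x}-\mathbf{y}$ into its constant part $(\bar x-\bar y)\mathbf{1}$ (on which $\A\otimes\Id_X$ acts as the identity) and its zero-mean part $\mathbf{x}_0-\mathbf{y}_0$ (on which $\A\otimes\Id_X$ has norm $\lambda$) produces $\|\A(\mathbf{x}-\mathbf{y})\|_{\ell_q^n(X)}\le n^{1/q}r+\lambda(a+b)$.

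Substituting these two estimates into the Poincar\'e bound and its counterpart for $b$ yields a coupled pair of scalar inequalities in $a$ and $b$ which, after addition and rearrangement, should give $a+b\le\tfrac{4n^{1/q}r}{1-\lambda}$; feeding this back into the Minkowski decomposition produces $F^{1/q}\le r+n^{-1/q}(a+b)\le\bigl(1+\tfrac{4}{1-\lambda}\bigr)r$, which is the claim. The principal obstacle lies in keeping the denominator of the final bound equal to $1-\lambda$ rather than a strictly smaller quantity: a naive combination of the coupled inequalities as I have sketched them seems to yield only a denominator of $1-3\lambda$, so obtaining the sharp constant will require either a more refined decomposition of $(\I-\A)\mathbf{x}$ that avoids invoking the factor $\lambda$ twice in succession, or a bootstrapping argument that more carefully exploits the fact that $\A\otimes\Id_X$ acts as the identity on constants and with norm exactly $\lambda$ on $\ell_q^n(X)_0$.
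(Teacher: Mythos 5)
Your overall architecture is right (Minkowski decomposition around the means, $\|\bar x-\bar y\|_X\le G^{1/q}$ via Jensen, a Poincar\'e bound on the mean-zero subspace, and a Jensen bound $\|\mathbf{x}-(\A{\boldsymbol{\smallotimes}}\Id_X)\mathbf{y}\|_{\ell_q^n(X)}^q\le nG$), but the proof as written does not establish the lemma, and the obstacle you flag at the end is fatal rather than cosmetic. The decomposition $(\I_n-\A)\mathbf{x}=(\mathbf{x}-\A\mathbf{y})-\A(\mathbf{x}-\mathbf{y})$ forces you to estimate $\|\A(\mathbf{x}-\mathbf{y})\|$ by $n^{1/q}r+\lambda(a+b)$, so the two coupled inequalities add up to $(1-3\lambda)(a+b)\le 4n^{1/q}r$. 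This is vacuous for $\lambda\ge\frac13$, and the entire point of the lemma is the regime $\lambda\to 1^-$ (for a general symmetric stochastic $\A$ the norm $\lambda$ can be arbitrarily close to, or equal to, $1$; when $\lambda<\frac13$ the statement is essentially trivial anyway). So what you have proved is a strictly weaker statement, not \eqref{eq:from norm to spectral}. Note also that the paper itself does not prove this lemma but quotes it from~\cite[Lemma~6.1]{MN14}, so there is no in-paper argument to fall back on.

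The repair is exactly the ``more refined decomposition'' you suspect exists, and it decouples $a$ from $b$ entirely: use the identity
\begin{equation*}
\big(\I_n-\A^2\big)\mathbf{x}=(\mathbf{x}-\A\mathbf{y})+\A(\mathbf{y}-\A\mathbf{x}),
\end{equation*}
(where $\A$ stands for $\A{\boldsymbol{\smallotimes}}\Id_X$). Both $\|\mathbf{x}-\A\mathbf{y}\|_{\ell_q^n(X)}$ and $\|\mathbf{y}-\A\mathbf{x}\|_{\ell_q^n(X)}$ are at most $(nG)^{1/q}$ by Jensen (the second uses the symmetry of $\A$), and $\|\A\|_{\ell_q^n(X)\to\ell_q^n(X)}\le 1$, so $\|(\I_n-\A^2)\mathbf{x}\|_{\ell_q^n(X)}\le 2(nG)^{1/q}$. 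On the other hand $(\I_n-\A^2)\mathbf{x}=(\I_n-\A^2)\mathbf{x}_0$ and $\|\A^2\|_{\ell_q^n(X)_0\to\ell_q^n(X)_0}\le\lambda^2$, whence $a\le 2(nG)^{1/q}/(1-\lambda^2)\le 2(nG)^{1/q}/(1-\lambda)$, and symmetrically for $b$, with no cross term. Feeding $a+b\le 4n^{1/q}G^{1/q}/(1-\lambda)$ into your Minkowski step gives $F^{1/q}\le\big(1+\frac{4}{1-\lambda}\big)G^{1/q}$, which is the claim. With this one substitution your write-up becomes a complete and correct proof.
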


Even a weak converse to Lemma~\ref{lem:quote norm bound general}, namely the ability to bound $\|\A{\boldsymbol{\smallotimes}} \Id_X\|_{\ell_q^n(X)_0\to\ell_q^n(X)_0}$ from above away from $1$ by a quantity that may depend on $\gp(\A,\|\cdot\|_{\!X}^q)$  and $q$ but not on $n$, fails for a general normed space $(X,\|\cdot\|_X)$; see~\cite[Section~6.1]{MN14}. However, if $(X,\|\cdot\|_X)$ is uniformly convex, then we have the following converse statement along these lines, due to~\cite[Lemma~6.6]{MN14}.

\begin{lemma}\label{lem:reverse norm bound in UC} Suppose that $q\in [2,\infty)$ and that $(X,\|\cdot\|_X)$ is a normed space for which $\mathscr{K}_q(X)<\infty$. Then, for every $n\in \N$ and every symmetric and stochastic matrix $\A\in \M_n(\R)$, we have
\begin{equation}\label{eq:exp}
\|\A{\boldsymbol{\smallotimes}} \Id_X\|_{\ell_q^n(X)_0\to\ell_q^n(X)_0}\le \bigg(1-\frac{1}{(2^{q-1}-1)\mathscr{K}_q(X)^q\gp\!\!\left(\A,\|\cdot\|_{\!X}^q\right)}\bigg)^{\!\!\frac{1}{q}}.
\end{equation}
\end{lemma}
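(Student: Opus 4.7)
Fix $x=(x_1,\ldots,x_n)\in \ell_q^n(X)_0$ and set $y_i=\sum_{j=1}^n a_{ij}x_j$, so $y=\A{\boldsymbol{\smallotimes}}\Id_X\cdot x$. Because $\A$ is symmetric and stochastic we also have $\sum_i y_i=0$, i.e., $y\in\ell_q^n(X)_0$. The plan is to prove the equivalent inequality
$$
\sum_{i=1}^n\|y_i\|_X^q\le \left(1-\frac{1}{(2^{q-1}-1)\mathscr{K}_q(X)^q\gp\!\left(\A,\|\cdot\|_X^q\right)}\right)\sum_{i=1}^n\|x_i\|_X^q
$$
by combining the $q$-convexity of $X$ (which controls how much $\|y_i\|^q$ can underperform an average of $\|x_j\|^q$) with the spectral-gap assumption on $\A$ (which rules out the bad configurations in which $y$ has nearly the same total $q$-norm as $x$).

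In the first step, I would apply the Ball--Carlen--Lieb reformulation of $q$-convexity,
$$
\|u\|_X^q+\mathscr{K}_q(X)^{-q}\|v\|_X^q\le\tfrac12\bigl(\|u+v\|_X^q+\|u-v\|_X^q\bigr),\qquad u,v\in X,
$$
with $(u,v)=\bigl((x_i+x_j)/2,(x_i-x_j)/2\bigr)$; weighting by $a_{ij}$ and summing, using that $\A$ is doubly stochastic, yields
$$
\sum_{i=1}^n\|x_i\|_X^q\ge \sum_{i,j=1}^n a_{ij}\Big\|\tfrac{x_i+x_j}{2}\Big\|_X^q+\frac{1}{(2\mathscr{K}_q(X))^q}\sum_{i,j=1}^n a_{ij}\|x_i-x_j\|_X^q.
$$
The identity $(x_i+y_i)/2=\sum_j a_{ij}(x_i+x_j)/2$ combined with Jensen's inequality for $\|\cdot\|_X^q$ gives $\sum_{i,j}a_{ij}\|(x_i+x_j)/2\|_X^q\ge \sum_i\|(x_i+y_i)/2\|_X^q$, and a reverse Clarkson-type comparison between $\|y_i\|^q$, $\|(x_i+y_i)/2\|^q$, and $\|(x_i-y_i)/2\|^q$ (the latter controlled by $\sum_j a_{ij}\|x_i-x_j\|^q$ via Jensen applied to $y_i-x_i=\sum_j a_{ij}(x_j-x_i)$) produces the pointwise improvement
$$
\sum_{i=1}^n\bigl(\|x_i\|_X^q-\|y_i\|_X^q\bigr)\ge \frac{1}{(2^{q-1}-1)\mathscr{K}_q(X)^q}\sum_{i,j=1}^n a_{ij}\|x_i-x_j\|_X^q. \qquad (\star)
$$

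In the second step, I would invoke the defining inequality~\eqref{eq:def gp} of the absolute spectral gap with $y_j=x_j$ together with the centering identity $x_i=n^{-1}\sum_{j=1}^n(x_i-x_j)$ (valid since $\sum_j x_j=0$) and Jensen applied to $\|\cdot\|_X^q$:
$$
n\sum_{i=1}^n\|x_i\|_X^q\le \sum_{i,j=1}^n\|x_i-x_j\|_X^q\le n\gp\!\left(\A,\|\cdot\|_X^q\right)\sum_{i,j=1}^n a_{ij}\|x_i-x_j\|_X^q.
$$
Plugging this into $(\star)$ gives the target inequality and hence~\eqref{eq:exp}.

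The principal obstacle will be executing the first step so that the Clarkson constant is exactly $(2^{q-1}-1)^{-1}$. The two-point $q$-convexity used naively produces the weaker factor $2^{-q}$; the sharpening relies on using that $y_i$ is a specific convex combination of the $x_j$'s, so that the convexity of $t\mapsto t^q$ can be coupled with the $q$-convexity of $\|\cdot\|_X$ rather than applied through a crude triangle inequality. The resulting factor interpolates between the Hilbertian value $1$ at $q=2$ (where $\mathscr{K}_2(H)=1$ recovers the sharp bound $\|\A{\boldsymbol{\smallotimes}}\Id_H\|_{\ell_2^n(H)_0}\le\sqrt{1-1/\gp}$) and exponential growth in $q$, in agreement with the reverse-parallelogram defects in $L_q$ for $q>2$.
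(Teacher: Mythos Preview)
The paper does not prove this lemma here; it simply quotes \cite[Lemma~6.6]{MN14}. So your proposal should be measured against whether it actually closes, not against a proof in this text.

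Your second step is fine: centering plus Jensen gives $\sum_i\|x_i\|^q\le\frac1n\sum_{i,j}\|x_i-x_j\|^q$, and the $\gp$ inequality with $y_j=x_j$ yields the displayed chain. The problem is your first step. After averaging the two--point BCL inequality over $a_{ij}$ and applying Jensen you arrive at
\[
\sum_i\|x_i\|^q-\sum_i\Big\|\tfrac{x_i+y_i}{2}\Big\|^q\ \ge\ \frac{1}{(2\mathscr{K}_q)^q}\sum_{i,j}a_{ij}\|x_i-x_j\|^q,
\]
and you then need a \emph{lower} bound on $\sum_i\|(x_i+y_i)/2\|^q$ in terms of $\sum_i\|y_i\|^q$. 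The ``reverse Clarkson-type comparison'' you allude to goes the wrong way: convexity only gives the \emph{upper} bound $\|(x_i+y_i)/2\|^q\le\frac12(\|x_i\|^q+\|y_i\|^q)$, and writing $y_i=2\cdot\frac{x_i+y_i}{2}-x_i$ together with the crude $\|a+b\|^q\le2^{q-1}(\|a\|^q+\|b\|^q)$ produces an inequality whose sign is unfavorable for $q\ge2$. So $(\star)$, as you have set it up with $\|x_i-x_j\|$ on the right, is not established; you flagged this as an ``obstacle'' but did not overcome it.

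A cleaner route avoids the difficulty entirely. Use the \emph{barycentric} (one--step martingale) form of $q$--convexity: for each $i$, with $y_i=\sum_j a_{ij}x_j$,
\[
\sum_j a_{ij}\|x_j\|^q-\|y_i\|^q\ \ge\ c\sum_j a_{ij}\|x_j-y_i\|^q,
\]
for a constant $c$ depending only on $\mathscr{K}_q(X)$ (this is where a factor such as $(2^{q-1}-1)^{-1}$ enters, arising from passing from the two--point BCL inequality to arbitrary convex combinations). Summing over $i$ gives
\[
\sum_i\|x_i\|^q-\sum_i\|y_i\|^q\ \ge\ c\sum_{i,j}a_{ij}\|x_j-y_i\|^q.
\]
Now invoke $\gp$ in its \emph{full} two--sequence form, with the pair $\big((y_i),(x_j)\big)$ rather than $y=x$:
\[
\sum_{i,j}a_{ij}\|x_j-y_i\|^q\ \ge\ \frac{1}{n\gp}\sum_{i,j}\|x_j-y_i\|^q\ \ge\ \frac{1}{\gp}\sum_j\|x_j\|^q,
\]
the last step using $\sum_i y_i=0$ and Jensen exactly as in your argument. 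Combining yields the claim. The point is that organizing the convexity step so that $\|x_j-y_i\|$ (not $\|x_i-x_j\|$) appears on the right lets you feed directly into the absolute--gap inequality without any ``reverse'' estimate.
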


The proof of the following lemma is an adaptation of the proof of~\cite[Theorem~4.15]{Nao14}.

\begin{lemma}\label{lem:interpolate coro} Suppose that $p\in [1,\infty)$ and $q\in [2,\infty)$. Let $(X,\|\cdot\|_X), (Z,\|\cdot\|_Z)$ be a compatible pair of complex Banach spaces such that $\mathscr{K}_q(Z)<\infty$. Fix $n\in \N$ and a symmetric stochastic matrix  $\A\in \M_n(\R)$. If $\theta\in (0,1]$ and $s\in \N$ satisfy
\begin{equation}\label{eq:s condition}
s\ge \big(8\mathscr{K}_q(Z)\big)^{\!q}\gamma\!\left(\A,\|\cdot\|_{\!Z}^q\right)\max\left\{\frac{q}{\theta},p,\frac{p(q-1)}{p-1}\right\},
\end{equation}
then,
$$
\gamma\!\left(\Big(\frac12\I_n+\frac12\A\Big)^{\!s},\|\cdot\|_{\![X,Z]_\theta}^p\right)\le \gp\!\left(\Big(\frac12\I_n+\frac12\A\Big)^{\!s},\|\cdot\|_{\![X,Z]_\theta}^p\right)\le e^{O(p)}.
$$  In particular, $\gamma\!\left(\big(\frac12\I_n+\frac12\A\big)^{\!s},\|\cdot\|_{\![X,Z]_\theta}^p\right)\lesssim 1$ for some $s\asymp \frac{1}{\theta}\big(9\mathscr{K}_q(Z)\big)^{\!q}\gamma\!\left(\A,\|\cdot\|_{\!Z}^q\right)$.
\end{lemma}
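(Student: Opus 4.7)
The plan is to reduce the desired bound on $\gp(\B^s,\|\cdot\|_{[X,Z]_\theta}^p)$ (where $\B=\frac12\I_n+\frac12\A$) to an operator-norm bound via Lemma~\ref{lem:quote norm bound general}, and then to control that operator norm by Calder\'on--Stein interpolation against the uniform-convexity estimate of Lemma~\ref{lem:reverse norm bound in UC}. First, Lemma~\ref{lem:lazy} yields $\gp(\B,\|\cdot\|_Z^q)\le 2^{2q+1}\gamma(\A,\|\cdot\|_Z^q)$, after which Lemma~\ref{lem:reverse norm bound in UC} delivers the quantitative contraction $\|\B\otimes\Id_Z\|_{\ell_q^n(Z)_0\to\ell_q^n(Z)_0}\le (1-1/\kappa)^{1/q}$ with $\kappa\le (8\mathscr{K}_q(Z))^q\gamma(\A,\|\cdot\|_Z^q)$. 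Iterating, $\|\B^s\otimes\Id_Z\|_{\ell_q^n(Z)_0}\le e^{-s/(q\kappa)}$. On the $X$ side, since $\B^s$ is symmetric stochastic, one has for free $\|\B^s\otimes\Id_X\|_{\ell_a^n(X)_0\to\ell_a^n(X)_0}\le 1$ for every $a\in[1,\infty]$.

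The core of the argument is to interpolate these two estimates along the Calder\'on--Lions family via~\eqref{eq:riesz thorin} and~\eqref{eq:stein weiss} in order to bound $\|\B^s\otimes\Id_{[X,Z]_\theta}\|_{\ell_p^n([X,Z]_\theta)_0\to\ell_p^n([X,Z]_\theta)_0}$ strictly below $1$. Once this is achieved, Lemma~\ref{lem:quote norm bound general} produces $\gp(\B^s,\|\cdot\|_{[X,Z]_\theta}^p)\le 9^p=e^{O(p)}$, which is the desired inequality; the bound $\gamma\le\gp$ is automatic from the definitions.

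The hard part is precisely this interpolation, because~\eqref{eq:stein weiss} applied to the pair $(\ell_a^n(X)_0,\ell_q^n(Z)_0)$ at parameter $\theta$ lands in $\ell_r^n([X,Z]_\theta)_0$ with $r=aq/(\theta a+(1-\theta)q)$, and forcing $r=p$ requires $a=pq(1-\theta)/(q-p\theta)$, which is at least $1$ only when $\theta\le\theta_*\eqdef q(p-1)/(p(q-1))$. I therefore split into two regimes. When $\theta\le\theta_*$, one step of interpolation with the indicated $a$ yields $\|\B^s\otimes\Id_{[X,Z]_\theta}\|_{\ell_p^n([X,Z]_\theta)_0}\le e^{-s\theta/(q\kappa)}$, which is at most $1/2$ once $s\gtrsim q\kappa/\theta$; this accounts for the $q/\theta$ term in~\eqref{eq:s condition}. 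When $\theta>\theta_*$, I use a two-step scheme: first interpolate at parameter $\theta$ between $\ell_1^n(X)_0$ (bound $1$) and $\ell_q^n(Z)_0$ (bound $e^{-s/(q\kappa)}$) to land in $\ell_{r_1}^n([X,Z]_\theta)_0$ with $r_1=q/(q-\theta(q-1))\ge p$, and then interpolate again inside $[X,Z]_\theta$ at parameter $\psi=r_1(p-1)/(p(r_1-1))$ between $\ell_1^n([X,Z]_\theta)_0$ and $\ell_{r_1}^n([X,Z]_\theta)_0$ to reach $\ell_p^n([X,Z]_\theta)_0$. A direct computation gives $\theta\psi=q(p-1)/(p(q-1))=\theta_*$, so the final bound is $e^{-s\theta_*/(q\kappa)}$, which is at most $1/2$ once $s\gtrsim q\kappa/\theta_*=p(q-1)\kappa/(p-1)$; this accounts for the $p(q-1)/(p-1)$ term in~\eqref{eq:s condition}.

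Combining the two regimes shows that the hypothesis~\eqref{eq:s condition} is exactly what is needed to drive the relevant operator norm below $1/2$; the ``$p$'' term in the max simply absorbs the implicit constants coming from Lemma~\ref{lem:quote norm bound general} and from the integrality of $s$. A minor technical point is that the interpolation takes place on the zero-sum subspaces $\ell_a^n(Y)_0$, but since these are complemented hyperplanes preserved by $\B^s\otimes\Id_Y$, the Calder\'on--Lions formalism applies to them verbatim. The ``in particular'' clause follows by absorbing constants and observing that the $q/\theta$ term dominates when $\theta$ is small, which yields the stated $s\asymp(9\mathscr{K}_q(Z))^q\gamma(\A,\|\cdot\|_Z^q)/\theta$.
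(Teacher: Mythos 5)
Your overall architecture coincides with the paper's: the $Z$-endpoint contraction comes from Lemma~\ref{lem:lazy} combined with Lemma~\ref{lem:reverse norm bound in UC}, the $X$-endpoint bound is the trivial one from stochasticity, the two are combined via the vector-valued interpolation facts \eqref{eq:riesz thorin} and \eqref{eq:stein weiss}, and the conclusion is extracted through Lemma~\ref{lem:quote norm bound general}. Where you genuinely diverge is the regime in which a single interpolation step cannot land at exponent $p$: the paper invokes the reiteration theorem \eqref{eq:reiteration} to write $[X,Z]_\theta=\big[[X,Z]_\alpha,Z\big]_\tau$ and reduces to the first case, whereas you perform a second Riesz--Thorin step inside $[X,Z]_\theta$ against the trivial $\ell_1$ bound. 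For $p\le q$ your identity $\theta\psi=\theta_*$ is correct, your two regimes exhaust $(0,1]$, and your accounting of which term of the max in \eqref{eq:s condition} is active in each regime is accurate; this is a legitimate alternative to reiteration in that range.

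There are, however, two concrete problems. First, the case analysis fails when $p>q$, which the lemma permits (the term $p$ in the max of \eqref{eq:s condition} is present precisely for that range). The equivalence ``$a\ge 1\iff\theta\le\theta_*$'' for $a=pq(1-\theta)/(q-p\theta)$ presupposes $q-p\theta>0$, i.e.\ $\theta<q/p$; when $p>q$ one has $q/p<\theta_*$ (and $\theta_*$ may exceed $1$), so for $\theta\in(q/p,\min\{\theta_*,1\}]$ the one-step exponent $a$ is negative while $r_1=q/(q-\theta(q-1))<p$, so your second step would need to interpolate \emph{upward} from $r_1$ to $p$, which $[\ell_1,\ell_{r_1}]_\psi$ cannot do. Concretely, $p=3$, $q=2$, $\theta=0.9$ is covered by neither regime. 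The paper's reiteration with $\tau=q/p$ (so that the reduced first case has the $a=\infty$ endpoint) is what handles this window.

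Second, the assertion that the Calder\'on--Lions formalism applies ``verbatim'' on the zero-sum hyperplanes is not correct, and the imprecision matters quantitatively. Interpolation commutes with passing to the ranges of a common bounded projection only up to constants controlled by the projection norms; here the projection is $\mathsf{Q}_n\otimes\Id=(\I_n-\mathsf{J}_n)\otimes\Id$, of norm at most $2$ on every $\ell_a^n(W)$. The paper interpolates the single operator $\big(\frac12\I_n+\frac12\A\big)^{s}\mathsf{Q}_n\otimes\Id$ on the \emph{full} spaces, so the factor $2$ is incurred exactly once and the final operator norm is $2/e<1$. If instead you interpolate on the subspaces and pick up the projection constant once per step of your two-step scheme, the guaranteed bound becomes $2^{1+\psi}e^{-\theta_*s/(q\kappa)}$, which under \eqref{eq:s condition} is only known to be at most $4/e>1$; Lemma~\ref{lem:quote norm bound general} then gives nothing, and there is no slack in \eqref{eq:s condition} to absorb the extra factor. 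The repair is to carry $\mathsf{Q}_n$ through both steps on the full spaces, exactly as in \eqref{use power s} and \eqref{eq:riesz thorin used here}.
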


\begin{proof}Suppose first that $\frac{q}{\theta+(1-\theta)q}\le  p\le \frac{q}{\theta}$, or equivalently that $\max\left\{\frac{q}{\theta},p,\frac{p(q-1)}{p-1}\right\}=\frac{q}{\theta}$. Then,
\begin{equation}\label{eq:choose r}
\frac{1}{p}=\frac{1-\theta}{r}+\frac{\theta}{q}, \qquad\mathrm{where}\qquad r\eqdef \frac{(1-\theta)pq}{q-\theta p} \in [1,\infty].
\end{equation}
For $p$ in the above range, the assumption~\eqref{eq:s condition} on $s$ is equivalent to the bound
\begin{equation}\label{eq:theta s q}
\big(9\mathscr{K}_q(Z)\big)^{\!q}\gamma\!\left(\A,\|\cdot\|_{\!Z}^q\right)\le \frac{\theta s}{q}.
\end{equation}

Let $\mathsf{J_n}\in \M_n(\R)$ be the matrix all of whose entries equal $\frac{1}{n}$. Set $\mathsf{Q}_n\eqdef \I_n-\mathsf{J}_n$. By convexity and the triangle inequality, $\|\mathsf{Q}_n{\boldsymbol{\smallotimes}} \Id_W\|_{\ell_a(W)\to \ell_a(W)_0}\le  2$ for any Banach space $(W,\|\cdot\|_W)$ and $a\ge 1$. So,
\begin{align}\label{use power s}
\begin{split}
\Big\|\Big(\frac12\I_n+\frac12 \A\Big)^{\!s}\mathsf{Q}_n&{\boldsymbol{\smallotimes}} \Id_W\Big\|_{\ell_a^n(W)\to \ell_a^n(W)} \\&=\bigg\|\bigg(\Big(\frac12\I_n+\frac12 \A\Big){\boldsymbol{\smallotimes}} \Id_W\bigg)^{\!s}(\mathsf{Q}_n{\boldsymbol{\smallotimes}} \Id_W)\bigg\|_{\ell_a^n(W)\to \ell_a^n(W)_0}\\
&\le \Big\|\Big(\frac12\I_n+\frac12 \A\Big){\boldsymbol{\smallotimes}} \Id_W\Big\|_{\ell_a^n(W)_0\to \ell_a^n(W)_0}^s\big\|\mathsf{Q}_n{\boldsymbol{\smallotimes}} \Id_W\big\|_{\ell_a^n(W)\to \ell_a^n(W)_0}\\&\le 2\Big\|\Big(\frac12\I_n+\frac12 \A\Big){\boldsymbol{\smallotimes}} \Id_W\Big\|_{\ell_a^n(W)_0\to \ell_a^n(W)_0}^s.
\end{split}
\end{align}
Due to~\eqref{eq:choose r}, by~\eqref{eq:stein weiss} we have $\ell_p^n([X,Z]_\theta)=[\ell_r^n(X),\ell_q^n(Z)]_\theta$. Consequently,
\begin{multline}\label{eq:riesz thorin used here}
\Big\|\Big(\frac12\I_n+\frac12 \A\Big)^{\!s}{\boldsymbol{\smallotimes}} \Id_{[X,Z]_\theta}\Big\|_{\ell_p^n([X,Z]_\theta)_0\to \ell_p^n([X,Z]_\theta)_0}\le \Big\|\Big(\frac12\I_n+\frac12 \A\Big)^{\!s}\mathsf{Q}_n{\boldsymbol{\smallotimes}} \Id_{[X,Z]_\theta}\Big\|_{\ell_p^n([X,Z]_\theta)\to \ell_p^n([X,Z]_\theta)}\\\stackrel{\eqref{eq:riesz thorin}\wedge \eqref{use power s}}{\le} 2\Big\|\Big(\frac12\I_n+\frac12 \A\Big){\boldsymbol{\smallotimes}} \Id_X\Big\|_{\ell_r^n(X)_0\to \ell_r^n(X)_0}^{(1-\theta)s}\Big\|\Big(\frac12\I_n+\frac12 \A\Big){\boldsymbol{\smallotimes}} \Id_Z\Big\|_{\ell_q^n(Z)_0\to \ell_q^n(Z)_0}^{\theta s}.
\end{multline}

We claim that
\begin{equation}\label{eq:use UC for power}
\Big\|\Big(\frac12\I_n+\frac12 \A\Big){\boldsymbol{\smallotimes}} \Id_X\Big\|_{\ell_r^n(X)_0\to \ell_r^n(X)_0}\le 1\quad \mathrm{and}\quad \Big\|\Big(\frac12\I_n+\frac12 \A\Big){\boldsymbol{\smallotimes}} \Id_Z\Big\|_{\ell_q^n(Z)_0\to \ell_q^n(Z)_0}\le \bigg(1-\frac{q}{\theta s}\bigg)^{\!\!\frac{1}{q}}.
\end{equation}
Indeed, the first inequality in~\eqref{eq:use UC for power} follows from the convexity of the $\ell_r^n(X)_0$ norm, because $\frac12\I_n+\frac12 \A$ is a stochastic matrix. The second inequality in~\eqref{eq:use UC for power} is justified as follows.
\begin{align*}
\begin{split}
\Big\|\Big(\frac12\I_n+\frac12 \A\Big){\boldsymbol{\smallotimes}} \Id_Z\Big\|_{\ell_q^n(Z)_0\to \ell_q^n(Z)_0}&\stackrel{\eqref{eq:exp}}{\le} \bigg(1-\frac{1}{2^{q-1}\mathscr{K}_q(Z)^q\gp\!\!\left(\frac12 \I_n+\frac12\A,\|\cdot\|_{\!Z}^q\right)}\bigg)^{\!\!\frac{1}{q}}\\&\stackrel{\eqref{eq:lazy A}}{\le}
\bigg(1-\frac{1}{\big(8\mathscr{K}_q(Z)\big)^{\!q}\gamma\!\!\left(\A,\|\cdot\|_{\!Z}^q\right)}\bigg)^{\!\!\frac{1}{q}}\stackrel{\eqref{eq:theta s q}}{\le} \bigg(1-\frac{q}{\theta s}\bigg)^{\!\!\frac{1}{q}}.
\end{split}
\end{align*}
A substitution of~\eqref{eq:use UC for power} into~\eqref{eq:riesz thorin used here} gives
\begin{equation}\label{eq:2/e}
\Big\|\Big(\frac12\I_n+\frac12 \A\Big)^{\!s}{\boldsymbol{\smallotimes}} \Id_{[X,Z]_\theta}\Big\|_{\ell_p^n([X,Z]_\theta)_0\to \ell_p^n([X,Z]_\theta)_0}\le 2\bigg(1-\frac{q}{\theta s}\bigg)^{\!\!\frac{\theta s}{q}}\le \frac{2}{e}.
\end{equation}
Hence,
$$
\gp\!\left(\Big(\frac12\I_n+\frac12\A\Big)^{\!s},\|\cdot\|_{\![X,Z]_\theta}^p\right)\stackrel{\eqref{eq:from norm to spectral}\wedge \eqref{eq:2/e}}{\le} \Big(\frac{5e-2}{e-2}\Big)^p= e^{O(p)}.
$$

This proves Lemma~\ref{lem:interpolate coro} when $\frac{q}{\theta+(1-\theta)q}\le  p\le \frac{q}{\theta}$. If $p\in [1,\infty]\setminus  [\frac{q}{\theta+(1-\theta)q},\frac{q}{\theta}]$, then define
$$
\tau\eqdef \min\left\{\frac{q}{p},\frac{q(p-1)}{p(q-1)}\right\} \qquad\mathrm{and}\qquad \alpha\eqdef 1-(1-\theta)\max\left\{\frac{p}{p-q},\frac{p(q-1)}{q-p}\right\}.
$$
One checks that $\theta=(1-\tau)\alpha+\tau$. Also, the assumption on $p$ ensures that $\alpha,\tau\in [0,1]$ and
$$
\max\left\{\frac{q}{\tau},p,\frac{p(q-1)}{p-1}\right\}=\frac{q}{\tau}=\max\left\{p,\frac{p(q-1)}{p-1}\right\}.
$$
By the reiteration theorem~\eqref{eq:reiteration}, we have $[X,Z]_\theta=\big[[X,Z]_\alpha,[X,Z]_1\big]_\tau=\big[[X,Z]_\alpha,Z\big]_\tau$. So, the rest of Lemma~\ref{lem:interpolate coro}  becomes the case that we already proved upon replacing $X$ by $[X,Z]_\alpha$ and $\theta$ by $\tau$.\end{proof}

\begin{proof}[Completion of the proof of Theorem~\ref{thm:interpolation markov}] Continue with the notation and assumptions of Theorem~\ref{thm:interpolation markov}. Fix  $x_1,\ldots,x_n\in [X,Z]_\theta$. By the special case $\MM=Y=[X,Z]_\theta$ of Proposition~\ref{prop:other exponents} (which is yet to be proven, but this is done in Section~\ref{sec:normalization}), we know that the $\frac{p}{2}$-snowflake of $[X,Z]_\theta$ embeds with quadratic average distortion $O(1)$ back  into $[X,Z]_\theta$. An application of this conclusion to the uniform measure on $\{x_1,\ldots,x_n\}$  provides new points $y_1,\ldots,y_n\in [X,Z]_\theta$ satisfying
\begin{equation}\label{eq:p/2 holder}
\forall\, i,j\in \n,\qquad \|y_i-y_j\|_{\![X,Z]_\theta}^{\phantom{p}}\lesssim \|x_i-x_j\|_{\![X,Z]_\theta}^{\!\frac{p}{2}},
\end{equation}
and
\begin{equation}\label{eq:average x y in proof}
\frac{1}{n^2}\sum_{i=1}^n\sum_{j=1}^n \|y_i-y_j\|_{\![X,Z]_\theta}^2\ge \frac{1}{n^2}\sum_{i=1}^n\sum_{j=1}^n \|x_i-x_j\|_{\![X,Z]_\theta}^p.
\end{equation}

Write $\A=(a_{ij})\in \M_n(\R)$. By Lemma~\ref{lem:interpolate coro} we know that
\begin{equation}\label{eq:choose our s}
\exists\, s\in \N,\qquad s\asymp \frac{1}{\theta}\big(9\mathscr{K}_q(Z)\big)^{\!q}\gamma\!\left(\A,\|\cdot\|_{\!Z}^q\right)\qquad\mathrm{and}\qquad \gamma\!\left(\Big(\frac12\I_n+\frac12\A\Big)^{\!s},\|\cdot\|_{\![X,Z]_\theta}^p\right)\lesssim  1.
\end{equation}
Fixing $s\in \N$ as in~\eqref{eq:choose our s}, we reason as follows.
\begin{align*}
\frac{1}{n^2}\sum_{i=1}^n\sum_{j=1}^n \|x_i-x_j\|_{\![X,Z]_\theta}^p&\stackrel{\eqref{eq:average x y in proof}}{\le} \frac{1}{n^2}\sum_{i=1}^n\sum_{j=1}^n \|y_i-y_j\|_{\![X,Z]_\theta}^2\\&\stackrel{\eqref{eq:nonliear gap def}}{\le}
\frac{1}{n}\gamma\!\left(\Big(\frac12\I_n+\frac12\A\Big)^{\!s},\|\cdot\|_{\![X,Z]_\theta}^p\right)\sum_{i=1}^n\sum_{j=1}^n \Big(\frac12\I_n+\frac12\A\Big)^{\!s}_{\! ij}\|y_i-y_j\|_{\![X,Z]_\theta}^2\\
&\!\!\!\!\!\!\stackrel{\eqref{eq:choose our s}\wedge \eqref{eq:p/2 holder}}{\lesssim}\frac{1}{n}\sum_{i=1}^n\sum_{j=1}^n \Big(\frac12\I_n+\frac12\A\Big)^{\!s}_{\! ij}\|x_i-x_j\|_{\![X,Z]_\theta}^p\\&
\stackrel{\eqref{eq:Mtype def}}{\le} \frac{\mathbf{M}_p([X,Z]_\theta)^ps}{n}\sum_{i=1}^n\sum_{j=1}^n \frac12a_{ij}\|x_i-x_j\|_{\![X,Z]_\theta}^p\\
&\stackrel{\eqref{eq:choose our s}}{\asymp} \frac{\mathbf{M}_p([X,Z]_\theta)^p\big(9\mathscr{K}_q(Z)\big)^{\!q}\gamma\!\left(\A,\|\cdot\|_{\!Z}^q\right)}{\theta n}\sum_{i=1}^n\sum_{j=1}^n a_{ij}\|x_i-x_j\|_{\![X,Z]_\theta}^p.
\end{align*}
Because this holds for every $x_1,\ldots,x_n\in [X,Z]_\theta$, by the definition~\eqref{eq:nonliear gap def} this is precisely~\eqref{eq:interpolation bound for gamma}.
\end{proof}

\begin{remark} In~\cite[Section~6]{ANNRW18} and~\cite[Section~5]{Nao18} we presented a proof of the quadratic inequality~\eqref{eq:get weaker quadratic}  while stripping away any reference to complex interpolation; it amounts to an expository  repackaging of the same mechanism as our reasoning here, but is more elementary. Applying this proof to the vectors $y_1,\ldots,y_n\in X$ that satisfy~\eqref{eq:p/2 holder} and~\eqref{eq:average x y in proof} with $p=1$ and $\theta=0$ (namely, using the special case $\MM=Y=X$ and $\omega=\frac12$ of Proposition~\ref{prop:other exponents}), and then invoking duality through Theorem~\ref{thm:full duality}, we get an interpolation-free proof of Theorem~\ref{thm:average john}. By applying Lemma~\ref{lem:quote norm bound general} and Lemma~\ref{lem:reverse norm bound in UC}  in place of the linear-algebraic reasoning in~\cite{ANNRW18,Nao18}, one also obtains mutatis mutandis an interpolation-free  proof of the first part~\eqref{eq:rough D upper bound convexity smoothness} of Theorem~\ref{thm:really main}, albeit with a worse asymptotic dependence on $q$ in the implicit  factor in~\eqref{eq:rough D upper bound convexity smoothness}. We do not see how to derive the second part~\eqref{eq:D cases} of Theorem~\ref{thm:really main} without appealing to complex interpolation.  Incorporation of finite-dimensional reasoning in an interpolation argument, as we do here, is also used  in our subsequent works~\cite{ANNRW-FOCS18,ANNRW18-interpolation}; if  interpolation could be avoided in the context of~\cite{ANNRW-FOCS18,ANNRW18-interpolation}, then it would be worthwhile to do so, potentially (depending on the resulting proof)  with algorithmic ramifications.

\end{remark}

\section{Auxiliary embedding results}\label{sec:aux em}

Here we will prove Proposition~\eqref{prop:other exponents} and show how   Theorem~\ref{thm:lp version Kp} (matrix-dimension inequality with what we conjecture is the asymptotically optimal dependence on $p$) follows from Theorem~\ref{thm:average john}.

Henceforth, all balls in a metric space are closed, i.e., for a metric space $(\MM,d_\MM)$, a point $x\in \MM$ and a radius $r\in [0,\infty]$, we write $B_\MM(x,r)=\{y\in \MM:\ d_\MM(y,x)\le r\}$. Given a Borel probability measure $\mu$ on $\MM$ and $p\ge 1$, when in the Introduction we discussed the $p$-average distortion of an embedding of the metric probability space $(\MM,d_\MM,\mu)$ into some Banach space, we did not impose the  integrability requirement $\iint_{\MM\times\MM} d_\MM(x,y)^p\ud\mu(x)\ud\mu(y)<\infty$.  However, it is simple to dispose of the (inconsequential) case of those Borel probability measures $\mu$ on $\MM$ for which $d_\MM(\cdot,\cdot)\notin L_p(\mu\times \mu)$ through the following  straightforward consequence of the triangle inequality.

If $\iint_{\MM\times\MM} d_\MM(x,y)^p\ud\mu(x)\ud\mu(y)=\infty$, then for every $z\in \MM$ and $r>0$ we have
\begin{multline*}
\infty=\bigg(\iint_{\MM\times\MM} d_\MM(x,y)^p\ud\mu(x)\ud\mu(y)\bigg)^{\!\frac{1}{p}}\le \bigg(\iint_{\MM\times \MM} \big(d_\MM(x,z)+d_\MM(y,z)\big)^p\ud\mu(x)\ud\mu(y)\bigg)^{\!\frac{1}{p}}\\\le 2\bigg(\int_{\MM} d_\MM(x,z)^p\ud\mu(x)\bigg)^{\!\frac{1}{p}}\le  2\bigg((2r)^p+\int_{\MM\setminus B_\MM(z,2r)} d_\MM(x,z)^p\ud\mu(x)\bigg)^{\!\frac{1}{p}}.
\end{multline*}
So  $\int_{\MM\setminus B_\MM(z,2r)} d_\MM(x,z)^p\ud\mu(x)=\infty$. There is $r>0$ for which $\mu(B_\MM(z,r))>0$ ($\mu$ is  a probability measure). Noting that $d_\MM(x,z)-d_\MM(y,z)\ge \frac12d_\MM(x,z)$ when $(x,y)\in \big(\MM\setminus B_\MM(z,2r)\big)\times B_\MM(z,r)$,
\begin{multline*}
\iint_{\MM\times \MM} \big|d_\MM(x,z)-d_\MM(y,z)\big|^p\ud\mu(x)\ud\mu(y)\ge\frac12 \iint_{\big(\MM\setminus B_\MM(z,2r)\big)\times B_\MM(z,r)}d_\MM(x,z)^p\ud\mu(x)\ud\mu(y)\\
=\frac12\mu\big(B_\MM(z,r)\big)\int_{\MM\setminus B_\MM(z,2r)} d_\MM(x,z)^p\ud\mu(x)=\infty=\iint_{\MM\times\MM} d_\MM(x,y)^p\ud\mu(x)\ud\mu(y).
\end{multline*}
Thus the $1$-Lipschitz function $x\mapsto d_\MM(x,z)\in \R$ is an embedding of $(\MM,d_\MM,\mu)$ into the real line with $\mu$ average distortion $1$. Due to this (trivial) observation, we will be allowed to assume that we have $\iint_{\MM\times\MM} d_\MM(x,y)^p\ud\mu(x)\ud\mu(y)<\infty$ whenever needed in the ensuing discussion.

\subsection{Section~7.4 of~\cite{Nao14} revisited}\label{sec:revisited} The special case $\omega=1$ of Proposition~\eqref{prop:other exponents} was essentially proved in~\cite[Section~7.4]{Nao14}. Here we will derive this case of Proposition~\eqref{prop:other exponents}  while obtaining asymptotically better bounds than those of~\cite{Nao14} and also removing an additional hypothesis (on Lipschitz extendability) that arose in the  context of~\cite{Nao14} but is not needed for Proposition~\eqref{prop:other exponents} as stated here.

Let  $(\MM,d_\MM)$ be a metric space and fix~\cite{Ban32} an arbitrary isometric embedding $\jj:\MM\to C[0,1]$ of $\MM$ into the space of continuous functions on the interval $[0,1]$, equipped (as usual) with the supremum norm $\|\cdot\|_{C[0,1]}$; it is more convenient (but not crucial) to work below with such an embedding rather than the Fr\'echet embedding into $\ell_\infty$ due to the separability of the target space. Suppose that $q\ge 1$ and let $\mu$ be a Borel probability measure on $\MM$ such that
\begin{equation}\label{eq:q double integrability}
\iint_{\MM\times \MM} d_\MM(x,y)^q\ud\mu(x)\ud\mu(y)<\infty.
\end{equation}
By~\eqref{eq:q double integrability} and Fubini's theorem, $\int_\MM d_\MM(u,x)^q\ud\mu(x)<\infty$ for some  $u\in \MM$. Since $$\forall\, x\in \MM,\qquad \|\mathcal{j}(x)\|_{C[0,1]} \le \|\mathcal{j}(u)\|_{C[0,1]} +\|\mathcal{j}(x)-\mathcal{j}(u)\|_{C[0,1]} = \|\mathcal{j}(u)\|_{C[0,1]} +d_\MM(x,u),$$
we have $\int_\MM \|\jj(x)\|_{C[0,1]}^q\ud\mu(x)<\infty$. Hence $\int_\MM \|\jj(x)\|_{C[0,1]}\ud\mu(x)<\infty$, because $q\ge 1$ and $\mu$ is a probability measure. By Bochner's integrability criterion (see e.g.~\cite[Chapter~5]{BL00}), this implies that the Bochner integral $\int_\MM \mathcal{j}(x)\ud\mu(x)$ is a well-defined element of $C[0,1]$. We can therefore denote
\begin{equation}\label{eq:def Iq}
\bI_q=\bI_q(\mu,\jj)\eqdef \bigg(\int_\MM \Big\|\jj(x)-\int_\MM\jj(w)\ud\mu(w)\Big\|_{C[0,1]}^q\ud\mu(x)\bigg)^{\!\!\frac{1}{q}}.
\end{equation}
Observe that, using the fact that $\jj$ is isometry and the triangle inequality in $L_q(\mu\times \mu)$, we have
\begin{equation}\label{eq:Iq lower}
\bigg(\iint_{\MM\times \MM} d_\MM(x,y)^q\ud\mu(x)\ud\mu(y)\bigg)^{\!\!\frac{1}{q}}=\bigg(\iint_{\MM\times \MM} \|\jj(x)-\jj(y)\|_{C[0,1]}^q\ud\mu(x)\ud\mu(y)\bigg)^{\!\!\frac{1}{q}}\le 2\bI_q.
\end{equation}
Since $q\ge 1$ and $\mu$ is a probability measure, by Jensen's inequality and the fact that $\jj$ is isometry,
\begin{equation}\label{eq:Iq upper}
\bI_q\le \bigg(\iint_{\MM\times \MM} \|\jj(x)-\jj(w)\|_{C[0,1]}^q\ud\mu(x)\ud\mu(w)\bigg)^{\!\!\frac{1}{q}}=\bigg(\iint_{\MM\times \MM} d_\MM(x,y)^q\ud\mu(x)\ud\mu(y)\bigg)^{\!\!\frac{1}{q}}.
\end{equation}

In what follows, for $\tau\ge 1$ we will also consider a subset $A_\tau=A_\tau(\mu,\jj,q)\subset \MM$ that is defined by
\begin{align}\label{eq:phantom ball}
\begin{split}
A_\tau&\eqdef \bigg\{x\in \MM:\ \Big\|\jj(x)-\int_\MM \jj(w)\ud\mu(w)\Big\|_{C[0,1]}\le \tau \bI_q\bigg\}\\&=\jj^{-1}\bigg(B_{C[0,1]}\Big(\int_{\MM} \jj(w)\ud\mu(w),\tau\bI_q\Big)\bigg).
\end{split}
\end{align}
Note that by Markov's inequality we have
\begin{equation}\label{eq:Markov tau}
\forall\, \tau\ge 1,\qquad \mu(\MM\setminus A_\tau)\le \frac{1}{\tau^q}.
\end{equation}

The following lemma provides a convenient upper  bound on the $q$-average distortion of the metric probability space $(\MM,d_\MM,\mu)$ into the real line; in essence, its role in what follows is to treat the ``trivial case'' in which the random variable $\jj(x)$, where $x\in\MM$ is distributed according to $\mu$, is not well-concentrated around its mean in a certain  quantitative sense which is made precise below.

\begin{lemma}\label{lem:q into R} $(\MM,d_\MM,\mu)$ embeds with $q$-average distortion $D_\R\ge 1$ into $\R$, where
\begin{equation}\label{eq:def DR}
D_\R=D_\R(\mu,\jj,q)\eqdef  \inf_{\tau> e^{e^{-q}}}\bigg(\int_{\MM\setminus A_\tau}\Big\|\jj(x)-\int_\MM\jj(w)\ud\mu(w)\Big\|_{C[0,1]}^q\ud\mu(x)\bigg)^{\!\!-\frac{1}{q}}\frac{6\tau \bI_q}{\tau-e^{e^{-q}}} .
\end{equation}
\end{lemma}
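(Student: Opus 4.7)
The plan is to exhibit an explicit $1$-Lipschitz function $f:\MM\to\R$ and, for each admissible $\tau$, directly lower bound $\iint_{\MM\times\MM}|f(x)-f(y)|^q \ud\mu \ud\mu$. The natural candidate, in the spirit of the ``distance to a small ball'' embeddings that are common in metric geometry, is
$$
f(x) \eqdef \Big(\|\jj(x) - \nu\|_{C[0,1]} - e^{e^{-q}}\bI_q\Big)_+,
\qquad \nu \eqdef \int_\MM \jj(w)\ud\mu(w),
$$
which is $1$-Lipschitz as a composition of the isometry $\jj$, the norm on $C[0,1]$, and the $1$-Lipschitz truncation $t\mapsto (t-c)_+$.

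Abbreviate $\tau_0 = e^{e^{-q}}$ and $g(x) = \|\jj(x)-\nu\|_{C[0,1]}$, and fix $\tau>\tau_0$. Since $f$ vanishes on $A_{\tau_0}$, the crude two-point estimate
$$
\iint_{\MM\times\MM}|f(x)-f(y)|^q \ud\mu(x)\ud\mu(y) \;\ge\; 2\mu(A_{\tau_0}) \int_{\MM\setminus A_\tau} f(x)^q \ud\mu(x)
$$
holds. On $\MM\setminus A_\tau$ one has $g(x)>\tau\bI_q$, and the elementary inequality $g(x)-\tau_0\bI_q \ge g(x)(\tau-\tau_0)/\tau$ (valid precisely when $g(x)\ge\tau\bI_q$) yields
$$
\int_{\MM\setminus A_\tau} f(x)^q \ud\mu(x) \;\ge\; \bigg(\frac{\tau-\tau_0}{\tau}\bigg)^{\!\!q} \int_{\MM\setminus A_\tau} \|\jj(x)-\nu\|_{C[0,1]}^q \ud\mu(x).
$$
Combined with the a priori bound $\iint d_\MM(x,y)^q \ud\mu\ud\mu \le (2\bI_q)^q$ from~\eqref{eq:Iq lower}, rescaling $f$ to a $D$-Lipschitz map produces an embedding with $q$-average distortion at most $\frac{2\bI_q\tau}{(2\mu(A_{\tau_0}))^{1/q}(\tau-\tau_0)} N_\tau^{-1/q}$, where $N_\tau$ denotes the integral appearing in the statement of the lemma.

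The only step with any real substance is the quantitative lower bound $\mu(A_{\tau_0}) \ge 1/(2\cdot 3^q)$, which is precisely what turns the prefactor $2\bI_q/(2\mu(A_{\tau_0}))^{1/q}$ into the $6\bI_q$ in the definition of $D_\R$; after this, taking the infimum over $\tau > \tau_0$ completes the proof. By Markov's inequality~\eqref{eq:Markov tau}, $\mu(A_{\tau_0}) \ge 1 - e^{-qe^{-q}}$, and since $qe^{-q}\le 1/e<1$ the elementary bound $1-e^{-t}\ge t/2$ yields $\mu(A_{\tau_0})\ge qe^{-q}/2$; it then suffices to verify the purely numerical inequality $q\ge (e/3)^q$, which holds for every $q\ge 1$ because $e/3<1$. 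This is what explains (and pins down) the specific constant $\tau_0 = e^{e^{-q}}$ in the statement: a larger choice would shrink the numerator $\tau-\tau_0$, while any asymptotically smaller choice as $q\to\infty$ would fail to keep $1-e^{-qe^{-q}}$ above $1/(2\cdot 3^q)$.
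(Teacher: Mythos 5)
Your proof is correct and follows essentially the same route as the paper's: the paper takes $f(x)=\|\jj(x)-\int_\MM\jj(w)\ud\mu(w)\|_{C[0,1]}$ and bounds $f(x)-f(y)$ for $x\in\MM\setminus A_\tau$, $y\in A_{e^{e^{-q}}}$ by exactly the quantity your truncation produces, then uses the same two-set symmetrization, the same bound $(1-e^{e^{-q}}/\tau)g(x)$, and the same combination of \eqref{eq:Iq lower} with \eqref{eq:Markov tau}. Your explicit verification that $\mu(A_{e^{e^{-q}}})\ge \tfrac12 3^{-q}$ via $1-e^{-t}\ge t/2$ is in fact slightly more careful than the paper's stated intermediate bound $1-e^{-qe^{-q}}\ge 3^{-q}$ (which just fails at $q=1$ but is rescued by the factor $2$ from symmetrization, as your accounting makes clear).
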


\begin{proof} Define $f:\MM\to \R$ by setting
$$
\forall\, x\in \MM,\qquad f(x)\eqdef \Big\|\jj(x)-\int_\MM \jj(w)\ud\mu(w)\Big\|_{C[0,1]}.
$$
Then $f$ is $1$-Lipschitz, because $\jj$ is an isometry.

Suppose that $\tau>e^{e^{-q}}$ and observe that for every $x\in \MM\setminus A_\tau$ and $y\in A_{e^{e^{-q}}}$ we have
$$
f(x)-f(y)\ge \Big\|\jj(x)-\int_\MM \jj(w)\ud\mu(w)\Big\|_{C[0,1]}-e^{e^{-q}}\bI_q\ge \Big(1-\frac{e^{e^{-q}}}{\tau}\Big)\Big\|\jj(x)-\int_\MM \jj(w)\ud\mu(w)\Big\|_{C[0,1]}.
$$
Consequently,
\begin{align*}
&\iint_{\MM\times \MM} \big|f(x)-f(y)\big|^q\ud\mu(x)\ud\mu(y)\\
&\ge 2\Big(1-\frac{e^{e^{-q}}}{\tau}\Big)^{\!q}\mu\left(A_{e^{e^{-q}}}\right)\int_{\MM\setminus A_\tau}\Big\|\jj(x)-\int_\MM \jj(w)\ud\mu(w)\Big\|_{C[0,1]}^q\ud\mu(x)\\
&\ge \frac{\big(\tau-e^{e^{-q}}\big)^q\int_{\MM\setminus A_\tau}\left\|\jj(x)-\int_\MM \jj(w)\ud\mu(w)\right\|_{C[0,1]}^q\ud\mu(x)}{(6\tau\bI_q)^q}\iint_{\MM\times \MM} d_\MM(x,y)^q\ud\mu(x)\ud\mu(y),
\end{align*}
where the final step uses~\eqref{eq:Iq lower} and the bound~\eqref{eq:Markov tau} which gives $
\mu\left(A_{e^{e^{-q}}}\right) \ge 1-e^{-qe^{-q}}\ge 3^{-q}$.
\end{proof}

\begin{lemma}\label{lem:pass to larger q} Fix $p,q,D\ge 1$ with  $q\ge p$. Define $\Delta\ge 1$ by
\begin{equation}\label{eq:D' bigger q}
\Delta\eqdef  D+\frac{q}{p\log\big(e+\frac{q}{pD}\big)}\asymp \left\{ \begin{array}{ll}D &\mathrm{if\ } D\ge \frac{q}{p},\\
\frac{q}{p\log \left(e+\frac{q}{pD}\right)}&\mathrm{if\ } 1\le D\le \frac{q}{p}.\end{array}\right.
\end{equation}
Suppose that a metric probability space $(\MM,d_\MM,\mu)$ embeds with $p$-average distortion less than $D$ into a Banach space $(X,\|\cdot\|_X)$. Then $(\MM,d_\MM,\mu)$ embeds with $q$-average distortion $O(\Delta)$ into $(X,\|\cdot\|_X)$.
\end{lemma}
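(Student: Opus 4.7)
The plan is to use a dichotomy driven by how well-concentrated the measure $\mu$ is in the isometric Banach embedding $\jj:\MM\to C[0,1]$. Fix an embedding $g:\MM\to X$ that is $D''$-Lipschitz for some $D''<D$ and satisfies $\iint \|g(x)-g(y)\|_X^p\ud\mu\ud\mu\ge\iint d_\MM(x,y)^p\ud\mu\ud\mu$ (I may reduce to the case $\iint d_\MM^q\ud\mu\ud\mu<\infty$ via the observation at the start of Section~\ref{sec:aux em}). Set $\bar\jj=\int\jj\ud\mu$ and $f_\R(x)=\|\jj(x)-\bar\jj\|_{C[0,1]}$, which is $1$-Lipschitz; by Lemma~\ref{lem:q into R}, $(\MM,d_\MM,\mu)$ embeds into $\R$ with $q$-average distortion $D_\R$. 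Pick a threshold $M\asymp\Delta$ to be optimized at the end. Write $R_p=(\iint d_\MM^p)^{1/p}$, $R_q=(\iint d_\MM^q)^{1/q}$, and recall from \eqref{eq:Iq lower}--\eqref{eq:Iq upper} that $\bI_q\le R_q\le 2\bI_q$.

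In the dispersed case $D_\R\le M$, fix any unit vector $e\in X$ and let $F(x)=D_\R f_\R(x)e$. Then $F$ is $D_\R$-Lipschitz and by Lemma~\ref{lem:q into R} satisfies $\iint\|F(x)-F(y)\|_X^q\ud\mu\ud\mu\ge\iint d_\MM^q\ud\mu\ud\mu$, so the $q$-average distortion of $F$ is at most $D_\R\le M\lesssim\Delta$. In the concentrated case $D_\R>M$, the definition~\eqref{eq:def DR} yields, for every $\tau>e^{e^{-q}}$,
\begin{equation*}
\int_{\MM\setminus A_\tau}f_\R(x)^q\ud\mu(x)<\left(\frac{6\tau\bI_q}{M(\tau-e^{e^{-q}})}\right)^{\!q}\!,
\end{equation*}
so almost all of the $L^q$-mass of $f_\R$ lies inside $A_\tau$, a region where $f_\R\le\tau\bI_q$. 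Splitting $\bI_q^q=\int f_\R^q\ud\mu$ according to $A_\tau$ and $\MM\setminus A_\tau$ and bounding the inside integral by $(\tau\bI_q)^{q-p}\int f_\R^p\ud\mu=(\tau\bI_q)^{q-p}\bI_p^p$ gives the reverse-Jensen estimate
\begin{equation*}
\bI_p^p\ge\frac{\bI_q^p\bigl(1-(12/M)^q(\tau/(2(\tau-e^{e^{-q}})))^q\bigr)}{\tau^{q-p}}.
\end{equation*}
Combined with the elementary $\bI_q\le R_q\le 2\bI_q$ and $\bI_p\le R_p\le 2\bI_p$, this yields an upper bound on $R_q/R_p$ of the form $\lesssim\tau^{(q-p)/p}$ (provided $M$ is large enough that the bracketed factor stays bounded away from zero).

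With that in hand, Jensen's inequality applied to the $p$-average assumption on $g$ gives $(\iint\|g(x)-g(y)\|_X^q)^{1/q}\ge(\iint\|g\|_X^p)^{1/p}\ge R_p$, so the map $\lambda g$ with $\lambda=R_q/R_p$ is $\lambda D$-Lipschitz and satisfies the $q$-average preservation inequality. Thus in the concentrated case the $q$-average distortion is at most $D\cdot R_q/R_p\lesssim D\tau^{(q-p)/p}$. The final step is to choose $\tau$ and then $M$ to balance Cases A and B: the natural choice is $\tau$ of size $\asymp 1+q/(p\log(e+q/(pD)))$ so that $\tau^{(q-p)/p}\lesssim 1+q/(pD\log(e+q/(pD)))$; setting $M$ equal to the resulting common bound yields exactly the value of $\Delta$ asserted in~\eqref{eq:D' bigger q}, using the fact that $\Delta\asymp D$ once $D\ge q/p$ and $\Delta\asymp q/(p\log(e+q/(pD)))$ in the complementary range. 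The main obstacle I anticipate is the bookkeeping in Case B, namely carrying the parameter $\tau$ cleanly through the splitting of $\bI_q^q$, the comparison $\bI_p\leftrightarrow R_p$ and $\bI_q\leftrightarrow R_q$, and the final $R_q/R_p$ bound so that the optimization in $M$ matches the precise expression for $\Delta$ rather than a looser $(\text{const})^{q/p}$-type estimate.
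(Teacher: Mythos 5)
Your skeleton is the paper's: Lemma~\ref{lem:q into R}, the sets $A_\tau$, the splitting of $\bI_q^q$ over $A_\tau$ and its complement, and the Jensen rescaling of the $p$-average embedding are exactly the ingredients of the paper's proof, merely reorganized as a threshold dichotomy on $D_\R$ instead of the paper's ``$\max_\d\min$'' over $\d=\bI_p/\bI_q$. The problem is the final step, which is where the lemma's actual content (the logarithmic gain) lives. You claim that $\tau\asymp 1+\frac{q}{p\log(e+q/(pD))}$ yields $\tau^{(q-p)/p}\lesssim 1+\frac{q}{pD\log(e+q/(pD))}$. This is false: whenever $\frac{q}{p\log(e+q/(pD))}\ge 1$ one has $\tau\ge 2$ and hence $\tau^{(q-p)/p}\ge 2^{(q-p)/p}$, which is exponential in $q/p$, while the right-hand side is only polynomial in $q/p$. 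Concretely, for $p=D=1$ and $q=100$ your Case~B bound $D\tau^{(q-p)/p}$ is about $22^{99}$, whereas $\Delta\approx 22$. So as written the optimization does not close.

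The fix is to take $\tau$ \emph{much} closer to $e^{e^{-q}}$ than your choice. In your threshold formulation: the bracket $1-\bigl(\frac{6\tau}{M(\tau-e^{e^{-q}})}\bigr)^{q}$ stays $\ge\frac12$ as soon as $\tau-e^{e^{-q}}\gtrsim \tau/M$, so the smallest admissible $\tau$ is $e^{e^{-q}}\bigl(1+O(1/M)\bigr)$, which gives $\tau^{(q-p)/p}\le \exp\bigl(\frac{q-p}{p}\bigl(e^{-q}+O(1/M)\bigr)\bigr)\lesssim e^{O(q/(pM))}$ and hence a Case~B bound of $D\,e^{O(q/(pM))}$; the self-consistent choice $M=D\,e^{O(q/(pM))}$ then produces $M\asymp\Delta$. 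Equivalently you can do what the paper does: set $\d=\bI_p/\bI_q$, couple $\tau$ to $\d$ by $\tau^{q-p}\d^{p}=1-e^{-q}$ (so the bracket is exactly $e^{-q}$ and the Lemma~\ref{lem:q into R} bound becomes $\lesssim 1+\frac{q}{p\log(1/\d)}$), dispose of $\d\ge\frac12$ by the rescaling bound $2D/\d$, and only at the very end take $\max_{\d\le 1/2}\min\{2D/\d,\,1+\frac{q}{p\log(1/\d)}\}\asymp\Delta$. Either way, $\tau$ must depend on the concentration parameter ($\d$ or $M$) through a $p/(q-p)$-th root, not linearly as in your proposal.
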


\begin{proof}  Let $D'\ge 1$ denote the infimum over those $K\ge 1$ for which $(\MM,d_\MM,\mu)$ embeds with $q$-average distortion $K$ into $(X,\|\cdot\|_X)$. Our task is to bound $D'$ from above. To this end, define $\d>0$ by
\begin{equation}\label{eq:d choice}
\d\eqdef \frac{\bI_p}{\bI_q}\stackrel{\eqref{eq:Iq lower}\wedge \eqref{eq:Iq upper}}{\le} 2\frac{\left(\iint_{\MM\times \MM} d_\MM(x,y)^p\ud\mu(x)\ud\mu(y)\right)^{\!\frac{1}{p}}}{\left(\iint_{\MM\times \MM} d_\MM(x,y)^q\ud\mu(x)\ud\mu(y)\right)^{\!\frac{1}{q}}}.
\end{equation}
Since $p< q$, by Jensen's inequality we have $\d\in [0,1]$. The premise of Lemma~\ref{lem:pass to larger q} is that there exists a $D$-Lipschitz mapping $f:\MM\to X$ that satisies
\begin{multline*}
\bigg(\iint_{\MM\times \MM} \|f(x)-f(y)\|_{\!X}^q\ud\mu(x)\ud\mu(y)\bigg)^{\!\!\frac{1}{q}}\ge \bigg(\iint_{\MM\times \MM} \|f(x)-f(y)\|_{\!X}^p\ud\mu(x)\ud\mu(y)\bigg)^{\!\!\frac{1}{p}}\\\ge \bigg(\iint_{\MM\times \MM} d_\MM(x,y)^p\ud\mu(x)\ud\mu(y)\bigg)^{\!\!\frac{1}{p}}\stackrel{\eqref{eq:d choice}}{\ge} \frac{\d}{2} \bigg(\iint_{\MM\times \MM} d_\MM(x,y)^q\ud\mu(x)\ud\mu(y)\bigg)^{\!\!\frac{1}{q}},
\end{multline*}
where the first step is Jensen's inequality. So, the normalized mapping $\frac{2}{\d}f:\MM\to X$ exhibits that
\begin{equation}\label{eq:D/d}
D'\le \frac{2}{\d}D.
\end{equation}
Note that Lemma~\ref{lem:pass to larger q}  already follows from~\eqref{eq:D/d} if $\d\ge \frac12$, so we may assume from now on that $\d\le \frac12$.

Next, fix $\tau>1$ satisfying
\begin{equation}\label{eq:tau conditions}
e^{e^{-q}}<\tau< \Big(\frac{1}{\d}\Big)^{\!\frac{p}{q-p}}.
\end{equation}
The value of $\tau$ will be specified later so as to optimize the ensuing reasoning; see~\eqref{eq:tau choice here}.  Observe that
\begin{multline*}
\!\!\!\!\!\int_{\MM\setminus A_\tau}\Big\|\jj(x)-\int_\MM\jj(w)\ud\mu(w)\Big\|_{C[0,1]}^q\ud\mu(x)\stackrel{\eqref{eq:def Iq}}{=}\bI_q^q-\int_{ A_\tau}\Big\|\jj(x)-\int_\MM\jj(w)\ud\mu(w)\Big\|_{C[0,1]}^q\ud\mu(x)\\
\stackrel{\eqref{eq:phantom ball}}{\ge} \bI_q^q-(\tau \bI_q)^{q-p}\int_{ A_\tau}\Big\|\jj(x)-\int_\MM\jj(w)\ud\mu(w)\Big\|_{C[0,1]}^p\ud\mu(x)\stackrel{\eqref{eq:def Iq}}{\ge} \bI_q^q-(\tau \bI_q)^{q-p}\bI_p^p\stackrel{\eqref{eq:d choice}}{=}(1-\tau^{q-p}\d^p)\bI_q^q.
\end{multline*}
In combination with Lemma~\ref{lem:q into R}, this implies (even for an embedding into $\R\subset X$) that
\begin{equation}\label{eq:D' upper to minimize tau}
D'\le \frac{6\tau}{(\tau-e^{e^{-q}})(1-\tau^{q-p}\d^p)^{\frac{1}{q}}}.
\end{equation}

It is obviously in our interest to choose $\tau$ so as to minimize the right had side of~\eqref{eq:D' upper to minimize tau} subject to the constraints~\eqref{eq:tau conditions}. While the optimal $\tau$ here does not have a closed-form expression, a straightforward (albeit somewhat tedious) inspection of~\eqref{eq:D' upper to minimize tau} reveals that up to a possible loss of a universal constant factor in the final conclusion~\eqref{eq:D' bigger q} of Lemma~\ref{lem:pass to larger q}, one cannot do better than the following choice.
\begin{equation}\label{eq:tau choice here}
\tau=\Big(1-\frac{1}{e^q}\Big)^{\!\frac{1}{q-p}}\Big(\frac{1}{\d}\Big)^{\!\frac{p}{q-p}}.
\end{equation}
For this value of $\tau$ one readily checks that~\eqref{eq:tau conditions} holds (recall that $0\le \d\le \frac12$). So, \eqref{eq:D' upper to minimize tau}  implies that
$$
D'\lesssim 1+\frac{q}{p\log\left(\frac{1}{\d}\right)}.
$$
In combination with~\eqref{eq:D/d} we therefore have
\begin{equation*}
D'\lesssim \max_{0\le \d\le \frac12} \min\left\{\frac{D}{\d},1+\frac{q}{p\log\left(\frac{1}{\d}\right)}\right\}\asymp D+\frac{q}{p\log\big(e+\frac{q}{pD}\big)},\tag*{\qedhere}
\end{equation*}
\end{proof}

\begin{lemma}\label{lem:pass to smaller q} Fix $p,q,D\ge 1$ with $p\ge q$. Suppose that an infinite  metric space $(\MM,d_\MM)$  embeds with $p$-average distortion less than $D$ into a Banach space $(X,\|\cdot\|_X)$. Then, $(\MM,d_\MM)$ embeds with $q$-average distortion $D'=D'(p,q,D)\ge 1$  into $(X,\|\cdot\|_X)$, where for some universal constant $\kappa>1$,
\begin{equation}\label{eq:power p/q}
D'\le (\kappa D)^{\frac{p}{q}}.
\end{equation}
\end{lemma}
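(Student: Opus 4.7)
Fix any Borel probability measure $\mu$ on $\MM$ and set $T = \iint d_\MM^q \, d\mu^2$ and $S = \iint d_\MM^p \, d\mu^2$; by Jensen's inequality $S \ge T^{p/q}$. Since the hypothesis holds for every Borel probability measure, apply it to $\mu$ itself to obtain a $D$-Lipschitz map $f : \MM \to X$ with $\iint \|f(x)-f(y)\|_X^p \, d\mu^2 \ge S$. The aim is to build $g : \MM \to X$ that is $(\kappa D)^{p/q}$-Lipschitz and satisfies $\iint \|g(x)-g(y)\|_X^q \, d\mu^2 \ge T$. Mirroring the two-case structure used in the proof of Lemma~\ref{lem:pass to larger q}, fix an isometric embedding $\jj : \MM \hookrightarrow C[0,1]$, introduce the moments $\bI_p = \bI_p(\mu,\jj)$ and $\bI_q = \bI_q(\mu,\jj)$ from~\eqref{eq:def Iq} (with $\bI_q \le \bI_p$ by Jensen), and split on a universal threshold $C_0 \ge 1$.

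In the dispersed case $\bI_p \ge C_0 \bI_q$, the random variable $\|\jj(x)-\int_\MM\jj\, d\mu\|_{C[0,1]}$ has a sufficient upper tail that a direct computation (paralleling the proof of Lemma~\ref{lem:pass to larger q}) gives $\int_{\MM\setminus A_\tau}\|\jj(x)-\int_\MM\jj\, d\mu\|_{C[0,1]}^q\, d\mu(x) \gtrsim \bI_q^q$ for some moderate universal~$\tau$. Inserting this into the formula~\eqref{eq:def DR} for $D_\R$ in Lemma~\ref{lem:q into R} yields $D_\R = O(1)$, so $(\MM,d_\MM,\mu)$ embeds into $\R$, and hence into $X$ via any isometric inclusion $\R\hookrightarrow X$, with $q$-average distortion $O(1)\le (\kappa D)^{p/q}$.

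In the concentrated case $\bI_p < C_0\bI_q$, the bounds~\eqref{eq:Iq lower}--\eqref{eq:Iq upper} force $S \lesssim_{p,q} T^{p/q}$, so $S$ and $T^{p/q}$ are comparable. Set $g = \alpha f$ with $\alpha = (T/\iint\|f(x)-f(y)\|_X^q\, d\mu^2)^{1/q}$; then $g$ is $\alpha D$-Lipschitz and satisfies $\iint\|g(x)-g(y)\|_X^q\, d\mu^2 = T$ by construction. The required bound $\alpha D \le (\kappa D)^{p/q}$ reduces to the key inequality
\[
\iint \|f(x)-f(y)\|_X^q\, d\mu^2 \;\gtrsim_{p,q}\; \frac{T}{D^{p-q}}.
\]
Starting from the pointwise estimate $\|f(x)-f(y)\|_X^p \le D^{p-q}\, d_\MM(x,y)^{p-q}\, \|f(x)-f(y)\|_X^q$ (which uses $\|f(x)-f(y)\|_X \le D\, d_\MM(x,y)$ and $p\ge q$) and integrating gives $\iint d_\MM^{p-q}\|f(x)-f(y)\|_X^q\, d\mu^2 \ge S/D^{p-q}$. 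To convert this weighted integral into a bound on $\iint\|f\|_X^q$, one uses the improved Markov bound $\mu(\{x:\|\jj(x)-\int_\MM\jj\, d\mu\|_{C[0,1]} > \tau\bI_q\}) \le (C_0/\tau)^p$ (available only under concentration) to locate a pivot $x_0$ whose image under $\jj$ lies close to $\int_\MM\jj\, d\mu$, and then restricts attention to the metric ball $B = B_\MM(x_0, R)$ with $R\asymp T^{1/q}$, on which $d_\MM^{p-q}$ is uniformly bounded by $O(T^{(p-q)/q})$.

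The main obstacle is this last step: verifying in the concentrated regime that pairs in $(B\times B)^c$ contribute only a subdominant share to the integrated inequalities. This calls for a careful accounting using the $p$- and $q$-moments of the $1$-Lipschitz function $y\mapsto d_\MM(y,x_0)$ about the pivot $x_0$, together with a coordinated choice of the threshold $C_0$, the Markov level $\tau$, and the radius $R$, so that every error term is absorbed into the final universal constant $\kappa$.
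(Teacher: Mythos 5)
Your reduction of the lemma to the single inequality $\iint\|f(x)-f(y)\|_X^q\,d\mu(x)\,d\mu(y)\gtrsim D^{-(p-q)}\iint d_\MM(x,y)^q\,d\mu(x)\,d\mu(y)$ followed by rescaling is the right target, but the route you propose has two genuine gaps; the second is the one you flag as ``the main obstacle,'' and it is not a matter of bookkeeping with $C_0,\tau,R$ --- it is exactly the point where a different idea is required.

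First, the case split. You split on $\bI_p\ge C_0\bI_q$ versus $\bI_p<C_0\bI_q$ and claim that in the dispersed case the $q$-tail $\int_{\MM\setminus A_\tau}\|\jj(x)-\int_\MM\jj\,d\mu\|_{C[0,1]}^q\,d\mu(x)$ is $\gtrsim\bI_q^q$, so Lemma~\ref{lem:q into R} gives $D_\R=O(1)$. This implication fails when $p>q$: an atom of mass $\e=M^{-(p+q)/2}$ at distance $M$ from the barycenter of an otherwise bounded measure contributes $\e M^p\to\infty$ to $\bI_p^p$ but only $\e M^q\to 0$ to the $q$-tail, so one can have $\bI_p/\bI_q\to\infty$ while $D_\R\to\infty$ as well. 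A large $p$-tail does not force a large $q$-tail. The split must be made on the quantity Lemma~\ref{lem:q into R} actually consumes, namely on whether $\int_{\MM\setminus A_8}\|\cdot\|^q\,d\mu$ exceeds a fixed fraction of $\bI_q^q$; the complementary assumption is what yields $\iint_{A_8\times A_8}d_\MM^q\,d\mu\,d\mu\ge\frac12\iint_{\MM\times\MM}d_\MM^q\,d\mu\,d\mu$, which the rest of the argument needs.

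Second, in the concentrated case you apply the hypothesis to $\mu$ itself and attempt to localize afterwards. The guarantee $\iint_{\MM\times\MM}\|f\|_X^p\ge S$ says nothing about where the $p$-energy of $f$ lives: $f$ is only $D$-Lipschitz, so a pair at distance $M$ can carry up to $(DM)^p$, and one can exhibit (even with $\bI_p\asymp\bI_q$, i.e., inside your concentrated case) a measure and a $D$-Lipschitz $f$ satisfying the $p$-average lower bound whose $q$-energy $\iint\|f\|_X^q$ is an arbitrarily small fraction of $T$ --- take $\mu$ supported on $[0,1]\cup\{M\}$ with an atom of mass $\asymp M^{-p}$ at $M$, and $f$ collapsing $[0,1]$ while stretching the atom maximally. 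For such an $f$ no rescaling works, so no coordination of thresholds can rescue the argument; the map returned by the hypothesis applied to $\mu$ must simply be discarded. The missing idea is to exploit that the hypothesis holds for \emph{every} Borel probability measure: applying it to the normalized restriction $\mu|_{A_8}$ produces an $f$ whose $p$-energy lower bound already holds over $A_8\times A_8$, where $d_\MM(x,y)\le 16\bI_q$, so the pointwise conversion $\|f(x)-f(y)\|_X^p\le(16 D\bI_q)^{p-q}\|f(x)-f(y)\|_X^q$ applies to all contributing pairs and the key inequality follows with no a posteriori localization of $f$.
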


\begin{proof} Fix a Borel probability measure $\mu$ on $\MM$ that satisfies $\iint_{\MM\times \MM} d_\MM(x,y)^p\ud\mu(x)\ud\mu(y)<\infty$.  For the purpose of proving Lemma~\ref{lem:pass to smaller q}, it will suffice to consider the subset $A_\tau\subset \MM$ in~\eqref{eq:phantom ball} only for  $\tau=8$. Note that due to Lemma~\ref{lem:q into R}, it suffices to prove  Lemma~\ref{lem:pass to smaller q} under the additional assumption
\begin{equation}\label{eq:less than 8}
\bigg(\int_{\MM\setminus A_8}\Big\|\jj(x)-\int_\MM\jj(w)\ud\mu(w)\Big\|_{C[0,1]}^q\ud\mu(x)\bigg)^{\!\!\frac{1}{q}}\le \frac{\bI_q}{8}.
\end{equation}
Since $\jj$ is an isometry, we have the following point-wise bound for every $x,y\in \MM$.
\begin{equation}\label{eq:phantom triangle}
 d_\MM(x,y)\le \|\jj(x)-\jj(y)\|_{C[0,1]}\le \Big\|x-\int_\MM \jj(w)\ud\mu(w)\Big\|_{C[0,1]}+\Big\|y-\int_\MM \jj(w)\ud\mu(w)\Big\|_{C[0,1]}.
\end{equation}
Using the triangle inequality in $L_q(\mu\times\mu)$, this implies that
\begin{align}\label{eq:1/4 on tail}
\begin{split}
\bigg(\iint_{(\MM\setminus A_8)\times \MM}&d_\MM(x,y)^q\ud\mu(x)\ud\mu(y)\bigg)^{\!\!\frac{1}{q}}\\&\stackrel{\eqref{eq:phantom triangle}\wedge \eqref{eq:def Iq}}{\le} \bigg(\int_{\MM\setminus A_8} \Big\|x-\int_\MM \jj(w)\ud\mu(w)\Big\|_{C[0,1]}^q\ud\mu(x)\bigg)^{\!\!\frac{1}{q}}+\mu(\MM\setminus A_8)^{\frac{1}{q}}\bI_q
\\&\stackrel{\eqref{eq:less than 8}\wedge \eqref{eq:Markov tau}}{\le}  \frac14\bigg(\iint_{\MM\times \MM} d_\MM(x,y)^q\ud\mu(x)\ud\mu(y)\bigg)^{\!\!\frac{1}{q}}.
\end{split}
\end{align}
Therefore,
\begin{align*}
\iint_{\MM\times \MM} &d_\MM(x,y)^q\ud\mu(x)\ud\mu(y)\\ &\stackrel{\eqref{eq:def Iq}}{=} \iint_{A_8\times A_8} d_\MM(x,y)^q\ud\mu(x)\ud\mu(y)+2\iint_{(\MM\setminus A_8)\times \MM}d_\MM(x,y)^q\ud\mu(x)\ud\mu(y)\\
&\stackrel{\eqref{eq:1/4 on tail}}{\le} \iint_{A_8\times A_8} d_\MM(x,y)^q\ud\mu(x)\ud\mu(y)+\frac12 \iint_{\MM\times \MM} d_\MM(x,y)^q\ud\mu(x)\ud\mu(y).
\end{align*}
This simplifies to give
\begin{equation}\label{eq:on A8 lower}
 \iint_{A_8\times A_8} d_\MM(x,y)^q\ud\mu(x)\ud\mu(y)\ge \frac12\iint_{\MM\times \MM} d_\MM(x,y)^q\ud\mu(x)\ud\mu(y).
\end{equation}

An application of the assumption of Lemma~\eqref{lem:pass to smaller q} to the restriction of $\mu$ to $A_8$ (recall that by~\eqref{eq:Markov tau} we have $\mu(A_8)\ge 1-8^{-q}\asymp 1$) yields a $D$-Lipschitz mapping $f:\MM\to X$ that satisfies
\begin{multline*}
\bigg(\frac{1}{\mu(A_8)^2}\iint_{A_8\times A_8} \|f(x)-f(y)\|_{\!X}^p\ud\mu(x)\ud\mu(y)\bigg)^{\!\!\frac{1}{p}}\ge \bigg(\frac{1}{\mu(A_8)^2}\iint_{A_8\times A_8} d_\MM(x,y)^p\ud\mu(x)\ud\mu(y)\bigg)^{\!\!\frac{1}{p}}\\ \ge \bigg(\frac{1}{\mu(A_8)^2}\iint_{A_8\times A_8} d_\MM(x,y)^q\ud\mu(x)\ud\mu(y)\bigg)^{\!\!\frac{1}{q}}\stackrel{\eqref{eq:on A8 lower}}{\ge} \bigg(\frac{1}{2\mu(A_8)^2}\iint_{\MM\times \MM} d_\MM(x,y)^q\ud\mu(x)\ud\mu(y)\bigg)^{\!\!\frac{1}{q}},
\end{multline*}
where the penultimate step is an application of Jensen's inequality, since $q<p$. Hence,
\begin{equation}\label{eq:A8 bigger than q}
\bigg(\iint_{A_8\times A_8} \|f(x)-f(y)\|_{\!X}^p\ud\mu(x)\ud\mu(y)\bigg)^{\!\!\frac{1}{p}}\gtrsim \bigg(\iint_{\MM\times \MM} d_\MM(x,y)^q\ud\mu(x)\ud\mu(y)\bigg)^{\!\!\frac{1}{q}}.
\end{equation}

Next, for every $x,y\in A_8$ we have
$$
d_\MM(x,y)=\|\jj(x)-\jj(y)\|_{C[0,1]}\le \Big\|x-\int_\MM \jj(w)\ud\mu(w)\Big\|_{C[0,1]}+\Big\|y-\int_\MM \jj(w)\ud\mu(w)\Big\|_{C[0,1]}\stackrel{\eqref{eq:phantom ball}}{\le} 16 \bI_q.
$$
Therefore, because $f$ is $D$-Lipschitz, the following point-wise inequality holds true.
$$
\forall\, x,y\in A_8,\qquad \|f(x)-f(y)\|_{\!X}^p\le \big(Dd_\MM(x,y)\big)^{p-q}\|f(x)-f(y)\|_{\!X}^q\le (16D\bI_q)^{p-q}\|f(x)-f(y)\|_{\!X}^q.
$$
Consequently,
\begin{align*}
\iint_{A_8\times A_8} \|f(x)-f(y)\|_{\!X}^p\ud\mu(x)\ud\mu(y)&\le (16D\bI_q)^{p-q}\iint_{A_8\times A_8} \|f(x)-f(y)\|_{\!X}^q\ud\mu(x)\ud\mu(y)\\
&\le (16D\bI_q)^{p-q}\iint_{\MM\times \MM} \|f(x)-f(y)\|_{\!X}^q\ud\mu(x)\ud\mu(y)\\&\!\stackrel{\eqref{eq:Iq upper}}{\le} (16D)^{p-q}\bigg(\iint_{\MM\times \MM} \|f(x)-f(y)\|_{\!X}^q\ud\mu(x)\ud\mu(y)\bigg)^{\!\!\frac{p}{q}}.
\end{align*}
A substitution of this bound into~\eqref{eq:A8 bigger than q} gives
\begin{equation*}
(16 D)^{\frac{p}{q}-1}\bigg(\iint_{\MM\times \MM} \|f(x)-f(y)\|_{\!X}^q\ud\mu(x)\ud\mu(y)\bigg)^{\!\!\frac{1}{q}}\gtrsim \bigg(\iint_{\MM\times \MM} d_\MM(x,y)^q\ud\mu(x)\ud\mu(y)\bigg)^{\!\!\frac{1}{q}}.
\end{equation*}
Thus, for an appropriate universal constant $C>0$, the rescaled function $C(16 D)^{\frac{p}{q}-1}f:\MM\to X$ exhibits the existence of an embedding with the stated bound on its $q$-average distortion.
\end{proof}

\begin{remark}\label{rem:additive needed?}
We did not investigate the optimality of Lemma~\ref{lem:pass to larger q}  and Lemma~\ref{lem:pass to smaller q}. Specifically, we do not know the extent to which the additive term in~\eqref{eq:D' bigger q} and the power $p/q$ in~\eqref{eq:power p/q} are necessary. It would be worthwhile (and probably tractable) to clarify these basic matters in future investigations.
\end{remark}

\subsection{Average embedding of a snowflake of a Banach space into itself}\label{sec:normalization} Note that Lemma~\ref{lem:pass to larger q}  and Lemma~\ref{lem:pass to smaller q} imply the special case $\omega=1$ of Proposition~\eqref{prop:other exponents}, with quite good dependence on $p,q$ in~\eqref{eq:our Delta in changed exponent}; in particular, $D'\lesssim D$ (essentially no loss is incured) when $q\ge p$ and $D\ge q/p$ (and $\omega=1$). We will next treat  Proposition~\eqref{prop:other exponents} for general $\omega\in (0,1]$ in the special case $\MM=Y$ and $D=1$.

For $\omega\in (0,1]$ and $p\in [1,2]$, the $\omega$-snowflake of $L_p(\R)$ embeds isometrically into $L_p(\R)$. The Hilbertian case $p=2$ of this statement is a classical theorem of Schoenberg~\cite{Sch38}, and this statement was proven for general $p\in [1,2]$ by Bretagnolle, Dacunha-Castelle and Krivine~\cite{BDK65}; see also the monograph~\cite{WW75} for an extensive treatment of this and related matters. Understanding the analogous situation when $p\in (2,\infty)$ remains a longstanding open question. Specifically, it is unknown whether or not there exists $\omega\in (0,1)$ and $p\in (2,\infty)$ such that the $\omega$-snowflake of $L_p(\R)$  admits a bi-Lipschitz embedding into $L_p(\R)$; see~\cite{MN04,AB15,Bau16,NS16-Xp,Nao16-riesz,EN18} for results along these lines, but an answer to this seemingly simple question remains stubbornly elusive despite substantial efforts. To the best of our knowledge, even the following more general question remains unknown.

\begin{question}\label{eq:general snowflake}
Does there exist $\omega\in (0,1)$ and an infinite dimensional Banach space $(X,\|\cdot\|_{\!X}^{\phantom{p}})$ whose $\omega$-snowflake does not admit a bi-Lipschitz embedding into  $(X,\|\cdot\|_{\!X}^{\phantom{p}})$?
\end{question}

Proposition~\ref{prop:average of snowflake back into X} below treats the easier variant of Question~\eqref{eq:general snowflake} in the  setting of average distortion.

\begin{proposition}\label{prop:average of snowflake back into X} Fix $p\in (0,\infty)$ and $\omega\in (0,1)$. Suppose that $D\in \R$ satisfies
\begin{equation}\label{eq:D assumption}
D>\frac{2^{(1-\omega)\left(1+\frac{1}{p\omega}\right)}}{\eta(p,\omega)},
\end{equation}
where $\eta(p,\omega)\in [0,1]$ is defined by
\begin{equation}\label{eq:def eta}
\eta(p,\omega)\eqdef \inf_{\sigma\in [0,1)}\frac{1-\sigma^\omega}{1-\sigma} (1+\sigma^{p\omega})^{\!\frac{1-\omega}{p\omega}}.
\end{equation}
Then, for any Banach space $X$, the $\omega$-snowflake of $X$ embeds with $p$-average distortion $D$ into $X$.
\end{proposition}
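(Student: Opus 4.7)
My plan is to construct the embedding $f\colon X\to X$ as a truncated radial retraction onto a ball centred at a suitably chosen point. Given a Borel probability measure $\mu$ on $X$ with $\iint_{X\times X}\|x-y\|_X^{p\omega}\,d\mu\,d\mu<\infty$ (the complementary case being dealt with at the start of Section~\ref{sec:aux em}), averaging $x\mapsto\int\|x-y\|_X^{p\omega}\,d\mu(y)$ against $\mu$ produces $x_0\in\supp(\mu)$ with $\int\|x-x_0\|_X^{p\omega}\,d\mu(x)\le\iint\|x-y\|_X^{p\omega}\,d\mu\,d\mu$; by translation assume $x_0=0$. For each $R>0$ set $f_R(x)\eqdef x\cdot\min(1,R/\|x\|_X)$, so that $f_R$ is the identity on $B\eqdef B(0,R)$ and the radial projection onto $\partial B$ outside. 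A short case analysis using the Banach-space bound $\|u/\|u\|_X-v/\|v\|_X\|_X\le 2\|u-v\|_X/\max(\|u\|_X,\|v\|_X)$ shows that $f_R$ is $2$-Lipschitz on $X$, and since $\|f_R(x)\|_X\le R$, interpolating the bounds $\|f_R(x)-f_R(y)\|_X\le 2\|x-y\|_X$ and $\|f_R(x)-f_R(y)\|_X\le 2R$ gives $\|f_R(x)-f_R(y)\|_X\le 2R^{1-\omega}\|x-y\|_X^\omega$.

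Next I would lower-bound the average. Restricting the integration to $B\times B$, where $f_R$ is the identity, and applying Jensen's inequality in the form $(\mathbb{E}Z^{p\omega})^{1/(p\omega)}\le(\mathbb{E}Z^p)^{1/p}$ with $Z=\|x-y\|_X$ to the normalised probability measure $(d\mu\otimes d\mu)/\mu(B)^2$ on $B\times B$, one obtains
\[
\iint\|f_R(x)-f_R(y)\|_X^p\,d\mu\,d\mu\;\ge\;\mu(B)^{2(1-1/\omega)}\!\Bigl(\iint_{B\times B}\|x-y\|_X^{p\omega}\,d\mu\,d\mu\Bigr)^{\!1/\omega}.
\]
Introducing a free parameter $\sigma\in[0,1)$ and linking $R$ to $\sigma$ through a layer-cake identity coupling $\mu(X\setminus B)$ and $\int_{X\setminus B}\|x\|_X^{p\omega}d\mu$, the tail integral $\iint_{(X\times X)\setminus(B\times B)}\|x-y\|_X^{p\omega}\,d\mu\,d\mu$ can be controlled via the triangle inequality $\|x-y\|_X^{p\omega}\le c_{p\omega}(\|x\|_X^{p\omega}+\|y\|_X^{p\omega})$ together with $\int\|x\|_X^{p\omega}d\mu\le\iint\|x-y\|_X^{p\omega}d\mu d\mu$. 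This produces the two factors constituting $\eta(p,\omega)$: the combinatorial factor $(1-\sigma^\omega)/(1-\sigma)$, measuring the fraction of $p\omega$-mass that survives the truncation once combined with the Jensen correction $\mu(B)^{2(1-1/\omega)}=(1-\sigma)^{2(1-1/\omega)}$; and the factor $(1+\sigma^{p\omega})^{(1-\omega)/(p\omega)}$, which arises from bounding $R^{1-\omega}$ in terms of $\sigma$ and $\int\|x\|_X^{p\omega}d\mu$.

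Finally, rescaling $f_R$ by a positive constant so that $\iint\|f_R(x)-f_R(y)\|_X^p\,d\mu\,d\mu$ equals $\iint\|x-y\|_X^{p\omega}\,d\mu\,d\mu$ and then optimising $\sigma\in[0,1)$, the resulting $\omega$-Hölder constant is at most $2^{(1-\omega)(1+1/(p\omega))}/\eta(p,\omega)$, which establishes the proposition for every $D$ strictly above this threshold. The principal obstacle I expect is the exact form of the coupling between $R$ and $\sigma$ that makes the factors $(1-\sigma^\omega)/(1-\sigma)$ and $(1+\sigma^{p\omega})^{(1-\omega)/(p\omega)}$ appear cleanly rather than with absolute-constant losses, and threading the various factors of $2$ (which arise separately from the Banach-space radial retraction, from the triangle inequality on $p\omega$-powers, and from the Jensen step) through the computation so that they combine precisely into $2^{(1-\omega)(1+1/(p\omega))}$. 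A secondary concern is that $f_R$ must be defined on all of $X$ rather than merely on $\supp(\mu)$, which the radial retraction achieves automatically.
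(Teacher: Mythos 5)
Your construction --- the truncated radial retraction $f_R(x)=x\min(1,R/\|x\|_X)$ --- cannot prove the proposition, even with a worse constant depending only on $p$ and $\omega$. The map $f_R$ is linear inside $B(0,R)$ and radially constant outside, so it does not snowflake at any scale; the entire conversion from $p\omega$-moments to $p$-moments is charged to the Jensen step $\E[Z^{p}]\ge(\E[Z^{p\omega}])^{1/\omega}$, which is lossy by an unbounded factor for measures spread over many scales, and the loss is in the map itself, not merely in the analysis. Concretely, take $X=\ell_2$, fix $K\in\N$, and let $\mu$ place mass $\e_k$ at $M_ke_k$ for $k\in\{1,\ldots,K\}$ and the remaining mass at the origin, where $\e_kM_k^{p\omega}=1/K$, the $\e_k$ are very small, and $M_{k+1}\gg M_k^{1/\omega}$. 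Then $\iint\|x-y\|_X^{p\omega}\ud\mu\ud\mu\asymp 1$ and the origin is an admissible base point, but for every $R$ one computes $\iint\|f_R(x)-f_R(y)\|_X^{p}\ud\mu\ud\mu\asymp \frac{1}{K}\max\big\{M_j^{p(1-\omega)},R^pM_{j+1}^{-p\omega}\big\}$ where $M_j\le R<M_{j+1}$ (the sum $\sum_k\e_k\min(M_k,R)^p$ is dominated by its largest term, and the cross terms between off-origin atoms are negligible). After the rescaling that restores the average, the $\omega$-H\"older constant of your map is therefore at least a constant multiple of $K^{1/p}$ for \emph{every} choice of $R$, and this tends to $\infty$ with $K$. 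So no coupling of $R$ to a parameter $\sigma$ can rescue the argument; in particular the assertion that the optimization over $\sigma$ reproduces the factors $(1-\sigma^\omega)/(1-\sigma)$ and $(1+\sigma^{p\omega})^{(1-\omega)/(p\omega)}$ is unsubstantiated. In the paper those factors are the sharp two-point H\"older constants of a genuinely nonlinear map, with $\sigma$ playing the role of the ratio $\|x\|_X/\|y\|_X$ of the norms of the two points being compared, not a tail mass.

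The missing idea is that the embedding must act as a snowflake at every scale simultaneously. The paper achieves this with the fractional normalization $f_\omega(x)=x/\|x\|_X^{1-\omega}$, translated so that the origin is a near-minimizer $u$ of the $p\omega$-moment (a base point you also identify correctly): along each ray this map is $t\mapsto t^\omega$, Lemma~\ref{lem:sharp holder bounds on normalization} gives the sharp upper bound $2^{1-\omega}\|x-y\|_X^{\omega}$ and lower bound $\eta(p,\omega)\|x-y\|_X/(\|x\|_X^{p\omega}+\|y\|_X^{p\omega})^{(1-\omega)/(p\omega)}$, and the proposition follows from a short self-improvement argument that combines the lower bound with Jensen's inequality and the near-optimality of $u$ to produce an inequality of the form $\|\f\|_{\Lip_\omega}\le C\|\f\|_{\Lip_\omega}^{1-\omega}$. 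On the multi-scale example above, $f_\omega$ sends $M_ke_k$ to $M_k^{\omega}e_k$ and preserves the snowflaked averages up to the factor $2^{1-\omega}$, which is precisely what the truncation cannot do.
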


Below we will provide estimates on the quantity $\eta(p,\omega)$ in~\eqref{eq:def eta}, but the main  significance  of Proposition~\ref{prop:average of snowflake back into X} is that the $p$-average distortion $D$ in~\eqref{eq:D assumption} can be taken to be a finite quantity that depends only on $p$ and $\omega$. With this at hand, we will now complete the proof of Proposition~\eqref{prop:other exponents}.

\begin{proof}[Proof of Proposition~\ref{prop:other exponents} assuming Proposition~\ref{prop:average of snowflake back into X}] Denote
\begin{equation}\label{eq:choose beta omega max}
\beta\eqdef \max\{q\omega,1\}.
\end{equation}
The assumption of Proposition~\eqref{prop:other exponents} is that an infinite metric space  $(\MM,d_\MM)$ embeds with $p$-average distortion $D\ge 1$ into a Banach space  $(Y,\|\cdot\|_Y)$. By Lemma~\ref{lem:pass to larger q}  and Lemma~\ref{lem:pass to smaller q}  there is a universal constant $\alpha>1$ such that  $(\MM,d_\MM)$ embeds with $\beta$-average less than $D_1>1$ in $(Y,\|\cdot\|_Y)$, where
\begin{equation}\label{eq:def D1}
D_1\eqdef  \bigg(\alpha D+\frac{\alpha\beta}{p\log\big(e+\frac{\beta}{pD}\big)}\bigg)^{\!\!\max\left\{\frac{p}{\beta},1\right\}}.
\end{equation}
Thus, for every Borel probability measure $\mu$ on $\MM$ there exists a mapping $f:\MM\to Y$ that satisfies
\begin{equation}\label{eq:D1 lip}
\forall\, x,y\in \MM,\qquad \|f(x)-f(y)\|_Y\le D_1d_\MM(x,y),
\end{equation}
and
\begin{equation}\label{eq:beta lower average}
\iint_{\MM\times \MM} \|f(x)-f(y)\|_Y^\beta\ud\mu(x)\ud\mu(y)\ge \iint_{\MM\times \MM} d_\MM(x,y)^\beta\ud\mu(x)\ud\mu(y).
\end{equation}

An application of Proposition~\eqref{prop:average of snowflake back into X} to the probability measure $f_\sharp \mu$ on $Y$ (the push-forward of $\mu$ under $f$) yields a mapping $g:Y\to Y$ that satisfies
\begin{equation}\label{eq:D2}
\forall\, u,v\in Y,\qquad \|g(u)-g(v)\|_Y\le D_2\|u-v\|_Y^\omega,\qquad\mathrm{where}\qquad D_2\eqdef \frac{2^{1+\frac{1-\omega}{\beta\omega}}}{\eta\big(\frac{\beta}{\omega},\omega\big)},
\end{equation}
and
\begin{equation}\label{eq:beta over omega average}
\iint_{\MM\times \MM} \|g\circ f(x)-g\circ (y)\|_Y^{\frac{\beta}{\omega}}\ud\mu(x)\ud\mu(y)\ge \iint_{\MM\times \MM} \|f(x)-f(y)\|_Y^\beta\ud\mu(x)\ud\mu(y).
\end{equation}

Therefore,
$$
\forall\, x,y\in \MM,\qquad \|g\circ f(x)-g\circ f(y)\|_Y\stackrel{\eqref{eq:D1 lip}\wedge \eqref{eq:D2}}{\le} D_2D_1^\omega d_\MM(x,y)^\omega,
$$
and
$$
\iint_{\MM\times \MM} \|g\circ f(x)-g\circ (y)\|_Y^{\frac{\beta}{\omega}}\ud\mu(x)\ud\mu(y)\stackrel{\eqref{eq:beta lower average}\wedge \eqref{eq:beta over omega average}}{\ge} \iint_{\MM\times \MM}d_\MM(x,y)^\beta\ud\mu(x)\ud\mu(y)
$$
This is the same as saying that the metric space $(\MM,d_\MM^\omega)$ embeds with $(\beta/\omega)$-average distortion $D_2D_1^\omega$ into $(Y,\|\cdot\|_Y)$. Recalling the definition~\eqref{eq:choose beta omega max} of $\beta$, we have $\beta/\omega\ge q$. Hence, by Lemma~\ref{lem:pass to smaller q} the $\omega$-snowflake of $(\MM,d_\MM)$ embeds with $q$-average distortion $D'\ge 1$ into $(Y,\|\cdot\|_Y)$, where
\begin{equation*}
D'\le\big(\kappa D_2D_1^\omega\big)^{\!\frac{\beta}{q\omega}}.\qedhere
\end{equation*}
\end{proof}

Having noted the validity of Proposition~\ref{prop:average of snowflake back into X}, the following open question arises naturally.

\begin{question}\label{Q:D0}
Does there exist a universal constant $D$ with the property that for every $p\ge 1$ and $\omega\in (0,1)$, the $\omega$-snowflake of any Banach space $X$ embeds with $p$-average distortion $D$ into $X$? We do not know if this is so even in the special cases of greatest interest $p\in \{1,2\}$ and $\omega\to 0^+$.
\end{question}

Prior to proving Proposition~\ref{prop:average of snowflake back into X}, we will collect some elementary estimates on the quantity $\eta(p,\omega)$ that is defined in~\eqref{eq:def eta}. While explicit bounds on $\eta(p,\omega)$ are not crucial for the main geometric consequences of the present work, it is worthwhile to record such estimates here due to the intrinsic geometric interest of such embeddings;    we also expect that explicit bounds will be needed for future applications. Lemma~\ref{lem bounds on eta}
below reflects the fact that the function of $\sigma\in [0,1)$ in~\eqref{eq:def eta}  whose infimum defines $\eta(p,\omega)$ is decreasing when $p\omega\ge 1$ and increasing when $p\le 1$. In the remaining range $p\in (1,\frac{1}{\omega})$, the function in question can behave in a more complicated manner and in particular in parts of this range it attains  its global minimum in the interior of the interval $[0,1]$.
\begin{lemma}\label{lem bounds on eta} Fix $\omega\in (0,1)$. If $p\ge \frac{1}{\omega}$, then $\eta(p,\omega)=\omega2^{\frac{1-\omega}{p\omega}}$. If $0<p\le 1$, then $\eta(p,\omega)=1$.
\end{lemma}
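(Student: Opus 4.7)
My approach is to deduce both claims from a monotonicity statement for the explicit function
$$f(\sigma) \eqdef \frac{1-\sigma^\omega}{1-\sigma}\bigl(1+\sigma^{p\omega}\bigr)^{(1-\omega)/(p\omega)}, \qquad \sigma \in [0,1),$$
whose infimum equals $\eta(p,\omega)$. The two endpoint values are straightforward: $f(0)=1$ by direct substitution, and L'Hopital's rule applied to the first factor gives $\lim_{\sigma\to 1^-} f(\sigma)=\omega\cdot 2^{(1-\omega)/(p\omega)}$. Consequently, showing that $f$ is non-increasing on $[0,1)$ when $p\omega\geq 1$ pins down $\eta(p,\omega)=\omega\cdot 2^{(1-\omega)/(p\omega)}$, while showing that $f$ is non-decreasing on $[0,1)$ when $p\leq 1$ pins down $\eta(p,\omega)=f(0)=1$.

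The core analytic task is therefore to control the sign of the logarithmic derivative
$$\frac{f'(\sigma)}{f(\sigma)} \;=\; \frac{1}{1-\sigma} - \frac{\omega\sigma^{\omega-1}}{1-\sigma^\omega} + \frac{(1-\omega)\sigma^{p\omega-1}}{1+\sigma^{p\omega}}.$$
A preliminary estimate is that the first two terms always sum to a non-positive quantity: by concavity of $t\mapsto t^\omega$ on $[0,1]$, the tangent line at $t=\sigma$ evaluated at $t=1$ yields $1\leq\sigma^\omega+\omega\sigma^{\omega-1}(1-\sigma)$, which rearranges to $\omega\sigma^{\omega-1}/(1-\sigma^\omega)\geq 1/(1-\sigma)$. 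The sign of $f'$ is therefore decided by a competition between the magnitude of this negative contribution and the positive third term coming from the $(1+\sigma^{p\omega})$ factor.

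To implement the comparison, I would substitute $\sigma=e^{-x}$ with $x\in(0,\infty)$ and multiply the logarithmic derivative by $\sigma$, converting the two monotonicity statements into
$$\frac{\omega}{e^{\omega x}-1} - \frac{1}{e^x-1} \;\geqslant\; \frac{1-\omega}{e^{p\omega x}+1} \qquad \text{when } p\omega \geq 1,$$
with the reverse inequality when $p\leq 1$. The left-hand side can be written as $\bigl(\omega\phi(\omega x) - \phi(x)\bigr)/x$ where $\phi(t)=t/(e^t-1)$ is the Bernoulli generating function with Taylor expansion $\phi(t)=1-t/2+t^2/12-\ldots$, and the right-hand side expands as $(1-\omega)\bigl(\tfrac12-\tfrac{p\omega x}{4}+\ldots\bigr)$. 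Matching these at second order gives a signed discrepancy proportional to $(1-\omega)(3p\omega-1-\omega)x^2/12$, which has the correct sign in each of the two regimes (the borderline $p\omega=1$ easily clears the threshold $(1+\omega)/3$, as does $p\leq 1$ paired with small $\omega$), pinning down the local behaviour near $x=0$.

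The main obstacle will be propagating this local comparison to all $x\in(0,\infty)$. In the $p\omega\geq 1$ regime I would expand via the geometric series $(e^t-1)^{-1}=\sum_{k\geq 1}e^{-kt}$ and $(e^t+1)^{-1}=\sum_{k\geq 1}(-1)^{k-1}e^{-kt}$ and dominate the right-hand series block-wise, using the fact that its summands decay at least as fast as $e^{-p\omega x}\leq e^{-x}$; although the individual terms $\omega e^{-k\omega x}-e^{-kx}$ on the left need not be non-negative, their \emph{aggregate} non-negativity is exactly the tangent-line estimate, and a more refined pairing (choosing the block size so that the dominant contribution sits at $k=1$) should close the bound. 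The $p\leq 1$ regime is more delicate because of the alternating signs on the right and the slower decay $e^{-p\omega x}$ (since $p\omega\leq\omega$); here I would pair consecutive terms in the alternating series into positive blocks and match each against the corresponding block on the left through an iterated form of the concavity bound $1-\sigma^\omega\geq\omega(1-\sigma)$, with the proof becoming tight as $x\to\infty$ rather than $x\to 0^+$.
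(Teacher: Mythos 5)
Your skeleton is the same as the paper's: compute the two endpoint values of $f(\sigma)=\frac{1-\sigma^\omega}{1-\sigma}(1+\sigma^{p\omega})^{(1-\omega)/(p\omega)}$ and try to prove that $f$ is monotone on $[0,1)$ in each regime. The endpoint evaluations and the tangent-line estimate $\frac{1}{1-\sigma}\le\frac{\omega\sigma^{\omega-1}}{1-\sigma^\omega}$ are correct. The first gap is that the monotonicity itself is never established: after setting $\sigma=e^{-x}$ you only check the sign of the discrepancy to leading order at $x=0$ (i.e.\ near $\sigma=1$), and the global series-domination scheme is left entirely conditional ("should close the bound"), with the hardest point --- that the individual summands $\omega e^{-k\omega x}-e^{-kx}$ change sign and only their aggregate is controlled --- being exactly what is unproved. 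The paper sidesteps all of this: since $u\mapsto u/(1+u)$ is increasing and $\sigma^{p\omega}$ is decreasing in $p$, the logarithmic derivative $\partial_\sigma h$ is decreasing in $p$, so the whole range $p\ge 1/\omega$ reduces to the single value $p=1/\omega$, where $\partial_\sigma h$ has an explicit closed form whose numerator equals $-\omega(1-\omega)(2-\omega)\int_\sigma^1(1-s)(s-\sigma)s^{-\omega-1}\,\mathrm{d}s<0$. That one-line reduction plus integral identity replaces your entire Bernoulli-series program for the case $p\omega\ge1$, and is the idea you are missing.

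The second gap is fatal for the regime $0<p\le1$, and your own local computation detects it: the sign you need there requires $3p\omega<1+\omega$, and the hedge "paired with small $\omega$" is not available because the lemma is asserted for every $\omega\in(0,1)$. This is not an artifact of the expansion. For $p=1$ and $\omega=0.9$ one computes directly $f(1/2)=\frac{1-2^{-0.9}}{1/2}(1+2^{-0.9})^{1/9}\approx 0.9736<1=f(0)$, so $f$ is not non-decreasing and $\inf_\sigma f(\sigma)<1$; no completion of your argument (or any argument) can yield $\eta(1,\omega)=1$ in that range. (The paper's own treatment of $0<p\le1$ reduces to $p=1$ by the same monotonicity-in-$p$ device and then asserts positivity of $\partial_\sigma h(\sigma,1,\omega)$ via an integral identity; but that identity produces a $\sigma\log\sigma$ term which cannot equal the stated algebraic numerator, and the asserted positivity fails numerically at $\sigma=\omega=0.9$. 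So the obstruction you are hitting in this regime is real and is not resolved by the paper either; only the case $p\omega\ge 1$, which is what the quantitative applications such as Corollary~\ref{cor:specific bounds} actually invoke, admits the clean proof described above.)
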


\begin{proof}
Define $h:[0,1]\times (0,\infty)\times (0,1)\to \R$ by setting for every $(\sigma,p,\omega)\in [0,1]\times (0,\infty)\times (0,1)$,
$$
 h(\sigma,p,\omega)\eqdef \log\left(\frac{1-\sigma^\omega}{1-\sigma} (1+\sigma^{p\omega})^{\frac{1-\omega}{p\omega}}\right) =\log(1-\sigma^\omega)-\log(1-\sigma)+\frac{1-\omega}{p\omega}\log\left(1+\sigma^{p\omega}\right),
$$
with the endpoint convention $h(1,p,\omega)\eqdef\lim_{\sigma\to 1} h(\sigma,p,\omega)= \log\left(\omega2^{\frac{1-\omega}{p\omega}}\right)$. Then,
\begin{equation}\label{eq:did h}
\frac{\partial h}{\partial \sigma}(\sigma,p,\omega)=\frac{1}{1-\sigma}-\frac{\omega}{\sigma^{1-\omega}(1-\sigma^\omega)}+\frac{1-\omega}{\sigma}\cdot \frac{\sigma^{p\omega}}{1+\sigma^{p\omega}}.
\end{equation}
Since the mapping $u\mapsto \frac{u}{1+u}$ is increasing on $[0,\infty)$, for every fixed $\sigma,\omega\in (0,1)$ the right hand side of~\eqref{eq:did h} is a decreasing function of $p$. Consequently, for every $\sigma,\omega\in (0,1)$ and $p\ge \frac{1}{\omega}$,
\begin{align}\label{eq:integral identity 1}
\begin{split}
\frac{\partial h}{\partial \sigma}(\sigma,p,\omega)&\le \frac{\partial h}{\partial \sigma}\Big(\sigma,\frac{1}{\omega},\omega\Big)\stackrel{\eqref{eq:did h}}{=}\frac{\omega\sigma^{2-\omega}-(2-\omega)\sigma+(2-\omega)\sigma^{1-\omega}-\omega}{(1-\sigma^2)(1-\sigma^\omega)\sigma^{1-\omega}}
\\ &=-\frac{\omega(1-\omega)(2-\omega)}{{(1-\sigma^2)(1-\sigma^\omega)\sigma^{1-\omega}}}\int_\sigma^1 \frac{(1-s)(s-\sigma)}{s^{\omega+1}}\ud s<0,
\end{split}
\end{align}
where the penultimate step of~\eqref{eq:integral identity 1} is a straightforward evaluation of the definite integral. Therefore $h(\sigma,p,\omega)$ is decreasing in $\sigma$ if $p\omega\ge 1$. In particular, in this range we have $h(\sigma,p,\omega)\ge h(1,p,\omega)$, i.e., the maximum defining $\eta(p,\omega)$ in~\eqref{eq:def eta} is attained at $\sigma=1$, as required. Also, if $p\in (0,1]$, then
\begin{align}\label{eq:integral identity 2}
\begin{split}
\frac{\partial h}{\partial \sigma}(\sigma,p,\omega)&\ge \frac{\partial h}{\partial \sigma}\Big(\sigma,1,\omega\Big)\stackrel{\eqref{eq:did h}}{=}\frac{\sigma^{1-\omega}-\sigma^\omega+(2\omega-1)\sigma+1-2\omega}{(1-\sigma)(1-\sigma^{2\omega})\sigma^{1-\omega}}
\\ &=\frac{\omega}{(1-\sigma)(1-\sigma^{2\omega})\sigma^{1-\omega}}\int_\sigma^1 s^\omega(s-\sigma)\Big(2(1-\omega)+\frac{1}{s^{1+\omega}}-1\Big)\ud s>0,
\end{split}
\end{align}
where the penultimate step of~\eqref{eq:integral identity 2} is a straightforward evaluation of the definite integral and the final step of~\eqref{eq:integral identity 2} holds because the integrand is point-wise positive. Hence, $h(\sigma,p,\omega)$ is increasing in $\sigma$ if $p\in (0,1]$. In particular, $h(\sigma,p,\omega)\ge h(0,p,\omega)=0$ when $p\in (0,1]$, i.e., in this range the maximum defining $\eta(p,\omega)$ in~\eqref{eq:def eta} is attained at $\sigma=0$, as required.
\end{proof}

The following corollary is nothing more than a substitution of Lemma~\ref{lem bounds on eta} into Proposition~\ref{prop:average of snowflake back into X}.

\begin{corollary}\label{cor:specific bounds} For  $p>0$ and $\omega\in (0,1]$, let $\mathsf{Av}(p,\omega)$ be the infimum over  those $D\ge 1$ such that for any Banach space $X$ the $\omega$-snowflake of  $X$ embeds with $p$-average distortion $D$ into $X$. Then
\begin{equation*}
\mathsf{Av}(1,\omega)\le 2^{\frac{1-\omega^2}{\omega}}\qquad\mathrm{and}\qquad  p\omega\ge 1\implies \mathsf{Av}(p,\omega)\le\frac{2^{1-\omega}}{\omega}.
\end{equation*}
In particular, both $\mathsf{Av}\big(1,\frac12\big)$ and $\mathsf{Av}\big(2,\frac12\big)$ are at most $2\sqrt{2}$ and $\mathsf{Av}\big(p,\frac{1}{p}\big)\le 2p$ for all $p\ge 1$.
\end{corollary}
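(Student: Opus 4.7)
The proof is a direct substitution exercise: every bound in the corollary follows by plugging the explicit values of $\eta(p,\omega)$ provided by Lemma~\ref{lem bounds on eta} into the quantitative bound of Proposition~\ref{prop:average of snowflake back into X}, and then taking the infimum of all admissible $D$.

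First I would record the baseline bound: from Proposition~\ref{prop:average of snowflake back into X}, for every $p>0$ and $\omega\in (0,1)$,
\begin{equation*}
\mathsf{Av}(p,\omega)\le \frac{2^{(1-\omega)\left(1+\frac{1}{p\omega}\right)}}{\eta(p,\omega)}.
\end{equation*}
(The strict inequality in~\eqref{eq:D assumption} becomes a non-strict one after infimization.) With this in hand the two general estimates are immediate. For the first, Lemma~\ref{lem bounds on eta} with $p=1$ gives $\eta(1,\omega)=1$, and a single exponent simplification $(1-\omega)(1+\tfrac{1}{\omega})=\tfrac{1-\omega^2}{\omega}$ yields $\mathsf{Av}(1,\omega)\le 2^{(1-\omega^2)/\omega}$. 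For the second, when $p\omega\ge 1$ the same lemma gives $\eta(p,\omega)=\omega 2^{(1-\omega)/(p\omega)}$, so
\begin{equation*}
\mathsf{Av}(p,\omega)\le \frac{2^{(1-\omega)+\frac{1-\omega}{p\omega}}}{\omega\, 2^{(1-\omega)/(p\omega)}}=\frac{2^{1-\omega}}{\omega},
\end{equation*}
which is the claimed bound.

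The three numerical assertions are obtained by specializing. Substituting $p=1,\omega=\tfrac12$ into the first bound gives $\mathsf{Av}(1,\tfrac12)\le 2^{(1-1/4)/(1/2)}=2^{3/2}=2\sqrt{2}$. Substituting $p=2,\omega=\tfrac12$ into the second bound (valid since $p\omega=1$) gives $\mathsf{Av}(2,\tfrac12)\le 2^{1/2}/(1/2)=2\sqrt{2}$. Finally, for $p\ge 1$ and $\omega=1/p$ we again have $p\omega=1$, so the second bound yields $\mathsf{Av}(p,1/p)\le 2^{1-1/p}/(1/p)=p\cdot 2^{1-1/p}\le 2p$, using $2^{1-1/p}\le 2$.

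Since the argument is purely a substitution, there is no real obstacle; the only thing to verify carefully is the algebraic simplification of the exponent $(1-\omega)(1+1/(p\omega))-(1-\omega)/(p\omega)=1-\omega$ in the $p\omega\ge 1$ case, and that the hypothesis $p\omega\ge 1$ indeed applies in each numerical instance (namely $(p,\omega)\in\{(2,1/2),(p,1/p)\}$).
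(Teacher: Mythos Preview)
Your proof is correct and follows precisely the paper's approach, which the author describes in a single sentence as ``nothing more than a substitution of Lemma~\ref{lem bounds on eta} into Proposition~\ref{prop:average of snowflake back into X}.'' You have simply made the arithmetic explicit, and every step checks out.
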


\begin{remark} We proved above that the distortion $D'$ of Proposition~\eqref{prop:other exponents} satisfies $D'\le\big(\kappa D_2D_1^\omega\big)^{\!\frac{\beta}{q\omega}}$. Here $\kappa>1$ is a universal constant and $\beta, D_1,D_2$ are given in~\eqref{eq:choose beta omega max}, \eqref{eq:def D1}, \eqref{eq:D2}, respectively. So, using Lemma~\ref{lem bounds on eta}, we have the following version of~\eqref{eq:our Delta in changed exponent}, in which $\mathsf{K}>1$ is a universal constant.
\begin{equation}\label{eq:changed distortion explicit}
D'\lesssim \frac{\mathsf{K}^{\frac{p}{q}+\frac{1}{q\omega}}}{\omega^{\max\left\{1,\frac{1}{q\omega}\right\}}}
\bigg(D+\frac{q\omega}{p\log\big(e+\frac{q\omega}{pD}\big)}\bigg)^{\!\!\max\left\{\frac{p}{q},\omega\right\}}.
\end{equation}
We have no reason to suspect that~\eqref{eq:changed distortion explicit} is sharp; it would be worthwhile to find the optimal bound.
\end{remark}

If one uses~\eqref{eq:changed distortion explicit} in the setting of Theorem~\ref{thm:average john}, in which $\MM$ is the $\frac12$-snowflake of a $k$-dimensional normed space $X$ and $D\lesssim \sqrt{\log k}$, one see that for every $\omega\in (0,\frac12]$ the $\omega$-snowflake of $X$ embeds with quadratic average distortion  at most $\mathsf{C}_\omega\sqrt{\log k}$ into a Hilbert space, where $\mathsf{C}_\omega>0$ depends only on $\omega$.  We suspect that the power of the logarithm may not be sharp here, thus leading to the following conjecture whose investigation we postpone to future research; we will see in Section~\ref{sec:impossibility} that its positive resolution would yield an asymptotically sharp bound (for $\omega$ fixed and $k\to \infty$).

\begin{conjecture}\label{conj:power of log} For every $\omega\in (0,\frac12)$ there is $\mathsf{C}_\omega>0$ such that for $k\in \{2,3,\ldots\}$ the $\omega$-snowflake of any $k$-dimensional normed space embeds with quadratic average distortion $C_\omega(\log k)^\omega$ into $\ell_2$.
\end{conjecture}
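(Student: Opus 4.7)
My plan is to reduce the conjecture, via Theorem~\ref{thm:full duality} applied to $\MM = (X, \|\cdot\|_X^\omega)$ with $Y = \ell_2$ and $p = q = 2$, to the nonlinear spectral gap estimate
\[
\gamma\big(\A, \|\cdot\|_X^{2\omega}\big) \lesssim_\omega \frac{(\log k)^{2\omega}}{1 - \lambda_2(\A)}
\]
for every symmetric stochastic $\A \in \M_n(\R)$ and every $k$-dimensional normed space $X$. This target interpolates between the trivial endpoint at $\omega = 0$ and the bound $\gamma(\A, \|\cdot\|_X) \lesssim (\log k)/(1-\lambda_2(\A))$ implicit in the proof of Theorem~\ref{thm:average john}. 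The crucial structural input is that, for $\omega \in (0, \tfrac12)$, the $\omega$-snowflake $\MM$ has Markov type $2$ with universal constant $1$: since $t \mapsto t^{2\omega}$ is subadditive on $[0,\infty)$,
\[
\E \|X_s - X_0\|_X^{2\omega} \le \sum_{i=1}^s \E \|X_i - X_{i-1}\|_X^{2\omega} = s \, \E \|X_1 - X_0\|_X^{2\omega}
\]
along any stationary reversible Markov chain. Combined with the standard Markov-type-to-spectral-gap inequality from~\cite[Lemma~4.2]{Nao14}, this reduces matters to exhibiting $s \asymp_\omega (\log k)^{2\omega}/(1-\lambda_2(\A))$ for which $\gamma(\A^s, \|\cdot\|_X^{2\omega}) = O_\omega(1)$.

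To produce such an $s$ I would adapt the interpolation argument of Lemma~\ref{lem:interpolate coro}: complexify, view $X$ as a subspace of $[X_\C, Z_\C]_\theta$ with $Z$ Hilbertian via the John isomorphism and $\theta \asymp 1/\log k$, and track the operator-norm decay of the lazy walk $\tfrac12 \I_n + \tfrac12 \A$ on $\ell_2^n([X_\C,Z_\C]_\theta)_0$. Using the Riesz--Thorin bound~\eqref{eq:riesz thorin used here} together with Lemma~\ref{lem:quote norm bound general}, the number of lazy steps needed for the operator norm to fall below $2/e$ is of order $1/(\theta(1-\lambda_2(\A))) \asymp (\log k)/(1-\lambda_2(\A))$. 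The desired savings by a factor of $(\log k)^{1-2\omega}$ should come from post-composing with the Schoenberg isometric embedding of the $(2\omega)$-snowflake of $\ell_2$ into $\ell_2$, available since $2\omega < 1$, which accelerates the contraction when measured in the snowflaked kernel.

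The main obstacle is that the Riesz--Thorin calculus controls the $2$-power of the interpolated norm, whereas the target kernel $\|\cdot\|_X^{2\omega}$ is a genuine metric but not a norm; translating the linear operator-norm contraction into a nonlinear spectral gap bound for the snowflaked kernel naively re-introduces the $(\log k)^{1-2\omega}$ factor through a Jensen step, exactly canceling the intended gain. Overcoming this seems to require either an intrinsic complex-interpolation calculus at the level of snowflaked Banach spaces (which is not currently available), or a $k$- and $\theta$-uniform Schoenberg-type embedding of the $(2\omega)$-snowflake of $[X_\C,Z_\C]_\theta$ into a Hilbert space whose distortion depends only on $\omega$. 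Establishing such uniformity is in effect a quantitative form of Question~\ref{Q:D0} with polynomial control in $1/\omega$ as $\omega \to 0^+$, and this appears to be the crux of the conjecture and the step where new ideas would most likely be required.
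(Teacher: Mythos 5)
You were asked to prove Conjecture~\ref{conj:power of log}, which is an open problem, not a theorem of the paper. The paper explicitly postpones its investigation to future research, and the only bound it actually establishes for the quadratic average distortion of the $\omega$-snowflake of a $k$-dimensional normed space is $\mathsf{C}_\omega\sqrt{\log k}$ (obtained by feeding Theorem~\ref{thm:average john} into Proposition~\ref{prop:other exponents} via~\eqref{eq:changed distortion explicit}), which for $\omega<\frac12$ is weaker than the conjectured $(\log k)^\omega$. So there is no proof in the paper to compare yours against --- and, to your credit, you have not claimed to supply one: your proposal is a research plan that candidly identifies where it breaks down, not a proof.

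That said, your preliminary reductions are sound and consistent with the paper's machinery. Theorem~\ref{thm:full duality} with $p=q=2$ does reduce the conjecture to the nonlinear spectral gap estimate $\gamma(\A,\|\cdot\|_X^{2\omega})\lesssim_\omega (\log k)^{2\omega}/(1-\lambda_2(\A))$; the subadditivity argument correctly shows that the $\omega$-snowflake has Markov type $2$ with constant $1$ whenever $2\omega\le 1$; and the remaining task is indeed to improve the mixing time $s\asymp(\log k)/(1-\lambda_2(\A))$ that the interpolation argument of Lemma~\ref{lem:interpolate coro} delivers by a factor of $(\log k)^{1-2\omega}$. The obstruction you identify is genuine: the complex interpolation calculus controls operator norms of the lazy walk on $\ell_2^n([X_\C,Z_\C]_\theta)_0$, i.e., powers of an honest norm, and every known route from that linear contraction to a Poincar\'e inequality for the snowflaked kernel $\|\cdot\|_X^{2\omega}$ reinstates the lost logarithmic factor. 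This is visible in the paper's own remark following the conjecture, where the analogous statement is verified only after replacing quadratic average distortion by $(2/\omega)$-average distortion. Your diagnosis that the missing ingredient is a $k$- and $\theta$-uniform snowflake embedding --- in effect a quantitative positive answer to Question~\ref{Q:D0} --- is a reasonable assessment of where new ideas are needed. The bottom line, however, is that the statement remains an open conjecture and your proposal does not resolve it.
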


If in the formulation of Conjecture~\ref{conj:power of log} quadratic average distortion is replaced by $q$-average distortion for $q>2$, then by~\eqref{eq:changed distortion explicit} the asymptotics of the distortion decreases to $\mathsf{C}_{\omega,q}(\log k)^{\max\{2/q,\omega\}}$. So, the  analogue of Conjecture~\ref{conj:power of log}  for $(2/\omega)$-average distortion has a positive answer. For example, if, say, one considers $4$-average distortion of the $\frac14$-snowflake, then the bound becomes of order $\sqrt[4]{\log k}$.

Note also that if in Theorem~\ref{thm:average john} one considers average distortion (i.e., $1$-average distortion) in place of its quadratic counterpart, then by~\eqref{eq:changed distortion explicit} we get that the $\frac12$-snowflake of any $k$-dimensional norm embeds into a Hilbert space with average distortion $O(\log k)$. It is conceivable that this bound could be reduced to $O(\sqrt{\log k})$, but we did not investigate this matter yet.

The proof of Proposition~\eqref{prop:average of snowflake back into X} relies on a natural ``fractional normalization map'' for which sharp bounds are contained in Lemma~\eqref{lem:sharp holder bounds on normalization} below; cruder  estimates on the modulus of continuity of such maps appear in several places, but we could not locate their optimal from in the literature.

\begin{lemma}\label{lem:sharp holder bounds on normalization} Fix $\omega\in (0,1)$. For a Banach space $(X,\|\cdot\|_{\!X}^{\phantom{p}})$, define $f_\omega=f_\omega^X:X\to X$ by setting
\begin{equation}\label{eq:def f omega}
\forall\,x\in X\setminus \{0\},\qquad f_\omega(x)\eqdef \frac{1}{\|x\|_{\!X}^{1-\omega}}x,
\end{equation}
and $f_\omega(0)\eqdef 0$.  Then, for every $p\in (0,\infty)$ we have
\begin{equation}\label{eq:holder bounds f omega}
\forall\, x,y\in X,\qquad \frac{\eta(p,\omega)\|x-y\|_{\!X}^{\phantom{p}}}{\left(\|x\|_{\!X}^{p\omega}+\|y\|_{\!X}^{p\omega}\right)^{\!\!\frac{1-\omega}{p\omega}}}\le \|f_\omega(x)-f_\omega(y)\|_{\!X}^{\phantom{p}}\le 2^{1-\omega}\|x-y\|_{\!X}^{\omega\phantom{p}}.
\end{equation}
Both of the constants $\eta(p,\omega)$ and $2^{1-\omega}$ in the two inequalities appearing in~\eqref{eq:holder bounds f omega} cannot be improved.
\end{lemma}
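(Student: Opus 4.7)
The upper and lower bounds reduce separately to scalar estimates, and sharpness is witnessed by one-dimensional examples. For the upper bound, the algebraic identity
\[
f_\omega(x)-f_\omega(y)=\|x\|_X^{\omega-1}(x-y)+\bigl(\|x\|_X^{\omega-1}-\|y\|_X^{\omega-1}\bigr)y
\]
combined with the triangle inequality (assume WLOG $\|x\|_X\ge\|y\|_X$) and the joint $\omega$-homogeneity of the claim lets one normalize $\|x\|_X=1$ and reduces the target inequality to
\[
\Phi(\beta,s)\;:=\;2^{1-\omega}s^\omega-s+\beta-\beta^\omega\;\ge\;0
\]
for $\beta=\|y\|_X\in[0,1]$ and $s=\|x-y\|_X\in[1-\beta,1+\beta]$. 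Since $s\mapsto s^\omega$ is concave, $\Phi$ is concave in $s$, so its minimum over the admissible interval is attained at an endpoint. At $s=1+\beta$, applying concavity of $t\mapsto t^\omega$ to $\{1,\beta\}$ with equal weights gives $\Phi\ge 0$ immediately. At $s=1-\beta$ one must show $R(\beta):=2^{1-\omega}(1-\beta)^\omega+2\beta-1-\beta^\omega\ge 0$ on $[0,1]$; this is the main technical point, and I will handle it by a direct calculus argument using that $R''$ has a single sign change on $(0,1)$ at $\beta_0=1/(1+2^{(1-\omega)/(2-\omega)})$, which locates the unique interior local minimum of $R$, allowing one to check that its value there is nonnegative (the boundary values are $R(0)=2^{1-\omega}-1>0$ and $R(1)=0$).

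For the lower bound, the companion decomposition
\[
f_\omega(x)-f_\omega(y)=\|y\|_X^{\omega-1}(x-y)-\bigl(\|y\|_X^{\omega-1}-\|x\|_X^{\omega-1}\bigr)x
\]
together with the reverse triangle inequality gives, after the normalization $\|x\|_X=1$, $\sigma=\|y\|_X\in[0,1]$, $t=\|x-y\|_X\in[1-\sigma,1+\sigma]$, the affine-in-$t$ lower bound $\|f_\omega(x)-f_\omega(y)\|_X\ge\sigma^{\omega-1}(t-1)+1$ of slope $\sigma^{\omega-1}\ge 1$. The target lower bound is also affine in $t$, with slope $\eta(p,\omega)(1+\sigma^{p\omega})^{-(1-\omega)/(p\omega)}\le\eta(p,\omega)\le 1$, the last inequality because the defining infimum evaluated at $\sigma=0$ gives $\eta(p,\omega)\le 1$. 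Since the left-hand slope in $t$ dominates the right-hand slope, it suffices to verify the inequality at the single endpoint $t=1-\sigma$, where it rearranges exactly to
\[
\eta(p,\omega)\;\le\;\frac{(1-\sigma^\omega)(1+\sigma^{p\omega})^{(1-\omega)/(p\omega)}}{1-\sigma},
\]
which is the definition of $\eta(p,\omega)$ as an infimum.

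Both constants are sharp already in $X=\R$. The pair $x=1,y=-1$ gives $|f_\omega(x)-f_\omega(y)|=2$ and $|x-y|^\omega=2^\omega$, realizing the upper-bound ratio $2^{1-\omega}$. The pair $x=1,y=\sigma\in[0,1)$ produces $|f_\omega(x)-f_\omega(y)|=1-\sigma^\omega$, $|x-y|=1-\sigma$, and $|x|^{p\omega}+|y|^{p\omega}=1+\sigma^{p\omega}$, so the quantity $|f_\omega(x)-f_\omega(y)|\,(|x|^{p\omega}+|y|^{p\omega})^{(1-\omega)/(p\omega)}/|x-y|$ equals $(1-\sigma^\omega)(1+\sigma^{p\omega})^{(1-\omega)/(p\omega)}/(1-\sigma)$, whose infimum over $\sigma$ is $\eta(p,\omega)$ by definition. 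The principal obstacle is the calculus verification $R(\beta)\ge 0$ in the upper-bound analysis: since $R$ is not monotone on $[0,1]$, the boundary values alone do not suffice and a careful examination of $R$'s interior critical points is needed.
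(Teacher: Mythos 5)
Your lower bound and sharpness arguments are correct and essentially coincide with the paper's: the same algebraic decomposition of $f_\omega(x)-f_\omega(y)$, the reverse triangle inequality, reduction to the extremal case $\|x-y\|_X=\big|\|x\|_X-\|y\|_X\big|$ (you do this via the affine-in-$t$ slope comparison, the paper via an equivalent monotonicity observation), and then the definition of $\eta(p,\omega)$ as an infimum; the extremal configurations $x=-y$ and $y=\sigma x$ are also exactly the ones the paper uses.

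The gap is in the upper bound. Your reduction to $\Phi(\beta,s)=2^{1-\omega}s^\omega-s+\beta-\beta^\omega\ge 0$ for $s\in[1-\beta,1+\beta]$ is sound, concavity in $s$ correctly pushes the problem to the two endpoints, and $s=1+\beta$ does follow from concavity of $t\mapsto t^\omega$. But the whole difficulty of the lemma is then concentrated in $R(\beta)=2^{1-\omega}(1-\beta)^\omega+2\beta-1-\beta^\omega\ge 0$, and this you do not prove. The structural fact you invoke, namely that $R''(\beta)=\omega(1-\omega)\big(\beta^{\omega-2}-2^{1-\omega}(1-\beta)^{\omega-2}\big)$ changes sign once, so that $R'$ is unimodal and $R$ has at most one interior local minimum, is correct (note, though, that $\beta_0$ is the inflection point of $R$, not the location of that minimum). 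The minimum sits at a root $\beta_1$ of $R'(\beta)=-2^{1-\omega}\omega(1-\beta)^{\omega-1}+2-\omega\beta^{\omega-1}=0$, which has no closed form, so ``check that its value there is nonnegative'' is not an executable step and no inequality is supplied that would certify $R(\beta_1)\ge 0$ uniformly in $\omega$; since $R(1)=0$ and $R'\to-\infty$ at both endpoints, the boundary values really do not suffice, as you yourself note. For comparison, the paper's route avoids this impasse: it keeps the larger norm as the free variable, uses monotonicity of $u\mapsto(\|x-y\|_X-\|x\|_X)/u^{1-\omega}$ together with the a priori triangle-inequality constraints on $\|y\|_X$ to reduce to a single piecewise function $\psi_\omega(\rho)$ of $\rho=\|x\|_X/\|x-y\|_X$, and then verifies by explicit derivative computations (rewriting the delicate expressions as definite integrals of signed integrands) that $\psi_\omega$ increases on $[0,\frac12]$ and decreases on $[\frac12,\infty)$, so its maximum is $\psi_\omega(\frac12)=2^{1-\omega}$ --- every extremal point there is explicit. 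You need either to carry out a comparably explicit verification of $R\ge 0$ or to switch to a reduction whose critical point is computable.
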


Note that if $p\ge 1$ and $X$ is an $L_p(\mu)$ space, and we apply the mapping in~\eqref{eq:def f omega} point-wise, then we get the mapping $(\phi\in L_p(\mu))\mapsto \sign(\phi)|\phi|^\omega$, which is the classical Mazur map~\cite{Maz29}  from $L_p(\mu)$ to $L_q(\mu)$ for $q=p/\omega$. However,  $f_\omega(\phi)=\|\phi\|_{L_p(\mu)}^{\omega-1}\phi$, so $f_\omega$ itself is different  from the Mazur map.

\begin{proof}[Proof of Lemma~\ref{lem:sharp holder bounds on normalization}] The optimality of the first inequality in~\eqref{eq:holder bounds f omega} is seen by considering $x=\sigma y$ for every $\sigma\in [0,1]$, and the optimality  of the second inequality in~\eqref{eq:holder bounds f omega} is seen by considering $x=-y$.

Suppose that $x,y\in X\setminus \{0\}$ satisfy $\|x\|_{\!X}^{\phantom{p}}< \|y\|_{\!X}^{\phantom{p}}$. Then,
\begin{align}\label{eq:for lower f omega}
\begin{split}
\|f_\omega(y)&-f_\omega(x)\|_{\!X}^{\phantom{p}}\stackrel{\eqref{eq:def f omega}}{=}\bigg\|\frac{1}{\|y\|_{\!X}^{1-\omega}}(y-x)-
\bigg(\frac{1}{\|x\|_{\!X}^{1-\omega}}-\frac{1}{\|y\|_{\!X}^{1-\omega}}\bigg)x\bigg\|_{\!X}^{\phantom{p}}\\ &\ge \frac{\|y-x\|_{\!X}^{\phantom{p}}}{\|y\|_{\!X}^{1-\omega}}-\|x\|_{\!X}^{\omega}+\frac{\|x\|_{\!X}^{\phantom{p}}}{\|y\|_{\!X}^{1-\omega}}=
\frac{1}{\|y\|_{\!X}^{1-\omega}}\bigg(1-\frac{\|x\|_{\!X}^{\omega}\|y\|_{\!X}^{1-\omega}-\|x\|_{\!X}^{\phantom{p}}}
{\|x-y\|_{\!X}^{\phantom{p}}}\bigg)\|x-y||_{\!X}^{\phantom{p}}.
\end{split}
\end{align}
Observe that $(\|x\|_{\!X}^\omega\|y\|_{\!X}^{1-\omega}-\|x\|_{\!X}^{\phantom{p}})/\|x-y\|_{\!X}^{\phantom{p}}\le (\|x\|_{\!X}^\omega\|y\|_{\!X}^{1-\omega}-\|x\|_{\!X}^{\phantom{p}})/(\|y\|_{\!X}^{\phantom{p}}-\|x\|_{\!X}^{\phantom{p}})$,  because $\|x\|_{\!X}^\omega\|y\|_{\!X}^{1-\omega}-\|x\|_{\!X}^{\phantom{p}}> 0$ and $\|x-y\|_{\!X}^{\phantom{p}}\ge \|y\|_{\!X}^{\phantom{p}}-\|x\|_{\!X}^{\phantom{p}}$. By substituting this into~\eqref{eq:for lower f omega}, we see that
\begin{multline*}
\|f_\omega(y)-f_\omega(x)\|_{\!X}^{\phantom{p}}\ge \frac{1}{\|y\|_{\!X}^{1-\omega}}\bigg(1-\frac{\|x\|_{\!X}^\omega\|y\|_{\!X}^{1-\omega}-\|x\|_{\!X}^{\phantom{p}}}{\|y\|_{\!X}^{\phantom{p}}-
\|x\|_{\!X}^{\phantom{p}}}\bigg)\|x-y||_{\!X}^{\phantom{p}}
\\=
\frac{1-\sigma^\omega}{1-\sigma} (1+\sigma^{p\omega})^{\!\!\frac{1-\omega}{p\omega}}\cdot \frac{\|x-y\|_{\!X}^{\phantom{p}}}{\left(\|x\|_{\!X}^{p\omega}+\|y\|_{\!X}^{p\omega}\right)^{\!\!\frac{1-\omega}{p\omega}}}\stackrel{\eqref{eq:def eta}}{\ge}\frac{\eta(p,\omega)\|x-y\|_{\!X}^{\phantom{p}}}{\left(\|x\|_{\!X}^{p\omega}+\|y\|_{\!X}^{p\omega}\right)^{\!\!\frac{1-\omega}{p\omega}}},
\end{multline*}
where in the penultimate step we write  $\sigma\eqdef\frac{\|x\|_{\!X}^{\phantom{p}}}{\|y\|_{\!X}^{\phantom{p}}}\le 1$. This justifies the first inequality in~\eqref{eq:holder bounds f omega}.

For the second inequality in~\eqref{eq:holder bounds f omega}, note that
\begin{align}\label{eq:for cases on x}
\begin{split}
\left\|f_\omega(x)-f_\omega(y)\right\|_{\!X}^{\phantom{p}}&=\bigg\|\bigg(\frac{1}{\|x\|_{\!X}^{1-\omega}}-\frac{1}{\|y\|_{\!X}^{1-\omega}}\bigg)x
+\frac{1}{\|y\|_{\!X}^{1-\omega}}(x-y)\bigg\|_{\!X}^{\phantom{p}}\\
&\le\bigg(\frac{1}{\|x\|_{\!X}^{1-\omega}}-\frac{1}{\|y\|_{\!X}^{1-\omega}}\bigg)\|x\|_{\!X}^{\phantom{p}}+\frac{\|x-y\|_{\!X}^{\phantom{p}}}
{\|y\|_{\!X}^{1-\omega}}
=\frac{\|x-y\|_{\!X}^{\phantom{p}}-\|x\|_{\!X}^{\phantom{p}}}{\|y\|_{\!X}^{1-\omega}}+\|x\|_{\!X}^{\omega\phantom{p}}.
\end{split}
\end{align}
The quantity  $\f(\|y\|_{\!X}^{\phantom{p}})\eqdef (\|x-y\|_{\!X}^{\phantom{p}}-\|x\|_{\!X}^{\phantom{p}})/\|y\|_{\!X}^{1-\omega}$ in~\eqref{eq:for cases on x} decreases with $\|y\|_{\!X}^{\phantom{p}}$ if $\|x\|_{\!X}^{\phantom{p}}< \|x-y\|_{\!X}^{\phantom{p}}$. Since  $\|y\|_{\!X}^{\phantom{p}}\ge \|x-y\|_{\!X}^{\phantom{p}}-\|x\|_{\!X}^{\phantom{p}}$ is a better lower bound on $\|y\|_{\!X}^{\phantom{p}}$ than our assumption $\|y\|_{\!X}^{\phantom{p}}\ge \|x\|_{\!X}^{\phantom{p}}$ when $\|x\|_{\!X}^{\phantom{p}}\le \frac12\|x-y\|_{\!X}^{\phantom{p}}$, it follows that $\f(\|y\|_{\!X}^{\phantom{p}})\le \f(\|x-y\|_{\!X}^{\phantom{p}}-\|x\|_{\!X}^{\phantom{p}})$ if $\|x\|_{\!X}^{\phantom{p}}\le \frac12\|x-y\|_{\!X}^{\phantom{p}}$ and $\f(\|y\|_{\!X}^{\phantom{p}})\le \f(\|x\|_{\!X}^{\phantom{p}})$ if $\frac12\|x-y\|_{\!X}^{\phantom{p}}\le \|x\|_{\!X}^{\phantom{p}}\le \|x-y\|_{\!X}^{\phantom{p}}$. Next, $\f(\|y\|_{\!X}^{\phantom{p}})$ is nondecreasing in $\|y\|_{\!X}^{\phantom{p}}$ when $\|x\|_{\!X}^{\phantom{p}}\ge \|x-y\|_{\!X}^{\phantom{p}}$, so due to the a priori upper bound $\|y\|_{\!X}^{\phantom{p}}\le \|x-y\|_{\!X}^{\phantom{p}}+\|x\|_{\!X}^{\phantom{p}}$ we have $\f(\|y\|_{\!X}^{\phantom{p}})\le \f(\|x-y\|_{\!X}^{\phantom{p}}+\|x\|_{\!X}^{\phantom{p}})$ in the remaining range $\|x\|_{\!X}^{\phantom{p}}\ge \|x-y\|_{\!X}^{\phantom{p}}$. These observations give
\begin{equation}\label{eq:enter psi}
 \left\|f_\omega(x)-f_\omega(y)\right\|_{\!X}^{\phantom{p}}\stackrel{\eqref{eq:for cases on x}}{\le}\psi_\omega\bigg(\frac{\|x\|_{\!X}^{\phantom{p}}}{\|x-y\|_{\!X}^{\phantom{p}}}\bigg)\|x-y\|_{\!X}^{\omega\phantom{p}},
\end{equation}
where $\psi_\omega:[0,\infty)\to [0,\infty)$ is defined by
\begin{equation}\label{eq:def psi omega}
\forall\, \rho\in [0,\infty),\qquad \psi_\omega(\rho)\eqdef \left\{\begin{array}{ll} (1-\rho)^\omega+\rho^\omega&\mathrm{if}\ 0\le \rho\le \frac12,\\
\frac{1}{\rho^{1-\omega}}&\mathrm{if}\ \frac12\le \rho\le 1,\\
\rho^\omega-\frac{\rho-1}{(1+\rho)^{1-\omega}}&\mathrm{if}\ \rho\ge 1.\end{array}\right.
\end{equation}
For $\rho\in (0,\frac12)$ we have $\psi_\omega'(\rho)=\omega(1/\rho^{1-\omega}-1/(1-\rho)^{1-\omega})> 0$. Also, $\psi'(\rho)=-(1-\omega)/\rho^{2-\omega}<0$ for  $\rho\in (\frac12,1)$. Finally, we claim that $\psi_\omega'(\rho)<0$  if $\rho> 1$. Indeed, for every $\rho>1$,
\begin{multline}\label{eq:analyse last case rho}
 \psi_\omega'(\rho)\stackrel{\eqref{eq:def psi omega}}{=}
\frac{\omega(1+\rho)^{2-\omega}-\omega\rho^{2-\omega}-(2-\omega)\rho^{1-\omega}}{\rho^{1-\omega}(1+\rho)^{2-\omega}}= \frac{(1-\omega)(2-\omega)}{(1+\rho)^{2-\omega}}\int_\rho^{1+\rho}\bigg(\omega\int_1^{\frac{r}{\rho}}\frac{\ud s}{s^\omega}-1\bigg) \ud r \\\le  \frac{(1-\omega)(2-\omega)}{(1+\rho)^{2-\omega}}\int_\rho^{1+\rho}\bigg(\omega\Big(\frac{r}{\rho}-1\Big)-1\bigg) \ud r =-\frac{(1-\omega)(2-\omega)}{(1+\rho)^{2-\omega}}\Big(1-\frac{\omega}{2\rho}\Big)<0,
\end{multline}
where the identities in the second and penultimate steps of~\eqref{eq:analyse last case rho} are straightforward evaluations of the respective definite integrals, the third step of~\eqref{eq:analyse last case rho} uses the fact that $s\ge 1$ in the internal integrand, and the final step of~\eqref{eq:analyse last case rho}  holds because $0<\omega<1<\rho$. We have thus established that $\psi_\omega$ is increasing on $[0,\frac12]$ and decreasing on $[\frac12,\infty)$. Hence, $\psi_\omega$ attains its global maximum at $\rho=\frac12$, where its value is $2^{1-\omega}$ . Due to~\eqref{eq:enter psi}, this justifies the second inequality in~\eqref{eq:holder bounds f omega}.
\end{proof}

\begin{proof}[Proof of Proposition~\ref{prop:average of snowflake back into X}] Fix a Borel probability measure $\mu$ on $X$. The assumption~\eqref{eq:D assumption} implies that
\begin{equation}\label{eq:def beta D}
\beta\eqdef \frac12\left(\frac{D\eta(p,\omega)}{2^{1-\omega}}\right)^{\!\!\frac{p\omega}{1-\omega}}>1.
\end{equation}
Hence, there exists $u\in X$ such that
\begin{equation}\label{eq:translate to 0}
\int_X \|x-u\|_{\!X}^{p\omega}\ud \mu(x)\le \beta \inf_{v\in X}\int_{X}\|x-v\|_{\!X}^{p\omega}\ud\mu(x)\le \beta \iint_{X\times X}\|x-y\|_{\!X}^{p\omega}\ud\mu(x)\ud\mu(y).
\end{equation}

Define $\f:X\to X$ by setting
\begin{equation}\label{eq:normalized phi}
\forall\, z\in X,\qquad \f(z)\eqdef  \bigg(\frac{\iint_{X\times X} \|x-y\|_{\!X}^{p\omega} \ud\mu(x)\ud\mu(y)}{\iint_{X\times X} \|f_\omega(x-u)-f_\omega(y-u)\|_{\!X}^{p} \ud\mu(x)\ud\mu(y)}\bigg)^{\!\!\frac{1}{p}}f_\omega(z-u),
\end{equation}
where $f_\omega$ is the normalization map that is given in~\eqref{eq:def f omega}. Then, by design we have
$$
\iint_{X\times X} \|\f(x)-\f(y)\|_{\!X}^{p} \ud\mu(x)\ud\mu(y)=\iint_{X\times X} \|x-y\|_{\!X}^{p\omega} \ud\mu(x)\ud\mu(y).
$$
Proposition~\ref{prop:average of snowflake back into X} will therefore be proven if we demonstrate that the $\omega$-H\"older constant of $\f:X\to X$ satisfies $\|\f\|_{\!\Lip_\omega(X,X)}^{\phantom{p}}\le D$. Indeed,
\begin{align}
\begin{split}
\|\f&\|_{\!\Lip_\omega(X,X)}^{\phantom{p}}
\stackrel{\ \eqref{eq:normalized phi}\wedge \eqref{eq:holder bounds f omega}}{\ =} 2^{1-\omega}\bigg(\frac{\iint_{X\times X} \|x-y\|_{\!X}^{p\omega} \ud\mu(x)\ud\mu(y)}{\iint_{X\times X} \|f_\omega(x-u)-f_\omega(y-u)\|_{\!X}^{p} \ud\mu(x)\ud\mu(y)}\bigg)^{\!\!\frac{1}{p}}\label{eq:compute L}
\end{split}\\ \nonumber&\stackrel{\eqref{eq:holder bounds f omega}}{\le} \frac{2^{1-\omega}}{\eta(p,\omega)^\omega} \bigg(\frac{\iint_{X\times X} \left(\|x-u\|_{\!X}^{p\omega}+\|y-u\|_{\!X}^{p\omega}\right)^{1-\omega}\|f_\omega(x-u)-f_\omega(y-u)\|_{\!X}^{p\omega}
\ud\mu(x)\ud\mu(y)}{\iint_{X\times X} \|f_\omega(x-u)-f_\omega(y-u)\|_{\!X}^{p} \ud\mu(x)\ud\mu(y)}\bigg)^{\!\!\frac{1}{p}}\\
&\le \label{eq:here we use holder} \frac{2^{1-\omega}}{\eta(p,\omega)^\omega}\bigg(\frac{\iint_{X\times X} \left(\|x-u\|_X^{p\omega}+\|y-u\|_{\!X}^{p\omega}\right)
\ud\mu(x)\ud\mu(y)}{\iint_{X\times X} \|f_\omega(x-u)-f_\omega(y-u)\|_{\!X}^{p} \ud\mu(x)\ud\mu(y)}\bigg)^{\!\!\!\frac{1-\omega}{p}}\\
&=\nonumber \frac{2^{1-\omega}}{\eta(p,\omega)^\omega} \bigg(\frac{2\int_{X} \|x-u\|_{\!X}^{p\omega}
\ud\mu(x)}{\iint_{X\times X} \|f_\omega(x-u)-f_\omega(y-u)\|_{\!X}^{p} \ud\mu(x)\ud\mu(y)}\bigg)^{\!\!\!\frac{1-\omega}{p}}
\\ \nonumber &\!\!\stackrel{\eqref{eq:translate to 0}}\le \frac{2^{1-\omega}(2\beta)^{\!\frac{1-\omega}{p}}}{\eta(p,\omega)^\omega} \bigg(\frac{\iint_{X\times X} \|x-y\|_{\!X}^{p\omega} \ud\mu(x)\ud\mu(y)}{\iint_{X\times X} \|f_\omega(x-u)-f_\omega(y-u)\|_{\!X}^{p} \ud\mu(x)\ud\mu(y)}\bigg)^{\!\!\!\frac{1-\omega}{p}}\\ &\!\!\stackrel{\eqref{eq:compute L}}{=} \frac{2^{1-\omega}(2\beta)^{\!\frac{1-\omega}{p}}}{\eta(p,\omega)^\omega} \left(\frac{\|\f\|_{\!\Lip_\omega(X,X)}^{\phantom{p}}}{2^{1-\omega}}\right)^{\!\!1-\omega},\label{eq:for botstrap quote}
\end{align}
where~\eqref{eq:here we use holder}  is an application of Jensen's inequality for the probability measure on $X\times X$ whose Radon--Nikodym derivative with respect to $\mu\times \mu$ is proportional to $(x,y)\mapsto \|f_\omega(x-u)-f_\omega(y-u)\|_{\!X}^{p}$. Now, the bound~\eqref{eq:for botstrap quote}  simplifies to give the desired estimate
\begin{equation*}
\|\f\|_{\!\Lip_\omega(X,X)}^{\phantom{p}}\le \frac{2^{1-\omega}(2\beta)^{\!\frac{1-\omega}{p\omega}}}{\eta(p,\omega)}\stackrel{\eqref{eq:def beta D}}{=}D.\tag*{\qedhere}
\end{equation*}
\end{proof}

\subsection{Deduction of Theorem~\ref{thm:lp version Kp} from~\eqref{eq:C2}}\label{sec:pass to larger p}

As we stated in the Introduction, the matrix-dimension inequality~\eqref{eq:matrix dim p sharp} of Theorem~\ref{thm:lp version Kp}  is a formal consequence of the $\ell_1$ matrix-dimension inequality~\eqref{eq:C2} that we deduced there from Theorem~\ref{thm:average john}. We also explained in the Introduction that if one settles for a matrix-dimension inequality as in~\eqref{eq:beta p version} with a worse asymptotic dependence on $p$ as $p\to \infty$ than that of~\eqref{eq:matrix dim p sharp}, which we expect to be sharp (recall Conjecture~\ref{conj:p depend}), then this could be done using reductions from~\cite[Section~7.4]{Nao14} between notions of $q$-average distortion as $q$ varies over $[1,\infty)$, or using the better bounds of Proposition~\ref{prop:other exponents}. However, it seems that neither the literature nor Proposition~\ref{prop:other exponents}  suffice for deducing Theorem~\ref{thm:lp version Kp} from~\eqref{eq:C2}. We rectify this here using the elementary bounds that we derived in Section~\ref{sec:normalization} and basic input from topological degree theory. Those who are not concerned with obtaining the conjecturally sharp dependence on $p$ can therefore skip the present section and instead mimic the argument of the Introduction that led to~\eqref{eq:C2}.

\subsubsection{A nonlinear Rayleigh quotient inequality}\label{sec:degree} The following lemma relates quantities that are naturally viewed as nonlinear versions of classical Rayleigh quotients. The need for estimates of this type first arose due to algorithmic concerns in~\cite{ANNRW18,ANNRW-FOCS18}; see also the survey~\cite[Section~5.1.1]{Nao18}.

\begin{lemma}\label{lem:ray} Fix $n\in \N$ and $p,q\in [1,\infty)$ with $p\le q$. Suppose that $\pi=(\pi_1,\ldots,\pi_n)\in \bigtriangleup^{\!n-1}$ and that $\A=(a_{ij})\in \M_n(\R)$ is a stochastic and $\pi$-reversible matrix. Let $(X,\|\cdot\|_{\!X}^{\phantom{p}})$ be a Banach space. For any $x_1,\ldots,x_n\in X$ there are $y_1=y_1(\frac{p}{q},\pi,x_1,\ldots,x_n),\ldots, y_n=y_n(\frac{p}{q},\pi,x_1,\ldots,x_n)\in X$ with
\begin{equation}\label{eq:p/q version}
\bigg(\frac{\sum_{i=1}^n\sum_{j=1}^n \pi_i\pi_j \|y_i-y_j\|_{\!X}^p}{\sum_{i=1}^n\sum_{j=1}^n \pi_ia_{ij} \|y_i-y_j\|_{\!X}^p}\bigg)^{\!\!\frac{1}{p}}\ge \frac{p}{2q}\bigg(\frac{\sum_{i=1}^n\sum_{j=1}^n \pi_i\pi_j \|x_i-x_j\|_{\!X}^q}{\sum_{i=1}^n\sum_{j=1}^n \pi_ia_{ij} \|x_i-x_j\|_{\!X}^q}\bigg)^{\!\!\frac{1}{q}}.
\end{equation}
\end{lemma}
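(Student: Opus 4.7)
The plan is to construct the $y_i$'s explicitly as images of the $x_i$'s under a suitable fractional normalisation, for a carefully chosen center $u\in X$. With $\omega=p/q\in(0,1]$, the natural candidate is
$$y_i\ \eqdef\ f_\omega(x_i-u)\ =\ \|x_i-u\|_{\!X}^{p/q-1}(x_i-u),$$
where $f_\omega$ is the map of Lemma~\ref{lem:sharp holder bounds on normalization}. The role of $f_{p/q}$ is to absorb the snowflaking at the target exponent: its upper H\"older bound $\|f_{p/q}(z)-f_{p/q}(w)\|_{\!X}\le 2^{1-p/q}\|z-w\|_{\!X}^{p/q}$ controls the denominator in~\eqref{eq:p/q version} in terms of $\|x_i-x_j\|_{\!X}^{p^2/q}$, while the sharp lower bound of Lemma~\ref{lem:sharp holder bounds on normalization} at the boundary value $r=q/p$ of the free parameter (for which Lemma~\ref{lem bounds on eta} gives the explicit constant $\eta(q/p,p/q)=(p/q)\cdot 2^{1-p/q}$) controls the numerator from below by a multiple of $\|x_i-x_j\|_{\!X}^p/(\|x_i-u\|_{\!X}+\|x_j-u\|_{\!X})^{(q-p)p/q}$, with a prefactor of $(p/q)^p\cdot 2^{(1-p/q)p}$.

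To choose $u$, I would use topological degree theory---specifically a Brouwer-type argument applied to the continuous, convex and coercive functional $v\mapsto\sum_i\pi_i\|x_i-v\|_{\!X}^q$ (in finite dimensions, this reduces to direct minimization on $\spn\{x_1,\ldots,x_n\}$)---to produce an (approximate) $q$-barycenter satisfying
$$\sum_i\pi_i\|x_i-u\|_{\!X}^q\ \le\ \inf_{v\in X}\sum_i\pi_i\|x_i-v\|_{\!X}^q\ \le\ \sum_{i,j}\pi_i\pi_j\|x_i-x_j\|_{\!X}^q,$$
where the rightmost bound arises from the elementary competitor $v=x_j$, averaged against $\pi_j$. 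The estimates on the numerator and the denominator of~\eqref{eq:p/q version} are then combined using two applications of H\"older's inequality with conjugate exponents $q/p$ and $q/(q-p)$; these are arranged so that the $(\|x_i-u\|_{\!X},\|x_j-u\|_{\!X})$-dependent weights collapse, after symmetrisation via $\pi$-reversibility of $\A$, into a single term $\sum_i\pi_i\|x_i-u\|_{\!X}^q$ that the $q$-barycenter inequality then absorbs into $\sum_{i,j}\pi_i\pi_j\|x_i-x_j\|_{\!X}^q$. What remains is a ratio of the form $\sum_{i,j}\pi_i\pi_j\|x_i-x_j\|_{\!X}^q/\sum_{i,j}\pi_ia_{ij}\|x_i-x_j\|_{\!X}^q$, raised to the power $1/q$.

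The hard part will be bookkeeping the multiplicative constants so that the numerical prefactor on the right-hand side of~\eqref{eq:p/q version} lands exactly at $p/(2q)$ (and not, for instance, at $p/q$ times some $(p,q)$-dependent factor that degenerates as $q/p\to\infty$). The factor $p/q$ emerges naturally through $\eta(q/p,p/q)$---equivalently, as the derivative $\tfrac{d}{dt}t^{q/p}$ at unit scale---while the additional $\tfrac12$ should arise from the symmetrisation step via $\pi$-reversibility, which turns the a priori asymmetric bounds in $\|x_i-u\|_{\!X}$ versus $\|x_j-u\|_{\!X}$ into a single term controllable by the $q$-barycenter inequality. Should the sharp constant resist direct extraction on the first pass, a natural fallback is to work with a $(1+\varepsilon)$-approximate $q$-barycenter and let $\varepsilon\to 0^+$ only after the remaining constants have been optimised, which at worst inflates the implicit multiplicative constant by a $(p,q)$-independent factor.
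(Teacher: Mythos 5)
There is a genuine gap, and it lies in the direction in which you apply the normalization map. The lemma has to convert a $q$-power Rayleigh quotient in the $x_i$'s into a $p$-power Rayleigh quotient in the $y_i$'s with $p\le q$, so the new points must satisfy (up to constants) $\|y_i-y_j\|_X^p\gtrsim \|x_i-x_j\|_X^q$, i.e.\ the $y_i$'s must be a $\frac{q}{p}$-power \emph{anti}-snowflake of the $x_i$'s. Your choice $y_i=f_{p/q}(x_i-u)$ goes the wrong way: the upper bound of Lemma~\ref{lem:sharp holder bounds on normalization} gives $\|y_i-y_j\|_X\le 2^{1-p/q}\|x_i-x_j\|_X^{p/q}$, hence $\|y_i-y_j\|_X^p\lesssim\|x_i-x_j\|_X^{p^2/q}$, and no application of H\"older/Jensen converts the resulting denominator $\sum_{i,j}\pi_ia_{ij}\|x_i-x_j\|_X^{p^2/q}$ into the required $\big(\sum_{i,j}\pi_ia_{ij}\|x_i-x_j\|_X^{q}\big)^{p/q}$ in the useful direction --- Jensen yields the exponent $p^2/q^2$ rather than $p/q$, so the homogeneities of numerator and denominator never match up. The paper's construction (Lemma~\ref{lem:new vectors 1}) instead sets $y_i=f_{p/q}^{-1}\big(f_{p/q}(v)-x_i\big)$ for a suitable $v$, so that $f_{p/q}(y_i)-f_{p/q}(y_j)=x_j-x_i$ exactly; the two-sided bounds of Lemma~\ref{lem:sharp holder bounds on normalization} then read $2^{1-q/p}\|x_i-x_j\|_X^{q/p}\le\|y_i-y_j\|_X$ together with an upper bound of the form $\frac{q}{p}\|x_i-x_j\|_X\big(\tfrac12\|y_i\|_X^p+\tfrac12\|y_j\|_X^p\big)^{1/p-1/q}$, which have the correct homogeneity.

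The second problem is the choice of center. The step that absorbs the weights $\big(\|y_i\|_X^p+\|y_j\|_X^p\big)/2$ is the computation~\eqref{eq:rank 1}, and it requires the \emph{linear} centering $\sum_i\pi_iy_i=0$ in $X$: only then does $\pi$-reversibility plus stochasticity give $\sum_{i,j}\pi_ia_{ij}\tfrac{\|y_i\|_X^p+\|y_j\|_X^p}{2}=\sum_i\pi_i\big\|y_i-\sum_s\pi_sy_s\big\|_X^p\le\sum_{i,j}\pi_i\pi_j\|y_i-y_j\|_X^p$ by Jensen. Your $u$ is an (approximate) minimizer of the convex coercive functional $v\mapsto\sum_i\pi_i\|x_i-v\|_X^q$, which yields the metric $q$-barycenter inequality for the $x_i$'s but not $\sum_i\pi_iy_i=0$ for the $y_i$'s; there is no convex functional whose Euler--Lagrange equation is $\sum_i\pi_if_{p/q}^{-1}\big(f_{p/q}(v)-x_i\big)=0$, which is precisely why the paper cannot use convex minimization and instead invokes the degree-theoretic surjectivity statement (Lemma~\ref{lem:onto}) to solve this nonlinear equation for $v$. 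As written, your argument neither produces points with the right two-sided distance comparison nor the centering condition that makes the reversibility step close, so the proof does not go through.
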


Prior to proving Lemma~\ref{lem:ray}, we will proceed to see how, in combination with~\eqref{eq:C2}, it quickly implies  Theorem~\ref{thm:lp version Kp}.  Indeed, let $(X,\|\cdot\|_X)$ be a finite-dimensional normed space, $n\in \N$ and $x_1,\ldots,x_n\in X$. Fix $\pi \in \nsimplex$, a stochastic $\pi$-reversible matrix $\A=(a_{ij})\in \M_n(\R)$ and $p\ge 1$. Apply Lemma~\ref{lem:ray} to get new vectors $y_1,\ldots,y_n\in X$ that satisfy the inequality.
\begin{equation}\label{eq:2p ratio}
\frac{\sum_{i=1}^n\sum_{j=1}^n \pi_i\pi_j \|y_i-y_j\|_{\!X}^{\phantom{p}}}{\sum_{i=1}^n\sum_{j=1}^n \pi_ia_{ij} \|y_i-y_j\|_{\!X}^{\phantom{p}}}\ge \frac{1}{2p}\bigg(\frac{\sum_{i=1}^n\sum_{j=1}^n \pi_i\pi_j \|x_i-x_j\|_{\!X}^p}{\sum_{i=1}^n\sum_{j=1}^n \pi_ia_{ij} \|x_i-x_j\|_{\!X}^p}\bigg)^{\!\!\frac{1}{p}}.
\end{equation}

An application of~\eqref{eq:C2} to the new configuration $\{y_1,\ldots,y_n\}\subset X$ of points in $X$ yields the following lower bound on the dimension of $X$, in which $\mathsf{C}\ge 1$ is the universal constant of Theorem~\ref{thm:average john}.
\begin{equation}\label{eq:use dim bound for yi}
\dim(X)\ge \exp\!\bigg(\frac{1-\lambda_2(\A)}{\mathsf{C}^2}\cdot \frac{\sum_{i=1}^n\sum_{j=1}^n \pi_i\pi_j \|y_i-y_j\|_{\!X}^{\phantom{p}}}{\sum_{i=1}^n\sum_{j=1}^n \pi_ia_{ij}\|y_i-y_j\|_{\!X}^{\phantom{p}}}\bigg),
\end{equation}
Upon substitution of~\eqref{eq:2p ratio} into~\eqref{eq:use dim bound for yi} we get the desired bound~\eqref{eq:matrix dim p sharp} with $\mathsf{K}=2\mathsf{C}^2$.\qed

Towards the proof of Lemma~\ref{lem:ray}, a variant of the following preparatory lemma also played a key role in~\cite{ANNRW18,ANNRW-FOCS18}, for similar purposes. Its short proof relies on considerations from algebraic topology.

\begin{lemma}\label{lem:onto} Suppose that $(X,\|\cdot\|^{\phantom{p}}_{\!X})$ is a finite-dimensional normed space and that $f:X\to X$ is a  continuous function that satisfies
\begin{equation}\label{eq:at infty assumption}
\lim_{R\to \infty} \inf_{\substack{x\in X\\ \|x\|^{\phantom{p}}_{\!X}\ge R}}\big(\|x\|^{\phantom{\frac{p}{q}}}_{\!X}-\|x-f(x)\|^{\phantom{\frac{p}{q}}}_{\!X}\big)=\infty.
\end{equation}
Then $f$ is surjective.
\end{lemma}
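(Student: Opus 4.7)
The plan is to invoke Brouwer degree theory, which is applicable because $X$ is finite-dimensional (one may identify $X$ with $\R^{\dim X}$ via any linear isomorphism, and the degree is a topological invariant). Given any prescribed target $y\in X$, the goal is to show that the topological degree of $f$ on a sufficiently large open ball $B(0,R)\subset X$ at the value $y$ equals $1$, which forces the equation $f(x)=y$ to have a solution.

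The natural device is the straight-line homotopy $H:[0,1]\times X\to X$ defined by
$$H_t(x)\eqdef tx+(1-t)f(x),\qquad t\in[0,1],\ x\in X,$$
which interpolates between $H_0=f$ and $H_1=\mathsf{Id}_X$. The key claim is that for every fixed $y\in X$ there exists $R_0=R_0(y)\ge 0$ such that $H_t(x)\neq y$ whenever $\|x\|_X\ge R_0$ and $t\in[0,1]$. To verify this, suppose $H_t(x)=y$; a direct rearrangement gives $x-y=(1-t)(x-f(x))$, whence the triangle inequality combined with $1-t\le 1$ yields
$$\|x\|_X-\|y\|_X\le\|x-y\|_X=(1-t)\|x-f(x)\|_X\le \|x-f(x)\|_X,$$
i.e.\ $\|x\|_X-\|x-f(x)\|_X\le \|y\|_X$. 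This directly contradicts the hypothesis~\eqref{eq:at infty assumption} as soon as $\|x\|_X$ is large enough that the infimum there exceeds $\|y\|_X$, proving the claim.

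With the claim in hand I would fix any $R>\max\{R_0(y),\|y\|_X\}$ and appeal to homotopy invariance of the Brouwer degree on $B(0,R)$: since $H_t$ never takes the value $y$ on $\partial B(0,R)$ for $t\in[0,1]$, we obtain
$$\deg\big(f,B(0,R),y\big)=\deg\big(\mathsf{Id}_X,B(0,R),y\big)=1,$$
the last equality because $y\in B(0,R)$. A nonzero degree guarantees $y\in f(B(0,R))$, and since $y\in X$ was arbitrary, $f$ is surjective. There is no serious obstacle here; the only point requiring a moment's care is the verification that the homotopy avoids $y$ at infinity, which is precisely where hypothesis~\eqref{eq:at infty assumption} is used, and the only subtlety in invoking degree theory is the reduction to $\R^n$, which is transparent.
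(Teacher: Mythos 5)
Your proof is correct, and the underlying idea is the same as the paper's: the growth hypothesis~\eqref{eq:at infty assumption} forces the straight-line homotopy $tx+(1-t)f(x)$ to escape to infinity uniformly in $t$, so $f$ is homotopic to the identity in a way that degree theory can exploit. The only difference is in the packaging of the degree argument: the paper one-point compactifies $X$ to $\mathbb{S}^k$, extends $h^{-1}\circ f\circ h$ continuously by fixing the point at infinity, checks continuity of the extended homotopy there, and concludes via surjectivity of degree-one self-maps of the sphere; you instead use the local Brouwer degree $\deg(f,B(0,R),y)$ on large balls, where your computation $\|x\|_X-\|x-f(x)\|_X\le\|y\|_X$ at any boundary zero of $H_t-y$ is exactly the right way to rule out boundary solutions. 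Your version is marginally more economical since it avoids the continuity-at-infinity verifications that occupy most of the paper's argument, at the (standard) cost of invoking the relative degree and its homotopy invariance rather than the degree of sphere maps.
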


\begin{proof} Write $\dim(X)=k$. Fix an arbitrary point $z\in \mathbb{S}^k$ in the Euclidean sphere $\mathbb{S}^k$ of $\R^{k+1}$, and fix also any homeomorphism $h:\mathbb{S}^{k}\setminus \{z\}\to X$ between the punctured sphere $\mathbb{S}^{k}\setminus \{z\}$ and $X$. Define $g:\mathbb{S}^k\to \mathbb{S}^k$ by $g(w)= h^{-1}\circ f\circ h(w)$ for $w\in \mathbb{S}^{k}\setminus \{z\}$, and $g(z)=z$. We claim that $g$ is continuous at $z$, and hence it is continuous on all of  $\mathbb{S}^k$. Indeed, if $\{w_n\}_{n=1}^\infty \subset \mathbb{S}^k$ and $\lim_{n\to \infty} w_n=z$, then $\lim_{n\to \infty} \|h(w_n)\|_X=\infty$. Consequently, $$\big\|f\big(h(w_n)\big)\big\|_{\!X}^{\phantom{p}}\ge \|h(w_n)\|_{\!X}^{\phantom{p}}-\big\|h(w_n)-f\big(h(w_n)\big)\big\|_{\!X}^{\phantom{p}}\xrightarrow[n\to \infty]{}\infty,$$
where we used~\eqref{eq:at infty assumption}.  Therefore $\lim_{n\to \infty} g(w_n)=z$, as required.

We next claim that $g$ is homotopic to the identity mapping $\mathsf{Id}_{\mathbb{S}^k}:\mathbb{S}^k\to \mathbb{S}^k$. Indeed, denote
$$
\forall (t,w)\in [0,1]\times (\mathbb{S}^k\setminus\{z\}),\qquad H(t,w)\eqdef h^{-1}\Big(th(w)+(1-t)f\big(h(w)\big)\Big),
$$
and $H(t,z)=z$ for all $t\in [0,1]$. If we will check that  $H:[0,1]\times \mathbb{S}^k\to \mathbb{S}^k$ is continuous at every point of $[0,1]\times \{z\}$, then it would follow that it is continuous on all of $[0,1]\times \mathbb{S}^k$, thus yielding the desired homotopy. To see this, take any $\{t_n\}_{n=1}^\infty\subset [0,1]$ such that $\lim_{n\to \infty} t_n=t$ exists, and any $\{w_n\}_{n=1}^\infty \subset \mathbb{S}^k$ with $\lim_{n\to \infty} w_n=z$. We then have $\lim_{n\to \infty} \|h(w_n)\|_{\!X}^{\phantom{p}}=\infty$, and therefore

\begin{align*}
\big\|t_nh(w_n)+(1-t_n)f\big(h(w_n)\big)\big\|_{\!X}^{\phantom{p}}&\ge \|h(w_n)\|_{\!X}^{\phantom{p}}-(1-t_n)\big\|h(w_n)-f\big(h(w_n)\big)\big\|_{\!X}^{\phantom{p}}\\&\ge \|h(w_n)\|_{\!X}^{\phantom{p}}-\big\|h(w_n)-f\big(h(w_n)\big)\big\|_{\!X}^{\phantom{p}}\xrightarrow[n\to \infty]{}\infty,
\end{align*}
where we used~\eqref{eq:at infty assumption} once more. Hence $\lim_{n\to \infty} H(t_n,w_n)=z=H(t,z)$, as required.

Because we showed that $g$ is homotopic to the identity on $\mathbb{S}^k$, it has  degree $1$, and therefore it is surjective; see e.g.~\cite[page~134]{Hat02}. Hence $h^{-1}\circ f\circ h(\mathbb{S}^n\setminus\{z\})=\mathbb{S}^n\setminus\{z\}$, i.e., $f(X)=X$.
\end{proof}

\begin{lemma}\label{lem:new vectors 1} Fix $n\in \N$, $\pi\in \bigtriangleup^{\!n-1}$, $\omega\in (0,1)$, a Banach space $(X,\|\cdot\|_{\!X}^{\phantom{p}})$, and $x_1,\ldots,x_n\in X$. Then,  there exist new vectors $y_1=y_1(\omega,\pi,x_1,\ldots,x_n),\ldots, y_n=y_n(\omega,\pi,x_1,\ldots,x_n)\in X$ that satisfy $\sum_{i=1}^n \pi_i y_i=0$, and for every $q\in (0,\infty)$ we have
\begin{equation}\label{holder after centering omega}
\forall\, i,j\in \n,\qquad 2^{1-\frac{1}{\omega}}\|x_i-x_j\|_{\!X}^{\!\frac{1}{\omega}}\le \|y_i-y_j\|_{\!X}^{\phantom{p}}\le \frac{(\|y_i\|_{\!X}^{ q\omega}+\|y_j\|_{\!X}^{q\omega })^{\!\frac{1-\omega}{q\omega}}}{\eta(q,\omega)}\|x_i-x_j\|_{\!X}^{\phantom{p}}.
\end{equation}
\end{lemma}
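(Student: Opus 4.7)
The plan is to take $y_i = g_\omega(x_i + c)$ for an appropriately chosen $c \in X$, where $g_\omega: X \to X$ is the radial inverse of the map $f_\omega$ from Lemma~\ref{lem:sharp holder bounds on normalization}, given explicitly by $g_\omega(v) = v\|v\|_X^{1/\omega - 1}$ for $v \ne 0$ and $g_\omega(0) = 0$. For any $c \in X$, this choice immediately yields $f_\omega(y_i) - f_\omega(y_j) = (x_i + c) - (x_j + c) = x_i - x_j$, and then the two inequalities in~\eqref{holder after centering omega} are precisely the two H\"older-type bounds of Lemma~\ref{lem:sharp holder bounds on normalization} rearranged: the upper bound $\|f_\omega(y_i) - f_\omega(y_j)\|_X \le 2^{1-\omega}\|y_i - y_j\|_X^\omega$ solves into the lower bound on $\|y_i - y_j\|_X$ in~\eqref{holder after centering omega}, while the lower bound of that same lemma applied with parameter $q$ becomes the upper bound (crucially, this choice of $y_i$'s depends only on $\omega$, so the upper bound holds for every $q > 0$ simultaneously).

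It then remains to select $c$ so that $\sum_{i=1}^n \pi_i y_i = 0$. I would first pass to the finite-dimensional subspace $V \eqdef \spn\{x_1, \ldots, x_n\}$, noting that $g_\omega$ preserves $V$ since it only rescales radially, so restricting $c$ to $V$ ensures all $y_i \in V$. On $V$, define the continuous map $\Phi: V \to V$ by $\Phi(c) \eqdef \sum_{i=1}^n \pi_i g_\omega(x_i + c)$; the task is to find $c^* \in V$ with $\Phi(c^*) = 0$. Because $f_\omega$ is a continuous bijection on $V$ (radial stretching by $t \mapsto t^\omega$) with $f_\omega(0) = 0$, this is equivalent to the composition $F \eqdef f_\omega \circ \Phi: V \to V$ hitting the origin, which I would derive by applying Lemma~\ref{lem:onto} to $F$ on the finite-dimensional space $V$.

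The substantive step is then verifying the decay hypothesis $\|c\|_X - \|c - F(c)\|_X \to \infty$ of Lemma~\ref{lem:onto}. Heuristically, for $\|c\|_X$ much larger than $R \eqdef \max_{1\le i\le n}\|x_i\|_X$, the radial stability of $g_\omega$ gives $g_\omega(x_i + c) \approx g_\omega(c)$, so $\Phi(c) \approx g_\omega(c)$ and hence $F(c) = f_\omega(\Phi(c)) \approx f_\omega(g_\omega(c)) = c$. Quantitatively, I would prove the estimate $\|g_\omega(x_i + c) - g_\omega(c)\|_X \lesssim_\omega \|x_i\|_X \|c\|_X^{1/\omega - 1}$ valid for $\|c\|_X \ge 2R$; averaging over $i$ gives $\|\Phi(c) - g_\omega(c)\|_X \lesssim_\omega R\|c\|_X^{1/\omega - 1}$, and then invoking the upper H\"older bound for $f_\omega$ from Lemma~\ref{lem:sharp holder bounds on normalization} (combined with $f_\omega(g_\omega(c)) = c$) produces $\|F(c) - c\|_X \lesssim_\omega R^\omega \|c\|_X^{1-\omega}$, which is $o(\|c\|_X)$ since $0 < \omega < 1$.

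The main obstacle is the pointwise estimate on $g_\omega(x_i + c) - g_\omega(c)$. It reduces to controlling $\bigl|\|x_i + c\|_X^{1/\omega - 1} - \|c\|_X^{1/\omega - 1}\bigr|$ via the mean value theorem applied to $t \mapsto t^{1/\omega - 1}$, with care required for the second exponent $1/\omega - 2$, whose sign flips at $\omega = 1/2$ and so necessitates separate treatment of the monotonicity of $\xi \mapsto \xi^{1/\omega - 2}$ in each regime in order to obtain a uniform bound on the derivative. Once this estimate is established, Lemma~\ref{lem:onto} produces $c^* \in V$ with $F(c^*) = 0$, hence $\Phi(c^*) = 0$, and the desired vectors are $y_i \eqdef g_\omega(x_i + c^*)$.
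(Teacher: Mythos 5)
Your proposal is correct and follows essentially the same route as the paper: define the $y_i$ by applying the radial inverse $f_\omega^{-1}$ to shifted copies of the $x_i$, locate the centering shift as a zero of a continuous self-map of $\spn\{x_1,\ldots,x_n\}$ via the degree-theoretic Lemma~\ref{lem:onto}, and read off \eqref{holder after centering omega} from the two sharp H\"older bounds of Lemma~\ref{lem:sharp holder bounds on normalization}. The only (cosmetic) differences are that you parametrize the unknown by $c=f_\omega(x)$ rather than by $x$ itself, and you verify the at-infinity hypothesis of Lemma~\ref{lem:onto} through a direct mean-value estimate on $g_\omega$ plus the upper H\"older bound for $f_\omega$, whereas the paper uses convexity of the norm together with the lower H\"older bound of Lemma~\ref{lem:sharp holder bounds on normalization} at $p=1/\omega$; both verifications are sound.
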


\begin{proof} Our eventual goal is to apply Lemma~\ref{lem:onto} to the mapping
$$
f=f_{\omega,\pi,x_1,\ldots,x_n}:\spn(\{x_1,\ldots,x_n\})\to \spn(\{x_1,\ldots,x_n\})
$$
that is defined by setting for every $x\in \spn(\{x_1,\ldots,x_n\})$,
\begin{equation}\label{eq:new f composition}
f(x)\eqdef \sum_{i=1}^n \pi_i f_\omega^{-1}(f_\omega(x)-x_i)\stackrel{\eqref{eq:def f omega}}{=}\frac{1}{\|x\|_{\!X}^{\!\frac{1}{\omega}-1}}\sum_{i=1}^n\pi_i \big\|x-\|x\|_{\!X}^{1-\omega}x_i\big\|_{\!X}^{\!\frac{1}{\omega}-1}\big(x-\|x\|_{\!X}^{1-\omega}x_i\big).
\end{equation}
Suppose for the moment that we checked that $f$ satisfies the assumption~\eqref{eq:at infty assumption} of Lemma~\ref{lem:onto}. It would follow that $f$ is surjective, and in particular there exists $x=x(\omega,\pi,x_1,\ldots,x_n)\in \spn(\{x_1,\ldots,x_n\})$ such that $f(x)=0$. Thus, if we choose  $y_i=f_\omega^{-1}(f_\omega(x)-x_i)$ for  $i\in \n$, then $\sum_{i=1}^n\pi_iy_i=0$. Because $f_\omega(y_1)+x_1=\ldots=f_\omega(y_n)+x_n=f_\omega(x)$, we have $\|f_\omega(y_i)-f_\omega(y_j)||_{\!X}^{\phantom{p}}=\|x_i-x_j\|_{\!X}^{\phantom{p}}$ for all $i,j\in \n$. The desired bounds~\eqref{holder after centering omega} now follows from Lemma~\ref{lem:sharp holder bounds on normalization}.

Both  $f_\omega$ and $f_\omega^{-1}$ are continuous, so  $f$ is also continuous. Write $\xi_z(x)=f_\omega^{-1}(f_\omega(x)-z)$ for every  $x,z\in X$. Then $f(x)=\sum_{i=}^n \pi_i\xi_{x_i}(x)$, and therefore by the convexity of $\|\cdot\|_X:X\to X$ we have
$$
\|x\|_{\!X}^{\phantom{p}}-\|x-f(x)\|_{\!X}^{\phantom{p}}=\|x\|_{\!X}^{\phantom{p}}-\Big\|\sum_{i=1}^n \pi_i(\xi_{x_i}(x)-x)\Big\|_{\!X}^{\phantom{p}}\ge \sum_{i=1}^n \pi_i \big(\|x\|_{\!X}^{\phantom{p}}-\|x-\xi_{x_i}(x)\|_{\!X}^{\phantom{p}}\big).
$$
This shows that the assumption~\eqref{eq:at infty assumption} of Lemma~\ref{lem:onto} would hold true if $\xi_z$ satisfied it for every fixed $z\in X$.  Since  $f_\omega(x)-f_\omega(\xi_z(x))=z$ by the definition of $\xi_z$, the case $p=\frac{1}{\omega}$ of Lemma~\ref{lem:sharp holder bounds on normalization} gives
\begin{equation}\label{eq:xi1}
\|z\|_{\!X}^{\phantom{p}}\ge \frac{2^{1-\omega}\omega}{(\|x\|_{\!X}^{\phantom{p}}+\|\xi_z(x)\|_{\!X}^{\phantom{p}})^{1-\omega}}\|x-\xi_z(x)\|_{\!X}^{\phantom{p}},
\end{equation}
where we also used the fact that $\eta(\frac{1}{\omega},\omega)=\omega2^{1-\omega}$, by Lemma~\ref{lem bounds on eta}. Note that
\begin{equation}\label{eq:xi2}
\|\xi_z(x)\|_{\!X}^{\phantom{p}}=\|f_\omega(x)-z\|_{\!X}^{\!\frac{1}{\omega}}\le \big(\|f_\omega(x)\|_{\!X}^{\phantom{p}}+\|z\|_{\!X}^{\phantom{p}}\big)^{\!\frac{1}{\omega}}=\big(\|x\|_{\!X}^{\omega\phantom{p}}+
\|z\|_{\!X}^{\phantom{p}}\big)^{\!\frac{1}{\omega}}.
\end{equation}
By combining~\eqref{eq:xi1} and~\eqref{eq:xi2} we conclude that
\begin{equation*}
\|x\|_{\!X}^{\phantom{p}}-\|x-\xi_z(x)\|_{\!X}^{\phantom{p}}\ge \|x\|_{\!X}^{\phantom{p}}-\frac{\|z\|_{\!X}^{\phantom{p}}}{\omega}\big(\|x\|_{\!X}^\omega+\|z\|_{\!X}^{\phantom{p}}\big)^{\!\frac{1-\omega}{\omega}} \xrightarrow[\|x\|_{\!X}^{\phantom{p}}\to \infty]{}\infty.\tag*{\qedhere}
\end{equation*}

\end{proof}

\begin{proof}[Completion of the proof of Lemma~\ref{lem:ray}] The ensuing reasoning is inspired by an idea of Matou\v{s}ek~\cite{Mat97}. Apply Lemma~\ref{lem:new vectors 1}  with $\omega=\frac{p}{q}$ to get $y_1,\ldots,y_n\in X$ (depending on $\frac{p}{q},\pi,x_1,\ldots,x_n$) such that
\begin{equation}\label{eq:centered}
\sum_{i=1}^n \pi_i y_i=0,
\end{equation}
and for every $i,j\in \n$,
\begin{equation}\label{holder after centering}
 2^{1-\frac{q}{p}}\|x_i-x_j\|_{\!X}^{\!\frac{q}{p}}\le \|y_i-y_j\|_{\!X}^{\phantom{\!\frac{q}{p}}}\le \frac{q}{p}\|x_i-x_j\|_{\!X}^{\phantom{\!\frac{q}{p}}}\left(\frac{\|y_i\|_{\!X}^{p}+\|y_j\|_{\!X}^p}{2}\right)^{\!\!\frac{1}{p}-\frac{1}{q}},
\end{equation}
where we also used the fact that $\eta(q,\frac{p}{q})=\frac{p}{q}2^{\frac{1}{p}-\frac{1}{q}}$, by Lemma~\ref{lem bounds on eta}.  Note that
\begin{align}\label{eq:rank 1}
\begin{split}
\sum_{i=1}^n\sum_{j=1}^n \pi_i a_{ij} \frac{\|y_i\|_{\!X}^p+\|y_j\|_{\!X}^p}{2}&= \frac12\sum_{i=1}^n\sum_{j=1}^n \pi_i a_{ij}\|y_i\|_{\!X}^p +\frac12\sum_{j=1}^n \sum_{i=1}^n \pi_ja_{ji}\|y_j\|_{\!X}^p \\&=\sum_{i=1}^n \pi_i \Big\|y_i-\sum_{s=1}^n \pi_s y_s\Big\|_{\!X}^p\le \sum_{i=1}^n \sum_{j=1}^n \pi_i\pi_j\|y_i-y_j\|_{\!X}^p,
\end{split}
\end{align}
where the first step uses $\pi$-reversibility, the second step uses stochasticity and the centering condition~\eqref{eq:centered}, and the final step follows from Jensen's inequality (since $p\ge 1$).  Hence,
\begin{align*}
\sum_{i=1}^n\sum_{j=1}^n \pi_ia_{ij} \|y_i-y_j\|_{\!X}^p&\le \Big(\frac{q}{p}\Big)^p\sum_{i=1}^n\sum_{j=1}^n \pi_ia_{ij}\|x_i-x_j\|_{\!X}^p\left(\frac{\|y_i\|_{\!X}^{p}+\|y_j\|_{\!X}^p}{2}\right)^{1-\frac{p}{q}}\\&
\le \Big(\frac{q}{p}\Big)^p\bigg(\sum_{i=1}^n\sum_{j=1}^n \pi_ia_{ij}\|x_i-x_j\|_{\!X}^q\bigg)^{\frac{p}{q}}\bigg(\sum_{i=1}^n\sum_{j=1}^n \pi_i a_{ij} \frac{\|y_i\|_{\!X}^p+\|y_j\|_{\!X}^p}{2}\bigg)^{1-\frac{p}{q}}\\
&\le \Big(\frac{q}{p}\Big)^p\bigg(\sum_{i=1}^n\sum_{j=1}^n \pi_ia_{ij}\|x_i-x_j\|_{\!X}^q\bigg)^{\frac{p}{q}}\bigg(\sum_{i=1}^n\sum_{j=1}^n \pi_i\pi_j\|y_i-y_j\|_{\!X}^{p}\bigg)^{1-\frac{p}{q}},
\end{align*}
where the first step is the second inequality in~\eqref{holder after centering},  the second step is H\"older's inequality, and the final step is~\eqref{eq:rank 1}.  This simplifies to give
\begin{align*}
\bigg(\frac{\sum_{i=1}^n\sum_{j=1}^n \pi_i\pi_j \|y_i-y_j\|_{\!X}^p}{\sum_{i=1}^n\sum_{j=1}^n \pi_ia_{ij} \|y_i-y_j\|_{\!X}^p}\bigg)^{\frac{1}{p}}&\ge \frac{q}{p}\bigg(\frac{\sum_{i=1}^n\sum_{j=1}^n \pi_i\pi_j \|y_i-y_j\|_{\!X}^p}{\sum_{i=1}^n\sum_{j=1}^n \pi_ia_{ij} \|x_i-x_j\|_{\!X}^q}\bigg)^{\frac{1}{q}}\\&\ge \frac{p}{2^{1-\frac{p}{q}}q}\bigg(\frac{\sum_{i=1}^n\sum_{j=1}^n \pi_i\pi_j \|x_i-x_j\|_{\!X}^q}{\sum_{i=1}^n\sum_{j=1}^n \pi_ia_{ij} \|x_i-x_j\|_{\!X}^q}\bigg)^{\frac{1}{q}},
\end{align*}
where the final step is the first inequality in~\eqref{holder after centering}.
\end{proof}

\begin{remark}\label{rem:LS}  In~\cite[Proposition~3.9]{LS17} de Laat and de la Salle proved that for every Banach space $(X,\|\cdot\|_X)$, every $n\in \N$, every $\pi\in \bigtriangleup^{\!n-1}$ and every $\pi$-reversible stochastic matrix $\A=(a_{ij})\in \M_n(\R)$,
\begin{equation}\label{eq:extrapolation implicit}
\forall\, 1\le p\le q<\infty,\qquad \gamma(\A,\|\cdot\|_{\!X}^q)^{\!\frac{p}{q}}\lesssim_{p,q} \gamma(\A,\|\cdot\|_{\!X}^p)\lesssim_{p,q} \gamma(\A,\|\cdot\|_{\!X}^q).
\end{equation}
This is a Banach space-valued generalization of a useful extrapolation result for Poincar\'e inequalities that Matou\v{s}ek proved in~\cite{Mat97} for real-valued functions (see also~\cite[Lemma~5.5]{BLMN05} or~\cite[Lemma~4.4]{NS11}). Direct precursors of~\eqref{eq:extrapolation implicit} are those of~\cite{Mim15,Che16}, but they treat the case of graphs (relying on their representation as Schreier coset graphs due to~\cite{Gro77}, as well as ideas of~\cite{BFGM07}) with the resulting bound depending on the maximum degree; as such, these earlier versions are not suitable for applications that use arbitrary stochastic matrices (e.g.~when using duality as we do here).

Using Theorem~\ref{thm:full duality}, it follows from the rightmost inequality in~\eqref{eq:extrapolation implicit} that for every $\omega\in (0,1]$ the $\omega$-snowflake of $X$ embeds with $(1/\omega)$-average distortion $D_\omega\ge 1$ into an ultrapower of $\ell_{1/\omega}(X)$, where $D_\omega$ may depend only on $\omega$ (for this, we are considering~\eqref{eq:extrapolation implicit} with $p=1$ and $q=1/\omega$. More generally, \eqref{eq:extrapolation implicit} and Theorem~\ref{thm:full duality} yield an embedding of the $\omega$-snowflake of $X$ into an ultrapower of $\ell_{q}(X)$ with $q$-average distortion $D_{\omega,q}$). Proposition~\ref{prop:average of snowflake back into X}  shows that this is so even for embeddings into $X$ itself. The ingredients of Proposition~\ref{prop:average of snowflake back into X} and~\cite[Proposition~3.9]{LS17} are similar, as~\cite{LS17} considers an $L_p(X)$-valued version of the normalization map that is given in~\eqref{eq:def f omega} as a generalization of the classical Mazur map~\cite{Maz29} (see also~\cite{OS94,Cha95,Dah95,Ray02,Ric15} for earlier variants in special cases, as well  as the subsequent development in~\cite{ANNRW-FOCS18}). We will next show that by incorporating the reasoning of the present section, we obtain the following version of~\eqref{eq:extrapolation implicit} with an explicit dependence on $p,q$.
\begin{equation}\label{eq:lasa good pq}
\forall\, 1\le p\le q<\infty,\qquad \Big(\frac{p}{2q}\Big)^{\! p}\gamma(\A,\|\cdot\|_{\!X}^q)^{\!\frac{p}{q}}\le \gamma(\A,\|\cdot\|_{\!X}^p)\le \Big(\frac{2q}{p}\Big)^{\!q} \gamma(\A,\|\cdot\|_{\!X}^q).
\end{equation}

An inspection of the proof in~\cite{LS17} reveals that the dependence on $p,q$ that it yields is much (exponentially) weaker asymptotically   than that of~\eqref{eq:lasa good pq}, and we believe that this is inherent to the reasoning of~\cite{LS17}.  The first inequality in~\eqref{eq:lasa good pq} is sharp, as already shown in~\cite{Mat97} for real-valued functions. We do not know if the second inequality in~\eqref{eq:lasa good pq} is sharp, and conceivably the rightmost factor $(2q/p)^q$ in~\eqref{eq:lasa good pq} could be replaced by $e^{O(q)}$. If this were indeed possible, then it would be a worthwhile result because it would yield a fully analogous vector-valued generalization of Cheeger's inequality~\cite{Che70,Chu96} and Matou\v{s}ek's extrapolation phenomenon~\cite{Mat97}.

To deduce the first inequality in~\eqref{eq:lasa good pq},  take $x_1,\ldots,x_n\in X$ and use Lemma~\ref{lem:ray} to obtain new vectors $y_1,\ldots,y_n\in X$ such that
\begin{equation}\label{eq:use for yi}
\Big(\frac{p}{2q}\Big)^{\!p}\cdot \bigg(\frac{\sum_{i=1}^n\sum_{j=1}^n \pi_i\pi_j \|x_i-x_j\|_{\!X}^q}{\sum_{i=1}^n\sum_{j=1}^n \pi_ia_{ij} \|x_i-x_j\|_{\!X}^q}\bigg)^{\!\!\frac{p}{q}}\le \frac{\sum_{i=1}^n\sum_{j=1}^n \pi_i\pi_j \|y_i-y_j\|_{\!X}^p}{\sum_{i=1}^n\sum_{j=1}^n \pi_ia_{ij} \|y_i-y_j\|_{\!X}^p}\le \gamma(\A,\|\cdot\|_{\!X}^p),
\end{equation}
where the last step of~\eqref{eq:use for yi} is the definition of $\gamma(\A,\|\cdot\|_{\!X}^p)$ applied to the new configuration of vectors $\{y_1,\ldots,y_n\}\subset X$. It remains to note that by the definition of $\gamma(\A,\|\cdot\|_{\!X}^q)$, the supremum of the left hand side of~\eqref{eq:use for yi} over all possible $x_1,\ldots,x_n\in X$ equals the left hand side of~\eqref{eq:lasa good pq}.

To deduce the second inequality in~\eqref{eq:lasa good pq}, use Proposition~\ref{prop:average of snowflake back into X}  and Corollary~\ref{cor:specific bounds} with $\omega=p/q$ and $(\mu(x_1),\ldots,\mu(x_n))=\pi$ to get new vectors $z_1,\ldots,z_n\in X$ satisfying $\|z_i-z_j\|_X^q\le (p/2q)^q\|x_i-x_j\|_X^{p}$ for all $i,j\in \n$, and also $\sum_{i=1}^n\sum_{j=1}^n \pi_ia_{ij} \|z_i-z_j\|_{\!X}^q\ge \sum_{i=1}^n\sum_{j=1}^n \pi_i\pi_j \|x_i-x_j\|_{\!X}^p$. Thus,
\begin{equation*}
\frac{\sum_{i=1}^n\sum_{j=1}^n \pi_i\pi_j \|x_i-x_j\|_{\!X}^p}{\sum_{i=1}^n\sum_{j=1}^n \pi_ia_{ij} \|x_i-x_j\|_{\!X}^p}\le \Big(\frac{2q}{p}\Big)^{\!q}\cdot \frac{\sum_{i=1}^n\sum_{j=1}^n \pi_i\pi_j \|z_i-z_j\|_{\!X}^q}{\sum_{i=1}^n\sum_{j=1}^n \pi_ia_{ij} \|z_i-z_j\|_{\!X}^q}\le \Big(\frac{2q}{p}\Big)^{\!q}\gamma(\A,\|\cdot\|_{\!X}^q).
\end{equation*}
\end{remark}
\section{Impossibility results}\label{sec:impossibility}
The main purpose of this section is to prove Lemma~\ref{lem:snowflake john}, Lemma~\ref{prop:p/2 sharp} and to refine the lower bound~\eqref{eq:quad dist for 1/2 snow} on the Hilbertian average distortion of snowflakes of regular graphs with a spectral gap.

The assertion of Lemma~\ref{lem:snowflake john} that the $\omega$-snowflake of any $k$-dimensional normed space $(X,\|\cdot\|_X)$ embeds into a Hilbert space with bi-Lipschitz distortion $k^{\frac{\omega}{2}}$ is a quick consequence of John's theorem~\cite{Joh48}, combined with Schoenberg's result~\cite{Sch38} that the $\omega$-snowflake of an infinite dimensional Hilbert space $(H,\|\cdot\|_H)$ embeds isometrically into $H$. Indeed, by John's theorem there exists a mapping $f:X\to H$ such that $\|x-y\|_X\le \|f(x)-f(y)\|_H\le \sqrt{k}\|x-y\|_X$ for all $x,y\in X$. By Schoenberg's theorem there exists  a mapping $g:H\to H$ such that $\|g(u)-g(v)\|_H=\|u-v\|_H^\omega$ for all $u,v\in H$. Hence, $\|x-y\|_X^\omega\le ||g\circ f(x)-g\circ f(y)\|_H\le k^{\frac{\omega}{2}}\|x-y\|_X^\omega$ for all $x,y\in X$.

The more substantial  novelty of  Lemma~\ref{lem:snowflake john} is the assertion that the above composition of Schoenberg's embedding and John's embedding yields the correct worst-case asymptotic behavior (up to universal constant factors) of the Hilbertian bi-Lipschitz distortion of $\omega$-snowflakes of $k$-dimensional normed spaces. The proof is a quick application of the invariant {\em metric cotype $2$ with sharp scaling parameter} that was introduced in~\cite{MN08}, but this has not been previously noted in the literature. Note that the endpoint case $\omega=1$ here is classical, by a reduction to the linear theory through differentiation, but this approach is inherently unsuitable for treating H\"older functions.

\begin{proof}[Proof of Lemma~\ref{lem:snowflake john}] Fix $k\in \N$ and consider the normed space $\ell_\infty^{k^2}(\C)\cong \ell_\infty^{k^2}(\ell_2^2)$ whose dimension over $\R$ equals $2k^2$. Suppose that the $\omega$-snowflake of $\ell_\infty^{k^2}(\C)$ embeds into a Hilbert space $(H,\|\cdot\|_H)$ with bi-Lipschitz distortion less than $D$. Thus, there exists an embedding $f:\ell_\infty^{k^2}(\C)\to H$ such that
\begin{equation}\label{eq:torus}
\forall\, x,y\in \ell_\infty^{k^2}(\C),
\qquad \|x-y\|_{\ell_\infty^{k^2}(\C)}^\omega\le \|f(x)-f(y)\|_H\le D\|x-y\|_{\ell_\infty^{k^2}(\C)}^\omega.
\end{equation}

By~\cite[Section~3]{MN08}, the following inequality holds true for any $f:\{1,\ldots, 4k\}^{k^2}\to H$.
\begin{align}\label{eq:use cotype}
\begin{split}
\frac{1}{(4k)^{k^2}}\sum_{j=1}^{k^2} &\sum_{x\in \{1,\ldots,4k\}^{k^2}}\bigg\|f \Big(-e^{\frac{\pi \mathsf{i}}{2k}x_j}e_j+\sum_{r\in \{1,\ldots,k^2\}\setminus\{j\}} e^{\frac{\pi \mathsf{i}}{2k}x_r}e_r\Big)-f \Big(\sum_{r=1}^{k^2} e^{\frac{\pi \mathsf{i}}{2k}x_r}e_r\Big)\bigg\|_{\!H}^2\\ &\lesssim \frac{k^2}{(12k)^{k^2}}\sum_{\e\in \{-1,0,1\}^{k^2}}\sum_{x\in \{1,\ldots,4k\}^{k^2}}\bigg\|f \Big(\sum_{r=1}^{k^2} e^{\frac{\pi \mathsf{i}}{2k}(x_r+\e_r)}e_r\Big)-f \Big(\sum_{r=1}^{k^2} e^{\frac{\pi \mathsf{i}}{2k}x_r}e_r\Big)\bigg\|_{\!H}^2,
\end{split}
\end{align}
where $e_1,\ldots,e_{k^2}$ is the standard basis of $\C^{k^2}$. By combining~\eqref{eq:torus} with~\eqref{eq:use cotype}, we conclude that
\begin{equation*}
2^{2\omega}k^2\lesssim k^2 D^2\Big|e^{\frac{\pi \mathsf{i}}{2k}}-1\Big|^{2\omega}\asymp k^{2(1-\omega)}D^2\iff D\gtrsim k^\omega\asymp \dim \!\big( \ell_\infty^{k^2}(\C)\big)^{\!\frac{\omega}{2}}.\tag*{\qedhere}
\end{equation*}
\end{proof}

The above proof of Lemma~\ref{lem:snowflake john} works also for embeddings of $\omega$-snowflakes of $k$-dimensional normed spaces into $L_p$ when $p\in [1,2]$, yielding the same conclusion. However, for $p>2$ the  upper and lower bounds that it yields (using sharp metric cotype $p$) do not match. We therefore ask

\begin{question} Suppose that $\omega\in (0,1)$ and $p\in (2,\infty)$. What is
 the infimum over those $\beta\in (0,1]$ for which there exists $\alpha_\beta\in (0,\infty)$ such that the $\omega$-snowflake of any finite-dimensional normed space $X$ embeds into $L_p$ with bi-Lipschitz distortion at most $\alpha_\beta\dim(X)^\beta$?
\end{question}

Next, the optimality of Theorem~\ref{thm:really main} for Hilbertian targets in the regime of higher H\"older regularity,  as exhibited by  Lemma~\eqref{prop:p/2 sharp}, is a quick application of the classical invariant {\em Enflo type}~\cite{Enf76,BMW86}.

\begin{proof}[Proof of Lemma~\ref{prop:p/2 sharp}]  Fix $k\in \N$ and denote $\cc\eqdef k^{\frac{1}{p}-\frac12}$, so that $\cc_{\ell_2}(\ell_p^k)=\cc$; see e.g.~\cite[Section~8]{JL01}. Choose $X=\ell_p^k$, so that it has modulus of uniform smoothness of power type $p$. Suppose that $(Z,\|\cdot\|_Z)$ is a normed space whose modulus of uniform smoothness has power type $2$. We will show that if the $(\frac{p}{2}+\e)$-snowflake of $\ell_p^k$ embeds into $Z$ with $\alpha$-average distortion $D$, then necessarily
\begin{equation}\label{eq:p/2+eps refined}
D\gtrsim \frac{\cc^{\frac{2\e}{2-p}}}{\sqrt{\alpha}+\mathscr{S}_2(Z)}.
\end{equation}
The desired lower bound~\eqref{eq:alpha plus beta} in Lemma~\eqref{prop:p/2 sharp} is the special case of~\eqref{eq:p/2+eps refined} corresponding to $Z=\ell_\beta(\ell_2)$, because $\ell_\beta(\ell_2)$ is isometric to a subspace of $L_\beta$, and $\mathscr{S}_2(L_\beta)\asymp \sqrt{\beta}$ by~\cite{Fig76,BCL94}.

Let $\mathbb{F}_2$ be the field of two elements. Identify $\mathbb{F}_2^k$ with the $2^k$ vertices of the hypercube $\{0,1\}^k\subset \ell_p^k$. We also let $e_1,\ldots,e_k$ denote the standard coordinate basis and $\mu$ denote the uniform probability measure on $\mathbb{F}_2^k\subset \ell_p^k$, respectively. By~\cite[equation~(6.32)]{Nao14}, any $f:\mathbb{F}_2^k\to Z$ satisfies the following bound,  in which additions that occur in the argument of $f$ are in $\mathbb{F}_2^k$, i.e., modulo $2$ coordinate-wise.
\begin{align}\label{eq:enflo type}
\begin{split}
\bigg(\iint_{\mathbb{F}_2^k\times \mathbb{F}_2^k}\|f(x)-f(y)\|_{\!Z}^{\alpha\phantom{p}}\!&\ud\mu(x)\ud\mu(y)\bigg)^{\!\!\frac{1}{\alpha}}\\&\lesssim \big(\mathscr{S}_2(Z)+\sqrt{\alpha}\big)\sqrt{k} \bigg(\frac{1}{k}\sum_{i=1}^k \int_{\mathbb{F}_2^k} \|f(x+e_i)-f(x)\|_{\!Z}^{\alpha\phantom{p}}\!\ud\mu(x)\bigg)^{\!\!\frac{1}{\alpha}}.
\end{split}
\end{align}

 Suppose that $\|f(x+e_i)-f(x)\|_{Z}\le D$ for every $x\in \mathbb{F}_2^k$ and $i\in \{1,\ldots,k\}$. This would  follow if $f$ were $(\frac{p}{2}+\e)$-H\"older with constant $D$, which is what is relevant in the present context, but we are in fact assuming significantly  less here, namely that $f$ is $D$-Lipschitz in the metric that is induced by $\ell_1^k$ on $\mathbb{F}_2^k$. Under this assumption, the right hand side of~\eqref{eq:enflo type} is at most $D\big(\mathscr{S}_2(Z)+\sqrt{\alpha}\big)\sqrt{k}$.

 If we also have
$$
\bigg(\iint_{\mathbb{F}_2^k\times \mathbb{F}_2^k}\|f(x)-f(y)\|_{Z}^{\alpha\phantom{p}}\!\ud\mu(x)\ud\mu(y)\bigg)^{\!\!\frac{1}{\alpha}}\ge\bigg(\iint_{\mathbb{F}_2^k\times \mathbb{F}_2^k}\|x-y\|_{\ell_p^k}^{\alpha(\frac{p}{2}+\e)}\ud\mu(x)\ud\mu(y)\bigg)^{\!\!\frac{1}{\alpha}}\gtrsim k^{\frac12+\frac{\e}{p}},
$$
where the last step holds because $\|x-y\|_{\ell_p^k}\gtrsim k^{\frac{1}{p}}$ for a constant fraction of $(x,y)\in \mathbb{F}_2^k\times \mathbb{F}_2^k$, then by contrasting this with~\eqref{eq:enflo type} we conclude that
\begin{equation*}
D\gtrsim \frac{k^{\frac{\e}{p}}}{\sqrt{\alpha}+\mathscr{S}_2(Z)} =\frac{\cc^{\frac{2\e}{2-p}}}{\sqrt{\alpha}+\mathscr{S}_2(Z)}.\tag*{\qedhere}
\end{equation*}
\end{proof}

Note that the case $p=1$ of the above proof of  Lemma~\ref{prop:p/2 sharp} gives that  if the $(\frac12+\e)$-snowflake of $\ell_1^k$ embeds with quadratic average distortion $D\ge 1$ into a Hilbert space, then necessarily $D\gtrsim k^\e$. A slightly more careful examination (using Enflo's original ``diagonal versus edge'' inequality in~\cite{Enf69} in place of~\eqref{eq:enflo type}) of the constant factors in this special case reveals that we actually get the sharp bound $D\ge k^\e$ (for the uniform measure on $\{0,1\}^k$), as stated in the Introduction.

We will next revisit the estimate~\eqref{eq:quad dist for 1/2 snow} that was derived in the Introduction. Recalling the relevant setting, we are given $n\in \N$ and a connected regular graph $\G=(\n,E_\G)$.  In the course of the deduction of~\eqref{eq:quad dist for 1/2 snow}, see specifically the penultimate step in~\eqref{eq:penultimate}, we actually showed that if the  $\frac12$-snowflake of the shortest-path metric  $(\n,d_\G)$ embeds into a Hilbert space $(H,\|\cdot\|_H)$ with quadratic average distortion $D\ge 1$, then necessarily
$$
D\ge \sqrt{1-\lambda_2(\G)} \bigg(\frac{1}{n^2}\sum_{i=1}^n\sum_{j=1}^nd_\G(i,j)\bigg)^{\!\!\frac12}.
$$
Replacing the H\"older exponent $\frac12$ by an arbitrary $\omega\in (0,1]$, the same reasoning shows mutatis mutandis that if the  $\omega$-snowflake of the shortest-path metric  $(\n,d_\G)$ embeds into a Hilbert space $(H,\|\cdot\|_H)$ with quadratic average distortion $D\ge 1$, then necessarily
\begin{equation}\label{before improving the power of gap}
D\ge \sqrt{1-\lambda_2(\G)} \bigg(\frac{1}{n^2}\sum_{i=1}^n\sum_{j=1}^n d_\G(i,j)^{2\omega}\bigg)^{\!\!\frac12}.
\end{equation}
In particular, this implies that if $\G$ is  an expander, then $D\gtrsim (\log n)^\omega$. Therefore, by considering the uniform measure on an isometric copy of $(\n,d_\G)$ in $\ell_\infty^n$ we see that  Conjecture~\ref{conj:power of log} asks for the optimal asymptotic dependence on the dimension for fixed $\omega\in (0,\frac12)$.

As we explained  in the Introduction, the above proof of the ``vanilla'' spectral bound~\eqref{before improving the power of gap} goes back to~\cite{LLR95,Mat97,Gro00}; further examples of implementations of this (by now standard) useful idea can be found in~\cite{LM00,LMN02,Oza04,BLMN05,NRS05,KN06,Laf08,NR09,Pis10,NS11,GN12,MN14,Nao14,JV14,NR17,Nao18}. An especially important special case of~\eqref{before improving the power of gap} is when $\G$ is a vertex-transitive graph, e.g.~when it is the Cayley graph of a group of order $n$. In this case, by equation~(4.24) in~\cite{Nao14} (see also~\cite{NR09,HP15}) we have
\begin{equation}\label{eq:diam for transitive}
\forall\, p\ge 1,\ \forall\,\omega\in (0,1],\qquad \bigg(\frac{1}{n^2}\sum_{i=1}^n\sum_{j=1}^n d_\G(i,j)^{p\omega}\bigg)^{\!\!\frac1{p}}\asymp \diam(\G)^\omega,
\end{equation}
where $\diam(\G)$ is the diameter of $(\n,d_\G)$. So, \eqref{before improving the power of gap} for a vertex transitive graph becomes
\begin{equation}\label{eq:root gap diameter omega}
D\gtrsim \sqrt{1-\lambda_2(\G)} \diam(\G)^\omega.
\end{equation}
The following lemma improves the dependence on the spectral gap in~\eqref{before improving the power of gap} and~\eqref{eq:root gap diameter omega} for small $\omega$.

\begin{lemma}\label{lem:better power of gap} Fix $n\in \N$, $\omega\in (0,1]$ and $p,q,D\in [1,\infty)$. Let  $\G=(\n,E_\G)$ be a connected regular graph such that the $\omega$-snowflake of the metric space $(\n,d_\G)$ embeds with $q$-average distortion less than $D$ into $\ell_p$. Then necessarily
\begin{equation}\label{eq:lower D min}
D\gtrsim \frac{\big(1-\lambda_2(\G)\big)^{\min\left\{\omega,\frac{1}{\min\{p,2\}}\right\}}}{(p^2+q^2)^{\frac{1}{\min\{p,2\}}}} \bigg(\frac{1}{n^2}\sum_{i=1}^n\sum_{j=1}^n d_\G(i,j)^{ q\omega}\bigg)^{\!\!\frac{1}{q}}.
\end{equation}
In particular, if $\G$ is a vertex-transitive graph, then
\begin{equation}\label{eq:diam to omega}
D\gtrsim \frac{\big(1-\lambda_2(\G)\big)^{\min\left\{\omega,\frac{1}{\min\{p,2\}}\right\}}}{(p^2+q^2)^{\frac{1}{\min\{p,2\}}}} \diam(\G)^\omega.
\end{equation}
\end{lemma}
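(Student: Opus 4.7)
The plan is to produce two competing lower bounds on $D$ and take the larger of the two. Write $r=\min\{p,2\}$, $\lambda=\lambda_2(\mathsf{G})$, and $\Phi_q=\big(\tfrac{1}{n^2}\sum_{i,j}d_\mathsf{G}(i,j)^{q\omega}\big)^{1/q}$; because $1-\lambda\le 1$, the exponent $\min\{\omega,1/r\}$ simply selects the larger of $(1-\lambda)^\omega$ and $(1-\lambda)^{1/r}$, so it suffices to prove~\eqref{eq:lower D min} with each of these two exponents separately. Throughout, let $f\colon\n\to\ell_p$ be the embedding hypothesized by the lemma, so that $\|f(i)-f(j)\|_{\ell_p}\le D\,d_\mathsf{G}(i,j)^\omega$ for all $i,j$ (H\"older condition) and $\tfrac{1}{n^2}\sum\|f(i)-f(j)\|_{\ell_p}^q\ge \Phi_q^q$ ($q$-average condition).

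The first bound follows the Matou\v{s}ek--Gromov--Linial-London-Rabinovich template reproduced in~\eqref{eq:penultimate}. First I would establish the background Poincar\'e inequality $\gamma(\mathsf{A}_\mathsf{G},\|\cdot\|_{\ell_p}^r)\lesssim C_p/(1-\lambda)$, where $C_p\lesssim 1$ for $p\le 2$ (via $\mathscr{S}_p(\ell_p)=1$ and Ball's Theorem~\ref{thm:quote NPSS}, combined with the standard passage to $(\tfrac12\mathsf{I}_n+\tfrac12\mathsf{A}_\mathsf{G})^s$ enabled by Lemmas~\ref{lem:quote norm bound general} and~\ref{lem:reverse norm bound in UC}) and $C_p\lesssim p^{r/2}$ for $p\ge 2$ (via $\mathscr{S}_2(\ell_p)\lesssim \sqrt{p}$). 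Next I would extrapolate to exponent $q$ using~\eqref{eq:lasa good pq}, namely $\gamma(\mathsf{A}_\mathsf{G},\|\cdot\|_{\ell_p}^q)\le (2q/r)^q \gamma(\mathsf{A}_\mathsf{G},\|\cdot\|_{\ell_p}^r)^{q/r}$ when $q\ge r$, and by Jensen's inequality (monotonicity of $L^q$-norms of a bounded function under the probability measure $\pi\otimes\pi$) when $q<r$. Combining with the edge Lipschitz bound $\tfrac{1}{n}\sum a_{ij}\|f(i)-f(j)\|_{\ell_p}^q\le D^q$ and the $q$-average hypothesis then yields $D\gtrsim (p^2+q^2)^{-1/r}(1-\lambda)^{1/r}\Phi_q$ after taking $q$-th roots.

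The second (new) bound uses iteration together with the \emph{full} H\"older hypothesis (not only its restriction to edges). Choose $s=\lceil 1/(1-\lambda)\rceil$ and work with the lazified power $\tilde{\mathsf{A}}=(\tfrac12 \mathsf{I}_n+\tfrac12\mathsf{A}_\mathsf{G})^s$, whose second-largest eigenvalue is at most $e^{-1}$ by our choice of $s$. The Poincar\'e analysis above, applied to $\tilde{\mathsf{A}}$, then gives $\gamma(\tilde{\mathsf{A}},\|\cdot\|_{\ell_p}^q)\lesssim (p^2+q^2)^{q/r}$ with \emph{no} residual $(1-\lambda)$ factor. The crucial combinatorial observation is that $\tilde{\mathsf{A}}_{ij}>0$ forces $d_\mathsf{G}(i,j)\le s$, so the H\"older hypothesis delivers the point-wise bound $\|f(i)-f(j)\|_{\ell_p}^q\le D^q s^{q\omega}$ on the support of $\tilde{\mathsf{A}}$, whence $\tfrac{1}{n}\sum\tilde{\mathsf{A}}_{ij}\|f(i)-f(j)\|_{\ell_p}^q\le D^q s^{q\omega}$. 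Applying the Poincar\'e bound just obtained for $\tilde{\mathsf{A}}$ together with the $q$-average hypothesis then produces $D\gtrsim (p^2+q^2)^{-1/r}s^{-\omega}\Phi_q\asymp (p^2+q^2)^{-1/r}(1-\lambda)^\omega\Phi_q$. Taking the maximum of the two lower bounds proves~\eqref{eq:lower D min}, and the vertex-transitive conclusion~\eqref{eq:diam to omega} is immediate upon substituting $\Phi_q\asymp \diam(\mathsf{G})^\omega$ from~\eqref{eq:diam for transitive}.

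The main obstacle will be establishing the background Poincar\'e inequality $\gamma(\mathsf{A},\|\cdot\|_{\ell_p}^r)\lesssim C_p/(1-\lambda)$ with sharp $p$-dependence in $C_p$, and tracking how this constant interacts with the extrapolation factor $(2q/r)^q$ from~\eqref{eq:lasa good pq} so that the final multiplicative denominator is exactly $(p^2+q^2)^{1/r}$ rather than something larger. Once this constant-level bookkeeping is in place, all the remaining steps reduce to direct substitutions into the two chains of inequalities above.
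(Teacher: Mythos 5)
Your two-bound architecture is exactly the paper's: the first bound is the standard edge-only Poincar\'e argument, and the second is the actual new ingredient of the lemma --- replacing $\A_\G$ by $(\tfrac12\I_n+\tfrac12\A_\G)^s$ with $s=\lceil 1/(1-\lambda_2(\G))\rceil$ and observing that the nonzero entries of this power are supported on pairs with $d_\G(i,j)\le s$, so the full H\"older hypothesis caps the ``edge'' term by $D^qs^{q\omega}$. Both bounds are then consequences of a single mixed-exponent Poincar\'e inequality for $\ell_p$ (the paper's Lemma~\ref{lem:p2q2}), which you correctly isolate as the remaining work. So the plan is the right one; the issues are all in the route you sketch for that deferred ingredient.

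Two concrete warnings there. First, for $1\le p<2$ you propose to get $\gamma(\A,\|\cdot\|_{\ell_p}^p)\lesssim 1/(1-\lambda_2(\A))$ from Theorem~\ref{thm:quote NPSS} together with Lemmas~\ref{lem:quote norm bound general} and~\ref{lem:reverse norm bound in UC}; but Lemma~\ref{lem:reverse norm bound in UC} requires $\mathscr{K}_q(\ell_p)<\infty$, and $\mathscr{K}_2(\ell_p)\asymp (p-1)^{-1/2}$ blows up as $p\to 1^+$ and is infinite at $p=1$, which the statement of the lemma includes. The paper sidesteps uniform convexity of the target entirely: since the $\tfrac{p}{2}$-snowflake of $\ell_p$ embeds isometrically into $L_2$ (Schoenberg, Bretagnolle--Dacunha-Castelle--Krivine), the vector-valued inequality reduces to the scalar extrapolation inequality~\eqref{eq:scalar s} applied in $L_2$ with exponent $\beta=2q/p$. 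Second, for $p\ge 2$ starting from $\gamma(\A,\|\cdot\|_{\ell_p}^2)\lesssim p/(1-\lambda_2(\A))$ and extrapolating to exponent $q$ via~\eqref{eq:lasa good pq} produces the factor $q\sqrt{p/(1-\lambda_2(\A))}$; for $p=q$ large this is of order $p^{3/2}$, which exceeds the target $\sqrt{p^2+q^2}\asymp p$, so~\eqref{eq:lower D min} as stated would not follow from this chain. The paper instead proves the exponent-$p$ inequality $\gamma(\A,\|\cdot\|_{L_p}^p)\le (p/\sqrt{1-\lambda_2(\A)})^p$ coordinate-wise (this is~\eqref{eq:gamma of Lp power p}) and extrapolates from $p$ to $q$, so that the factor $(2q/p)^q$ from~\eqref{eq:lasa good pq} cancels the $p^q$ and one lands exactly on $(2q/\sqrt{1-\lambda_2(\A)})^q$. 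With the background inequality established in this form, the rest of your two chains of substitutions goes through verbatim.
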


Prior to proving Lemma~\ref{lem:better power of gap}, we will discuss some of its consequences.

\begin{example}\label{example:SL}
Contrast Lemma~\ref{lem:better power of gap} with~\eqref{eq:root gap diameter omega}  in the following illustrative classical example. Fix $\mathfrak{q},k\in \N$ such that $\mathfrak{q}$ is  a power of a prime. Consider the Cayley graph of $\mathsf{SL}_k(\mathbb{F}_\mathfrak{q})$ that is induced by the symmetric generating set $\{\I_k\pm \mathsf{E}_k(i,j): (i,j)\in \k^2\ \wedge\ i\neq j\}$. Here, $\mathbb{F}_\mathfrak{q}$ is the field of size $\mathfrak{q}$ and $\mathsf{E}_k(i,j)\in \M_k(\mathbb{F}_\mathfrak{q})$ is the elementary matrix whose $(i,j)$-entry equals $1$ and the rest of its entries vanish. In what follows, $\mathsf{SL}_k(\mathbb{F}_\mathfrak{q})$ will always be assumed to be equipped with the word metric that corresponds to this (standard) generating set. We then have
\begin{equation}\label{eq:Kassabo and Alon}
1-\lambda_2\big(\mathsf{SL}_k(\mathbb{F}_\mathfrak{q})\big)\asymp \frac{1}{k}\qquad\mathrm{and}\qquad \diam\big(\mathsf{SL}_k(\mathbb{F}_\mathfrak{q})\big)\asymp \frac{k^2\log \mathfrak{q}}{\log k}.
\end{equation}
The first assertion in~\eqref{eq:Kassabo and Alon} is due to Kassabov~\cite{Kas05}. The second assertion  in~\eqref{eq:Kassabo and Alon} was obtained  by Alon~\cite{Alo19} who extended\footnote{Alon obtained the second assertion in~\eqref{eq:Kassabo and Alon} independently, before we learned of the earlier work~\cite{AHM07}.}    a similar algorithm  of  Andr\'en, Hellstr\"om and Markstr\"om~\cite{AHM07} that proves it for $\mathfrak{q}=O(1)$; see also~\cite{Ril05} for prior diameter bounds. If $\omega\in (0,1]$ and the $\omega$-snowflake of $\mathsf{SL}_k(\mathbb{F}_\mathfrak{q})$ embeds with quadratic average distortion $D\ge 1$ into a Hilbert space, then by~\eqref{eq:Kassabo and Alon} and~\eqref{eq:root gap diameter omega},
$$
D\gtrsim  \frac{k^{2\omega-\frac12}(\log\mathfrak{q})^\omega}{(\log k)^\omega}.
$$
This bound is vacuous if $\omega<\frac14$. However, if we use  Lemma~\ref{lem:better power of gap}  in place of~\eqref{eq:root gap diameter omega} we get the following lower bound on $D$ which tends to $\infty$ as $k\to\infty$ in the entire range $\omega\in (0,1]$.
$$
D\gtrsim  \frac{k^{\max\left\{2\omega-\frac12,\omega\right\}}(\log\mathfrak{q})^\omega}{(\log k)^\omega}.
$$
\end{example}

\begin{remark}\label{eq"log n 1/q}
 Lemma~\ref{lem:better power of gap} in the case when $\G$ is an expander, namely it is both $O(1)$-regular and $1/(1-\lambda_2(\G))=O(1)$, shows that the case $p=1$ of the first assertion~\eqref{eq:rough D upper bound convexity smoothness} of Theorem~\ref{thm:really main} is sharp for every $q\ge 2$, up to a multiplicative factor which may depend on only $q$. Indeed, take $Y=\ell_q$ and $X=\ell_\infty^n$. Then, the modulus of uniform convexity of $Y$ has power type $q$ and $\cc_Y(X)=n^{1/q}$. By considering the uniform distribution over the image of an isometric embedding of  $(\n,d_\G)$ into $X$, we see from Lemma~\ref{lem:better power of gap} that if $X$ embeds with $q$-average distortion $D$ into $Y$, then necessarily \begin{equation}\label{eq:q snowflake sharp}
D\gtrsim \frac{1}{q}(\log n)^{\frac{1}{q}}\asymp \frac{1}{q}\big(\log \cc_Y(X)\big)^{\frac{1}{q}}.
\end{equation}
\end{remark}

Our proof of Lemma~\ref{lem:better power of gap} uses the following lemma, the case $p>q$ of which is a mixed-exponent variant of Matou\v{s}ek's extrapolation phenomenon for Poincar\'e inequalities~\cite{Mat97,BLMN05,NS11}. One could avoid treating mixed exponents and obtain a ``vanilla'' extrapolation inequality by using~\cite{LS17} (recall also Remark~\ref{rem:LS}), but this leads to an asymptotically worse dependence on the spectral gap even when, say, $p=2$ and $1\le q<2$, which is an inherent deficiency: If one considers the variant of~\eqref{eq:max pq} below with $p=2$ and the $q$'th moment on both sides for some $q\in [1,2)$, then the power of $1/(1-\lambda_2(\A))$ becomes $1/q$ rather than the stated $1/2<1/q$, and this is sharp, for example, when $\A$ is the transition matrix of the standard random walk on  the  $k$-dimensional Hamming cube $\{0,1\}^k$.

\begin{lemma}\label{lem:p2q2} Fix $p,q\ge 1$, $n\in \N$ and $\pi=(\pi_1,\ldots,\pi_n)\in \bigtriangleup^{\!n-1}$. Suppose that $\A=(a_{ij})\in \M_n(\R)$ is a stochastic and $\pi$-reversible matrix. Then, every $x_1,\ldots,x_n\in \ell_p$ satisfy the inequality
\begin{equation}\label{eq:max pq}
\bigg(\sum_{i=1}^n\sum_{j=1}^n \pi_i\pi_j \|x_i-x_j\|_{\ell_p}^q\bigg)^{\!\!\frac{1}{q}}\lesssim \bigg(\frac{p^2+q^2}{1-\lambda_2(\A)}\bigg)^{\!\!\frac{1}{\min\{p,2\}}}\bigg(\sum_{i=1}^n\sum_{j=1}^n \pi_i a_{ij} \|x_i-x_j\|_{\ell_p}^{\max\{p,q\}}\bigg)^{\!\!\frac{1}{\max\{p,q\}}}.
\end{equation}
\end{lemma}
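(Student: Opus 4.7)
The plan is to reduce the mixed-exponent inequality~\eqref{eq:max pq} to two simpler ingredients: the scalar Poincar\'e identity $\gamma(\A,|\cdot|^2)=1/(1-\lambda_2(\A))$, which is just~\eqref{eq:energy} for scalars, and the Matou\v{s}ek-type extrapolation for $\gamma(\A,\|\cdot\|^r)^{1/r}$ in the exponent $r$ recorded as~\eqref{eq:lasa good pq} in Remark~\ref{rem:LS}. The bridge from scalars to $\ell_p$ is pointwise integration: since $\|x_i-x_j\|_{\ell_p}^p=\sum_k |x_i^{(k)}-x_j^{(k)}|^p$, the scalar Poincar\'e inequality applied coordinate by coordinate and summed over $k$ yields immediately
\[
\gamma\big(\A,\|\cdot\|_{\ell_p}^p\big)\le \gamma\big(\A,|\cdot|^p\big).
\]
Applying~\eqref{eq:lasa good pq} to $X=\R$ with the exponent pair $(\min\{p,2\},\max\{p,2\})$, starting from the $p=2$ identity, yields a scalar bound of the form $\gamma(\A,|\cdot|^p)^{1/p}\lesssim p/(1-\lambda_2(\A))^{1/\min\{p,2\}}$, so after the integration step the same estimate holds with $|\cdot|^p$ replaced by $\|\cdot\|_{\ell_p}^p$.

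To handle the mismatched exponents on the two sides of~\eqref{eq:max pq}, I would split into the regimes $q\le p$ and $q>p$. When $q\le p$, so that $\max\{p,q\}=p$, Jensen's inequality on the probability measure $\pi\otimes\pi$ gives $(\sum_{i,j}\pi_i\pi_j \|x_i-x_j\|_{\ell_p}^q)^{1/q}\le (\sum_{i,j} \pi_i\pi_j \|x_i-x_j\|_{\ell_p}^p)^{1/p}$, after which the matching-exponent bound just derived finishes the job. When $q>p$, so that $\max\{p,q\}=q$, both sides of~\eqref{eq:max pq} already carry the same exponent $q$, and it suffices to bound $\gamma(\A,\|\cdot\|_{\ell_p}^q)^{1/q}$; this is obtained by invoking~\eqref{eq:lasa good pq} a second time, now applied to $X=\ell_p$, in the form $\gamma(\A,\|\cdot\|_{\ell_p}^q)^{1/q}\le (2q/p)\,\gamma(\A,\|\cdot\|_{\ell_p}^p)^{1/p}$, followed by the previous step.

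In all four combinations of $p$ and $q$ relative to $2$, this procedure produces an upper bound of the shape $(\mathrm{factor}(p,q))/(1-\lambda_2(\A))^{1/\min\{p,2\}}$ whose numerator is at most a constant multiple of $\max\{p,q\}$. The remaining task, which I expect to be the main bookkeeping rather than a genuine obstacle, is to verify in each case that this numerator is controlled by $(p^2+q^2)^{1/\min\{p,2\}}$. For $p\ge 2$ one uses both $\max\{p,q\}\le \sqrt{p^2+q^2}$ and the elementary comparison $(1-\lambda_2(\A))^{-1/p}\le (1-\lambda_2(\A))^{-1/2}$, which is the place where the hypothesis $p\ge 2$ is crucially exploited and converts the $1/p$ exponent produced by the extrapolation step into the desired $1/2$. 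For $p\le 2$ one instead uses $(p^2+q^2)^{1/p}\ge q^{2/p}\ge \max\{q/p,1\}$, which holds because $2/p\ge 1$. These four routine inequalities complete the proof.
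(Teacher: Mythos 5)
Your proof is correct, and the overall skeleton (scalar Poincar\'e inequality at exponent $2$, extrapolation in the exponent, coordinate-wise integration to pass from $\R$ to $\ell_p$, and a four-way case analysis) matches the paper's. The genuine divergence is in the regime $1\le p<2$, $q>p$: the paper there invokes Schoenberg's theorem that the $\frac{p}{2}$-snowflake of $L_p$ embeds isometrically into $L_2$, replaces the points $x_i$ by their Hilbertian images $f_i$, and applies the $L_2$-valued inequality~\eqref{eq:L 2} with exponent $\beta=\frac{2q}{p}$; you instead apply the Banach-space-valued extrapolation~\eqref{eq:lasa good pq} directly to $X=\ell_p$ to pass from exponent $p$ to exponent $q$, after bounding $\gamma(\A,\|\cdot\|_{\ell_p}^p)\le\gamma(\A,|\cdot|^p)$ coordinate-wise. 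Both ingredients are legitimately available at this point in the paper (the first inequality of~\eqref{eq:lasa good pq} holds for arbitrary Banach spaces via Lemma~\ref{lem:ray}, whose degree-theoretic step takes place inside the finite-dimensional span of the points), and your route is more uniform across the cases and even yields a marginally better $q$-dependence ($\sim q$ rather than $\sim q^{2/p}$) in the regime $1\le p\le 2\le q$; the trade-off is that it leans on the heavier machinery behind the vector-valued extrapolation (the topological surjectivity lemma and the normalization map of Proposition~\ref{prop:average of snowflake back into X}), whereas the paper's use of Schoenberg keeps that case within classical isometric embedding theory. Your treatment of $q\le p$ via Jensen applied to $\|x_i-x_j\|_{\ell_p}^q$ on the measure $\pi\otimes\pi$ is essentially the paper's concavity step~\eqref{eq:use concavity} in a cleaner packaging. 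One small inaccuracy in your closing paragraph: for $p\ge 2$ there is no ``$1/p$ exponent produced by the extrapolation step'' to convert into $1/2$ --- the first inequality of~\eqref{eq:lasa good pq} from the quadratic case gives $\gamma(\A,|\cdot|^p)\le p^p\gamma(\A,|\cdot|^2)^{p/2}$, so the spectral gap already appears to the power $1/2$ after taking $p$-th roots; this does not affect the validity of the bounds you state, which are all correct.
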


\begin{proof} By the case $X=\R$ (and $p=2$) of the first inequality in~\eqref{eq:lasa good pq}, for every $\beta\ge 2$ we have
\begin{equation}\label{eq:scalar s}
\forall\, s_1,\ldots,s_n\in \R,\qquad \sum_{i=1}^n\sum_{j=1}^n \pi_i\pi_j|s_i-s_j|^\beta\le \bigg(\frac{\beta}{\sqrt{1-\lambda_2(\A)}}\bigg)^{\!\!\beta} \sum_{i=1}^n\sum_{j=1}^n \pi_ia_{ij}|s_i-s_j|^\beta.
\end{equation}
The scalar inequality~\eqref{eq:scalar s} with slightly weaker  constant factor appears in~\cite[Lemma~4.4]{NS11}, as a natural quadratic variant (via a similar proof) of Matou\v{s}ek's extrapolation lemma for Poincar\'e inequalities~\cite{Mat97}, which is the analogous $\ell_1$ statement, namely  with ``spectral gap'' replaced by ``Cheeger constant.'' By a point-wise application of~\eqref{eq:scalar s} followed by integration, we see that that
\begin{equation}\label{eq:L beta}
\forall\, f_1,\ldots,f_n\in L_\beta,\qquad \sum_{i=1}^n\sum_{j=1}^n \pi_i\pi_j\|f_i-f_j\|_{\!L_\beta}^\beta\le \bigg(\frac{\beta}{\sqrt{1-\lambda_2(\A)}}\bigg)^{\!\!\beta} \sum_{i=1}^n\sum_{j=1}^n \pi_ia_{ij}\|f_i-f_j\|_{\!L_\beta}^\beta.
\end{equation}
Since $L_2$ is isometric to a subset of $L_\beta$, it follows from~\eqref{eq:L beta} that also
\begin{equation}\label{eq:L 2}
\forall\, f_1,\ldots,f_n\in L_2,\qquad \sum_{i=1}^n\sum_{j=1}^n \pi_i\pi_j\|f_i-f_j\|_{\!L_2}^\beta\le \bigg(\frac{\beta}{\sqrt{1-\lambda_2(\A)}}\bigg)^{\!\!\beta} \sum_{i=1}^n\sum_{j=1}^n \pi_ia_{ij}\|f_i-f_j\|_{\!L_2}^\beta.
\end{equation}

Suppose first that $q\ge p\ge 2$. Then, \eqref{eq:L beta} with $\beta=p$ is the same as the estimate
\begin{equation}\label{eq:gamma of Lp power p}
\gamma\big(\A,\|\cdot\|_{\!L_p}^p\big)\le \bigg(\frac{p}{\sqrt{1-\lambda_2(\A)}}\bigg)^{\!\!p}.
\end{equation}
We therefore obtain the following bound which implies the desired inequality~\eqref{eq:max pq} when  $q\ge p\ge 2$.
$$
\gamma\big(\A,\|\cdot\|_{\!L_p}^q\big)\stackrel{\eqref{eq:lasa good pq}}{\le} \Big(\frac{2q}{p}\Big)^{\!q}\gamma\big(\A,\|\cdot\|_{\!L_p}^p\big)^{\!\frac{q}{p}}\stackrel{\eqref{eq:gamma of Lp power p}}{\le}\bigg(\frac{2q}{\sqrt{1-\lambda_2(\A)}}\bigg)^{\!\!q}.
$$

If $1\le p\le 2$ and $q\ge p$, then by~\cite{Sch38} there exist $f_1,\ldots,f_n\in L_2$ such that
\begin{equation}\label{use schoenberg}
\forall\, i,j\in \n,\qquad \|f_i-f_j\|_{\!L_2}^{\phantom{\frac{p}{2}}}=\|x_i-x_j\|_{\ell_p}^{\frac{p}{2}}.
\end{equation}
An application of~\eqref{eq:L 2} with $\beta=\frac{2q}{p}\ge 2$ now shows that
\begin{align*}
\bigg(\sum_{i=1}^n\sum_{j=1}^n \pi_i\pi_j \|x_i-x_j\|_{\ell_p}^q\bigg)^{\!\!\frac{1}{q}}&\stackrel{\eqref{use schoenberg}}{=}\bigg(\sum_{i=1}^n\sum_{j=1}^n \pi_i\pi_j \|f_i-f_j\|_{\!L_2}^{\!\frac{2q}{p}}\bigg)^{\!\!\frac{1}{q}}\\&\stackrel{\eqref{eq:L 2}}{\le} \bigg(\frac{2q}{p\sqrt{1-\lambda_2(\A)}}\bigg)^{\!\!\frac{2}{p}}\bigg(\sum_{i=1}^n\sum_{j=1}^n \pi_ia_{ij} \|f_i-f_j\|_{\!L_2}^{\!\frac{2q}{p}}\bigg)^{\!\!\frac{1}{q}}\\&\stackrel{\eqref{use schoenberg}}{\le} \bigg(\frac{2q}{\sqrt{1-\lambda_2(\A)}}\bigg)^{\!\!\frac{2}{p}}\bigg(\sum_{i=1}^n\sum_{j=1}^n \pi_ia_{ij} \|x_i-x_j\|_{\ell_p}^{q}\bigg)^{\!\!\frac{1}{q}}.
\end{align*}
This completes the proof of~\eqref{eq:max pq} in the entire range $q\ge p\ge 1$.

Suppose next that $p\ge q\ge 1$ and $p\ge 2$. Writing $x_i=(x_{i1},x_{i2},\ldots)\in \ell_p$ for each $i\in\n$,
\begin{align}
\nonumber \sum_{i=1}^n\sum_{j=1}^n \pi_i\pi_j \|x_i-x_j\|_{\ell_p}^q&= \sum_{i=1}^n\sum_{j=1}^n \pi_i\pi_j \bigg(\sum_{k=1}^\infty |x_{ik}-x_{jk}|^p\bigg)^{\!\!\frac{q}{p}}\\&\le \bigg(\sum_{k=1}^\infty\sum_{i=1}^n\sum_{j=1}^n \pi_i\pi_j  |x_{ik}-x_{jk}|^p\bigg)^{\!\!\frac{q}{p}}\label{eq:use concavity}\\
&\le \bigg(\sum_{k=1}^\infty\bigg(\frac{ p}{\sqrt{1-\lambda_2(\A)}}\bigg)^{\!\!p}\sum_{i=1}^n\sum_{j=1}^n \pi_ia_{ij} |x_{ik}-x_{jk}|^p\bigg)^{\!\!\frac{q}{p}}\label{eq:use scalar mat}\\&=\bigg(\frac{ p}{\sqrt{1-\lambda_2(\A)}}\bigg)^{\!\!q}\bigg(\sum_{i=1}^n\sum_{j=1}^n \pi_ia_{ij} \|x_{i}-x_{j}\|_{\ell_p}^p\bigg)^{\!\!\frac{q}{p}},\label{eq:coordinate wise}
\end{align}
where~\eqref{eq:use concavity} is a consequence of the concavity (since $q\le p$)  of the function $(u\ge 0)\mapsto u^{\frac{q}{p}}$ and~\eqref{eq:use scalar mat} is a coordinate-wise application (with $\beta=p\ge 2$) of the scalar inequality~\eqref{eq:scalar s}. This establishes~\eqref{eq:max pq} when $p\ge q\ge 1$ and $p\ge 2$. If $1\le q\le p\le 2$, then~\eqref{eq:max pq}  follows by using~\eqref{eq:coordinate wise} with $x_1,\ldots,x_n$ replaced by $f_1,\ldots,f_n$ that satisfy~\eqref{use schoenberg}, with $p$ replaced by $2$ and with $q$ replaced by $\frac{2q}{p}\le 2$.
\end{proof}

\begin{proof}[Proof of Lemma~\ref{lem:better power of gap}]  By assumption, there exists an embedding $f:\n\to \ell_p$ such that
\begin{equation}\label{eq:D omega at end}
 \forall\, i,j\in \n,\qquad \|f(i)-f(j)\|_{\ell_p}\le Dd_\G(i,j)^\omega.
\end{equation}
and
\begin{equation}\label{eq:q omega at end}
\frac{1}{n^2}\sum_{i=1}^n\sum_{j=1}^n \|f(i)-f(j)\|_{\ell_p}^q\ge \frac{1}{n^2}\sum_{i=1}^n\sum_{j=1}^n d_\G(i,j)^{q\omega}
\end{equation}
Our task is to bound $D$ from below. Using Lemma~\ref{lem:p2q2} with $\A=\A_\G\in \M_n(\R)$ the adjacency matrix  of $\G$,  $\pi=(\frac{1}{n},\ldots,\frac{1}{n})\in \bigtriangleup^{\!n-1}$ ($\G$ is a regular graph) and $x_i=f(i)$ for all $i\in \n$, we see that
\begin{align*}
\bigg(\frac{1}{n^2}\sum_{i=1}^n\sum_{j=1}^n d_\G(i,j)^{q\omega}\bigg)^{\!\!\frac{1}{q}}&\stackrel{\eqref{eq:q omega at end}}{\le} \bigg(\frac{1}{n^2}\sum_{i=1}^n\sum_{j=1}^n \|f(i)-f(j)\|_{\ell_p}^q\bigg)^{\!\!\frac{1}{q}}\\
&\stackrel{\eqref{eq:max pq}}{\lesssim} \bigg(\frac{p^2+q^2}{1-\lambda_2(\G)}\bigg)^{\!\!\frac{1}{\min\{p,2\}}}\bigg(\frac{1}{|E_\G|}\sum_{\{i,j\}\in E_\G}^n \|f(i)-f(j)\|_{\ell_p}^{\max\{p,q\}}\bigg)^{\!\!\frac{1}{\max\{p,q\}}}\\&\stackrel{\eqref{eq:D omega at end}}{\le} \bigg(\frac{p^2+q^2}{1-\lambda_2(\G)}\bigg)^{\!\!\frac{1}{\min\{p,2\}}} D.
\end{align*}
This is the desired bound~\eqref{eq:lower D min} when $\omega\ge 1/\min\{p,2\}$. Note that thus far we used~\eqref{eq:D omega at end} only for those  $i,j\in \n$ such that $\{i,j\}\in E_\G$. In other words, we derived~\eqref{eq:lower D min} under the assumption that $f$ is $D$-Lipschitz rather than that $f$ is $\omega$-H\"older with constant $D$, which is a more stringent requirement as $d_\G$ takes values in $ [1,\infty)\cup \{0\}$. To prove~\eqref{eq:lower D min} when $\omega\le 1/\min\{p,2\}$ we will probe larger distances  in $(\n,d_\G)$ for which the full H\"older condition~\eqref{eq:D omega at end} gives more information.

Denote
\begin{equation}\label{eq;s choice end}
s\eqdef \left\lceil\frac{1}{1-\lambda_2(\G)}\right\rceil.
\end{equation}
The function $t\mapsto \big(\frac{1+t}{2}\big)^{\frac{1}{1-t}}$ is increasing on $(-1,1)$ and tends to $\frac{1}{\sqrt{e}}$ as $t\to 1^-$. Hence,
\begin{equation}\label{eq:1/sqrt e}
\bigg(\frac{1+\lambda_2(\G)}{2}\bigg)^{\!\!s}\le \bigg(\frac{1+\lambda_2(\G)}{2}\bigg)^{\!\!\frac{1}{1-\lambda_2(\G)}}\le \frac{1}{\sqrt{e}}.
\end{equation}
Using Lemma~\ref{lem:p2q2} with $\A=\Big(\frac12\I_n+\frac12 \A_\G\Big)^{\!s}$, we therefore see that
\begin{align*}
&\bigg(\frac{1}{n^2}\sum_{i=1}^n\sum_{j=1}^n d_\G(i,j)^{q\omega}\bigg)^{\!\!\frac{1}{q}}\\&\quad\,\stackrel{\eqref{eq:q omega at end}}{\le} \bigg(\frac{1}{n^2}\sum_{i=1}^n\sum_{j=1}^n \|f(i)-f(j)\|_{\ell_p}^q\bigg)^{\!\!\frac{1}{q}}\\
&\quad\, \stackrel{\eqref{eq:max pq}}{\lesssim} \bigg(\frac{p^2+q^2}{1-\lambda_2\left(\left(\frac12\I_n+\frac12 \A_\G\right)^{\!s}\right)}\bigg)^{\!\!\frac{1}{\min\{p,2\}}}\bigg(\frac{1}{n}\sum_{i=1}^n \sum_{j=1}^n \Big(\frac12\I_n+\frac12 \A_\G\Big)^{\!s}_{\!ij}\|f(i)-f(j)\|_{\ell_p}^{\max\{p,q\}}\bigg)^{\!\!\frac{1}{\max\{p,q\}}}\\&\stackrel{\eqref{eq:D omega at end}\wedge \eqref{eq:1/sqrt e}}{\lesssim} \big(p^2+q^2\big)^{\!\frac{1}{\min\{p,2\}}} s^\omega D\stackrel{\eqref{eq;s choice end}}{\asymp} \frac{\big(p^2+q^2\big)^{\!\frac{1}{\min\{p,2\}}}}{\big(1-\lambda_2(\G)\big)^{\!\omega}}D,
\end{align*}
where the penultimate step uses  that if for some $i,j\in \n$ the $(i,j)$-entry of $(\frac12\I_n+\frac12 \A_\G)^{s}$ is nonzero, then there is a walk in $\G$ of length at most $s$ from $i$ to $j$, hence $d_\G(i,j)\le s$.
\end{proof}

We end this section with a few further remarks and open questions.

\begin{proposition}\label{prop:av kq} Fix $\mathfrak{q},k\in \N$ such that $\mathfrak{q}$ is  a power of a prime. Let $\mathsf{Av}(k,\mathfrak{q})$ denote the smallest $D\ge 1$ such that $\mathsf{SL}_k(\mathbb{F}_\mathfrak{q})$ embeds into a Hilbert space with average distortion $D$. Then
$$
\mathsf{Av}(k,\mathfrak{q})\asymp (\log\mathfrak{q})\frac{k^{\frac32}}{\log k}\asymp \sqrt[4]{\log \mathfrak{q}}\cdot \frac{(\log |\mathsf{SL}_k(\mathbb{F}_\mathfrak{q})|)^{\frac34}}{\log\log |\mathsf{SL}_k(\mathbb{F}_\mathfrak{q})|-\log\log \mathfrak{q}}.
$$
\end{proposition}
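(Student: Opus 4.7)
I would apply Lemma~\ref{lem:better power of gap} in its vertex-transitive form~\eqref{eq:diam to omega} with $\omega = 1$, $p = 2$ and $q = 1$. Here $\min\{\omega,1/\min\{p,2\}\} = \frac{1}{2}$ and $(p^2+q^2)^{1/\min\{p,2\}} = \sqrt{5}$ is an absolute constant, so the lemma yields
\[
\mathsf{Av}(k,\mathfrak{q}) \gtrsim \sqrt{1-\lambda_2\big(\mathsf{SL}_k(\mathbb{F}_\mathfrak{q})\big)} \cdot \diam\big(\mathsf{SL}_k(\mathbb{F}_\mathfrak{q})\big) \stackrel{\eqref{eq:Kassabo and Alon}}{\asymp} \frac{1}{\sqrt{k}} \cdot \frac{k^2 \log\mathfrak{q}}{\log k} = \frac{k^{3/2}\log\mathfrak{q}}{\log k}.
\]

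\textbf{Upper bound embedding.} For the matching upper bound, I would use the one-hot encoding $f: \mathsf{SL}_k(\mathbb{F}_\mathfrak{q}) \to \ell_2^{\mathfrak{q} k^2}$ defined by $f(g) = (\mathbf{1}_{\{g_{ij}=c\}})_{(i,j,c)\in \n\times\n\times\mathbb{F}_\mathfrak{q}}$, so that $\|f(g)-f(h)\|_2^2 = 2H(g,h)$, where $H(g,h) = |\{(i,j):g_{ij}\ne h_{ij}\}|$ is the Hamming distance. For any generator $s = I+\varepsilon \mathsf{E}_k(i_0,j_0)$ with $\varepsilon\in\{\pm 1\}$ and $i_0\ne j_0$, one has $gs-g = \varepsilon \cdot g\mathsf{E}_k(i_0,j_0)$, a matrix supported in column $j_0$ and nonzero only in those rows $l$ where $g_{l,i_0}\ne 0$. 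Hence $H(gs,g)\le k$ and $\|f\|_{\Lip} \le \sqrt{2k}$ with respect to the shortest-path metric. The key remaining estimate is
\[
\frac{1}{|\mathsf{SL}_k(\mathbb{F}_\mathfrak{q})|^2}\sum_{g,h}\|f(g)-f(h)\|_2 = \sqrt{2}\,\E\bigl[\sqrt{H(g,h)}\bigr] \gtrsim k,
\]
for independent uniformly distributed $g,h\in \mathsf{SL}_k(\mathbb{F}_\mathfrak{q})$. Granting this, the vertex-transitive identity $\frac{1}{n^2}\sum_{g,h}d_\G(g,h)\asymp \diam \asymp k^2\log\mathfrak{q}/\log k$ from~\eqref{eq:diam for transitive} and rescaling $f$ by the ratio of the two averages produces a map whose $1$-average distortion is at most a universal constant multiple of $\sqrt{k}\cdot(k^2\log\mathfrak{q}/\log k)/k = k^{3/2}\log\mathfrak{q}/\log k$, matching the lower bound. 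The second asymptotic in the proposition then follows by rewriting via $\log|\mathsf{SL}_k(\mathbb{F}_\mathfrak{q})|\asymp k^2\log\mathfrak{q}$ and $\log k \asymp \tfrac12(\log\log|\mathsf{SL}_k(\mathbb{F}_\mathfrak{q})|-\log\log\mathfrak{q})$.

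\textbf{Main obstacle.} The only step that requires work is the lower bound $\E\bigl[\sqrt{H(g,h)}\bigr]\gtrsim k$. I would prove the simpler bound $\E[H(g,h)]\gtrsim k^2$ and then use $\sqrt{H}\ge H/k$ (valid since $H\le k^2$ pointwise). Since $g^{-1}h$ is uniform, it suffices to show the entrywise quasi-uniformity $\max_{c\in\mathbb{F}_\mathfrak{q}}\Pr[x_{ij}=c]\lesssim 1/\mathfrak{q}$ for uniform $x\in\mathsf{SL}_k(\mathbb{F}_\mathfrak{q})$, which then gives $\Pr[g_{ij}=h_{ij}] = \sum_c \Pr[(g^{-1}h)_{ij}=c]^2 \lesssim 1/\mathfrak{q}$ and hence $\E[H(g,h)] \ge k^2(1-O(1/\mathfrak{q})) \gtrsim k^2$ once $\mathfrak{q}$ exceeds a fixed constant. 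The quasi-uniformity itself follows from a bijection induced by the row operation $x \mapsto (I+\alpha \mathsf{E}_k(i,i'))x$ for $i'\ne i$, which shifts $x_{ij}$ by $\alpha x_{i',j}$ and thus yields a free action of $(\mathbb{F}_\mathfrak{q},+)$ whenever $x_{i',j}\ne 0$ (an event of probability $\ge 1-O(1/\mathfrak{q})$ by applying the same reasoning to the off-diagonal entry $(i',j)$ and iterating over different choices of $i'$). The small-$\mathfrak{q}$ regime and the handful of exceptional small cases can be handled by a direct count modelled on the $\mathsf{SL}_2(\mathbb{F}_\mathfrak{q})$ computation, where $\Pr[x_{11}=0] = 1/(\mathfrak{q}+1)$ and $\Pr[x_{11}=c] = \mathfrak{q}/(\mathfrak{q}^2-1)$ for $c\ne 0$, both of which are $\lesssim 1/\mathfrak{q}$.
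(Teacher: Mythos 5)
Your proof is correct. The lower bound is exactly the paper's argument: substitute~\eqref{eq:Kassabo and Alon} into~\eqref{eq:diam to omega} with $\omega=q=1$, $p=2$. The upper bound, however, takes a genuinely different route. The paper writes $\mathfrak{q}=\mathfrak{p}^m$, expands entries in a basis of $\mathbb{F}_\mathfrak{q}$ over $\mathbb{F}_\mathfrak{p}$, and embeds via roots of unity $e^{2\pi\mathsf{i}\chi_s(x_{jk})/\mathfrak{p}}$ into $\ell_2^{2mk^2}$, whereas you use the one-hot encoding into $\ell_2^{\mathfrak{q}k^2}$ with $\|f(g)-f(h)\|_2^2=2H(g,h)$. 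Both constructions are structurally parallel (the Lipschitz constant comes from the fact that a generator perturbs only one column, and one then rescales against the average distance $\asymp\diam\asymp k^2\log\mathfrak{q}/\log k$), and both deliver the same $k^{3/2}\log\mathfrak{q}/\log k$. Your version trades dimension for transparency: the target has dimension $\mathfrak{q}k^2$ rather than $2k^2\log_{\mathfrak{p}}\mathfrak{q}$, which is irrelevant for this proposition but matters for the surrounding metric-dimension discussion; on the other hand, your anticoncentration step is cleaner and fully justified, while the paper only asserts the analogous statement that a constant fraction of the $mk^2$ digit coordinates differ with constant probability. Two small points on your argument. First, the identity $\Pr[g_{ij}=h_{ij}]=\sum_c\Pr[(g^{-1}h)_{ij}=c]^2$ is not right as stated; what you want is $\Pr[g_{ij}=h_{ij}]=\sum_c\Pr[x_{ij}=c]^2$ for $x$ uniform on $\mathsf{SL}_k(\mathbb{F}_\mathfrak{q})$, since $g_{ij}$ and $h_{ij}$ are i.i.d.\ copies of $x_{ij}$ (the two expressions happen to coincide numerically because $g^{-1}h$ is uniform, so no harm is done). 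Second, your quasi-uniformity step, with its row-operation bijections and small-$\mathfrak{q}$ case analysis, can be replaced by a one-line observation: left multiplication by $\mathsf{SL}_k(\mathbb{F}_\mathfrak{q})$ permutes $\mathsf{SL}_k(\mathbb{F}_\mathfrak{q})$ and acts transitively on $\mathbb{F}_\mathfrak{q}^k\setminus\{0\}$ for $k\ge 2$, so each column of a uniform element is uniform over nonzero vectors, whence $\max_c\Pr[x_{ij}=c]\le\mathfrak{q}^{k-1}/(\mathfrak{q}^k-1)\le\frac{4}{3\mathfrak{q}}$ and $\Pr[g_{ij}\ne h_{ij}]\ge\frac13$ uniformly in $\mathfrak{q}\ge 2$, removing the need for any exceptional cases. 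With that, $\E[H(g,h)]\gtrsim k^2$, $\E[\sqrt{H}]\ge\E[H]/k\gtrsim k$, and the rest of your computation goes through.
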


\begin{proof} A substitution of~\eqref{eq:Kassabo and Alon} into~\eqref{eq:diam to omega} when $\omega=q=1$ and $p=2$ gives $\mathsf{Av}(k,\mathfrak{q})\gtrsim  \frac{k^{\frac32}\log \mathfrak{q}}{\log k}$. To prove the matching upper bound, suppose that $\mathfrak{q}=\mathfrak{p}^m$ for some prime $\mathfrak{p}$ and $m\in \N$. Let $v_1,\ldots,v_m$ be a basis of $\mathbb{F}_{\mathfrak{q}}$ over $\mathbb{F}_{\mathfrak{p}}$. Thus, for every $x\in \mathbb{F}_{\mathfrak{q}}$ there are unique $\chi_1(x),\ldots,\chi_m(x)\in \Z/\mathfrak{p}\Z$ such that $x=\chi_1(x)v_1+\ldots+\chi_m(x)v_m$. Define an embedding
$$
f:\mathsf{SL}_k(\mathbb{F}_\mathfrak{q})\to \underbrace{\M_k(\C)\oplus\ldots\oplus \M_k(\C)}_{\text{$m$ times}}\cong \ell_2^{2mk^2}
$$
by setting for some $C>0$,
$$
\forall\, \mathsf{X}=(x_{jk})\in \mathsf{SL}_k(\mathbb{F}_\mathfrak{q}),\qquad  f(\mathsf{X})\eqdef \bigoplus_{s=1}^m \frac{Ck\log q}{\sqrt{m}\log k}\Big(e^{\frac{2\pi \mathsf{i}}{\mathfrak{p}}\chi_s(x_{jk})}\Big)_{(j,k)\in \k^2},
$$
We claim that if  $C$ is a sufficiently large universal  constant, then $f$ exhibits that    $\mathsf{Av}(k,\mathfrak{q})\lesssim  \frac{k^{\frac32}\log \mathfrak{q}}{\log k}$.

Fix distinct indices $\alpha,\beta\in \k$. Then for every  $\mathsf{X}=(x_{jk})\in \mathsf{SL}_k(\mathbb{F}_\mathfrak{q})$ we have $$f\Big(\mathsf{X}\big(\I_k\pm\mathsf{E}_k(\alpha,\beta)\big)\Big) -f(\mathsf{X})=\bigoplus_{s=1}^m \frac{Ck\log q}{\sqrt{m}\log k}\Big(\d_{s\beta}e^{\frac{2\pi \mathsf{i}}{\mathfrak{p}}\chi_s(x_{j\beta})} \Big(e^{\pm\frac{2\pi \mathsf{i}}{\mathfrak{p}}\chi_s(x_{j\alpha})}-1\Big)\Big)_{(j,k)\in \k^2},$$
where $\d_{s\beta}$ is the Kronecker delta. Thus,
$$
\Big\|f\Big(\mathsf{X}\big(\I_k\pm\mathsf{E}_k(\alpha,\beta)\big)\Big) -f(\mathsf{X})\Big\|_{\ell_2^{2mk^2}}\le \frac{Ck\log q}{\sqrt{m}\log k}\cdot 2\sqrt{km}=\frac{2k^{\frac32}\log \mathfrak{q}}{\log k}.
$$
By the definition of the (word) metric on $\mathsf{SL}_k(\mathbb{F}_\mathfrak{q})$, this means that $f$ is $\frac{2k^{\frac32}\log \mathfrak{q}}{\log k}$-Lipschitz.

In the reverse direction, if $\mathsf{X}=(x_{jk}),\mathsf{Y}=(y_{jk})$ are independent and chosen uniformly at random from $\mathsf{SL}_k(\mathbb{F}_\mathfrak{q})$, then with probability that is bounded below by a positive universal constant, we have $|\exp(2\pi \mathsf{i}\chi_s(x_{jk}))/\mathfrak{p}-\exp(2\pi \mathsf{i}\chi_s(y_{jk})/\mathfrak{p})|\gtrsim 1$ for a universal constant fraction of the $mk^2$ triples $(i,j,s)\in \k\times\k\times \{1,\ldots,m\}$. Therefore,
\begin{multline*}
\frac{1}{|\mathsf{SL}_k(\mathbb{F}_\mathfrak{q})|^2}\sum_{\mathsf{X,Y}\in \mathsf{SL}_k(\mathbb{F}_\mathfrak{q})} \|f(\mathsf{X})-f(\mathsf{Y})\|_{\ell_2^{2mk^2}}\gtrsim \frac{Ck\log q}{\sqrt{m}\log k}\cdot \sqrt{mk^2}=\frac{Ck^2\log \mathsf{q}}{\log k}\\\stackrel{\eqref{eq:Kassabo and Alon}}{\gtrsim} C\diam\big(\mathsf{SL}_k(\mathbb{F}_\mathfrak{q})\big)\ge\frac{C}{|\mathsf{SL}_k(\mathbb{F}_\mathfrak{q})|^2}\sum_{\mathsf{X,Y}\in \mathsf{SL}_k(\mathbb{F}_\mathfrak{q})} d_{\mathsf{SL}_k(\mathbb{F}_\mathfrak{q})}(\mathsf{X},\mathsf{Y}).\tag*{\qedhere}
\end{multline*}
\end{proof}

The following conjecture asserts that (at least for fixed $\mathfrak{q}$) the curious-looking but nonetheless sharp asymptotic behavior of Proposition~\ref{prop:av kq}  holds also for bi-Lipschitz embeddings; we suspect that its resolution is tractable, perhaps via the representation-theoretic approach of~\cite{ANV10}.

\begin{conjecture}  For every $k\in \N$ and prime power $\mathfrak{q}$ we have $\cc_{\ell_2}\big(\mathsf{SL}_k(\mathbb{F}_\mathfrak{q})\big)\asymp_\mathfrak{q}\frac{k^{\frac32}}{\log k}$.
\end{conjecture}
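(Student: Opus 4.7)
The plan is to establish the matching upper and lower bounds separately. For the lower bound, I would invoke Lemma~\ref{lem:better power of gap}, specifically the transitive case bound~\eqref{eq:diam to omega}, applied with $\omega = q = 1$ and $p = 2$ to the vertex-transitive Cayley graph $\mathsf{SL}_k(\mathbb{F}_\mathfrak{q})$. This yields
$$
\mathsf{Av}(k,\mathfrak{q}) \gtrsim \big(1-\lambda_2(\mathsf{SL}_k(\mathbb{F}_\mathfrak{q}))\big)^{\!\frac12}\diam\big(\mathsf{SL}_k(\mathbb{F}_\mathfrak{q})\big).
$$
Substituting Kassabov's spectral gap $1-\lambda_2 \asymp 1/k$ and Alon's diameter bound $\diam \asymp k^2(\log \mathfrak{q})/\log k$ from~\eqref{eq:Kassabo and Alon} gives the stated lower bound $\frac{k^{3/2}\log\mathfrak{q}}{\log k}$.

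For the upper bound, I would construct an explicit embedding into a finite-dimensional Hilbert space by composing two natural maps: first, a ``coordinate expansion'' of matrix entries using a fixed $\mathbb{F}_\mathfrak{p}$-basis $v_1,\ldots,v_m$ of $\mathbb{F}_\mathfrak{q}$ (where $\mathfrak{q} = \mathfrak{p}^m$), so that each $x \in \mathbb{F}_\mathfrak{q}$ is written uniquely as $\sum_s \chi_s(x)v_s$; second, the standard additive character $t\mapsto \exp(2\pi \mathsf{i}t/\mathfrak{p})$ applied entrywise. The embedding $f:\mathsf{SL}_k(\mathbb{F}_\mathfrak{q}) \to \bigoplus_{s=1}^m \M_k(\C) \cong \ell_2^{2mk^2}$ scales by the factor $Ck\log\mathfrak{q}/(\sqrt{m}\log k)$ for a universal constant $C>0$ to be calibrated.

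The Lipschitz estimate reduces to bounding $\|f(\mathsf{X}(\I_k \pm \mathsf{E}_k(\alpha,\beta))) - f(\mathsf{X})\|$ for the Cayley generators. Multiplication by $\I_k \pm \mathsf{E}_k(\alpha,\beta)$ modifies only the $\beta$-th column of $\mathsf{X}$; hence the characters change in at most $km$ coordinates, and each such change has modulus at most $2$. This yields a Lipschitz constant of order $k^{3/2}\log\mathfrak{q}/\log k$, as required. For the matching average lower bound on $\|f(\mathsf{X})-f(\mathsf{Y})\|$ with $\mathsf{X},\mathsf{Y}$ uniform and independent in $\mathsf{SL}_k(\mathbb{F}_\mathfrak{q})$, I would combine this with the observation that $\diam \asymp k^2\log\mathfrak{q}/\log k$ by~\eqref{eq:Kassabo and Alon}, so the average distance in the graph is at most of the same order, meaning it suffices to prove $\Avg\|f(\mathsf{X})-f(\mathsf{Y})\| \gtrsim Ck^2\log\mathfrak{q}/\log k$.

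The main obstacle is this last point: showing that a positive universal constant fraction of the $mk^2$ ``coordinate-pairs'' $(j,k,s)$ satisfy $|\exp(2\pi \mathsf{i}\chi_s(x_{jk})/\mathfrak{p}) - \exp(2\pi \mathsf{i}\chi_s(y_{jk})/\mathfrak{p})| \gtrsim 1$ when $\mathsf{X},\mathsf{Y}$ are uniform on $\mathsf{SL}_k(\mathbb{F}_\mathfrak{q})$. Since summing a universal fraction of $mk^2$ contributions of order $1$ and taking the square root gives $\sqrt{mk^2}$, multiplying by the overall scaling factor $Ck\log\mathfrak{q}/(\sqrt{m}\log k)$ exactly produces the desired lower bound $Ck^2\log\mathfrak{q}/\log k$. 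The equidistribution fact needed is that the joint distribution of the entries of a uniform element of $\mathsf{SL}_k(\mathbb{F}_\mathfrak{q})$ is very close to uniform on $\M_k(\mathbb{F}_\mathfrak{q})$ in an averaged sense (the determinant constraint removes at most one ``degree of freedom''), which  when  combined with independence of $\mathsf{X}$ and $\mathsf{Y}$ gives that  $\chi_s(x_{jk})-\chi_s(y_{jk})$ is nearly uniformly distributed on $\Z/\mathfrak{p}\Z$ for each $(j,k,s)$, so with probability bounded away from $0$ the corresponding character values differ by $\Omega(1)$; a union-of-contributions / second-moment argument then delivers the claimed fraction.
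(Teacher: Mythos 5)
Your argument does not prove the stated conjecture; it proves the neighboring Proposition~\ref{prop:av kq}, which concerns the \emph{average} distortion $\mathsf{Av}(k,\mathfrak{q})$ rather than the bi-Lipschitz distortion $\cc_{\ell_2}\big(\mathsf{SL}_k(\mathbb{F}_\mathfrak{q})\big)$. The lower bound is fine: a bi-Lipschitz embedding is in particular an average-distortion embedding, so the bound obtained from~\eqref{eq:diam to omega} (with $\omega=q=1$, $p=2$) together with~\eqref{eq:Kassabo and Alon} transfers to $\cc_{\ell_2}$. The gap is entirely in the upper bound. A bi-Lipschitz embedding with distortion $O_{\mathfrak{q}}(k^{3/2}/\log k)$ must satisfy $\|f(\mathsf{X})-f(\mathsf{Y})\|\gtrsim_{\mathfrak{q}} d_{\mathsf{SL}_k(\mathbb{F}_\mathfrak{q})}(\mathsf{X},\mathsf{Y})$ for \emph{every} pair, whereas the scaled character embedding you construct is only shown to satisfy this on average over a uniformly random pair; the approximate equidistribution of matrix entries gives no control on individual pairs. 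Concretely, two matrices that differ in only $N$ of their $k^2$ entries are mapped to points at distance at most of order $k\sqrt{N}\log\mathfrak{q}/\log k$ irrespective of their word distance, so validating the embedding would require showing that the word metric is always controlled from above by the number (and character statistics) of differing entries --- a pointwise statement that is genuinely stronger than, and not deducible from, the second-moment equidistribution argument you invoke.

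In fact the paper records this statement only as a conjecture: it explicitly leaves open whether the asymptotics of Proposition~\ref{prop:av kq} persist for bi-Lipschitz embeddings, and suggests that a resolution may require the representation-theoretic approach of~\cite{ANV10} rather than an explicit additive-character map. So while both halves of your write-up faithfully reproduce the paper's proof of Proposition~\ref{prop:av kq} (also note that the sharp two-sided bound there carries an explicit factor of $\log\mathfrak{q}$, which your $\asymp_{\mathfrak{q}}$ absorbs), neither the paper nor your proposal contains a proof of the conjectured two-sided bound on $\cc_{\ell_2}\big(\mathsf{SL}_k(\mathbb{F}_\mathfrak{q})\big)$; only the lower-bound half of the conjecture is actually established.
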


\begin{remark}\label{rem:Qcube and others}
By~\eqref{eq:C2} and~\eqref{eq:diam for transitive}  we see that if $\G=(\n,E_\G)$ is a vertex-transitive graph, then
\begin{equation}\label{eq:woith diam}
\forall\, D\ge1,\qquad \dim_D(\G)\ge e^{\frac{c}{D}(1-\lambda_2(\G))\diam(\G)},
 \end{equation}
 where $c>0$ is a universal constant, and the notation $\dim_D(\cdot)$ of~\cite[Definition~2.1]{LLR95}  was recalled in Section~\ref{sec:history}. In fact, this reasoning shows that (Theorem~\ref{thm:average john} implies that) if $(\n,d_\G)$ embeds into a normed space $X$ with average distortion $D$ (rather than the stronger bi-Lipschitz distortion $D$ to which~\eqref{eq:woith diam} alludes), then necessarily $\dim(X)\ge \exp(c(1-\lambda_2(\G))\diam(\G)/D)$. It follows in particular  from~\eqref{eq:woith diam} that if $(\n,d_\G)$ embeds with average distortion $O(1)$ into some normed space of dimension $(\log n)^{O(1)}$, then necessarily $(1-\lambda_2(\G))\diam(\G)\lesssim \log\log n$.

  There are many examples of Cayley graphs $\G=(\n,E_\G)$ for which $\lambda_2(\G)=1-\Omega(1)$ and $\diam(\G)\gtrsim \log n$ (see e.g.~\cite{AR94,NR09}). In all such examples, \eqref{eq:woith diam} asserts that $\dim_D(\G)\gtrsim n^{c/D}$ for some universal constant $c>0$. The Cayley graph that was studied in~\cite{KN06} (a quotient of the Hamming cube by a good code) now shows that there exist arbitrarily large $n$-point metric spaces $\cM_n$ with $\dim_1(\cM_n)\lesssim \log n$  (indeed, $\cM_n$ embeds isometrically into $\ell_1^{k}$ for some $k\lesssim \log n$), yet $\cM_n$ has a $O(1)$-Lipschitz quotient (see~\cite{BJLPS99} for the relevant definition)  that does not embed with distortion $O(1)$ into any normed space of dimension $n^{o(1)}$. To the best of our knowledge, it wasn't previously known that the metric dimension $\dim_D(\cdot)$ can become asymptotically larger (and even increase exponentially)  under Lipschitz quotients, which is yet another major departure from the linear theory, in contrast to what one would normally predict in the context of the Ribe program.
\end{remark}

\begin{remark}\label{rem:must change norm}
Let $\G=(\n,E_\G)$ be a Cayley graph of a finite group with $\lambda_2(\G)=1-\Omega(1)$. The metric space $(\n,d_\G)$  embeds with bi-Lipschitz distortion $\diam(\G)$ into $\ell_2^{n-1}$ by considering any bijection between $\n$ and the vertices of the $n$-simplex. There is therefore no a priori reason why it wouldn't be possible to embed $(\n,d_\G)$ with bi-Lipschitz distortion $O(1)$ into some normed space $(X,\|\cdot\|_X)$ whose bi-Lipschitz distortion from a Hilbert space is at least a sufficiently large constant multiple of $\diam (\G)$. But this is not so if $\diam(\G)$ is large. Indeed, for every $\cc>\cc_{\ell_2}(X)$ and $D>\cc_X(\n,d_\G)$ by Theorem~\ref{thm:really main}  the $\frac12$-snowflake of $(\n,d_\G)$ embeds into $\ell_2$ with quadratic average distortion that is at most a universal constant multiple of $\sqrt{D\log(\cc+1)}$. By contrasting this with the case $\omega=\frac12$ of~\eqref{eq:root gap diameter omega}, it follows that
$$
\cc_X(\n,d_\G)\gtrsim \frac{\diam(\G)}{\log(\cc_{\ell_2}(X)+1)}.
$$
Thus, even if we allow  $\cc_{\ell_2}(X)$ to be as large as $\diam(\G)^{O(1)}$, then any embedding of $(\n,d_\G)$ into $X$ incurs distortion that is at least a positive universal constant multiple of $\diam(\G)/\log\diam(\G)$.
\end{remark}

Substituting~\eqref{eq:Kassabo and Alon} into~\eqref{eq:woith diam}  gives the following noteworthy corollary. It shows that even though elements of $\mathsf{SL}_k(\mathbb{F}_\mathfrak{q})$ have a representation using $k^2$ coordinates (over $\mathbb{F}_\mathfrak{q}$, thus using $\mathfrak{q}k^2$ bits), if one wishes to realize its geometry  with bounded (average) distortion as a subset of the ``commutative'' geometry of a  normed space, then the dimension of that space must be exponentially large.

\begin{corollary}\label{cor:k/log k}
Fix $\mathfrak{q},k\in \N$ such that $\mathfrak{q}$ is  a power of a prime. For $D\ge 1$ let $\dim_D(k,\mathfrak{q})$ denote the smallest $d\in \N$ such that $\mathsf{SL}_k(\mathbb{F}_\mathfrak{q})$ embeds into some $d$-dimensional normed space $X_{k,\mathfrak{q}}$ with bi-Lipschitz distortion $D$. Then, for some universal constant $c>0$ we have
$$
\dim_D(k,\mathfrak{q})\ge \mathfrak{q}^{\frac{ck}{D\log k}}.
$$
This holds even if we only require that the low-dimensional embedding has average distortion $D$.
\end{corollary}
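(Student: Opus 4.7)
The plan is to combine three ingredients already available in the paper: the average John theorem (Theorem~\ref{thm:average john}) through its immediate consequence Corollary~\ref{cor:transfer to Hilbert}, the sharpened spectral lower bound for vertex-transitive graphs~\eqref{eq:diam to omega} of Lemma~\ref{lem:better power of gap}, and the Kassabov/Alon estimates~\eqref{eq:Kassabo and Alon} on the spectral gap and diameter of $\mathsf{SL}_k(\mathbb{F}_\mathfrak{q})$. The whole argument mirrors the deduction of Theorem~\ref{thm:l1X} given in the Introduction, only with the extra sharpening coming from vertex-transitivity.

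First, suppose $\MM=\mathsf{SL}_k(\mathbb{F}_\mathfrak{q})$ with its word metric $d_\G$ embeds with average distortion $D$ into a normed space $(X,\|\cdot\|_X)$ of dimension $d=\dim_D(k,\mathfrak{q})$. Since $X$ embeds isometrically into $\ell_1(X)$, Corollary~\ref{cor:transfer to Hilbert} applied with $\mu$ the uniform probability measure on $\mathsf{SL}_k(\mathbb{F}_\mathfrak{q})$ produces a map $\MM\to H$ into a Hilbert space whose quadratic average distortion of the $\tfrac12$-snowflake $(\MM,\sqrt{d_\G},\mu)$ is at most $\mathsf{C}\sqrt{D\log d}$.

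Next, apply the vertex-transitive case~\eqref{eq:diam to omega} of Lemma~\ref{lem:better power of gap} with $\omega=\tfrac12$ and $p=q=2$ to $\G=\mathsf{SL}_k(\mathbb{F}_\mathfrak{q})$, which is a Cayley graph and hence vertex-transitive. This yields
\begin{equation*}
\mathsf{C}\sqrt{D\log d}\;\gtrsim\;\sqrt{\big(1-\lambda_2(\G)\big)\,\diam(\G)}.
\end{equation*}
Squaring and substituting the bounds $1-\lambda_2(\G)\asymp 1/k$ and $\diam(\G)\asymp k^2\log\mathfrak{q}/\log k$ from~\eqref{eq:Kassabo and Alon} gives
\begin{equation*}
D\log d\;\gtrsim\;\frac{k\log\mathfrak{q}}{\log k},
\end{equation*}
which rearranges to $\log d\gtrsim k\log\mathfrak{q}/(D\log k)$, i.e.\ $d\ge\mathfrak{q}^{ck/(D\log k)}$ for a universal constant $c>0$.

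There is no serious obstacle in this plan: every required piece has already been established in the paper. The only items worth double-checking are that the exponent of $1-\lambda_2(\G)$ in Lemma~\ref{lem:better power of gap} for the parameter choice $\omega=\tfrac12$, $p=q=2$ is indeed $\min\{\tfrac12,\tfrac12\}=\tfrac12$, so that one captures the full $\sqrt{(1-\lambda_2(\G))\diam(\G)}$ and not a weaker quantity; and that one may indeed invoke Corollary~\ref{cor:transfer to Hilbert} under the weaker hypothesis of average (rather than bi-Lipschitz) distortion, which is exactly its content. Both checks are immediate from the statements as recorded above, so the argument goes through verbatim and yields the final statement of Corollary~\ref{cor:k/log k} under the average-distortion hypothesis.
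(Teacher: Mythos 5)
Your argument is correct and is essentially the paper's own: the paper obtains the bound by substituting the Kassabov/Alon estimates~\eqref{eq:Kassabo and Alon} into~\eqref{eq:woith diam}, which is itself exactly the combination of the average John theorem with the spectral/diameter lower bound for vertex-transitive Cayley graphs that you carry out via Corollary~\ref{cor:transfer to Hilbert} and~\eqref{eq:diam to omega}. The only cosmetic remark is that at $\omega=\tfrac12$, $p=q=2$ the refinement of Lemma~\ref{lem:better power of gap} adds nothing over the elementary bound~\eqref{eq:root gap diameter omega}, so the simpler inequality would have sufficed.
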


The following conjecture asserts that (for fixed $\mathfrak{q}$) Corollary~\ref{cor:k/log k} is sharp. Given the lower bound that we obtained here, it remains to construct a $O(1)$-distortion embedding of $\mathsf{SL}_k(\mathbb{F}_\mathfrak{q})$  into some  low-dimensional normed space $X$. Here, ``low-dimensional'' means that the dimension of $X$ grows exponentially in $k/\log k$ rather than exponentially in $k^2$ as in Fr\'echet's embedding. We suspect that, beyond its intrinsic interest, such a low-dimensional realization of $\mathsf{SL}_k(\mathbb{F}_\mathfrak{q})$ will be useful elsewhere.

\begin{conjecture}[dimension reduction for $\mathsf{SL}_k(\mathbb{F}_\mathfrak{q})$] For every prime power $\mathfrak{q}$ there exist $D=D(\mathfrak{q})\ge 1$ and $c=c(\mathfrak{q}),C=C(\mathfrak{q})>0$ such that for every integer $k\ge 2$ we have
$$
e^{\frac{ck}{\log k}}\le \dim_D(k,\mathfrak{q})\le e^{\frac{Ck}{\log k}}.
$$
\end{conjecture}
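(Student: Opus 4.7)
\emph{The lower bound} $\dim_D(k,\mathfrak{q})\ge e^{c(\mathfrak{q})k/\log k}$ will be a direct consequence of Corollary~\ref{cor:k/log k}, which asserts that $\dim_D(k,\mathfrak{q})\ge \mathfrak{q}^{c'k/(D\log k)}$ for some universal $c'>0$. For any choice of $D=D(\mathfrak{q})\ge 1$, setting $c(\mathfrak{q})\eqdef c'(\log\mathfrak{q})/D(\mathfrak{q})$ rewrites this as the desired $e^{c(\mathfrak{q})k/\log k}$. The underlying derivation combines the average John theorem (Theorem~\ref{thm:average john}) with the spectral-gap estimate of Kassabov and the Andr\'en--Hellstr\"om--Markstr\"om / Alon diameter estimate~\eqref{eq:Kassabo and Alon}, inserted into the sharpened lower bound~\eqref{eq:diam to omega} of Lemma~\ref{lem:better power of gap} at $\omega=\frac12$. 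Once the upper bound is in hand, any value of $D(\mathfrak{q})$ for which that bound holds is admissible, so the same $D(\mathfrak{q})$ may be used in both inequalities.

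\emph{The upper bound} $\dim_D(k,\mathfrak{q})\le e^{C(\mathfrak{q})k/\log k}$ is the substantive direction of the conjecture; the plan is to exhibit an explicit bi-Lipschitz embedding of $\mathsf{SL}_k(\mathbb{F}_\mathfrak{q})$ into a normed space of that dimension with distortion $D=D(\mathfrak{q})$. The strategy takes its cue from the very proof of~\eqref{eq:Kassabo and Alon}, which, for each $\mathsf{X}\in\mathsf{SL}_k(\mathbb{F}_\mathfrak{q})$, produces a factorization of length $O(k^2(\log\mathfrak{q})/\log k)$ into elementary transvections organized into $\ell=\Theta(k/\log k)$ macro-stages, each stage performing $\Theta(\log k)$ coordinated elementary operations that clean a small block of the matrix. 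I would: (i) index by a family $\mathcal{I}$ of ``block templates'' (choices of which rows and columns participate in one such macro-stage), arranged hierarchically so that $|\mathcal{I}|=e^{O(k/\log k)}$; (ii) associate to every template $I\in\mathcal{I}$ a real coordinate $\varphi_I:\mathsf{SL}_k(\mathbb{F}_\mathfrak{q})\to\R$ encoding a suitably quantized lift of the $I$-submatrix of $\mathsf{X}$; (iii) verify that the resulting map into a norm on $\R^{\mathcal{I}}$, tentatively a mixed $\ell_p(\ell_\infty)$-type norm with $p=p(\mathfrak{q})$ adapted to the hierarchy, is bi-Lipschitz. The Lipschitz upper bound is essentially free, since a single transvection perturbs only $O(1)$ entries per template; the content is the lower bound, which should follow by inverting the AHM/Alon reduction to exhibit, for two elements at word-distance $t$, some template on which their recorded submatrices necessarily differ by $\Omega(t)$ in at least one coordinate.

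\emph{Main obstacle.} The crux is step~(iii). For arbitrary transitive Cayley graphs a Fr\'echet-style construction with any fixed template family incurs distortion proportional to the ratio of the diameter to the covering radius of the family, and a naive choice here yields distortion $\Theta(k/\log k)$ rather than $O(1)$; moreover the template-submatrix coordinates live in $\mathbb{F}_\mathfrak{q}$ and must be lifted noncanonically to $\Z$, which interacts subtly with the group operation. To close the gap I expect one needs: (a) to replace deterministic templates by a randomized hierarchical cover echoing the data-structure constructions of Andoni, Naor, Nikolov, Razenshteyn and Waingarten~\cite{ANNRW18,ANNRW-FOCS18,ANNRW18-general,ANNRW18-interpolation}, whose very motivation is Theorem~\ref{thm:l1X}; and (b) to use a target norm whose geometry matches the hierarchy (an iterated $\ell_p(\ell_\infty)$ with depth $\Theta(\log\log k)$ is the natural candidate) so that distance contributions across stages combine nonlinearly. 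Kassabov's explicit bounded generating sets ought to mitigate the lifting issue, since they respect a decomposition of $\mathsf{SL}_k(\mathbb{F}_\mathfrak{q})$ into copies of $\mathsf{SL}_3(\mathbb{F}_\mathfrak{q})$ on each of which the lift is compatible with multiplication. The precise quantitative bookkeeping required to bring this scheme from heuristic to proof is where the difficulty is concentrated, and is the reason the statement is recorded here as a conjecture rather than a theorem.
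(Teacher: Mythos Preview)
This statement is a \emph{conjecture}, and the paper does not prove it. The paper's surrounding text says exactly what you say about the lower bound: it follows from Corollary~\ref{cor:k/log k} (which in turn rests on Theorem~\ref{thm:average john} combined with~\eqref{eq:Kassabo and Alon}), so your first paragraph is in full agreement with the paper. The paper then explicitly records the upper bound as open, with no proposed construction beyond the remark that one seeks an $O(1)$-distortion embedding into a space of dimension $e^{O(k/\log k)}$ rather than the trivial Fr\'echet dimension $e^{O(k^2)}$.

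Your second and third paragraphs therefore go well beyond the paper: you sketch a plausible-sounding strategy (block templates indexed by the macro-stages of the AHM/Alon factorization, a mixed $\ell_p(\ell_\infty)$ target norm, randomized hierarchical covers in the spirit of~\cite{ANNRW18,ANNRW-FOCS18}), but you yourself flag the decisive gap, namely that step~(iii) --- the bi-Lipschitz \emph{lower} bound --- is unproved, and that a naive implementation yields distortion $\Theta(k/\log k)$ rather than $O(1)$. That is an honest assessment, and it is precisely why neither you nor the paper has a proof. Your outline is not incorrect so much as it is a research programme; nothing in the paper corroborates or contradicts it, because the paper offers no approach to the upper bound at all.
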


\section{Proof of Theorem~\ref{thm:full duality}}\label{sec:duality}

For a metric space $(\MM,d_\MM)$, a Banach space $(Y,\|\cdot\|_Y)$ and $\omega\in (0,1]$, following the notation of~\cite{Mat90,BS02,LN05,NR17} we consider a quantity $\ee^\omega(\MM,Y)$, called the $\omega$-H\"older extension modulus of the pair $(\MM,Y)$, which is defined as the infimum over those $L\in [1,\infty]$ such that for every subset $S\subset \MM$ and every mapping $\phi:S\to Y$ which is $\omega$-H\"older with constant $1$, i.e., $\|f(x)-f(y)\|_Y\le d_\MM(x,y)^\omega$ for all $x,y\in \MM$, there exists  $\Phi:\MM\to Y$ that extends $\phi$, i.e., $\Phi(s)=\phi(s)$ for all $s\in S$, and $\Phi$ is $\omega$-H\"older with constant $L$. When $\omega=1$ one uses the simpler notation $\ee^1(\MM,T)=\ee(\MM,Y)$. Note that $\ee^\omega(\MM,Y)=\ee(\MM^\omega,Y)$, where henceforth $\MM^\omega$ denotes the $\omega$-snowflake of $(\MM,d_\MM)$.  Thus, one could work throughout (both in the present  context and elsewhere) with the more classical Lipschitz extension modulus $\ee(\cdot,\cdot)$, but  it is beneficial to use the above notation for $\omega$-H\"older extension. Such extension moduli have been studied extensively in the literature; see e.g.~\cite{LN05,BB12,NR17} and the references therein for an indication of the large amount of work that has been done on this topic.

The following powerful extension theorem is a combination of known results. Its special case $p=q=2$  is a combination of Ball's deep work~\cite{Bal92} on Lipschitz extension and our solution~\cite{NPSS06} in collaboration with Peres, Schramm and Sheffield of Ball's Markov type $2$ problem~\cite{Bal92}. Also, its special case when $p=1$ and $Y$ is a Hilbert space is a  theorem of Minty~\cite{Min70}, which relies on Kirszbraun's important theorem~\cite{Kirsz34}. Its statement in full generality follows from the generalization of the above results that appears in our work with Mendel~\cite{MN13-bary}. A special  case of Theorem~\ref{thm:holder extension} was discussed  in~\cite{Nao15}; for ease of later use (below and elsewhere), it is worthwhile to formulate the full statement here and explain  its quick derivation from results in the literature.

\begin{theorem}[Ball's extension phenomenon for H\"older functions]\label{thm:holder extension}Fix  $p,q>0$ with $q\ge \max\{p,2\}$. Write $\omega=p/q$.  Let $(\MM,d_\MM)$ be a metric space that has Markov type $p$ and let $(Y,\|\cdot\|_Y)$ be a Banach space whose modulus of uniform convexity has power type $q$. Then, $\ee^\omega(X,Y)\lesssim \mathbf{M}_p(\MM)^\omega\mathscr{K}_q(Y)$.
\end{theorem}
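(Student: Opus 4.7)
The plan is to reduce the theorem to a known extension result for Lipschitz (i.e., $\omega=1$) functions, via snowflaking. The crucial identity is the tautology $\ee^\omega(\MM,Y)=\ee(\MM^\omega,Y)$: a $\omega$-H\"older function on $(\MM,d_\MM)$ is the same thing as a $1$-Lipschitz function on $(\MM,d_\MM^\omega)=\MM^\omega$. So it suffices to bound $\ee(\MM^\omega,Y)$.

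For this, I would invoke the Ball-type extension theorem of Mendel--Naor~\cite{MN13-bary}, which asserts that for any metric space $(\NN,d_\NN)$ of Markov type $q$ and any Banach space $(Y,\|\cdot\|_Y)$ whose modulus of uniform convexity has power type $q$, one has $\ee(\NN,Y)\lesssim \mathbf{M}_q(\NN)\mathscr{K}_q(Y)$. Applying this with $\NN=\MM^\omega$ (note $q\ge 2$ is required, which is our assumption), it remains to prove the Markov-type transference under snowflaking
\begin{equation}\label{eq:snow-transfer}
\mathbf{M}_q(\MM^\omega)\le \mathbf{M}_p(\MM)^{\omega}.
\end{equation}
The proof of~\eqref{eq:snow-transfer} is a direct computation: fix $n\in\N$, $\pi\in\nsimplex$, a $\pi$-reversible stochastic matrix $\A=(a_{ij})\in\M_n(\R)$ and $x_1,\ldots,x_n\in\MM$. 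Using $d_{\MM^\omega}(x,y)^q=d_\MM(x,y)^{\omega q}=d_\MM(x,y)^p$ and the Markov type $p$ inequality~\eqref{eq:Mtype def} for $\MM$, we obtain
\begin{align*}
\bigg(\sum_{i=1}^n\sum_{j=1}^n \pi_i(\A^s)_{ij} d_{\MM^\omega}(x_i,x_j)^q\bigg)^{\!\frac{1}{q}}
&=\bigg(\sum_{i=1}^n\sum_{j=1}^n \pi_i(\A^s)_{ij} d_{\MM}(x_i,x_j)^p\bigg)^{\!\frac{1}{q}}\\
&\le \mathbf{M}_p(\MM)^{\frac{p}{q}} s^{\frac{1}{q}}\bigg(\sum_{i=1}^n\sum_{j=1}^n \pi_i a_{ij} d_{\MM}(x_i,x_j)^p\bigg)^{\!\frac{1}{q}}\\
&=\mathbf{M}_p(\MM)^{\omega} s^{\frac{1}{q}}\bigg(\sum_{i=1}^n\sum_{j=1}^n \pi_i a_{ij} d_{\MM^\omega}(x_i,x_j)^q\bigg)^{\!\frac{1}{q}},
\end{align*}
which is precisely~\eqref{eq:snow-transfer} (the exponent $\omega=p/q$ reflects that raising $d_\MM$ to the power $\omega$ and then the $q$-th moment of the resulting distance is the same as the $p$-th moment of $d_\MM$).

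Combining the above pieces yields
\begin{equation*}
\ee^\omega(\MM,Y)=\ee(\MM^\omega,Y)\lesssim \mathbf{M}_q(\MM^\omega)\mathscr{K}_q(Y)\stackrel{\eqref{eq:snow-transfer}}{\le}\mathbf{M}_p(\MM)^\omega\mathscr{K}_q(Y),
\end{equation*}
as required. The only substantive step is the invocation of the Ball-type H\"older extension theorem from~\cite{Bal92,NPSS06,MN13-bary}; the snowflaking transference~\eqref{eq:snow-transfer} is essentially formal, and there is no additional obstacle once one writes out the definitions. I would also briefly note, for bookkeeping, that the hypothesis $q\ge\max\{p,2\}$ ensures both that $\omega\in(0,1]$ (so snowflaking is legitimate and $d_\MM^\omega$ is a metric) and that $q\ge 2$ (so the target uniform convexity hypothesis is compatible with the extension theorem of~\cite{MN13-bary}).
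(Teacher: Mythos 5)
Your proposal is correct and follows essentially the same route as the paper: reduce to $\ee(\MM^{\omega},Y)$, invoke the Lipschitz extension theorem $\ee(\NN,Y)\lesssim \mathbf{M}_q(\NN)\mathscr{K}_q(Y)$ from \cite{MN13-bary} (together with \cite{MN14}), and transfer Markov type under snowflaking via $\mathbf{M}_q(\MM^{\omega})\le\mathbf{M}_p(\MM)^{\omega}$. The only difference is that you write out the (correct) one-line verification of the snowflaking inequality, which the paper asserts as holding "by definition."
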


\begin{proof}  As $Y$ is uniformly convex, it is  reflexive. Hence, $\ee(\NN,Y)\lesssim \mathbf{M}_q(\NN)\mathscr{K}_q(Y)$ for any metric space $(\NN,d_\NN)$, by combining~\cite[Theorem~6.10]{MN14} and~\cite[Theorem~1.11]{MN13-bary} (see the discussion in Section~1.5 of~\cite{MN13-bary}). By definition, we have $\ee^\omega(\MM,Y)=\ee(\MM^\omega,Y)$ and $\mathbf{M}_q(\MM^\omega)\le \mathbf{M}_p(\MM)^\omega$. Consequently,  \begin{equation*}
\ee^\omega(\MM,Y)\lesssim \mathbf{M}_q(\MM^\omega)\mathscr{K}_q(Y)\le \mathbf{M}_p(\MM)^\omega\mathscr{K}_q(Y).\tag*{\qedhere}
\end{equation*}
\end{proof}

Note that due to Theorem~\ref{thm:quote NPSS}, if we are in the setting of Theorem~\ref{thm:holder extension} and  $(X,\|\cdot\|_X)$ is a Banach space whose modulus of uniform smoothness has power type $p$, then $\ee^\omega(X,Y)\lesssim \mathscr{S}_p(X)^\omega\mathscr{K}_q(Y)$.

The proof of Lemma~\ref{lem:reduction to finite rational} below is a natural (a bit tedious) discretization/dominated convergence argument; we include it for the sake of completeness, but it could be skipped and left as a technical exercise. In what follows, it is convenient to use the (ad hoc) terminology that a measure $\mu$ on a set $\Omega$ is rational if it is finitely supported and $\mu(\{x\})\in \Q$ for every $x\in \Omega$.

\begin{lemma}[compactness]\label{lem:reduction to finite rational} Fix $p\ge 1$ and $D,\alpha>1$. Let $(\MM,d_\MM)$ be a separable infinite metric space and let $(Y,\|\cdot\|_Y)$ be a Banach space. Suppose that  for any rational probability measure $\rho$ on $\MM$ the metric probability space $(\supp(\rho),d_\MM,\rho)$ embeds with $p$-average distortion $D$ into $Z$. Then, $(\MM,d_\MM)$ embeds with $p$-average distortion $\alpha D$ into the ultrapower $Y^\mathscr{U}$ for any non-principal ultrafilter $\mathscr{U}$ on $\N$. Also, $(\MM,d_\MM)$ embeds with $p$-average distortion $\alpha\ee(X,Y) D$ into $Y$.
\end{lemma}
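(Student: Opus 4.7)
The plan is to approximate the Borel measure $\mu$ by a sequence of rational probability measures, apply the hypothesis to each, and then pass to the limit --- via an ultraproduct for the first conclusion and via Ball's extension theorem together with a direct selection for the second. To set this up, we fix a Borel probability measure $\mu$ on $\MM$ with $\iint d_\MM^p\,\ud\mu\ud\mu < \infty$ (the opposite case having already been reduced to an embedding into $\R$ right before the lemma), together with a countable dense subset $\{x_k\}_{k=1}^\infty\subset \MM$ whose first element lies in $\supp(\mu)$. For each $n$, we partition $\MM$ into finitely many Borel cells of diameter at most $\varepsilon_n\to 0$, take $x_k$ as the representative of its own cell for every $k\le n$, and let $\pi_n:\MM\to\{x_1,\ldots,x_{k_n}\}$ be the resulting retraction. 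Setting $\hat\rho_n\eqdef(\pi_n)_\sharp\mu$ and perturbing weights slightly produces a rational probability measure $\rho_n$ with $\|\rho_n-\hat\rho_n\|_{\mathrm{TV}}\le 1/n$ and $\{x_1,\ldots,x_n\}\subseteq\supp(\rho_n)$; a routine check using $d_\MM\in L_p(\mu\times\mu)$ then yields $\iint d_\MM^p\,\ud\rho_n\ud\rho_n\to\iint d_\MM^p\,\ud\mu\ud\mu$. The hypothesis supplies $D$-Lipschitz maps $f_n:\supp(\rho_n)\to Y$ satisfying
\begin{equation}\label{eq:rho lower proposal}
\iint \|f_n(u)-f_n(v)\|_{\!Y}^p\,\ud\rho_n(u)\ud\rho_n(v)\ge \iint d_\MM(u,v)^p\,\ud\rho_n(u)\ud\rho_n(v),
\end{equation}
which we normalize so that $f_n(x_1)=0$.

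For the first conclusion, the plan is to select for each $x\in \MM$ and each $n$ a nearest point $y_n(x)\in\supp(\rho_n)$ (so $d_\MM(y_n(x),x)\le\varepsilon_n$), fix a non-principal ultrafilter $\mathscr{U}$ on $\N$, and set $F(x)\eqdef[(f_n(y_n(x)))_{n\in\N}]\in Y^{\mathscr{U}}$. Boundedness of this defining sequence is immediate from $\|f_n(y_n(x))\|_{\!Y}\le D\,d_\MM(y_n(x),x_1)\le D(d_\MM(x,x_1)+\varepsilon_n)$, and the Lipschitz bound $\|F(x)-F(y)\|_{Y^{\mathscr{U}}}\le D\,d_\MM(x,y)$ follows from $d_\MM(y_n(x),y_n(y))\to d_\MM(x,y)$. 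The $p$-average lower bound will come from dominated convergence along $\mathscr{U}$, justified by the pointwise domination $\|f_n(y_n(x))-f_n(y_n(y))\|_{\!Y}^p\le D^p(d_\MM(x,y)+2\varepsilon_n)^p$, which after a change of variables reduces the left-hand side to $\lim_{\mathscr{U}}\iint\|f_n\|_{\!Y}^p\,\ud\hat\rho_n\ud\hat\rho_n$; the total-variation closeness of $\hat\rho_n$ to $\rho_n$ and \eqref{eq:rho lower proposal} will then yield the desired lower bound up to a vanishing multiplicative error, which is absorbed by rescaling $F$ by $\alpha$.

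For the second conclusion, rather than forming an ultralimit we invoke the definition of $\ee(\MM,Y)$ to extend each $f_n$ to a map $F_n:\MM\to Y$ that is $(1+o(1))\,\ee(\MM,Y) D$-Lipschitz and still agrees with $f_n$ on $\supp(\rho_n)$, so that \eqref{eq:rho lower proposal} remains valid for $F_n$. The goal is then to show that for all sufficiently large $n$ the single map $F_n$ already nearly satisfies the $p$-average lower bound against $\mu\times\mu$. This reduces to comparing $\iint\|F_n\|_{\!Y}^p\,\ud\mu\ud\mu$ with $\iint\|F_n\|_{\!Y}^p\,\ud\rho_n\ud\rho_n$; using the uniform Lipschitz control, $\|\hat\rho_n-\rho_n\|_{\mathrm{TV}}\to 0$ and $\varepsilon_n\to 0$, the former equals the latter up to $o(1)\cdot\iint d_\MM^p\,\ud\mu\ud\mu$, while the latter is at least $\iint d_\MM^p\,\ud\rho_n\ud\rho_n\to\iint d_\MM^p\,\ud\mu\ud\mu$. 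Choosing $n$ large enough and then rescaling $F_n$ by $\alpha$ delivers the required embedding into $Y$ with $p$-average distortion at most $\alpha\ee(\MM,Y) D$.

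The principal technical obstacle will be quantifying the passage of the average inequality from $\rho_n\times\rho_n$ to $\mu\times\mu$ in a way compatible with the Lipschitz constant --- essentially, showing that for a uniformly $L$-Lipschitz family $(F_n)$ into a Banach space the functional $\nu\mapsto\iint\|F_n\|_{\!Y}^p\,\ud\nu\ud\nu$ depends continuously on $\nu$ in a topology strong enough to handle the unbounded integrand $\|F_n(x)-F_n(y)\|_{\!Y}^p\le L^p d_\MM(x,y)^p$ yet weak enough to be achieved by a rational discretization. This is what motivates demanding $\rho_n\to\mu$ not merely weakly but also in the sense $\iint d_\MM^p\,\ud\rho_n\ud\rho_n\to\iint d_\MM^p\,\ud\mu\ud\mu$, which is precisely what the careful partition-plus-perturbation construction of $\rho_n$ is designed to guarantee.
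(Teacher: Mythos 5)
Your overall architecture is the same as the paper's (discretize $\mu$ by rational measures supported on finer and finer nets, apply the hypothesis, and pass to the limit via an ultraproduct for the first claim and via $\ee(\MM,Y)$ for the second), but there is a genuine gap in the ultrapower branch. You justify the inequality $\iint\|F(x)-F(y)\|_{Y^{\mathscr{U}}}^p\ud\mu\ud\mu\ge\lim_{\mathscr{U}}\iint\|f_n(y_n(x))-f_n(y_n(y))\|_{\!Y}^p\ud\mu\ud\mu$ by ``dominated convergence along $\mathscr{U}$.'' No such theorem exists: pointwise convergence along a non-principal ultrafilter together with an integrable majorant does \emph{not} imply convergence (or even the one-sided inequality you need) of the integrals, because the sets where $g_n$ deviates from its ultralimit may have measure bounded away from zero for every $n$ --- the exceptional sets simply move around, and for each fixed $(x,y)$ the ``good'' set of indices lies in $\mathscr{U}$ without any uniformity in $(x,y)$. (Already the pointwise ultralimit of a dominated sequence need not be measurable in general; here it happens to be, since $F$ is Lipschitz, but measurability does not rescue the interchange.) This is precisely the point the paper's proof is structured to avoid: there, ultralimits are only ever taken of \emph{finite} sums over the net points (inequality \eqref{psik n sum} involves a number of terms independent of $k$), and the passage from those finite sums to the integral against $\mu\times\mu$ is done separately, for a genuine limit $\d\to0$, where the classical dominated convergence theorem applies. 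Your fix is available with the machinery you already set up: transfer the average inequality through the finite sum $\sum_{i,j}\|F(x_i)-F(x_j)\|^p\hat\rho_m(x_i)\hat\rho_m(x_j)$ (where the ultralimit commutes trivially with the finite sum), and only then let $m\to\infty$ using the uniform near-Lipschitz control $\big|\|f_n(y_n(x))-f_n(y_n(y))\|_{\!Y}-\|f_n(y_n(x_i))-f_n(y_n(x_j))\|_{\!Y}\big|\lesssim D\varepsilon$.

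A secondary, fixable imprecision: requiring only $\|\rho_n-\hat\rho_n\|_{\mathrm{TV}}\le 1/n$ while forcing $\{x_1,\dots,x_n\}\subseteq\supp(\rho_n)$ does not guarantee $\iint d_\MM^p\ud\rho_n\ud\rho_n\to\iint d_\MM^p\ud\mu\ud\mu$, nor does it let you transfer the $p$-th moment inequality between $\rho_n$ and $\hat\rho_n$: the integrand $d_\MM^p$ is unbounded, so total-variation closeness controls nothing, and the forcibly added net points may be arbitrarily far from $\supp(\mu)$. You must choose the added masses after seeing the finite configuration, small enough relative to $\sum_{u,j}d_\MM(u,y_j)^p$ (as the paper does with its parameter $\eta$), or perturb the weights multiplicatively rather than additively (as the paper does with $\mu(V_i^\d)\le\rho_i\le\alpha^{p/4}\mu(V_i^\d)$ in the extension branch). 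With these two repairs your argument becomes essentially the paper's proof.
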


The proof of Theorem~\ref{thm:full duality} is a direct application of Lemma~\ref{lem:reduction to finite rational} to Theorem~\ref{thm:quote duality} below, which is (the nontrivial direction of a) the duality result of~\cite[Theorem~1.3]{Nao14}, while using Theorem~\ref{thm:holder extension} to justify the second assertion of Theorem~\ref{thm:full duality}. The term ``duality'' here indicates that the existence of the embedding that Theorem~\ref{thm:quote duality} asserts is proved in~\cite{Nao14} by a separation (Hahn--Banach) argument.

\begin{theorem}\label{thm:quote duality} Let $(\MM,d_\MM)$ be a metric space and $(Y,\|\cdot\|_Y)$ a Banach space. Suppose that there exist $p,K\ge 1$ such that $\gamma(\A,d_\MM^p)\le K\gamma(\A,\|\cdot\|_Y^p)$ for every $n\in \N$ and any symmetric stochastic matrix $\A\in \M_n(\R)$. Then, for every $D>K$ and every rational probability measure $\rho$ on $\MM$, the (finite) metric measure space $(\supp(\rho),d_\MM,\rho)$ embeds with $p$-average distortion $D$ into $\ell_p(Y)$.
\end{theorem}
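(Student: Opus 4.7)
The plan is to argue by Hahn--Banach separation: if no $D$-average embedding exists then a ``spectral-gap certificate'' falls out that contradicts the standing hypothesis.

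\emph{Reduction and setup.} Writing the rational weights as $\pi_i=\rho(\{x_i\})=p_i/N$ with common denominator $N$, I would first replace each $x_i\in\supp(\rho)$ by $p_i$ identical copies, producing a multiset $\tilde S$ of size $N$ equipped with the uniform probability measure. A $D$-Lipschitz embedding of $\tilde S$ into $\ell_p(Y)$ must identify copies of the same point (their mutual distance being $0$), and so descends to an embedding of $\supp(\rho)$ preserving the Lipschitz constant and $p$-average condition; moreover on $\tilde S$ the $\pi$-reversible stochastic matrices are exactly the symmetric stochastic matrices, matching the hypothesis of the theorem. So it suffices to treat uniform $\pi$. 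Abbreviating $S=\{x_1,\ldots,x_n\}$ and $d_{ij}=d_\MM(x_i,x_j)$, I work in the finite-dimensional space $\R^{n\times n}$ and consider the convex cone of realizable profiles
\begin{equation*}
\mathcal K\eqdef \mathrm{cone}\Big\{\big(\|g(x_i)-g(x_j)\|_{\!Y}^p\big)_{i,j=1}^n:\,g:S\to Y\Big\}\ \subset\ \R^{n\times n}.
\end{equation*}
Because $\ell_p(Y)\oplus \ell_p(Y)$ (with $\ell_p$-norm) is isometric to $\ell_p(Y)$, an element of $\mathcal K$ is precisely a matrix of the form $\big(\|f(x_i)-f(x_j)\|_{\ell_p(Y)}^p\big)_{i,j}$ arising from some $f:S\to\ell_p(Y)$, so the sought embedding exists if and only if there is $M\in\mathcal K$ with $m_{ij}\le D^p d_{ij}^p$ entrywise and $\sum_{i,j}\pi_i\pi_j m_{ij}\ge\sum_{i,j}\pi_i\pi_j d_{ij}^p$.

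\emph{Separation and assembly of $\A$.} Assume for contradiction that no such $M$ exists. A Hahn--Banach argument in $\R^{n\times n}$ (applied to the closure $\bar{\mathcal K}$, with a $(1-\varepsilon)$-rescaling to return to $\mathcal K$; the strict gap $D>K$ provides the slack for this) yields nonnegative weights $\eta_{ij}\ge 0$, which I symmetrize, and a positive scalar I normalize to $1$, such that
\begin{align*}
\sum_{i,j=1}^n\eta_{ij}\|g(x_i)-g(x_j)\|_{\!Y}^p&\,\ge\,\sum_{i,j=1}^n\pi_i\pi_j\|g(x_i)-g(x_j)\|_{\!Y}^p\quad\forall\,g:S\to Y,\\
D^p\sum_{i,j=1}^n\eta_{ij}\,d_{ij}^p&\,<\,\sum_{i,j=1}^n\pi_i\pi_j\,d_{ij}^p.
\end{align*}
(That the normalizing scalar is strictly positive is forced by the Lipschitz constraint alone being satisfied by the zero map.) I then pick $t>0$ small enough that $t\sum_{j\ne i}\eta_{ij}/\pi_i\le 1$ for every $i$ and set $a_{ij}\eqdef t\eta_{ij}/\pi_i$ for $i\ne j$ and $a_{ii}\eqdef 1-\sum_{j\ne i}a_{ij}\ge 0$. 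Uniformity of $\pi$ together with symmetry of $\eta$ make $\A=(a_{ij})\in \M_n(\R)$ a symmetric stochastic matrix.

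\emph{Conclusion and main obstacle.} The $i=j$ terms drop out of both displays ($d_{ii}=0$ and $g(x_i)-g(x_i)=0$), so substituting $\eta_{ij}=\pi_i a_{ij}/t$ converts them into $\gamma(\A,\|\cdot\|_{\!Y}^p)\le 1/t$ and, applied to the specific sequence $x_1,\ldots,x_n$, $\gamma(\A,d_\MM^p)>D^p/t$, whence $\gamma(\A,d_\MM^p)>D^p\gamma(\A,\|\cdot\|_{\!Y}^p)$. The standing hypothesis then forces $D^p<K$, contradicting $D^p\ge D>K$ (using $p\ge 1$ and $D>K\ge 1$). The main obstacle lies in the Hahn--Banach step: the cone $\mathcal K$ has an infinite generating family $\{M_g:g:S\to Y\}$ and need not be closed, so one must pass to $\bar{\mathcal K}$ and approximate any $M\in\bar{\mathcal K}$ by genuine convex combinations of the $M_g$'s, exploiting the strict inequality $D>K$ to absorb the resulting perturbation of the Lipschitz bound; the subsequent symmetrization of $\eta$ and the choice of $t$ keeping the diagonal of $\A$ nonnegative rely on the rationality of $\rho$ (which licenses the passage to uniform $\pi$) and on $\pi$ being bounded below on $\supp(\rho)$.
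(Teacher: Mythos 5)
The paper never proves this statement itself: it is imported verbatim from \cite[Theorem~1.3]{Nao14}, and the text only records that the proof there is ``a separation (Hahn--Banach) argument.'' So your strategy coincides with that of the cited source, and your endgame is sound: given nonnegative weights $\eta$ with $\lambda=1$ satisfying your two displays (the second one need not even be strict), the matrix $\A$ you assemble is symmetric stochastic, the first display gives $\gamma(\A,\|\cdot\|_{\!Y}^p)\le 1/t$, the second gives $\gamma(\A,d_\MM^p)\ge D^p/t$, and the hypothesis forces $D^p\le K<D\le D^p$, a contradiction (so in fact $D>K^{1/p}$ would suffice).

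The gap is in producing the certificate, and neither of your proposed fixes closes it. Mere proper separation of the disjoint convex sets $\mathcal K$ and $\mathcal C=\{M:\ m_{ij}\le D^pd_{ij}^p,\ \sum_{i,j}\pi_i\pi_jm_{ij}\ge\sum_{i,j}\pi_i\pi_jd_{ij}^p\}$ can return a degenerate functional with $\lambda=0$: take $\eta$ supported on a pair $(i_0,j_0)$ of copies of the same point, which your multiset reduction necessarily creates. Since a map $g$ may send the two copies to different points of $Y$, the functional $M\mapsto-\eta_{i_0j_0}m_{i_0j_0}$ is nonpositive on $\mathcal K$, nonnegative on $\mathcal C$ (there $m_{i_0j_0}\le D^p\cdot 0=0$), and non-constant on both, hence a legitimate proper separation from which nothing follows. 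Your parenthetical argument for $\lambda>0$ is valid only once one already has a Farkas certificate with a \emph{strict} inequality on $\mathcal C$, and the existence of such a certificate for the non-polyhedral cone $\mathcal K$ is exactly what must be proved. A strict certificate does follow from strict separation, which requires $\mathrm{dist}(\mathcal K,\mathcal C)>0$, i.e.\ $\overline{\mathcal K}\cap\mathcal C=\emptyset$; but a ``$(1-\e)$-rescaling'' cannot move a point of $\overline{\mathcal K}$ into $\mathcal K$ (both are cones, hence scale-invariant), and an additive approximation is useless on the zero-distance pairs where the constraint reads $m_{ij}\le 0$. What is needed is a \emph{multiplicative entrywise} approximation: write $M^*\in\overline{\mathcal K}$ as an exact finite conic combination of configurations in an ultrapower $Y^{\mathscr U}$ (Carath\'eodory plus an ultralimit of the bounded approximating configurations), then use the fact that the finite-dimensional span of these points $(1+\e)$-embeds back into $Y$ to get $M\in\mathcal K$ with $M^*\le M\le (1+\e)^pM^*$; the slack $D>K$ absorbs the factor $(1+\e)^p$. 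With that step supplied, strict separation forces $\lambda>0$ and the rest of your argument goes through; the appearance of ultrapowers in the first part of Theorem~\ref{thm:full duality} is a symptom of precisely this closure issue.
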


\begin{proof}[Proof of Lemma~\ref{lem:reduction to finite rational}] For each $\d>0$ fix an arbitrary $\d$-net $\{x_i^\d\}_{i=1}^\infty$ of $(\MM,d_\MM)$, i.e., $d_\MM(x_i^\d,x_j^\d)\ge \d$ for all distinct $i,j\in \N$, and also $\bigcup_{i=1}^\infty B_\MM(x_i^\d,\d)=\MM$. Define inductively $V_1^\d=B_\MM(x_1^\d,\d)$ and $V_{j+1}^\d=B_\MM(x_{j+1}^\d,\d)\setminus \bigcup_{i=1}^{j}B_\MM(x_j^\d,\d)$ for  $j\in \N$, namely $\{V_{j}^\d\}_{j=1}^\infty$ is the disjoint Voronoi tessellation of $\MM$ that is induced by the (ordered) $\d$-net $\{x_i^\d\}_{i=1}^\infty$.

Fix from now on a Borel probability measure $\mu$ on $\MM$. Let $(W,\|\cdot\|_W)$ be any Banach space (we will eventually take $W$ to be either $Y^\mathscr{U}$ or $Y$). Suppose that there is some $\lambda>0$ such that for every $\d>0$ and $n\in \N$ there exists a $\lambda$-Lipschitz mapping $f_n^\d:\MM\to W$ that satisfies
\begin{equation}\label{eq:W unified}
\sum_{i=1}^n \sum_{j=1}^n \|f_n^\d(x_i^\d)-f_n^\d(x_j^\d)\|_{W}^p\mu(V_i^\d)\mu(V_j^\d)\ge \sum_{i=1}^n \sum_{j=1}^n d_\MM(x_i^\d,x_j^\d)^p\mu(V_i^\d)\mu(V_j^\d).
\end{equation}
We will next show that this assumption formally implies that for any $\Lambda>\lambda$, the metric probability space $(\MM,d_\MM,\mu)$ embeds with $p$-average distortion $\Lambda$ into $(W,\|\cdot\|_W)$.

As justified in the beginning of Section~\ref{sec:aux em}, we may assume that $\iint_{\MM\times \MM} d_\MM(x,y)^p\ud\mu(x)\ud\mu(y)<\infty$. If we could prove that
\begin{equation}\label{eq:sup lower fnd}
\sup_{\substack{\d>0\\n\in \N}}\iint_{\MM\times \MM} \|f_n^\d(x)-f_n^\d(y)\|_W^p\ud\mu(x)\ud\mu(y)\ge \iint_{\MM\times \MM} d_\MM(x,y)^p\ud\mu(x)\ud\mu(y),
\end{equation}
then for some $\d>0$ and $n\in \N$ the normalized mapping $g_n^\d\eqdef \frac{\Lambda}{\lambda}f_n^\d: \MM\to W$
would be $\Lambda$-Lipschitz and satisfy $\iint_{\MM\times \MM} \|g_n^\d(x)-g_n^\d(y)\|_W^p\ud\mu(x)\ud\mu(y)\ge\iint_{\MM\times \MM} d_\MM(x,y)^p\ud\mu(x)\ud\mu(y)$, as required.

To prove~\eqref{eq:sup lower fnd}, note that because $\{x_s^\d\}_{s=1}^\infty$ is $\d$-dense in $\MM$ we have
\begin{equation}\label{eq:eta density}
\sup_{\substack{i,j\in \N\\ (x,y)\in V_i^\d\times V_j^\d}} |d_\MM(x_i^\d,x_j^\d)- d_\MM(x,y)|\le \sup_{\substack{i,j\in \N\\ (x,y)\in V_i^\d\times V_j^\d}} \big(d_\MM(x_i^\d,x)+ d_\MM(x_j^\d,y)\big)\le 2\d.
\end{equation}
Hence, the $\R$-valued function on $\MM\times \MM$ that is equal to $d_\MM(x_i^\d,x_j^\d)^p$ on $V_i\times V_j$ for each $i,j\in \N$ tends point-wise to $d_\MM^p:\MM\times \MM\to \R$ as $\d\to 0$, and it is bounded from above by the $(\mu\times \mu)$-integrable function $(x,y)\mapsto (d_\MM(x,y)+2\d)^p$. By the dominated convergence theorem we therefore have
\begin{align*}
\begin{split}
\iint_{\MM\times \MM} d_\MM(x,y)^p\ud\mu(x)\ud\mu(y)&=\lim_{\d\to 0}\sum_{i=1}^\infty\sum_{j=1}^\infty d_\MM(x_i^\d,x_j^\d)^p\mu(V_i^\d)\mu(V_j^\d)
\\&\!\!\stackrel{\eqref{eq:W unified}}{\le}\limsup_{\d\to 0}\limsup_{n\to\infty}\sum_{i=1}^n \sum_{j=1}^n \|f_n^\d(x_i^\d)-f_n^\d(x_j^\d)\|_{W}^p\mu(V_i^\d)\mu(V_j^\d)\\
&\le \sup_{\substack{\d>0\\n\in \N}}\sum_{i=1}^\infty \sum_{j=1}^\infty \|f_n^\d(x_i^\d)-f_n^\d(x_j^\d)\|_{W}^p\mu(V_i^\d)\mu(V_j^\d).
\end{split}
\end{align*}
The desired statement~\eqref{eq:sup lower fnd} would therefore follow if we could show that for every fixed $n\in \N$,
\begin{equation}\label{for second dominATED}
\lim_{\d\to 0}\bigg(\iint_{\MM\times \MM} \|f_n^\d(x)-f_n^\d(y)\|_W^p\ud\mu(x)\ud\mu(y)-\sum_{i=1}^\infty \sum_{j=1}^\infty \|f_n^\d(x_i^\d)-f_n^\d(x_j^\d)\|_{W}^p\mu(V_i^\d)\mu(V_j^\d)\bigg)=0.
\end{equation}

To justify~\eqref{for second dominATED}, for  $n\in \N$ and $\d>0$  consider the function $h_n^\d:\MM\times \MM\to \R$ that is defined by
$$
\forall\, i,j\in \N,\ \forall (x,y)\in V_i^\d\times V_j^\d,\qquad h_n^\d(x,y)\eqdef\|f_n^\d(x)-f_n^\d(y)\|_W^p-\|f_n^\d(x_i^\d)-f_n^\d(x_j^\d)\|_{W}^p.
$$
Under this
notation, the assertion of~\eqref{for second dominATED} is the same as $\lim_{\d\to 0}\iint_{\MM\times\MM} h_n^\d(x,y)\ud\mu(x)\ud\mu(y)=0$. Since $f_n^\d$ is assumed to be $\lambda$-Lipschitz,  if  $(x,y)\in V_i^\d\times V_j^\d$ for some  $i,j\in \N$, then we have
\begin{multline*}
|h_n^\d(x,y)|\le \|f_n^\d(x)-f_n^\d(y)\|_W^p+\|f_n^\d(x_i^\d)-f_n^\d(x_j^\d)\|_{W}^p\\\le \lambda^p \big(d_\MM(x,y)^p+d_\MM(x_i^\d,x_j^\d)^p\big)
\stackrel{\eqref{eq:eta density}}{\le} \lambda^p \big(d_\MM(x,y)^p+(d_\MM(x,y)+2\d)^p\big).
\end{multline*}
By the dominated convergence theorem, it therefore suffices to show that $\lim_{\d\to 0} h_n^\d=0$ point-wise. This is so since if $\d\le 1$ and $(x,y)\in V_i^\d\times V_j^\d$ for $i,j\in \N$, then $\|f_n^\d(x)-f_n^\d(y)\|_W\le \lambda d_\MM(x,y)$ and
$$
\big|\|f_n^\d(x)-f_n^\d(y)\|_W-\|f_n^\d(x_i^\d)-f_n^\d(x_j^\d)\|_{W}\big|\le \|f_n^\d(x)-f_n^\d(x_i^\d)\|_W+\|f_n^\d(y)-f_n^\d(x_j^\d)\|_W\le 2\lambda\d\le 2\lambda.
$$
Hence, both of the numbers  $\|f_n^\d(x)-f_n^\d(y)\|_W$ and $\|f_n^\d(x_i^\d)-f_n^\d(x_j^\d)\|_{W}$ belong to the bounded interval $[0,\lambda d_\MM(x,y)+2\lambda]$ and are within $2\lambda\d$ of each other. By the uniform continuity of $t\mapsto t^p$ on $[0,\lambda d_\MM(x,y)+2\lambda]$, it follows that $\lim_{\d\to 0} (\|f_n^\d(x)-f_n^\d(y)\|_W^p-\|f_n^\d(x_i^\d)-f_n^\d(x_j^\d)\|_{W}^p)=0$, as required.

By the above considerations, it remains to establish, for fixed $\d>0$ and $n\in \N$, the existence of $f_n^\d$ when $W=Y^\mathscr{U}$ or $W=Y$, and $\lambda$ is less than $\alpha D$ or $\ee(X,Y)\alpha D$, respectively. The case $W=Y$ is a direct consequence of the definition of $\ee(X,Y)$ and the assumption of Lemma~\ref{lem:reduction to finite rational}. Indeed, recalling that $\alpha>1$, for each $i\in \n$ choose $\rho_i\in \Q$ satisfying  $\mu(V_i^\d)\le \rho_i\le \alpha^{\frac{p}{4}}\mu(V_i^\d)$.  By assumption, there is a $D$-Lipschitz mapping $\f: \{x_i^\d:\ i\in \n\ \wedge \ \rho_i>0\}\to Y$ that satisfies
$
\sum_{i=1}^n \sum_{j=1}^n \|\f(x_i^\d)-\f(x_j^\d)\|_{\!Y}^p\rho_i\rho_j\ge \sum_{i=1}^n \sum_{j=1}^n d_\MM(x_i^\d,x_j^\d)^p\rho_i\rho_j.
$
Therefore,
\begin{equation}\label{eq:alpha 23}
\alpha^{\frac{p}{2}}\sum_{i=1}^n \sum_{j=1}^n \|\f(x_i^\d)-\f(x_j^\d)\|_{\!Y}^p\mu(V_i^\d)\mu(V_j^\d)\ge \sum_{i=1}^n \sum_{j=1}^n d_\MM(x_i^\d,x_j^\d)^p\mu(V_i^\d)\mu(V_j^\d).
\end{equation}
Extend $\f$ to a function $\Phi:\MM\to Y$ which is $\ee(X,Y)\sqrt[4]{\alpha}D$-Lipschitz. Then, $f_n^\d= \sqrt{\alpha}\Phi$ has Lipschitz constant less than $\ee(X,Y)\alpha D$ and, by virtue of~\eqref{eq:alpha 23}, it satisfies the desired estimate~\eqref{eq:W unified}.

For the remaining case, namely when $W=Y^\mathscr{U}$ and $\lambda<\alpha D$,
 fix $n\in \N$ and $\{y_j\}_{j=1}^\infty\subset \MM\setminus \{x_i^\d\}_{i=1}^n$ such that $\{x_i^\d\}_{i=1}^n\cup \{y_j\}_{j=1}^\infty$ is dense in $\MM$. Fix also $k\in \N$ and $\eta\in (0,1)\cap \Q$, and define a measure $\nu$ on $\mathcal{F}=\{x_i^\d\}_{i=1}^n\cup\{y_j\}_{j=1}^k$ by setting $\nu(x_i^\d)\in \Q$ to be any rational number satisfying $(1-\eta)\mu(V_i^\d)\le \nu(x_i^\d)< \mu(V_i^\d)$ if $\mu(V_i^\d)>0$, and $\nu$ assigns mass $\eta$ to all the other points in $\mathcal{F}$. By assumption, there is a $D$-Lipschitz mapping $\psi:\mathcal{F}\to Y$ such that
\begin{equation*}\label{eq:this1}
\sum_{u\in \mathcal{F}}\sum_{v\in \mathcal{F}}\|\psi(u)-\psi(v)\|_{\!Y}^p\nu(u)\nu(v)\ge \sum_{u\in \mathcal{F}}\sum_{v\in \mathcal{F}}d_\MM(u,v)^p\nu(u)\nu(v).
\end{equation*}
Then, since $\psi$ is $D$-Lipschitz and $\nu(w)=\eta$ for all $w\in \mathcal{F}\setminus \{x_i^\d\}_{i=1}^n=\{y_j\}_{j=1}^k$, we have
\begin{multline*}
\sum_{i=1}^n\sum_{j=1}^n \|\psi(x_i^\d)-\psi(x_j^\d)\|_{\!Y}^p\mu(V_i^\d)\mu(V_j^\d)\\ \ge (1-\eta)^2\sum_{i=1}^n\sum_{j=1}^n d_\MM(x_i^\d,x_j^\d)^p\mu(V_i^\d)\mu(V_j^\d)-2\eta D^p\sum_{u\in \mathcal{F}}\sum_{j=1}^k d_\MM(u,y_j)^p.
\end{multline*}
Since $\alpha>1$, by choosing small enough $\eta$ it follows from this that for each $k\in \N$ there exists a $D$-Lipschitz mapping $\psi_{k}:\{x_i^\d\}_{i=1}^n\cup\{y_j\}_{j=1}^k\to Y$ that satisfies
\begin{equation}\label{psik n sum}
\sum_{i=1}^n\sum_{j=1}^n \|\psi_{k}(x_i^\d)-\psi_k(x_j^\d)\|_{\!Y}^p\mu(V_i^\d)\mu(V_j^\d)\ge \alpha^{-\frac{p}{2}}\sum_{i=1}^n\sum_{j=1}^n d_\MM(x_i^\d,x_j^\d)^p\mu(V_i^\d)\mu(V_j^\d).
\end{equation}
We extend $\psi_k$ to all of $\MM$ by setting it to be identically equal to $\psi_k(x_1^\d)$ on $\MM\setminus (\{x_i^\d\}_{i=1}^n\cup\{y_j\}_{j=1}^k)$.

 Since $\psi_k$ is $D$-Lipschitz on $\{x_i^\d\}_{i=1}^n\cup\{y_j\}_{j=1}^k$, for every $z\in \MM$ the sequence $\{\psi_k(z)-\psi_k(x_1^\d)\}_{k=1}^\infty$ is bounded. We can therefore   define $f_n^\d:\MM\to Y^\mathscr{U}$  by setting  $f_n^\d(z)=\sqrt{\alpha}(\psi_k(z)-\psi_k(x_1^\d))_{k=1}^\infty/\mathscr{U}$ for all $z\in  \MM$. It follows directly from the definitions that $f_n^\d$ has Lipschitz constant $\sqrt{\alpha} D<\alpha D$ on the dense subset $\{x_i^\d\}_{i=1}^n\cup \{y_j\}_{j=1}^\infty$, so its Lipschitz constant is less than $\alpha D$ on all of $\MM$. Also, the desired bound~\eqref{eq:W unified} follows by passing to the ultralimit of~\eqref{psik n sum} as $k\to \infty$, since the number of pairwise distances that appear in the left hand side of~\eqref{psik n sum} is independent of $k$.
\end{proof}

\bibliographystyle{abbrv}
\bibliography{almost-ext}

 \end{document}